\documentclass[12pt]{amsart}

\usepackage{amssymb,bm,mathrsfs}
\usepackage{mathabx}
\usepackage{enumerate}
\usepackage{tikz}
\usepackage[centering,width=6in]{geometry}
\usepackage{pgfplots}

\usepackage[colorlinks,citecolor=blue,pdfstartview=FitH]{hyperref}
\theoremstyle{plain}
\newtheorem*{theorem*}{Theorem}
\newtheorem{thm}{Theorem}[section]
\newtheorem{prop}[thm]{Proposition}
\newtheorem{lem}[thm]{Lemma}

\newtheorem{cor}[thm]{Corollary}

\theoremstyle{definition}
\newtheorem{defn}[thm]{Definition}
\newtheorem{exmp}[thm]{Example}

\theoremstyle{remark}
\newtheorem{rem}[thm]{Remark}
\numberwithin{equation}{section}

\newcommand{\N}{\mathbb N}
\newcommand{\Q}{\mathbb Q}
\newcommand{\R}{\mathbb R}
\newcommand{\Z}{\mathbb Z}
\newcommand{\C}{\mathbb C}

\def\pb{\boldsymbol{p}}
\def\mb{\boldsymbol{m}}
\def\hb{\boldsymbol{h}}
\def\Qb{\boldsymbol{Q}}

\def\T{\top}

\usepackage{float}

\title{On the Rajchman property of homogeneous self-affine measures}

\author{De-Jun FENG}
\address{Department of Mathematics\\
The Chinese University of Hong Kong\\
Shatin,  Hong Kong\\}
\curraddr{}
\email{\href{mailto:djfeng@math.cuhk.edu.hk}{djfeng@math.cuhk.edu.hk}}
\thanks{}

\author{Tian-Han YI}
\address{Department of Mathematics\\
The Chinese University of Hong Kong\\
Shatin,  Hong Kong\\}
\curraddr{}
\email{\href{mailto:thyi@math.cuhk.edu.hk}{thyi@math.cuhk.edu.hk}}
\thanks{}
\keywords{Self-affine measures, Fourier transform, Rajchman property, Power Fourier decay, Absolute continuity}

\subjclass[2020]{Primary 28A80, 42A16; Secondary, 37C45}

\begin{document}

\begin{abstract}
We provide a  complete algebraic characterization of homogeneous self-affine measures whose Fourier transform does not tend to~$0$ at infinity. Moreover,  we prove that an integral self-affine measure has  power Fourier decay if and only if it is absolutely continuous. For a given integral affine IFS, we also provide an algorithm for determining those probability vectors for which the associated self-affine measure is absolutely continuous.
\end{abstract}

\maketitle

\section{Introduction}

Let $d\geq 1$ be an integer. Given a Borel probability measure $\nu$ on $\mathbb{R}^d$, the Fourier transform of $\nu$ is defined by
\[
\widehat{\nu}(\xi)=\int e^{2\pi i \langle \xi,\;x\rangle}\,d\nu(x),\qquad \xi\in\mathbb{R}^d,
\]
where $\langle\cdot,\cdot\rangle$ denotes the standard inner product on $\mathbb{R}^d$. The measure $\nu$ is said to be a \textit{Rajchman measure} if $|\widehat{\nu}(\xi)|\to 0$ as $|\xi|\to\infty$. The Riemann–Lebesgue lemma says that $\nu$ is Rajchman whenever it is absolutely continuous with respect to the Lebesgue measure. For singular measures, determining which ones are Rajchman is a subtle question with a long history (see \cite{Lyons1995}). In this paper, we study the Rajchman property in the context of homogeneous self-affine measures on $\R^d$.

Before presenting the background and stating our main results, we first introduce some necessary notation and definitions concerning affine iterated function systems, self-affine sets, and self-affine measures. Let $GL_d(\R)$ denote the set of all invertible real $d\times d$ matrices.
By an {\it affine iterated function system} (affine IFS) on $\mathbb{R}^d$, we mean a finite collection
$
\Phi=\{\phi_\ell\}_{\ell=1}^m
$
of contracting affine maps from $\mathbb{R}^d$ to $\mathbb{R}^d$ (in some norm) of the form
\[
\phi_\ell(x)=A_\ell x+a_\ell,\qquad 1\le \ell \le m,
\]
where $A_\ell\in GL_d(\mathbb{R})$,
and $a_\ell\in \mathbb{R}^d$. It is well known \cite{Hutchinson1981} that  there exists a unique nonempty
compact set $K \subset \mathbb{R}^d$ such that
\[
K = \bigcup_{\ell=1}^m \phi_\ell(K).
\]
We call $K$ the {\it self-affine set} generated by $\Phi$. In particular, if all the maps
in $\Phi$ are contracting similitudes, then $K$ is called a {\it self-similar set}.

It is also well known \cite{Hutchinson1981} that, given an affine IFS
\(\Phi=\{\phi_\ell\}_{\ell=1}^m\) and a probability vector
\(\pb=(p_\ell)_{\ell=1}^m\), there exists a unique probability
measure \(\mu\) on \(\mathbb{R}^d\) such that
\[
\mu=\sum_{\ell=1}^m p_\ell\, \mu \circ \phi_\ell^{-1}.
\]
The measure \(\mu\) is called the \emph{self-affine measure} associated
with \(\Phi\) and \(\pb\), and it is supported on the attractor
\(K\). When all the maps in \(\Phi\) are similitudes, \(\mu\) is called
\emph{self-similar}. Moreover, an affine IFS \(\Phi\) is said to be
\emph{homogeneous} if the linear parts of the affine maps in \(\Phi\)
are identical, that is, $A_\ell=A$ for all $1\leq \ell\leq m$; correspondingly, the associated self-affine measure
\(\mu\) is also called \emph{homogeneous}.

It is a standard fact that, for a given $A\in GL_d(\Bbb R)$, a finite family $\{Ax+a_\ell\}_{\ell=1}^m$ of affine maps on $\R^d$ forms a homogeneous affine IFS  if and only if the spectral radius of $A$ is less than $1$; see e.g.~\cite[Theorem~5.6.9 and Lemma~5.6.10]{HornJohnson1985}.

The Fourier transform of a homogeneous self-affine measure can be expressed as the limit of a certain infinite product. To see this, let $\mu$ be the self-affine measure associated with a homogeneous affine IFS $\{Ax+a_\ell\}_{\ell=1}^m$ on $\R^d$ and a probability vector $(p_\ell)_{\ell=1}^m$. Then
\begin{equation}
\label{mu-fourier}
\widehat{\mu}(\xi)
=\prod_{n=0}^\infty H\left((A^\top)^{n}\xi\right), \qquad \xi\in \R^d,
\end{equation}
where $A^\top$ denotes the transpose of $A$ and
\begin{equation}
\label{e-Hxi}
 H(\xi) = \sum_{\ell=1}^{m} p_\ell \exp\!\bigl(2\pi i \langle \xi, a_\ell \rangle\bigr).
\end{equation}
This follows from the fact that $\mu$ can be expressed as an infinite convolution product
$$
 \mu = \bigast_{n=0}^{\infty} \left( \sum_{\ell=1}^{m} p_\ell\,\delta_{A^n a_\ell} \right).
 $$

Following \cite{Hochman2015}, we say that  an affine IFS $\Phi=\{\phi_\ell\}_{\ell=1}^m$ on $\R^d$ is  {\it affinely irreducible} if  there does not exist a proper affine subspace $V$ of $\R^d$ such that
$\phi_\ell(V) = V$ for all $1 \leq \ell \leq m$. It is easy to see that $\Phi$ is not affinely irreducible if and only if its
attractor $K$ is contained in a proper affine subspace $V$ of $\mathbb{R}^d$.
Hence, if $\Phi$ is not affinely irreducible, then any self-affine measure associated with $\Phi$ is non-Rajchman.

\subsection{Background}

Understanding the Rajchman property of fractal measures is a fundamental problem in harmonic analysis, ergodic theory, fractal geometry, and number theory; see e.g.~ the survey papers \cite{Algom2025, Lyons1995, Sahlsten2025}.

The first major advances were obtained for classical Bernoulli convolutions $\nu_\lambda$, namely, the self-similar measures on the real line  associated with the IFS $\{\lambda x, \lambda x+1\}$  and the probability vector $(1/2, 1/2)$, where  $\lambda\in (0,1)$.  Erd\H{o}s \cite{Erdos1939} proved that if $\lambda^{-1}$ is a Pisot number distinct from $2$, then $\nu_{\lambda}$ is not Rajchman. Recall that a {\it Pisot number} is an algebraic integer greater than one,
whose algebraic conjugates are all less than one in modulus. Note that $\nu_{1/2}$ is absolutely continuous and hence Rajchman. Salem \cite{Salem1943} later proved that if $\lambda^{-1}$ is not a Pisot number, then $\nu_{\lambda}$ is Rajchman, thereby providing a complete characterization of Rajchman Bernoulli convolutions. Erd\H{o}s \cite{Erdos1940} proved that  $\nu_\lambda$ has power Fourier decay for $\mathcal L$-a.e.~$\lambda\in (0,1)$.  Later, Kahane \cite{Kahane1971} pointed out that this actually holds for all $\lambda\in (0,1)$ outside a set of zero Hausdorff dimension.  Recall that a probability measure $\mu$ on $\R^d$ is said to have {\it power Fourier decay} if there exists $\alpha>0$ such that $|\widehat{\mu}(\xi)|=O(|\xi|^{-\alpha})$ as $|\xi|\to\infty$.

Now we turn to the general orientation preserving self-similar measures on the real line.  Based on a new method relying on renewal theory originating in \cite{Li2018}, Li and Sahlsten \cite{LiSahlsten2022} proved that if $\mu$ is the self-similar measure  associated with an IFS  $\{r_\ell x+ a_\ell\}_{\ell=1}^m$ on the line and a strictly positive probability vector $(p_\ell)_{\ell=1}^m$, where $r_\ell>0$ for all $1\leq \ell\leq m$ and there exist $i\neq j$ such that $\log r_i/\log r_j$ is irrational, then $\mu$ is Rajchman. Moreover, they established logarithmic decay of the Fourier transform under an appropriate Diophantine condition.

 Br\'{e}mont \cite{Bremont2021}  further investigated the commensurable case when $\log r_i/\log r_j$ are rational for all $1\leq i,j\leq m$, and obtained an almost complete characterization of non-Rajchman orientation preserving self-similar measures on the line. He proved that, for an IFS
$
\Phi=\{r_\ell x+a_\ell\}_{\ell=1}^m
$
on the line, with $r_\ell>0$ for every $\ell$ and with a nonsingleton attractor, there exists a strictly positive probability vector
$
\pb=(p_\ell)_{\ell=1}^m
$
such that the self-similar measure $\mu$ associated with $\Phi$ and $\pb$ is non-Rajchman if and only if $\Phi$ is  a {\it Pisot IFS} on $\R$, in the sense that there exists a Pisot number $\beta$ such that
\[
r_\ell=\beta^{-n_\ell}
\qquad\text{for all }1\leq \ell\leq m,
\]
where $n_\ell\in\mathbb{N}$, and such that, after conjugating $\Phi$ by a suitable similarity, the translation parameters satisfy
\[
a_\ell\in\mathbb{Q}(\beta)
\qquad\text{for all }1\leq \ell\leq m,
\]
where $\mathbb{Q}(\beta)$ denotes the smallest field containing the field $\Q$ of rational numbers and the element $\beta$.

Br\'{e}mont also showed that for a Pisot IFS $\Phi$ on $\R$, the corresponding self-similar measure $\mu_{\pb}$ is non-Rajchman for every strictly positive probability vector $\pb$ outside a finite
union of real-analytic graphs of dimension at most $m-2$ (which are
points when $m=2$). Moreover, he proved that, in this setting, $\mu_{\pb}$ is absolutely continuous whenever it is Rajchman. As a related result, Varj\'{u} and Yu \cite{VarjuYu2022} gave a complete characterization of self-similar sets on \(\mathbb{R}\) with positive contraction ratios that are sets of uniqueness.

We remark that Solomyak~\cite{Solomyak2021} proved the existence of a set
\(E \subset (0,1)^m\) of Hausdorff dimension zero such that, whenever
\(\Phi=\{r_\ell x+a_\ell\}_{\ell=1}^m\) is an affinely irreducible IFS on
\(\mathbb{R}\) with \((r_\ell)_{\ell=1}^m\notin E\), the self-similar measure
\(\mu\) associated with \(\Phi\) and any strictly positive probability vector
has power Fourier decay. However, this exceptional set is not explicit.
At present, very few concrete examples of self-similar measures are known
to exhibit power Fourier decay; see \cite{DaiFengWang2007, Streck2026}.

Rapaport \cite{Rapaport2022} further extended the non-Rajchman characterization of Li and Sahlsten \cite{LiSahlsten2022} and Br\'{e}mont \cite{Bremont2021} to arbitrary self-similar IFSs on $\R^d$.   Denote the orthogonal group of $\R^d$ by $O(d)$. Rapaport proved  the following result.

 \begin{thm}[\cite{Rapaport2022}]
 \label{thm-Rapaport}
Let
$
\Phi=\{\phi_\ell(x)=r_\ell U_\ell x+a_
\ell\}_{\ell=1}^{m}
$
be an affinely irreducible self-similar IFS on $\mathbb{R}^d$, where
$0<r_\ell<1$, $U_\ell\in O(d)$, and $a_\ell\in\mathbb{R}^d$ for
$1\le  \ell\leq m$.
Then there exists a strictly positive probability vector
$\pb=(p_\ell)_{\ell=1}^{m}$ such that the self-similar measure associated
with $\Phi$ and $\pb$ is non-Rajchman if and only if there exist a linear
subspace $V\subset \mathbb{R}^d$, with
$d':=\dim V>0$, satisfying $U_\ell(V)=V$ for all $1\le \ell\le m$,
and an isometry $S:V\to\mathbb{R}^{d'}$ such that the following
conditions hold.

\begin{itemize}
\item[(1)] For $1\le  \ell\leq m$, let $U_\ell'\in O(d')$ and
$a_\ell'\in\mathbb{R}^{d'}$ be defined by
\[
S\circ P_V\circ\phi_\ell\circ S^{-1}(x)
= r_\ell U_\ell'x+a_\ell',
\]
where $P_V$ denotes the orthogonal projection from $\R^d$ onto $V$.
Let $\boldsymbol{H}\subset GL_{d'}(\mathbb{R})$ be the group generated by
$\{r_\ell U_\ell'\}_{\ell=1}^{m}$, and let
$
\boldsymbol{N}:=\boldsymbol{H}\cap O(d').
$
Then $\boldsymbol{N}$ is finite, $\boldsymbol{N}\triangleleft \boldsymbol{H}$, and $\boldsymbol{H}/\boldsymbol{N}$ is cyclic.

\item[(2)] For every contracting matrix $A\in \boldsymbol{H}$ such that
$
\{A^n\boldsymbol{N}\}_{n\in\mathbb{Z}}=\boldsymbol{H}/\boldsymbol{N},
$
there exist $k\ge1$, $\beta_1,\ldots,\beta_k\in\mathbb{C}$, and
$\zeta_1,\ldots,\zeta_k\in\mathbb{C}^{d'}\setminus\{0\}$ such that:

\begin{itemize}
\item[(a)] $\{\beta_1,\ldots,\beta_k\}$ is a Pisot tuple (see Definition~\ref{defn-1.1});

\item[(b)]
$
A^{-1}\zeta_j=\beta_j\zeta_j$ for all $1\le j\le k$;

\item[(c)] for every $1\le \ell \le m$ and every $B\in \boldsymbol{N}$, there exists a
polynomial $P_{\ell,B}\in\mathbb{Q}[X]$ such that
$
\langle Ba_\ell',\zeta_j\rangle
= P_{\ell,B}(\theta_j)
$ for all $1\le j\le k$,  where $\langle\cdot,\cdot\rangle$ denotes the standard inner product on $\C^d$, that is, \[ \langle z,w\rangle=\sum_{j=1}^d z_j\overline{w_j} \qquad (z,w\in\mathbb{C}^d). \]
\end{itemize}
\end{itemize}
\end{thm}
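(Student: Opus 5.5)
This is Rapaport's theorem, which the paper only cites. I would nevertheless prove it by reducing to the homogeneous, Br\'emont-type situation, in three stages.

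\emph{Stage 1: the group structure.} Assume some strictly positive $\pb$ gives a non-Rajchman $\mu$. Pick $\xi_n\to\infty$ with $|\widehat\mu(\xi_n)|\ge\varepsilon$. Iterating the self-similarity identity $\widehat\mu(\xi)=\sum_\ell p_\ell e^{2\pi i\langle\xi,a_\ell\rangle}\widehat\mu(r_\ell U_\ell^{\top}\xi)$ along a random word shows that large Fourier coefficients propagate along the random walk driven by $\{r_\ell U_\ell\}$. Passing to limit directions of $\xi_n$ produces a nonzero subspace $V$, spanned by the limiting frequencies, invariant under all $U_\ell$; on it the projected measure is still non-Rajchman. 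Next let $\boldsymbol H\subset GL_{d'}(\R)$ be generated by the $r_\ell U'_\ell$. The map $\boldsymbol H\to\R_{>0}$, $g\mapsto$ scaling factor, has kernel $\boldsymbol N$, which is normal. If the image were not discrete (irrational ratio $\log r_i/\log r_j$), a renewal theorem for the random walk on $\R\times O(d')$, as in \cite{LiSahlsten2022}, would give equidistribution of scales and force $\widehat\mu\to0$. Likewise, if $\boldsymbol N$ were infinite, its closure would contain a nontrivial connected compact subgroup, and averaging over it would again give decay. Hence $\boldsymbol N$ is finite and $\boldsymbol H/\boldsymbol N\cong\Z$ is cyclic.

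\emph{Stage 2: reduction to the homogeneous case.} Choose a generator $A$ as in (2). Every word in the IFS has linear part $BA^{n}$ with $B\in\boldsymbol N$. Grouping words by stopping times, I would rewrite $\mu_V$ as an infinite convolution $\bigast_n\bigl(\sum_{B,\ell}q\,\delta_{A^nBa'_\ell}\bigr)$, using finiteness of $\boldsymbol N$. Here the weights $q$ come from the product of $p$'s, and this step may need a random rather than deterministic decomposition. This puts us in the homogeneous setting of \eqref{mu-fourier}, with digit set $\{Ba'_\ell\}$, modulo the randomness in the stopping.

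\emph{Stage 3: the Pisot argument.} Non-Rajchman along $\xi_n$ requires $\prod_k|H((A^\top)^k\xi_n)|\not\to0$. This forces $\sum_k\|\langle(A^\top)^k\xi_n, Ba'_\ell-Ba'_{\ell'}\rangle\|^2<\infty$ uniformly in $n$, where $\|\cdot\|$ is distance to $\Z$. A multidimensional Pisot--Vijayaraghavan theorem then applies. Decompose $\xi$ along eigenvectors of $A^{-1}$: sequences $x_k$ with $\|x_k\|\to0$ satisfying a linear recurrence force the relevant eigenvalues $\beta_j$ to form a Pisot tuple, and the coordinates $\langle Ba'_\ell,\zeta_j\rangle$ to be $\Q$-polynomial in the conjugates. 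This gives (a)--(c). This is the step I expect to be the main obstacle. One needs a Pisot tuple version of the Pisot--Vijayaraghavan/Salem theorem that handles several eigenvalues simultaneously and keeps track of conjugate embeddings, and the uniformity in $n$ must be extracted from a single limiting sequence.

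\emph{Converse.} Under (1)--(2), take the frequencies $\xi_n=(A^{-\top})^n\xi_0$, with $\xi_0$ built from the dual basis to the $\zeta_j$. Then $\langle (A^\top)^k\xi_n,Ba'_\ell\rangle$ is a trace-like algebraic integer plus conjugate terms that decay geometrically. So each factor is close to $1$ modulo a finite part, and $|\widehat\mu(\xi_n)|$ stays bounded below for all $\pb$ outside an analytic exceptional set where the finite part vanishes. Any $\pb$ off that set works.
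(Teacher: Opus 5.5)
\textbf{There is a genuine gap, and it sits in Stage~2.}

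The paper does not prove this statement; it is quoted from \cite{Rapaport2022}. The paper's own arguments (Lemma~\ref{lem-3.9}, Proposition~\ref{prop-3.8}, Theorems~\ref{thm-1.4} and~\ref{thm-1.4'}) cover only homogeneous systems, where the product formula \eqref{mu-fourier} holds. Your Stage~2 is exactly the step that leaves that setting, and it is not a valid reduction.

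For a general self-similar IFS, the linear part of a word $w=\ell_1\cdots\ell_n$ is $B_wA^{n_w}$ with $B_w\in\boldsymbol N$. Both $n_w$ and $B_w$ depend on the earlier letters. So the projected measure is the law of the Markov-modulated sum $\sum_k B_{w|_k}A^{n_{w|_k}}a'_{\ell_{k+1}}$, not a convolution of independent discrete measures. Regrouping by stopping times does not remove this dependence, even when all $r_\ell$ are equal.

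The paper's Example~\ref{exmp-1} shows the reduction is actually false. Take $\{\beta^{-1}x,\,1-\beta^{-1}x\}$ with $\beta=(\sqrt5+1)/2$ and $S=\mathrm{id}$. Conditions (1)--(2) hold with $\boldsymbol N=\{\pm1\}$ and $A=\pm\beta^{-1}$, and your digits $Ba'_\ell$ lie in $\{0,\pm1\}$. Consider any convolution $\bigast_n\bigl(\sum q\,\delta_{A^nBa'_\ell}\bigr)$ with $n$-independent weights charging at least two distinct digits. It is the self-similar measure of an affinely irreducible generalized Pisot IFS of type~I. This remains true if you first regroup into a power of $A$, since all word translations lie in $\Q(\beta)$; see the remark after Definition~\ref{defn-1.2}. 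Hence such a measure is non-Rajchman by Theorem~\ref{thm-1.5}. Yet $\mu_{(\sqrt5-1)/2}$ is absolutely continuous. Consequently the product $\prod_k|H((A^\top)^k\xi_n)|$ on which Stage~3 rests is not available, and neither is the ``finite part'' in your converse.

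\textbf{What is needed instead.} You have to extract the Diophantine information directly from
$\widehat\mu(\xi)=\sum_{w}p_w e^{2\pi i\langle\xi,a_w\rangle}\widehat\mu(A_w^\top\xi)$, where $\phi_w(x)=A_wx+a_w$.
For example, suppose $|\widehat\mu(\xi_n)|\to L:=\limsup_{|\xi|\to\infty}|\widehat\mu(\xi)|>0$. Positivity of $\pb$ then forces every term $e^{2\pi i\langle\xi_n,a_w\rangle}\widehat\mu(A_w^\top\xi_n)$ to approach $\widehat\mu(\xi_n)$. Hence $\|\langle\xi_n,a_w-a_{w'}\rangle\|_{\Z}\to0$ whenever $A_w=A_{w'}$. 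Since $A_w$ ranges over the elements $BA^n$ with $B\in\boldsymbol N$, this gives information on $Ba'_\ell$ for all $B\in\boldsymbol N$, as (c) requires.

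You would still need two further ingredients before Theorem~\ref{thm-Pisot} applies: a quantitative, square-summable version of this, and a single limiting frequency. In the homogeneous case these are Lemma~\ref{lem-3.9} and Proposition~\ref{prop-3.8}. There, the uniformity you worry about comes from the constant $C=C(\varepsilon,\pb)$ and from rescaling $(A^\top)^{\tau_N}\xi_N$ to unit size.

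The converse likewise needs an argument that handles the non-homogeneous relation directly, as Br\'emont does on the line. It also needs a proof that your exceptional set of $\pb$ is proper. In the homogeneous case the paper obtains this by continuity at $(1,0,\ldots,0)$ in the proof of Theorem~\ref{thm-1.4}.

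\textbf{Stage~1 also has substantive holes.}
\begin{itemize}
\item Limit directions of $\xi_n$ lying in $V$ do not show that $P_V\mu$ is non-Rajchman. One has $\widehat{P_V\mu}(\eta)=\widehat\mu(\eta)$ for $\eta\in V$, but this says nothing about $\widehat\mu(\xi_n)$ when $\xi_n$ stays at unbounded distance from $V$.
\item Averaging over a connected compact subgroup $K\subset\overline{\boldsymbol N}$ gives no decay in directions with trivial or small $K$-orbits. That is exactly why $V$ appears in (1), so your averaging argument does not establish finiteness of $\boldsymbol N$ on your $V$.
\end{itemize}
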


The above theorem provides a complete algebraic characterization of
self-similar IFS on $\mathbb{R}^d$ for which there exists a strictly
positive probability vector giving rise to a non-Rajchman self-similar
measure. Nevertheless, for such a self-similar IFS, it does not identify the
precise strictly positive probability vectors that give rise to
non-Rajchman self-similar measures. As pointed out by Br\'{e}mont
(see \cite[Remark~2.10]{Bremont2021}), it is important to
determine all strictly positive probability vectors giving rise to
non-Rajchman self-similar measures.

Next we turn to self-affine measures. In \cite{LiSahlsten2020}, Li and Sahlsten established the Rajchman property of non-homgeneous self-affine measures under some natural algebraic hypotheses.  For an affine IFS $\Phi=\{A_\ell x+a_\ell\}_{i=1}^m$ on $\R^d$,   assuming that the multiplicative group $\Gamma$ generated by  $\{A_\ell\}_{\ell=1}^m$ is proximal and totally irreducible,
and that the attractor of $\Phi$ is not a singleton, they  proved that all self-affine measures
associated with $\Phi$ and strictly positive probability vectors are Rajchman. Under additional assumptions on the group $\Gamma$, they also established power Fourier decay.

The result of Li and Sahlsten \cite{LiSahlsten2020} does not apply to homogeneous self-affine measures, since the group generated by the linear part of a homogeneous affine IFS cannot simultaneously satisfy proximality and total irreducibility. By extending the Erd\H{o}s--Kahane argument to higher-dimensional diagonal affine systems, Solomyak \cite{Solomyak2022} proved that for every contracting diagonal matrix $A\in GL_d(\mathbb{R})$ outside an exceptional set of Lebesgue measure zero, every self-affine measure associated with a homogeneous affine IFS $\{Ax+a_\ell\}_{\ell=1}^m$ and a strictly positive probability vector exhibits power Fourier decay, provided that the attractor of the IFS is not contained in any hyperplane of $\mathbb{R}^d$. Nevertheless, this exceptional set is not explicit. Solomyak further observed that, by combining the classical techniques of Pisot \cite{Pisot1938} and Salem \cite{Salem1943} with ideas from \cite[Section~5]{Rapaport2022}, one can show that if a homogeneous self-affine measure is non-Rajchman and the corresponding IFS $\{Ax+a_\ell\}_{\ell=1}^m$ is affinely irreducible, then the spectrum of $A^{-1}$ contains a Pisot tuple.

Very recently,  Fu, Wen and Zhu \cite{FuWenZhu2026} provided a necessary and sufficient condition for a homogeneous self-affine measure to be non-Rajchman.  They proved that the self-affine measure $\mu$ associated with a homogeneous IFS $\{Ax+a_\ell\}_{\ell=1}^m$ and a probability vector $\{p_\ell\}_{\ell=1}^m$ is non-Rajchman if and only if there exists a nonzero vector $\xi \in \mathbb{R}^d$ such that
\[ \sum_{n=0}^{\infty} \bigl(1 - \big|H\left((A^\top)^{-n}\xi\right)\big|\bigr) < \infty, \quad and \;\; \left|H\left((A^\top)^{k}\xi\right)\right| \neq 0 \quad \text{for all } k \in \Z, \] where $H$ is defined as in \eqref{e-Hxi}. Although this condition is difficult to verify directly in practice, they also provided several explicit examples of non-Rajchman homogeneous self-affine measures.

As related results, the Rajchman property and power Fourier decay for self-conformal measures have been verified under certain conditions; see \cite{AlgomChangWuWu2025, AlgomHertzWang2021, AlgomHertzWang2023, AlgomHertzWang2026, BakerBanaji2025, BakerSahlsten2023, JordanSahlsten2016, SahlstenStevens2024} and the references in \cite{Sahlsten2025}. In addition, Algom, Rodriguez Herz, and Wang proved that every Rajchman self-similar measure on $\R$  is pointwise normal in every integer base; see \cite[Theorem~1.4]{AlgomHertzWang2021}.

\subsection{Statement of our main results}
We first introduce some definitions needed to state our main results.
Following \cite{Cantor1962} and \cite[Section 9.2]{Bertin1992},  we make the following definition.

\begin{defn}\label{defn-1.1}
    A finite collection $\{\beta_1,\ldots,\beta_k\}$ of distinct algebraic integers (real or complex) is said to be a {\it Pisot $k$-tuple}, or simply a {\it Pisot tuple},  if the following two conditions are satisfied:
    \begin{itemize}
        \item[(i)] $|\beta_j|>1$ for $1\leq j \leq k$;
        \item[(ii)] there exists a monic integer polynomial $P\in \mathbb{Z}[x]$ so that $P(\beta_j)=0$ for all $1\leq j \leq k$, and every other root $z$ of $P$, if any, satisfies $|z|<1$.
    \end{itemize}
    Furthermore, we call $\{\beta_1,\ldots,\beta_k\}$ a {\it Pisot tuple of type~I} if, in addition, the above polynomial $P$ has at least one root in the open unit disk $\{z\in \C\colon |z|<1\}$; otherwise we call it  a {\it Pisot tuple of type~II}.
 \end{defn}

Moreover, inspired by the work of Br\'{e}mont \cite{Bremont2021} and Rapaport \cite{Rapaport2022}, we give the following definition.

\begin{defn}\label{defn-1.2}
    A homogeneous self-affine IFS
\[
\Phi = \{\phi_\ell(x)=Ax + a_\ell\}_{\ell=1}^m
\]
on $\mathbb{R}^d$ is said to be a {\it generalized Pisot IFS} if there exist
$k \geq 1$, $\beta_1,\ldots,\beta_k \in \mathbb{C}$, and
$\zeta_1,\ldots,\zeta_k \in \mathbb{C}^d \setminus \{0\}$ such that the following three
conditions are satisfied:
\begin{itemize}
    \item[(i)] $\{\beta_1,\ldots,\beta_k\}$ is a Pisot tuple;
    \item[(ii)] $A^\T\zeta_j = \beta_j^{-1} \zeta_j$ for $1 \leq j \leq k$, where $A^\T$ denotes the transpose of $A$;
    \item[(iii)] for every $1 \leq \ell \leq m$, there exists $P_\ell \in \mathbb{Z}[x]$
    such that
    \[
        \langle \zeta_j, a_\ell - a_1 \rangle = P_\ell(\beta_j)
        \quad \text{for } 1 \leq j \leq k.
    \]

\end{itemize}
Furthermore, we say that $\Phi$ is a {\it generalised Pisot IFS of type~I} if,
in addition, $\{\beta_1,\ldots,\beta_k\}$ is a Pisot tuple of type~I.  For completeness, we say that $\Phi$ is a {\it generalized Pisot IFS of type II} if it is a generalized Pisot IFS but not of type I.
\end{defn}
\begin{rem}
It can be checked directly that a homogeneous IFS $$\Phi=\{\beta^{-1}x+a_\ell\}_{\ell=1}^m$$ on $\mathbb{R}$ is a generalized Pisot IFS of type~I if and only if $|\beta|>1$ is a non-integer Pisot number, and $S(a_\ell)\in \mathbb{Q}(|\beta|)$ for some invertible affine map $S(x)=cx+d$ and all $1\le \ell \le m$. Moreover, $\Phi$ is a generalized Pisot IFS on $\R$ of type~II if and only if $|\beta|>1$ is an integer, and $S(a_\ell)\in \mathbb{Z}$ for some invertible affine map $S(x)=cx+d$ and all $1\le \ell \le m$.\end{rem}
For convenience, we introduce two more definitions.
\begin{defn}
A homogeneous affine IFS
\[
\Phi = \{\phi_\ell(x)=Ax + a_\ell\}_{\ell=1}^m
\]
on $\mathbb{R}^d$ is said to be an {\it integral affine IFS} if $A^{-1}$ is an integer matrix and $a_\ell \in \mathbb{Z}^d$ for all $1 \leq \ell \leq m$. Every self-affine measure associated with an integral affine IFS is called an {\it integral self-affine measure}.
\end{defn}

\begin{defn} Let $\Phi = \{\phi_\ell(x)=Ax + a_\ell\}_{\ell=1}^m$ be an homogeneous affine IFS
on $\mathbb{R}^d$.    An IFS $\Psi=\{\psi_\ell\}_{\ell=1}^m$ on $\R^q$ (with $1\leq q\leq m$) is said to be a {\it factor} of $\Phi$, if there exists
a nonzero $A^\top$-invariant linear subspace $V\subset \R^d$ with $\dim V=q$ such that the projected IFS
\begin{equation}
\label{e-phiV}
\Phi_V := \{\phi_{V,\ell}(x)=P_V A x + P_V a_\ell\}_{\ell=1}^m
\end{equation}
on $V$ is conjugate to $\Psi$  via an invertible affine map $T$ from $V$ to $\mathbb{R}^q$, i.e., $$\psi_\ell=T\circ \phi_{V,\ell}\circ T^{-1}\quad\text{for all }1\leq \ell\leq m,$$
where $P_V$ denotes the orthogonal projection from $\R^d$ onto $V$.
 If, in addition, $\Psi$ is an integral IFS, we say that $\Psi$ is an {\it integral factor} of $\Phi$.
\end{defn}

Now we are ready to state the first result of this paper.

\begin{thm}
\label{thm-main1}
Let  $\Phi=\{\phi_\ell(x)=Ax+a_\ell\}_{\ell=1}^m$ be  an affinely irreducible  homogeneous affine IFS on $\R^d$.
Then there exists a strictly positive probability vector $\pb =(p_\ell)_{\ell=1}^m$  such that the self-affine measure associated with $\Phi$ and $\pb$ is non-Rajchman if and only if $\Phi$ is a generailzed Pisot IFS.
\end{thm}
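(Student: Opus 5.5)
We prove the two directions separately, following the Pisot--Salem strategy adapted to matrices as in \cite[Section~5]{Rapaport2022} and \cite{Bremont2021}. We may conjugate $\Phi$ by the translation $x\mapsto x-a_1$, so that $a_1=0$. This changes $\widehat\mu$ only by a unimodular factor, and it turns condition~(iii) of Definition~\ref{defn-1.2} into $\langle\zeta_j,a_\ell\rangle=P_\ell(\beta_j)$.

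\emph{Sufficiency.} Assume $\Phi$ is a generalized Pisot IFS with data $\beta_j,\zeta_j,P_\ell$, and let $P$ be the polynomial in Definition~\ref{defn-1.1}. Take $\xi=\sum_j c_j\zeta_j$, with coefficients chosen so that $\xi$ is real: conjugate $\beta_j$'s appear in the tuple, so we pair them and take conjugate coefficients. Then
\[
\langle (A^\top)^{-n}\xi,a_\ell\rangle=\sum_j c_j\beta_j^{n}P_\ell(\beta_j).
\]
We choose $c_j=1/P'(\beta_j)$, or more generally $c_j=Q(\beta_j)/P'(\beta_j)$ with $Q\in\Z[x]$ chosen so that $\xi\neq0$. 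For any $R\in\Z[x]$, the sum of $R(z)z^n/P'(z)$ over all roots $z$ of $P$ is an integer. The roots outside the tuple have modulus less than $1$, so their contributions decay exponentially in $n$. Hence $\langle (A^\top)^{-n}\xi,a_\ell\rangle$ lies within $C\rho^n$ of an integer for some $\rho<1$, for every $\ell$.

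It follows that $\sum_n\bigl(1-|H((A^\top)^{-n}\xi)|\bigr)<\infty$, and this holds for every choice of positive $\pb$. For the nonvanishing condition, we use that the set of $\pb$ with $H((A^\top)^k\xi)=0$ for some $k\in\Z$ is a countable union of proper affine subsets of the simplex. For all other strictly positive $\pb$, $\widehat\mu((A^\top)^{-N}\xi)$ is bounded below as $N\to\infty$. This argument is exactly the criterion of Fu--Wen--Zhu \cite{FuWenZhu2026}. Equivalently, write $\widehat\mu((A^\top)^{-N}\xi)$ as a finite product of nonzero factors times a convergent tail.

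Finally, $|(A^\top)^{-N}\xi|\to\infty$, because $\xi$ has a nonzero component along an eigenvector with $|\beta_j|>1$. Hence $\mu$ is non-Rajchman.

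\emph{Necessity.} This is the main part. Suppose $|\widehat\mu(\xi_N)|\ge\varepsilon$ along a sequence $|\xi_N|\to\infty$. Using \eqref{mu-fourier} and the standard renormalization, we pass to a limit point $\eta\neq0$, normalized so that $|\eta|$ lies in a fundamental shell for $A^\top$. We want
\[
\sum_{n\ge0}\bigl(1-|H((A^\top)^{-n}\eta)|\bigr)<\infty .
\]
Since all $p_\ell>0$, the quantity $1-|H(\cdot)|$ controls $\max_\ell\|\langle\cdot,a_\ell\rangle\|^2$, where $\|\cdot\|$ is the distance to $\Z$. Hence
\[
\sum_n\|\langle (A^\top)^{-n}\eta,a_\ell\rangle\|^2<\infty\quad\text{for every }\ell .
\]

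Write $x_n^{(\ell)}=\langle (A^\top)^{-n}\eta,a_\ell\rangle$. Let $\chi$ be the characteristic polynomial of $A^{-1}$, which is a real polynomial. Then $x_n^{(\ell)}$ satisfies the linear recurrence given by $\chi$, and its distance to $\Z$ is square-summable. We then apply Pisot's theorem in its matrix form, via a Hankel-determinant rationality argument, as used by Rapaport and Solomyak. This gives integers $y_n^{(\ell)}$ with $x_n^{(\ell)}-y_n^{(\ell)}\to0$ such that $y_n^{(\ell)}$ eventually satisfies a linear recurrence with integer coefficients. That recurrence has a monic integer minimal polynomial $P$ whose roots of modulus $\ge1$ are exactly the relevant eigenvalues of $A^{-1}$ appearing in $x_n^{(\ell)}$.

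The expected main obstacle is excluding roots of modulus exactly $1$. The tuple must satisfy $|\beta_j|>1$ strictly, and the remaining roots must satisfy $|z|<1$, so unimodular roots cannot be allowed. We would first try Salem's argument: the error $\|x_n\|\to0$ forces the coefficient of each unimodular root to vanish. We also need a common polynomial $P$ for all $\ell$ simultaneously; for this we take the least common multiple of the minimal polynomials of the $y^{(\ell)}$, each of which is Pisot-type.

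Next, decompose $\eta$ along the generalized eigenvectors of $A^\top$ for the eigenvalues $\beta_j^{-1}$; we hope that Jordan blocks are ruled out by the boundedness of the errors. The components give the vectors $\zeta_j$. The identity $y_n^{(\ell)}=\sum_j \beta_j^n\,\gamma_{j,\ell}$ for integer sequences then shows, by a Lagrange interpolation and Vandermonde argument, that $\gamma_{j,\ell}P'(\beta_j)=P_\ell(\beta_j)$ for some $P_\ell\in\Z[x]$. Rescaling $\zeta_j$ by $P'(\beta_j)$ yields condition~(iii).

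It remains to check that the tuple is nonempty, i.e.\ $k\ge1$ with each $\zeta_j\ne0$. If every $\zeta_j$ were zero, then $\langle\eta',a_\ell\rangle$ would be integers for all $\ell$ along the whole orbit. Combined with $|H|$ staying bounded away from $0$ and with affine irreducibility, this leads to a contradiction: affine irreducibility forces the $a_\ell$ to span $\R^d$ affinely modulo the invariant complement.
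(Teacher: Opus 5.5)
Your necessity argument has a genuine gap at the step where you decompose $\eta$ into generalized eigencomponents and ``hope that Jordan blocks are ruled out by the boundedness of the errors.'' They are not ruled out. Pisot's theorem allows polynomial coefficients $Q_j(n)\beta_j^n$, and such terms can be exactly integral. For example, take the integral IFS with $A^{-1}=\begin{pmatrix}2&0\\1&2\end{pmatrix}$ and $a_1=0$, $a_2=e_1$, $a_3=e_2$.
\begin{itemize}
\item It is affinely irreducible, since the fixed points $(I-A)^{-1}a_\ell$ affinely span $\R^2$.
\item Its attractor is Lebesgue-null because $3<|\det A^{-1}|=4$. So every $\mu$ is singular, hence non-Rajchman by Proposition~\ref{pro-self-affine}.
\item With $B=(A^\top)^{-1}=\begin{pmatrix}2&1\\0&2\end{pmatrix}$ one has $\widehat\mu(B^N(0,1)^\top)=\widehat\mu((0,1)^\top)\neq 0$ for every $\pb>0$.
\item The renormalization, normalized as in Proposition~\ref{prop-3.8}, yields $\eta=B^{-1}(0,1)^\top=(-1/4,1/2)^\top$.
\end{itemize}
Then $B^n\eta\in\Z^2$ for all $n\ge 1$, so every error vanishes. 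Yet $\eta$ is not an eigenvector: $\langle B^n\eta,a_2\rangle=(n-1)2^{n-2}$. So the components of $\eta$, which you take as the $\zeta_j$, can violate condition~(ii) of Definition~\ref{defn-1.2}. Your Lagrange/Vandermonde step also fails here, since it presupposes $y^{(\ell)}_n=\sum_j\gamma_{j,\ell}\beta_j^n$ with constant coefficients at the large roots.

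The missing idea is the one the paper develops in the proof of Theorem~\ref{thm-1.4'}:
\begin{itemize}
\item Keep the polynomial coefficients $Q_{j,\ell}(n)$ produced by the Jordan structure (Lemma~\ref{lem-Bnxi}).
\item Use the polynomial form of Pisot's theorem (Theorem~\ref{thm-Pisot}), with Galois compatibility of all coefficients, transferred to the binomial basis by the rational matrix of Lemma~\ref{lem-3.7}.
\item Take the top of each Jordan chain, $\eta_j=\beta_j^{1-m_j}(B-\beta_jI)^{m_j-1}\xi_j$. This is an honest eigenvector, and its pairing with $a_\ell-a_1$ is the leading coefficient $b^{(j,\ell)}_{m_j-1}$.
\end{itemize}
For these leading coefficients to be Galois conjugates of each other, $m_j$ must be constant on each conjugacy class. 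The paper obtains this from affine irreducibility via Lemma~\ref{lem-3.10}(ii).

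In your framework, an equivalent repair is to replace $\eta$ by $G(B)\eta$. Here $G\in\Z[x]$ is the product of the irreducible factors of your lcm polynomial, each raised to one less than its multiplicity. The errors stay square-summable, since these are integer combinations of shifts, and the nilpotent parts are killed. The same multiplicity fact ensures no component is killed outright. Also, the integer recurrence holds only for $n\ge n_0$, so you should rescale by $\beta_j^{n_0}P'(\beta_j)$ rather than $P'(\beta_j)$, as $\beta_j$ need not be a unit.

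By contrast, the unimodular roots you flag as the main obstacle are harmless. Every eigenvalue of $A^{-1}$ has modulus $>1$, and unimodular roots of the recurrence for $y^{(\ell)}$ are excluded by the standard argument inside Pisot's theorem.

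Your sufficiency direction is correct and differs from the paper's. The paper perturbs from the vertex $(1,0,\ldots,0)$ using Lemma~\ref{lem-2.3} and continuity. You instead avoid the zero sets $\{\pb: H((A^\top)^k\xi)=0\}$, each a proper affine slice of the simplex since it misses $(1,0,\ldots,0)$; this gives non-Rajchman for Lebesgue-a.e.\ $\pb$. Two small repairs are needed:
\begin{itemize}
\item Making $\xi$ real requires $\zeta_i=\overline{\zeta_j}$ whenever $\beta_i=\overline{\beta_j}$. This is not part of Definition~\ref{defn-1.2}; the paper proves it in Lemma~\ref{lem-conjugate}, or you can take a nonzero real or imaginary part of $\xi$ instead.
\item $P$ must be squarefree for $1/P'(\beta_j)$ to make sense. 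Alternatively, weights $c_j=1$ already work by Lemma~\ref{lem-2.1}.
\end{itemize}
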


 This result is an analogue of Theorem~\ref{thm-Rapaport} in the context of homogeneous affine IFSs. However, for a given generalized Pisot IFS, it does not specify which probability vectors $\pb$ lead to a non-Rajchman self-affine measure.  This shortcoming is remedied in our second result:

\begin{thm}
\label{thm-main}
Let $\mu$ be the self-affine measure associated with an affinely irreducible homogeneous affine IFS $\Phi=\{\phi_\ell(x)=Ax+a_\ell\}_{\ell=1}^m$ on $\mathbb{R}^d$ and a strictly positive probability vector $\pb=(p_\ell)_{\ell=1}^m$.  Then $\mu$ is non-Rajchman if and only if at least one of the following two conditions holds:
\begin{itemize}
\item[(i)] $\Phi$ is a generalized Pisot IFS of type~I.
\item[(ii)] $\Phi$ has an integral factor $\Psi$ such that the self-affine measure associated with $\Psi$ and $\pb$ is singular.
\end{itemize}
\end{thm}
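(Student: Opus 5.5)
The proof splits into the two implications. Throughout we use the product formula \eqref{mu-fourier}. For any $\xi$ and $N\ge 0$ it gives $\widehat\mu((A^\top)^{-N}\xi)=\prod_{n=0}^{N-1}H((A^\top)^{-n-1}\xi)\cdot\widehat\mu(\xi)$, and we also use the criterion of Fu, Wen and Zhu quoted above.

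\emph{Sufficiency.} Suppose first that (ii) holds, with $\Psi$ the integral factor on $V$. Then $\widehat{\mu}$ restricted to $V$ (that is, $\widehat\mu(\xi)$ for $\xi\in V$) is, after the linear change of variables induced by $T$, the Fourier transform of the integral self-affine measure $\nu$ of $\Psi$ and $\pb$. So it suffices to show that a singular integral self-affine measure $\nu$ is non-Rajchman. Write $B=(A')^{-1}$ for the integer expanding matrix of $\Psi$. The measure $\nu$ is invariant under the toral endomorphism $x\mapsto B x \bmod \Z^q$ in the sense of Fourier coefficients: $\widehat\nu(B^\top k)=H'(k)\widehat\nu(k)$ with $H'(k)=1$ for $k\in\Z^q$, since the digits are integral. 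Hence $\widehat\nu((B^\top)^n k)=\widehat\nu(k)$ for all $k\in\Z^q$. If $\nu$ were Rajchman, then for every nonzero $k$ the values $\widehat\nu((B^\top)^n k)$ tend to $0$, and these values equal $\widehat\nu(k)$. So $\widehat\nu(k)=0$ for all $k\neq 0$. The periodization of $\nu$ to the torus would then be Lebesgue measure. The attractor of $\Psi$ tiles-covers $\R^q$ by $\Z^q$-translates with bounded multiplicity, so the periodization being Lebesgue forces $\nu$ to be absolutely continuous, which is a contradiction.

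Next suppose (i) holds. We adapt Erd\H{o}s' argument. Take $\xi=\mathrm{Re}\sum_j c_j\zeta_j$, where the coefficients are chosen conjugation-compatibly so that $\xi$ is real. Then $\langle (A^\top)^{-n}\xi, a_\ell-a_1\rangle=\mathrm{Tr}$-type sums $\sum_j \beta_j^n P_\ell(\beta_j)$, taken over the full root set of $P$. These are integers up to the contribution of the roots inside the unit disk, and that contribution decays geometrically. So $\sum_n(1-|H((A^\top)^{-n}\xi)|)<\infty$. The type~I hypothesis provides a root of modulus less than $1$, which gives freedom to choose $\xi$ with $H((A^\top)^k\xi)\neq0$ for all $k$. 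One perturbs $\xi$ within a one-parameter family, since the zero set of each factor is a proper analytic set, and uses a Baire or countability argument. By Fu--Wen--Zhu, $\mu$ is then non-Rajchman.

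\emph{Necessity.} Suppose $\mu$ is non-Rajchman. By Solomyak's observation and the Pisot--Salem method, as in Theorem~\ref{thm-main1}, $\Phi$ is a generalized Pisot IFS with some tuple $\{\beta_j\}$ and a polynomial $P$. If $P$ can be chosen with a root in the open disk, we are in (i). Otherwise every such $P$ has all roots of modulus greater than $1$. The key step is then to show that the subspace $V$ spanned by the real and imaginary parts of the relevant $\zeta_j$ (with all Galois conjugates included) is $A^\top$-invariant, and that $P_V$ conjugates $\Phi_V$ to an integral IFS $\Psi$. The matrix is the companion of $P$ acting on $\Z[x]/(P)$, and the translations are the integer vectors $P_\ell$. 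Here one must prove the non-Rajchman witnesses $\xi$ from Fu--Wen--Zhu can be taken in $V$: the Pisot--Salem argument shows that the non-integer parts of $\langle (A^\top)^{-n}\xi,a_\ell-a_1\rangle$ are forced into a finite-dimensional recurrence whose characteristic roots all lie outside the disk. Then the same identity $\widehat\nu((B^\top)^n k)=\widehat\nu(k)$, applied to the lattice points reached from the witness, shows that $\widehat\nu$ fails to vanish at some nonzero integer point. If $\nu$ were absolutely continuous it would be Rajchman, so this forces $\nu$ to be singular.

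I expect this last step to be the main obstacle: controlling the witness frequency under the type~II hypothesis and transferring non-Rajchmanness of $\mu$ to singularity of $\nu$.
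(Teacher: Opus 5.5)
Your argument that (ii) forces non-Rajchman is correct and matches the paper. For an integral measure, $\widehat\nu((B^\top)^n k)=\widehat\nu(k)$ on $\mathbb{Z}^q$, so Rajchman would force Haar periodization and hence absolute continuity (Proposition~\ref{pro-self-affine}). Moreover, $\widehat\mu$ restricted to $V$ is $\widehat\nu$ after a linear change of variables. The other two steps, however, have genuine gaps.

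\textbf{Sufficiency of (i).} This is the heart of the paper (Theorem~\ref{thm-1.5}), and the Erd\H{o}s-type argument you sketch cannot prove it. With $\pb$ fixed, you need one $\xi$ that satisfies the summability condition and also avoids every zero of $H$ along its orbit. The admissible $\xi$, however, form a countable set. By Pisot's theorem, the values $\langle (A^\top)^{-n}\xi,a_\ell-a_1\rangle$ are determined by finitely many coefficients in the fields $\mathbb{Q}(\beta_j)$, and by affine irreducibility these values determine $\xi$. So there is no one-parameter family of witnesses to perturb in. The fact that each factor has a proper analytic zero set is useful only when $\pb$ is the variable; that is exactly how the paper proves the weaker Theorem~\ref{thm-1.4}, where some $\pb$ near $(1,0,\ldots,0)$ is chosen.

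You also never explain how the root inside the disk produces the claimed freedom. Without that, your argument would apply verbatim to type~II, where the conclusion is false: $\{x/2,\,x/2+1/2\}$ with weights $(1/2,1/2)$ is of type~II and gives Lebesgue measure.

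The paper instead argues by contradiction:
\begin{enumerate}
\item If $\mu$ were Rajchman, Lemma~\ref{lem-2.2} applied to $Q_n(x)=1+x^n$ gives, for each $n$, an exponent $\tau_n$ at which some factor vanishes.
\item Lemma~\ref{lem-4.4} shows that $\tau_n$ is constant on a set $\Lambda$ of positive upper density.
\item Proposition~\ref{prop-4.1} shows that the vectors $v_n$, $n\in\Lambda+\tau$, span the whole space, which forces the merged weights to vanish. Its proof uses equidistribution on a torus, real-analyticity, Lemma~\ref{lem-i} and a Vandermonde argument.
\end{enumerate}
Type~I enters exactly in step 3. For $n\ge 0$, $v_n$ is expressed through the conjugates of modulus less than $1$, whereas in type~II one would have $v_n=(1,\ldots,1)$. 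The argument also needs the reduction to an iterate via Proposition~\ref{pro-2.4}.

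\textbf{Necessity in the type~II case.} Taking $V$ spanned by eigenvectors $\zeta_j$, with the companion matrix of $P$, fails: the witness need not lie in any such $V$. Consider $A^{-1}=\begin{pmatrix}2&0\\1&2\end{pmatrix}$, $a_1=0$, $a_2=e_1$, $\pb=(1/2,1/2)$.
\begin{itemize}
\item The only eigenvector direction of $A^\top$ is $e_1$, so your $V=\mathbb{R}e_1$.
\item Then $\Phi_V=\{x/2,\,x/2+1\}$, whose measure is normalized Lebesgue measure on $[0,2]$.
\item Yet $\Phi$ is integral and affinely irreducible, with two maps and $|\det A|=1/4$. Hence $\mathcal{L}^2(K)=0$, so $\mu$ is singular and therefore non-Rajchman.
\end{itemize}
Even for diagonalizable $A$, the $\zeta_j$ of an arbitrary generalized Pisot structure are the wrong choice. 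For instance, with $A=\tfrac12 I_2$, digits $0,e_1,e_2$, weights $(1/4,1/2,1/4)$ and $\zeta=e_1$, the resulting factor is absolutely continuous while $\mu$ is singular.

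The paper instead takes the witness $\xi$ from Proposition~\ref{prop-3.8} and sets $V=\operatorname{span}\{\xi,B\xi,\ldots,B^{q-1}\xi\}$, where $q=\sum_j m_j$ counts Jordan lengths. This requires:
\begin{itemize}
\item Pisot's theorem with polynomial coefficients (Theorem~\ref{thm-Pisot});
\item the equality $m_j=m_{j'}$ for conjugate $\beta_j,\beta_{j'}$;
\item the rationality of $\langle B^n\xi,a_\ell-a_1\rangle$;
\item the companion matrix of $\prod_j(x-\beta_j)^{m_j}$ rather than of $P$.
\end{itemize}
With this choice the witness lies in $V$ by construction. The factor is then non-Rajchman directly, and no lattice-point argument is needed.
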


Theorem~\ref{thm-main} reveals a dichotomy between affinely irreducible generalized Pisot IFSs of types~I and II. For systems of type~I, non-Rajchman behavior occurs automatically for every strictly positive probability vector $\pb$. By contrast, for systems of type~II, non-Rajchman behavior can arise only through a singular integral factor. Consequently, the theorem provides an essentially complete characterization of non-Rajchman homogeneous self-affine measures.
We remark that every type~II system admits at least one integral factor (see Lemma~\ref{lem-5.6'}), whereas a type~I system may also admit an integral factor (see Example~\ref{exmp-8.5}).

Applying Theorem~\ref{thm-main} in the case $d=1$, we obtain the following characterization. Let $\mu$ be the self-similar measure associated with an affinely irreducible IFS
$
\{\beta^{-1}x+a_\ell\}_{\ell=1}^m
$
on $\R$ and a strictly positive probability vector $\pb$. Then $\mu$ is non-Rajchman if and only if one of the following conditions holds:
\begin{enumerate}
\item[(a)] $|\beta|$ is a non-integer Pisot number, and there exists an invertible affine map $S(x)=cx+d$ such that
$
S(a_\ell)\in \Q(|\beta|)
$ for all $1\le \ell\le m.$

\item[(b)] $|\beta|>1$ is an integer, and there exists an invertible affine map $S(x)=cx+d$ such that
$
S(a_\ell)\in \Z
$
for all $1\le \ell\le m,$
and $\mu$ is singular.
\end{enumerate}

As a related earlier result, Lau, Ngai, and Rao~\cite[Theorem~5.1]{LNR2001} proved that if $\beta$ is a non-integer Pisot number and $a_\ell\in\Q$ for all $1\le \ell\le m$, then every self-similar measure associated with the IFS
$
\{\beta^{-1}x+a_\ell\}_{\ell=1}^m
$
on $\R$ is non-Rajchman, and hence singular.

It is known that an integral self-affine measure is absolutely continuous if and only if it is Rajchman; see Proposition~\ref{pro-self-affine} for details. Moreover, there exist effective algorithms for determining the absolute continuity of integral self-affine measures on $\R^d$; see \cite[Theorem~1.3]{LNR2001} and \cite[Theorem~1]{Protasov2000} for the case $d=1$, and \cite[Theorems~7.5, 1.7, and~1.8]{FengLoShen2020} for the general case $d\ge1$.

We remark that there is no straightforward analogue of Theorem~\ref{thm-main} for non-homogeneous self-affine measures. Indeed, Example~\ref{exmp-1} provides a rather unexpected example showing that the self-similar measure associated with the IFS
\[
\{\beta^{-1}x,\,-\beta^{-1}x+1\},
\]
where $\beta=(\sqrt5+1)/2$ is a non-integer Pisot number, together with the probability vector $(\beta^{-1},\beta^{-2})$, is absolutely continuous and hence Rajchman.

Interestingly, the density of this self-similar measure is piecewise constant (though not globally constant) and strictly positive on its support. This answers affirmatively a question posed by Chun-Kit Lai (private communication) concerning the existence of such a self-similar measure. Moreover, it yields the first example of a self-similar measure that admits a Fourier frame but is not a spectral measure; see Definition~\ref{defn-Fourier frame} and the discussion in Example~\ref{exmp-1}. We further remark that the self-similar measure associated with the same IFS and the weight vector $(\beta^{-2},\beta^{-1})$ is also absolutely continuous. However, in this case its density is not strictly positive on its support.

To state our next result, we introduce a generalization of the classical $L^q$-dimension.

\begin{defn}
\label{defn-1.9}
Let $\nu$ be a Borel probability measure on $\R^d$ with compact support. Let $A\in GL_d(\R)$ have spectral radius strictly less than $1$.
For $q>0$, the {\it $L^{(A,q)}$-dimension} of $\nu$ is defined by
$$
\tau_A(\nu,q)=\liminf_{n\to \infty}\frac{d}{n\log |\det(A)|}\log\left(\sum_{Q\in {\mathcal D}}\nu(A^nQ)^q\right),
$$
where ${\mathcal D}$ denotes the standard partition $\{[0,1)^d+z:\; z\in \Z^d\}$ of $\R^d$.
 \end{defn}
It is readily verified that $\tau_A(\nu,q)$ coincides with the classical $L^q$-dimension of $\nu$ whenever all eigenvalues of $A$ have the same modulus.

 Our third result states that for an affinely irreducible integral self-affine measure,  absolute continuity is equivalent to having power Fourier decay. It is also equivalent to the $L^{(A,2)}$-dimension being equal to $d$, where $A$ is the linear part of the homogeneous affine IFS associated with the measure.

\begin{thm}
\label{thm-1.6}
 Let $\mu$ be the self-affine measure  associated with an affinely irreducible integral affine IFS $\Phi=\{Ax+a_\ell\}_{\ell=1}^m$ on $\R^d$ and a strictly positive probability vector $\pb=(p_\ell)_{\ell=1}^m$.   Then the following conditions are equivalent.
\begin{itemize}
    \item[(i)] $\mu$ is absolutely continuous with respect to $\mathcal L^d$;
    \item[(ii)] $\mu$ has power Fourier decay;
    \item[(iii)] $\tau_A(\mu,2)=d$.
\end{itemize}
\end{thm}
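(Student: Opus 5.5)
I would prove the cycle (ii)$\Rightarrow$(i)$\Rightarrow$(iii)$\Rightarrow$(ii), working throughout with $B:=A^{-1}\in M_d(\Z)$ and $D:=\{a_\ell\}\subset\Z^d$. Write $\mu_n$ for the discrete measure $\sum_{\ell_1,\dots,\ell_n}p_{\ell_1}\cdots p_{\ell_n}\,\delta_{B^{n}(A a_{\ell_1}+\cdots+A^n a_{\ell_n})}$. Since $D$ and $B$ are integral, $\mu_n$ is supported on $\Z^d$, and $\mu$ is the image of $\mu_n\ast\mu$ under $A^n$.

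\textbf{(ii)$\Rightarrow$(i).} This is the easy direction. If $|\widehat\mu(\xi)|\le C|\xi|^{-\alpha}$, then the convolution powers $\mu^{*k}$ with $k\alpha>d$ have $\widehat{\mu^{*k}}\in L^1$. I would then show that $\mu$ itself is absolutely continuous. One route is to use Theorem~\ref{thm-main} together with Proposition~\ref{pro-self-affine}: an integral self-affine measure is absolutely continuous if and only if it is Rajchman, and power decay certainly implies Rajchman. So (ii)$\Rightarrow$(i) follows at once from Proposition~\ref{pro-self-affine}.

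\textbf{(i)$\Rightarrow$(iii).} Absolute continuity of integral self-affine measures is known (via \cite{LNR2001, FengLoShen2020}) to be equivalent to the density being in $L^2$, and in fact bounded. The argument uses the fact that the density is a refinable function $f=\sum_\ell p_\ell|\det B|\,f(B\cdot-a_\ell)$ with an integer lattice structure. I would use this to obtain $\mu(A^nQ)\le C|\det A|^{n}$. Then
\[\sum_Q\mu(A^nQ)^2\le C|\det A|^n,\]
which gives $\tau_A(\mu,2)\ge d$. The reverse inequality $\tau_A(\mu,2)\le d$ holds for any compactly supported measure by Cauchy--Schwarz, since about $|\det A|^{-n}$ cells $A^nQ$ meet the support.

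\textbf{(iii)$\Rightarrow$(ii).} This is the main step. By self-affinity, $\mu(A^nQ)$ is comparable, up to bounded overlap of neighbouring cells, with the $\mu_n$-mass of lattice points near $Q$. Hence
\[S_n:=\sum_Q\mu(A^nQ)^2\asymp \|\mu_n\|_2^2=\sum_{z\in\Z^d}\mu_n(\{z\})^2.\]
The key point is that $n\mapsto\|\mu_n\|_2^2$ is governed by a transfer operator on a finite-dimensional space. Indeed, $\|\mu_n\|_2^2=\langle v, T^n u\rangle$, where $T$ acts on functions on the finite set $\Omega=(D-D+\text{bounded})\cap\Z^d$ of differences by $Tg(y)=\sum_{\ell,\ell'}p_\ell p_{\ell'}g(B y+a_\ell-a_{\ell'})$ (suitably restricted). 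Consequently $\|\mu_n\|_2^2$ behaves like $n^{k}\rho^n$, where $\rho$ is the spectral radius of $T$ on the relevant invariant subspace. The hypothesis $\tau_A(\mu,2)=d$ forces $\rho\le|\det A|$. I would then aim to upgrade this to a strict gap on a complementary subspace that controls oscillation, using affine irreducibility.

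Next, the Fourier side. By \eqref{mu-fourier}, $\widehat\mu(\xi)=\prod_{j<n}H((A^\T)^j\xi)\,\widehat\mu((A^\T)^n\xi)$. Partition frequency space at scale $(B^\T)^n$ and use the Parseval identity on the torus,
\[\int_{[0,1]^d}\Bigl|\prod_{j<n}H((A^\T)^j\eta)\Bigr|^2d\eta=\|\mu_n\|_2^2\]
(after rescaling). This gives $L^2$-averaged decay of $\widehat\mu$ over dyadic shells $\{|\xi|\sim |B^\T|^n\}$ at rate $|\det A|^{n\epsilon}$, provided $\|\mu_n\|_2^2\le C|\det A|^{n(1-\epsilon')}$ relative to the trivial bound. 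Upgrading this to a pointwise power decay needs two ingredients. The first is that $\widehat\mu$ is Lipschitz. The second is a standard argument using that $|H|\le1$ and that $|H|<1-c$ on a positive proportion of the orbit $(A^\T)^j\xi$ away from the finite set of "bad" points. Here affine irreducibility, together with the non-Pisot-type-I conclusion from Theorem~\ref{thm-main} (an integral IFS is type~II), rules out $\xi$ whose orbit stays near the zero set of $1-|H|$, namely the dual lattice $\Z^d$ directions.

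The obstacle is precisely this pointwise upgrade. Theorem~\ref{thm-main} already gives Rajchman (since $\mu$ is absolutely continuous, by (iii)$\Rightarrow$(i) through $L^2$), but I need a quantitative version. I would first try an Erd\H{o}s--Kahane style counting argument: the set of $\xi$ in a shell for which fewer than $\delta n$ indices $j$ have $\|(A^\T)^j\xi\|_{\R^d/\Z^d}$ bounded away from the bad set is covered by exponentially fewer cubes, and the $L^2$ bound controls the rest.
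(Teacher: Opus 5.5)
The genuine gap is in (iii)$\Rightarrow$(ii). Your (ii)$\Rightarrow$(i) via Proposition~\ref{pro-self-affine} is fine, and so is (i)$\Rightarrow$(iii). For the density bound there, the clean justification is Proposition~\ref{pro-self-affine}(v): $\overline\mu$ is Haar measure, so $\sum_{z\in\Z^d}f(x+z)=1$ a.e.\ and hence $0\le f\le 1$. In your cycle, (iii)$\Rightarrow$(ii) has to carry both hard implications, (iii)$\Rightarrow$(i) and (i)$\Rightarrow$(ii), and you supply neither.

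First, you invoke absolute continuity ``by (iii)$\Rightarrow$(i) through $L^2$'' without proof. But $\tau_A(\mu,2)=d$ only gives $\theta_n:=\sum_Q\mu(A^nQ)^2=|\det A|^{n+o(n)}$, whereas an $L^2$ density needs $\theta_n\le C|\det A|^n$. Your own heuristic $n^k\rho^n$ leaves the case $\rho=|\det A|$, $k\ge 1$ open, and a ``strict gap on a complementary subspace'' is not what is needed. Your transfer-operator idea can in fact close this, more elementarily than the paper's route (Deng--He--Lau plus uniqueness and the Gibbs property of the equilibrium state of $(\mathbf M,1)$):
\begin{itemize}
\item With $\mathbf M$ as in Proposition~\ref{prop-8.1}, one has $\|\mu_n\|_2^2=(\mathbf M^n)_{1,1}\asymp\theta_n$.
\item A diagonal entry only sees the irreducible class $C$ of $v_1=0$, and a positive Perron vector of $\mathbf M_C$ gives $(\mathbf M^n)_{1,1}\le\rho(\mathbf M_C)^n$ with no polynomial factor.
\item Hence (iii) forces $\theta_n\le C|\det A|^n$, so $|\det A|^{-n}\sum_Q\mu(A^nQ)\chi_{A^nQ}$ is bounded in $L^2$ and converges weakly to $\mu$.
\end{itemize}
None of this is in your proposal, though.

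Second, and more seriously, you leave the pointwise decay open, and the mechanisms you suggest would fail.
\begin{itemize}
\item $L^2$-averaged decay on shells holds whenever $\tau_A(\mu,2)>0$. An example is the Cantor measure of $\{x/3,\,x/3+2\}$, where $\widehat\mu(3^N)=\widehat\mu(1)\neq 0$. So averaged decay cannot be upgraded without an input specific to absolute continuity.
\item The claim that $|H|<1-c$ on a positive proportion of the orbit is false for every integral IFS. For $\xi=(A^\T)^{-N}\mb$ with $\mb\in\Z^d\setminus\{0\}$, one has $|H((A^\T)^j\xi)|=1$ for all $0\le j<N$. Affine irreducibility and type II do not exclude this.
\item Erd\H{o}s--Kahane counting controls exceptional sets, not every $\xi$.
\end{itemize}

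The missing idea is that a single very small factor pays for such a stretch. Normalize as in Lemma~\ref{lem-6.1'}, so that $a_1=0$ and $\operatorname{span}_\Z(X)=\Z^d$. If $|H((A^\T)^j\xi)|>\delta$ for $0\le j\le q-1$, then $(A^\T)^{q-1}\xi$ lies within $\sqrt d\,L\varepsilon$ of some $\mb\in\Z^d\setminus\{0\}$. Absolute continuity gives, via Proposition~\ref{pro-self-affine}(iv), an $r$ with $H((A^\T)^r\mb)=0$. The Lipschitz bound on $H$ then gives $|H((A^\T)^{r+q-1}\xi)|\le\|A\|^r$. In either case $|\widehat\mu(\xi)|\le\gamma^{j}|\widehat\mu((A^\T)^{j}\xi)|$ for some $j\ge 1$ and a uniform $\gamma<1$, and iterating yields power decay. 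This is the paper's proof of (i)$\Rightarrow$(ii) (Theorem~\ref{thm-6.3}); it uses (i) directly and never passes through (iii).
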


The one-dimensional case $d=1$ of Theorem~\ref{thm-1.6} is known; see, for example, \cite[Theorem~1.1]{DaiFengWang2007} for the equivalence between (i) and (ii), and see \cite[Theorem~1.3]{LNR2001} for the equivalence between (i) and (iii).

It is worth emphasizing that Theorem~\ref{thm-1.6} also provides a method for constructing concrete examples of non-integral self-affine measures with power Fourier decay. Indeed, suppose that $\mu$ is an absolutely continuous self-affine measure associated with an integral IFS
\[
\{Ax+a_\ell\}_{\ell=1}^m
\]
on $\R^d$. Then, for any $A^\top$-invariant linear subspace $V\subset\R^d$, the pushforward measure $P_V\mu$ is a self-affine measure associated with the projected IFS $\Phi_V$ defined in \eqref{e-phiV}. Consequently, $P_V\mu$ has power Fourier decay.

By Theorem~\ref{thm-1.6}, every Rajchman integral self-affine measure is absolutely continuous. Consequently, its support has positive Lebesgue measure. However, the converse fails in general. Indeed, an integral affine IFS whose attractor has positive Lebesgue measure may admit no Rajchman self-affine measures; see Example~\ref{exmp-nonRajchman}.

We remark that the $L^{(A,2)}$ dimension of an integral self-affine measure is computable. In Proposition~\ref{prop-8.1}, we provide an algorithm for computing this dimension, adapted from the proof of \cite[Proposition~4.1]{Akiyama2020}. Moreover, for a given integral affine IFS, Proposition~\ref{prop-8.1} also yields an algorithm  for determining those probability vectors for which the associated self-affine measure is absolutely continuous; see Remark~\ref{rem-8.2}.

\subsection{About the proofs} The proof for the equivalence between (i) and (ii) in Theorem~\ref{thm-1.6}  extends an idea from the proof of  \cite[Theorem~1.6]{DaiFengWang2007}, whereas the proof of the equivalence between (i) and (iii) relies on a result of Deng, He, and Lau \cite{DengHeLau2008} concerning vector representations of integral self-affine measures, as well as on the thermodynamic formalism for matrix products developed in \cite{Feng2004,FengLau2002}.

Theorems~\ref{thm-main1} and \ref{thm-main} are consequences of  the following three theorems.
\begin{thm}
\label{thm-1.4}
Let  $\Phi=\{\phi_\ell(x)=Ax+a_\ell\}_{\ell=1}^m$ be  an affinely irreducible  generalized Pisot IFS on $\R^d$.
Then there exists a probability vector $\pb =(p_\ell)_{\ell=1}^m>0$  such that the self-affine measure associated with $\Phi$ and $\pb$ is non-Rajchman.
\end{thm}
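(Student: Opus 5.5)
We split according to the type of the Pisot tuple $\{\beta_1,\ldots,\beta_k\}$ in Definition~\ref{defn-1.2}, after normalising. Replacing $a_\ell$ by $a_\ell-a_1$ (a translation conjugacy) does not change the modulus of $\widehat\mu$. So we may assume $a_1=0$ and $\langle \zeta_j,a_\ell\rangle=P_\ell(\beta_j)$ with $P_\ell\in\Z[x]$.

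The natural test frequencies are
\[
\xi_n:=\mathrm{Re}\sum_{j=1}^k c_j\beta_j^{n}\overline{\zeta_j},
\]
with suitable complex coefficients $c_j$; these can be chosen real-symmetric, since the tuple is closed under conjugation up to relabelling of the $\zeta_j$.

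\textbf{Type~I.} Take all $c_j=1$. By (ii), $(A^\top)^{r}\overline{\zeta_j}=\beta_j^{-r}\overline{\zeta_j}$ (up to the conjugation convention of the inner product). Hence
\[
\langle (A^\top)^r\xi_n,a_\ell\rangle=\mathrm{Re}\sum_j\beta_j^{n-r}P_\ell(\beta_j).
\]
Adding the contributions of the remaining roots $\gamma$ of $P$, which satisfy $|\gamma|<1$, gives a trace $\mathrm{Tr}(\beta^{n-r}P_\ell(\beta))$ of an algebraic integer, and this is an integer whenever $n-r\ge0$.

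For $0\le r\le n$, the distance of this quantity to $\Z$ is therefore $O(\rho^{\,n-r})$ for some $\rho<1$. For $r>n$, the terms are $O(\min_j|\beta_j|^{-(r-n)})$. Consequently
\[
|\widehat\mu(\xi_n)|\ge \prod_{s\ge0}\bigl|H\text{-factor at distance } s\bigr|,
\]
and this is bounded below by $\prod_s (1-C\rho^s)$, uniformly in $n$, provided no factor vanishes.

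Positive weights may not suffice to avoid zeros, and this is where the freedom in $\pb$ enters. I would first try weights that make $p_1$ dominant, say $p_1>1/2$. Then $|H(\eta)|\ge p_1-(1-p_1)>0$ everywhere, so every factor is nonzero and the infinite product converges to a positive limit. This gives non-Rajchman for such $\pb$. The type~I hypothesis (a root of modulus $<1$ exists) is not even needed here beyond the trace argument.

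\textbf{Type~II.} The same argument works: all conjugates of the $\beta_j$ are among the $\beta_j$, so the traces are exact integers for $n\ge r$. The tail $r>n$ is handled as above. It remains only to check $\xi_n\to\infty$. This holds since the $\overline{\zeta_j}$ are linearly independent eigenvectors for distinct eigenvalues, and the coefficients $\beta_j^n$ grow.

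\textbf{Main obstacle.} The delicate points are the following.
\begin{itemize}
\item Ensuring $\xi_n$ is real and nonzero. This requires pairing $\zeta_j$ with the conjugate index so that the sum equals its own conjugate. One must check that (iii) is compatible with conjugation; if not, use the real part, which still gives a sum of two traces.
\item Keeping the angles summable uniformly in $n$.
\end{itemize}
Once these are settled, choosing $p_1>1/2$ makes $|\widehat\mu(\xi_n)|\ge c>0$ along $|\xi_n|\to\infty$, which proves Theorem~\ref{thm-1.4}.
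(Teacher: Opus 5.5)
Your argument is essentially the paper's Erd\H{o}s-type proof. It uses the same test frequencies $\sum_j\beta_j^n\zeta_j$, the fact that $\langle (A^\top)^r\xi_n,a_\ell-a_1\rangle=\sum_j\beta_j^{n-r}P_\ell(\beta_j)$ lies within $C\delta^{|n-r|}$ of $\Z$ (Lemma~\ref{lem-2.1}), and weights concentrated on one map. As you observe, no split into types is needed. Your choice $p_1>1/2$ is a genuine simplification. Since $|H|\ge 2p_1-1$ everywhere, you get directly $|\widehat\mu(\xi_n)|\ge\prod_{t\ge0}\max\{2p_1-1,\,1-2\pi C\delta^{t}\}^2>0$ uniformly in $n$, because each distance $|n-r|=t$ occurs at most twice. (Note that $1-C\rho^s$ by itself can be negative for small $s$, so your product as written needs this $\max$.) This replaces Lemma~\ref{lem-2.3}, the continuity argument near $(1,0,\ldots,0)$, and the limit argument of Lemma~\ref{lem-2.2}. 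It also shows explicitly that every $\pb>0$ with $\max_\ell p_\ell>1/2$ works.

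The step you leave open, that $\xi_n$ is real with $\|\xi_n\|\to\infty$, is the one place where affine irreducibility is used, so it must be proved rather than deferred.

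\emph{Drop the bars.} Since $A$ is real, $A^\top\overline{\zeta_j}=\overline{\beta_j}^{\,-1}\overline{\zeta_j}$, not $\beta_j^{-1}\overline{\zeta_j}$. With the pairing $\langle\zeta_j,a_\ell\rangle=\sum_i\zeta_{j,i}a_{\ell,i}$, the correct frequency is $\xi_n=\sum_j\beta_j^n\zeta_j$.

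\emph{Realness.} Argue as in Lemma~\ref{lem-conjugate}. If $\beta_i=\overline{\beta_j}$, then $\langle\zeta_i-\overline{\zeta_j},A^n(a_\ell-a_1)\rangle=\beta_i^{-n}P_\ell(\beta_i)-\overline{\beta_j^{-n}P_\ell(\beta_j)}=0$ for all $n\ge0$ and all $\ell$. By affine irreducibility the vectors $A^n(a_\ell-a_1)$ span $\R^d$. Hence $\zeta_i=\overline{\zeta_j}$, so $\xi_n\in\R^d$, and Lemma~\ref{lem-infinity} gives $\|\xi_n\|\to\infty$.

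\emph{The real-part fallback does not bypass this.} The eigencomponents of $\mathrm{Re}\sum_j\beta_j^n\zeta_j$ are $\tfrac12(\zeta_i+\overline{\zeta_j})$. Linear independence of the $\zeta_j$ does not stop these from all vanishing; for example, take $\beta_j$ real and $\zeta_j\in i\R^d$ with all $P_\ell(\beta_j)=0$. Ruling that out again needs irreducibility, via Lemma~\ref{lem-3.10}(ii).
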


\begin{thm}
\label{thm-1.4'}
Let \(\Phi = \{\phi_\ell(x)=Ax+a_\ell\}_{\ell=1}^m\) be an affinely irreducible homogeneous affine IFS on \(\mathbb{R}^d\), and let $K$ be the attractor of $\Phi$.
Suppose there exists a probability vector \(\pb = (p_\ell)_{\ell=1}^m > 0\) such that the self-affine measure \(\mu\) associated with \(\Phi\) and \(\pb\) is non-Rajchman. Then \(\Phi\) is a generalized Pisot IFS. Moreover, at least one of the following two cases will occur:
\begin{itemize}
\item[(i)]  \(\Phi\) is a generalized Pisot IFS of type~I;
\item[(ii)] $\Phi$ has an integral factor $\Psi$ such that the self-affine measure associated with $\Psi$ and $\pb$ is non-Rajchman.

\end{itemize}
\end{thm}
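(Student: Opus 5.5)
Throughout, $\mu$ is non-Rajchman, so there exist $\xi_k\in\R^d$ with $|\xi_k|\to\infty$ and $|\widehat{\mu}(\xi_k)|\ge c>0$ for all $k$.

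The plan is to run the Pisot--Salem--Erd\H{o}s argument in the matrix setting, in the form suggested by Solomyak and \cite[Section~5]{Rapaport2022}. By \eqref{mu-fourier}, $|\widehat\mu(\xi_k)|=\prod_{n\ge0}|H((A^\top)^n\xi_k)|$. Since $\pb>0$ and $\Phi$ is affinely irreducible, $1-|H(\eta)|\gtrsim \sum_{\ell}\|\langle \eta, a_\ell-a_1\rangle\|^2$, where $\|\cdot\|$ is the distance to $\Z$. Choose $N_k$ so that $\eta_k:=(A^\top)^{N_k}\xi_k$ lies in a fixed compact annulus; this uses that $A^\top$ is contracting, and if $A$ is not diagonalizable or its eigenvalues have different moduli, one first restricts to the generalized eigenspace where the growth is largest. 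A compactness argument then gives a limit $\eta$ with
\[
\sum_{n\ge 0}\sum_{\ell}\bigl\|\langle (A^\top)^{-n}\eta,\,a_\ell-a_1\rangle\bigr\|^2<\infty,
\]
together with $H((A^\top)^{j}\eta)\ne0$ for all $j$. This is essentially the Fu--Wen--Zhu criterion, and it follows from the uniform lower bound $c$ plus Fatou.

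Next, fix $\ell$ and set $x_n=\langle (A^\top)^{-n}\eta, a_\ell-a_1\rangle=\langle \eta, A^{-n}(a_\ell-a_1)\rangle$. These numbers satisfy the linear recurrence given by the characteristic polynomial of $A^{-1}$, and $\sum_n\|x_n\|^2<\infty$. Write $x_n=y_n+\varepsilon_n$ with $y_n\in\Z$ and $\varepsilon_n\to0$ in $\ell^2$. A Pisot-type lemma (Cassels' Hankel-determinant argument, or Rapaport's version) then implies that $y_n$ eventually satisfies an integer linear recurrence. Its characteristic polynomial $P$ is monic with integer coefficients. Its roots of modulus at least $1$ are exactly the eigenvalues $\beta_j$ of $A^{-1}$ whose eigendirections $\zeta_j$ (components of $\eta$) contribute, and its remaining roots lie in the open unit disk. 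Combining the recurrences over all $\ell$ through a common least-common-multiple polynomial produces the Pisot tuple $\{\beta_j\}$ with $A^\top\zeta_j=\beta_j^{-1}\zeta_j$. Expressing $y_n$ via the companion matrix gives $\langle\zeta_j,a_\ell-a_1\rangle\in\Z[\beta_j]$ uniformly in $j$, up to a common rescaling of the $\zeta_j$, hence polynomials $P_\ell$ as required. Affine irreducibility ensures that some $\zeta_j\neq0$ pairs nontrivially. This shows that $\Phi$ is a generalized Pisot IFS.

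For the dichotomy, suppose $\Phi$ is not of type~I. Then every choice of Pisot tuple produced above is of type~II: $P$ has all its roots of modulus at least $1$, so $\varepsilon_n=0$ for large $n$ and $x_n\in\Z$ exactly. Let $V$ be the real $A^\top$-invariant subspace spanned by the real and imaginary parts of the $\zeta_j$ together with their Galois conjugates. On $V$, the matrix $A^{-\top}$ is conjugate to the companion matrix of $P$, which is an integer matrix. The numbers $\langle\zeta_j,a_\ell-a_1\rangle$ being integral then gives coordinates in which the projected IFS $\Phi_V$ is integral. This yields an integral factor $\Psi$, in the spirit of Lemma~\ref{lem-5.6'}. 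The projection $P_V\mu$ is the self-affine measure of $\Phi_V$ with $\pb$, and its Fourier transform at $\xi\in V$ equals $\widehat\mu(\xi)$. So it suffices to pick the frequencies $\xi_k$ (after the renormalization above) inside $V$, up to errors that tend to $0$. This would show that $\Psi$ is non-Rajchman.

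The main obstacle is this last point. The non-decay of $\widehat{\mu}$ must be localized to the type~II subspace. Writing $\xi_k=u_k+w_k$ with $u_k\in V$, I would use that along the orbit the complementary components $w_k$ contribute to the recurrence only through non-Pisot eigenvalues or through eigenvalues of type~I. Under the hypothesis that no type~I tuple exists, the $\ell^2$ condition should force $(A^\top)^{n}w_k$ to vanish in the limit. Continuity of $\widehat\mu$ then transfers the lower bound $c$ to $\widehat{\mu}(u_k)$. Handling non-diagonalizable $A$ and the Jordan blocks in this splitting is the delicate part.
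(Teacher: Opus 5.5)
There is a genuine gap, and it is exactly the step you call the ``main obstacle'': in the non-type-I case you never show that the self-affine measure of the integral factor is non-Rajchman, and the mechanism you propose cannot work as stated. Your square-summability statement concerns only the renormalized limit $\eta$. It gives no control over the components $w_k$ of the original frequencies $\xi_k$ transverse to $V$, and these need not tend to $0$; they can even be unbounded. For instance, take $\mu=\nu\times\nu$ with $\nu$ the middle-third Cantor measure and $\xi_k=(3^k,\,3^k+3^{\lfloor k/2\rfloor})$. Then $|\widehat\mu(\xi_k)|$ stays bounded below and the renormalized limit gives $V=\R(1,1)$, yet $\mathrm{dist}(\xi_k,V)\to\infty$. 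So continuity of $\widehat\mu$ cannot carry the bound from $\widehat\mu(\xi_k)$ to $\widehat\mu(u_k)$.

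The missing idea is that no localization of the $\xi_k$ is needed, because the limit $\eta$ is itself the witness. Put $B=(A^\top)^{-1}$. Since $|H|\le1$, we have $|\widehat\mu((A^\top)^u\zeta)|\ge|\widehat\mu(\zeta)|$ for $u\ge0$. Your compactness step therefore gives, by continuity, $|\widehat\mu(B^n\eta)|\ge c$ for all $n\ge0$. Record this, rather than only the Fu--Wen--Zhu-type nonvanishing of $H$.

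Now write $\eta=\sum_j\eta_j$ with $\eta_j$ in the generalized eigenspace of $\beta_j$. Let $m_j$ be minimal with $(B-\beta_jI)^{m_j}\eta_j=0$, and set $q=\sum_j m_j$. Build the factor on the cyclic subspace $V=\operatorname{span}_{\R}\{\eta,B\eta,\dots,B^{q-1}\eta\}$. Take $S=(\kappa\eta,\dots,\kappa B^{q-1}\eta)$ and let $M$ be the companion matrix of $Q(x)=\prod_j(x-\beta_j)^{m_j}$, so that $BS=SM$. With $Tx=S^\top x+u$, the map $T$ intertwines $\phi_\ell$ with $\psi_\ell(x)=(M^\top)^{-1}x+S^\top(a_\ell-a_1)$. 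Moreover $|\widehat{T\mu}(\kappa^{-1}M^ne_1)|=|\widehat\mu(B^n\eta)|\ge c$ while $\|M^ne_1\|\to\infty$. This is how the paper finishes.

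This also shows why your choice of $V$, spanned by the eigenvectors $\zeta_j$ and their conjugates, fails when some $m_j>1$: then $\eta\notin V$ and the witness is lost. In the diagonalizable case your $V$ does contain $\eta$, and the argument closes at once.

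Working with the cyclic subspace needs two facts your sketch does not supply.
\begin{itemize}
\item $Q\in\Z[x]$. This requires the $m_j$ to be constant on each Galois conjugacy class. The paper gets this from Lemma~\ref{lem-3.10}(ii), which makes the top coefficient $\langle\beta_j^{1-m_j}(B-\beta_jI)^{m_j-1}\eta_j,a_\ell-a_1\rangle$ nonzero for some $\ell$, combined with the degree equality in Theorem~\ref{thm-Pisot}(iii).
\item $S^\top(a_\ell-a_1)$ must be integral after a single rescaling. The paper shows $\langle B^n\eta,a_\ell-a_1\rangle\in\Q$ for all $n\ge0$ via Lemma~\ref{lem-Galois}(ii). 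Your ``$x_n\in\Z$ for large $n$'' would also suffice after replacing $\eta$ by $B^{n_0}\eta$.
\end{itemize}

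Similarly, in your Step 2 the vectors required by Definition~\ref{defn-1.2} must be these tops of Jordan chains, not the components of $\eta$.

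The rest of your outline matches the paper: the renormalization, the Pisot--K\"ornyei theorem, and the type~I/type~II split on the tuple coming from $\eta$. The preliminary restriction to a generalized eigenspace of maximal growth is unnecessary; just take the first $N$ with $\|(A^\top)^N\xi_k\|<1$.
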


\begin{thm}
\label{thm-1.5}
Let  $\Phi=\{\phi_\ell\}_{\ell=1}^m$ be an affinely irreducible  generalized Pisot IFS of type I.  Then for every strictly positive probability vector  $\pb=(p_\ell)_{\ell=1}^m$, the self-affine measure corresponding to $\Phi$ and $\pb$ is non-Rajchman.
\end{thm}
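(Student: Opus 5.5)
We prove Theorem~\ref{thm-1.5} with the classical Erd\H{o}s--Pisot argument, adapted to the Pisot tuple, and use the type~I hypothesis to make the Fourier transform large along a geometric sequence. By Definition~\ref{defn-1.2} we have $\beta_1,\dots,\beta_k$, eigenvectors $\zeta_j$ with $A^\top\zeta_j=\beta_j^{-1}\zeta_j$, and $P_\ell\in\Z[x]$ with $\langle\zeta_j,a_\ell-a_1\rangle=P_\ell(\beta_j)$. Let $P$ be the monic integer polynomial from Definition~\ref{defn-1.1}, with remaining roots $\gamma_1,\dots,\gamma_s$. Type~I means $s\ge1$ and $|\gamma_i|<1$.

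The test frequencies are built in three steps.
\begin{itemize}
\item[(1)] Choose a nonzero $\eta\in\Z[\beta_1,\dots]$ in ``trace form''. Concretely, pick $Q\in\Z[x]$ and set
\[
\xi_n=\mathrm{Re}\sum_{j}\overline{c_j}\,\beta_j^{\,n}\zeta_j ,
\]
possibly summing over complex-conjugate pairs so that $\xi_n\in\R^d$. Here the coefficients $c_j=Q(\beta_j)$ are chosen so that the sum over all roots of $P$ of $Q(\cdot)\,(\cdot)^nP_\ell(\cdot)$ is an integer; this is a trace, hence lies in $\Z$.
\item[(2)] Compute the phases. Then $\langle (A^\top)^{r}\xi_n,a_\ell-a_1\rangle$ equals, up to conjugation/real parts, $\sum_j Q(\beta_j)\beta_j^{n-r}P_\ell(\beta_j)$. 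This is $N_{n-r,\ell}-\sum_i Q(\gamma_i)\gamma_i^{n-r}P_\ell(\gamma_i)$ with $N_{n-r,\ell}\in\Z$ whenever $n-r\ge0$.
\item[(3)] Bound the terms. Consequently $|H((A^\top)^r\xi_n)|\ge 1-C|\gamma|^{n-r}$ for $0\le r\le n$, where $|\gamma|=\max|\gamma_i|<1$. For $r>n$ the terms equal $H$ evaluated at $\sum_j Q(\beta_j)\beta_j^{-(r-n)}\zeta_j$, i.e.\ at a fixed vector contracted by powers of $A^\top$, so their product converges to $\prod_{t\ge1}H((A^\top)^t\xi_0)$ over the fixed vector $\xi_0$.
\end{itemize}
Multiplying, $|\widehat\mu(\xi_n)|\ge \prod_{t\ge0}(1-C|\gamma|^t)\cdot|\prod_{t\ge1}H((A^\top)^t\xi_0)|$, up to the translation factor from $a_1$, which has modulus one. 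Meanwhile $|\xi_n|\to\infty$, since $|\beta_j|>1$ and the $\zeta_j$ are nonzero.

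The main obstacle is ensuring both infinite products are nonzero for every strictly positive $\pb$. This is exactly where type~I matters. In the type~II case ($s=0$) the tail error vanishes, and non-Rajchmanness depends on whether $H$ has zeros along the orbit, i.e.\ on $\pb$ (singularity of an integral factor).

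In type~I we have freedom: we may replace $Q$ by $Q\cdot R$ for suitable integer polynomials $R$, and we may shift $n$. First, choose $Q$ so that $\sum_i|Q(\gamma_i)|$ is tiny relative to the $P_\ell(\gamma_i)$. This is possible by taking $Q(x)=x^{M}$ with $M$ large, so that $Q(\gamma_i)=\gamma_i^M\to0$, while $Q(\beta_j)\ne0$. Then every factor with $r\le n$ satisfies $|1-H|$ small, so none vanishes, because a $\pb$-average of points near $1$ on the unit circle is near $1$.

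For $r>n$ the factors are $H((A^\top)^t\xi_0)$ with $\xi_0$ depending on $M$. We handle them the same way by writing them as integer traces minus small $\gamma$-terms. The quantity $\sum_j\beta_j^{M-t}P_\ell(\beta_j)$ is an integer plus $\sum_i\gamma_i^{M-t}P_\ell(\gamma_i)$ only while $M-t\ge0$. So we absorb everything into the first range: all $r\le n+M$ give integers up to small errors. The remaining tail $t>M$ involves vectors $(A^\top)^{t}\xi_0$ of size $O(\rho^{t-M})$ for some $\rho<1$, since $\xi_0$ carries the factor $\beta_j^M$. Hence these factors are $1-O(\rho^{t-M})$ by continuity of $H$ at $0$. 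Possible zeros occur only at finitely many initial tail factors. We avoid them by perturbing the choice of $Q$ (e.g.\ $Q(x)=x^M+\epsilon$-type integer modifications, or $x^M(1+x^L)$), using that the zero set of a nonzero trigonometric polynomial along a one-parameter discrete family can be escaped; affine irreducibility guarantees $H\not\equiv$ const on these directions.

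Putting the pieces together yields $\limsup_{|\xi|\to\infty}|\widehat\mu(\xi)|>0$.
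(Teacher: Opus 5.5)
Your proposal has a genuine gap, located exactly where you flag ``the main obstacle.'' Write $F_Q(n)=\sum_{\ell}p_\ell\exp\bigl(2\pi i\sum_{j}\beta_j^{n}Q(\beta_j)P_\ell(\beta_j)\bigr)$. Your scheme, like Lemma~\ref{lem-2.2}, needs a single $Q$ with $F_Q(n)\neq 0$ for every $n\in\Z$.

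\emph{Taking $Q=x^M$ gains nothing.} Since $F_{x^MQ}(n)=F_Q(n+M)$, it only re-indexes the bi-infinite product. Your bound $1-C|\gamma|^{n-r}$ is vacuous when $n-r$ is bounded. As you partly notice, the factors in a bounded window around the transition index have $O(1)$ conjugate errors on one side and $O(1)$ frequencies on the other. So they need not be near $1$, and for a given $\pb$ they can vanish.

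\emph{The remedy cannot work as stated.} You propose to perturb $Q$ (e.g.\ to $x^M(1+x^L)$) and ``escape'' the zeros because $H$ is nonconstant. This is precisely the content of the theorem, and your argument never uses type~I. Take the type~II system $\{x/2,\,x/2+1\}$ with $\pb=(1/2,1/2)$. Then $\mu$ is normalized Lebesgue measure on $[0,2]$, which is Rajchman. For every $Q$ with $Q(2)=2^a b$, $b$ odd, one gets $F_Q(-a-1)=\tfrac12(1+e^{\pi i b})=0$, so no choice of $Q$ escapes. The freedom to vary $Q$ is also not special to type~I: Lemma~\ref{lem-2.2} and Theorem~\ref{thm-1.4} use it for both types.

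What is missing is a proof that, in type~I, not every member of an infinite family $\{Q_n\}$ can produce a vanishing factor. The paper argues by contradiction with $Q_n(x)=1+x^n$.
\begin{itemize}
\item[(1)] Suppose $F_{Q_n}(\tau_n)=0$ for all $n$. The decay in Lemma~\ref{lem-2.1} forces $|\tau_n|<N$ or $|n+\tau_n|<N$.
\item[(2)] By pigeonhole, $\tau_n$ or $n+\tau_n$ is constant on a set $\Lambda$ of positive upper density.
\item[(3)] After grouping indices with equal $P_\ell(\beta_1)$, this gives a linear relation with nonzero coefficients among the vectors $v_{\pm n}$, $n\in\Lambda$.
\item[(4)] Proposition~\ref{prop-4.1} says these vectors span the whole space, a contradiction.
\end{itemize}
The proof of Proposition~\ref{prop-4.1} passes to an iterate (Proposition~\ref{pro-2.4}) so that the arguments of all conjugates lie in $\operatorname{span}_\Z\{1,\theta_1,\ldots,\theta_s\}$, and expands the exponentials into layers indexed by $\gamma_p$. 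It then kills each layer using equidistribution on $\mathbb{T}^s$ together with real-analyticity, and concludes with Lemma~\ref{lem-i} and a Vandermonde determinant.

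Type~I enters only in the $n\to+\infty$ half of Proposition~\ref{prop-4.1}. There $v_n$ is rewritten through the small conjugates $\beta_{k+1},\ldots,\beta_{k'}$. In type~II, $v_n$ is constant for $n\ge 0$, which is consistent with the counterexample above. Without a substitute for this step, your argument does not go through.
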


\begin{proof}[Proof of Theorem~\ref{thm-main1}] It follows immediately from Theorems~\ref{thm-1.4} and \ref{thm-1.4'}.
\end{proof}

\begin{proof}[Proof of Theorem~\ref{thm-main}] It follows immediately from Theorems~\ref{thm-1.4'} and \ref{thm-1.5}, together with the known fact that an integral self-affine measure is non-Rajchman if and only if it is singular (see Proposition~\ref{pro-self-affine}).
\end{proof}

Below, we briefly outline the main ideas underlying the proofs of Theorems~\ref{thm-1.4}--\ref{thm-1.5}.

The proof of Theorem \ref{thm-1.4} extends an idea of Erd\H{o}s \cite{Erdos1939}.  For Theorem~\ref{thm-1.4'},  we first adapt ideas of Salem \cite{Salem1943}, Rapaport \cite{Rapaport2022}, and Solomyak \cite{Solomyak2022} to show that, under the assumptions of that theorem, there exist $\xi \in \mathbb{R}^d \setminus \{0\}$ and $\varepsilon>0$ such that  $|\widehat{\mu}((A^\top)^{-n}\xi)|\geq \varepsilon$ for every integer $n\geq 0$, and
\[
\sum_{n=0}^{\infty}
\bigl\| \langle (A^\top)^{-n}\xi,\, a_\ell-a_1\rangle \bigr\|_{\mathbb{Z}}^2 < \infty
\qquad \text{for } 1 \le \ell \le m.
\]
Using this, we then complete the proof of  Theorem~\ref{thm-1.4'} by analysing the linear-algebraic properties of $(A^\top)^{-n}\xi$ and applying a result of Pisot \cite{Pisot1938} and K\"{o}rnyei \cite{Kornyei1987} (see Theorem~\ref{thm-Pisot}) on the characterization of Pisot tuples. The argument in this final step is rather long and delicate.

For Theorem~\ref{thm-1.5}, we argue by contradiction. Suppose, contrary to the conclusion of the theorem, that $\mu$ is Rajchman. Extending an argument of Salem \cite{Salem1943} to the present setting, we obtain that for every $n\in \mathbb{N}$, there exists $\tau \in \mathbb{Z}$ such that
\begin{equation}
\label{e-ourkey}
\sum_{\ell=1}^m p_\ell
\exp\!\left(2\pi i
\sum_{j=1}^k \beta_j^\tau (1+\beta_j^n)\, P_\ell(\beta_j)
\right)=0,
\end{equation}
where $\beta_1,\ldots, \beta_k$ and $P_\ell$ ($1\leq \ell\leq m$) are the parameters introduced in Definition~\ref{defn-1.2}. More precisely, $\{\beta_1,\ldots, \beta_k\}$ is a Pisot tuple of type~I, and each $P_\ell$ ($1\leq \ell\leq m$) is a monic integer polynomial. Thus,
$(\beta_1,\ldots, \beta_k)$  is a common zero of infinitely many analytic functions. In the generality considered here, however, it appears difficult to describe the structure of the zero sets of these functions.

To derive a contradiction from \eqref{e-ourkey}, we employ several new ideas and techniques.
A key ingredient is Proposition~\ref{prop-4.1}, which asserts that, under certain additional mild assumptions, if
\[
v_n=\left(\exp\left(2\pi i\sum_{j=1}^k\beta_j^{n} P_1(\beta_j)\right),\ldots,
\exp\left(2\pi i\sum_{j=1}^k\beta_j^{n} P_m(\beta_j)\right)\right),\qquad n\in \mathbb{Z},
\]
then, for every subset $\Lambda\subset\mathbb{N}$ of positive upper density, one has
one has
\[
\operatorname{span}_{\mathbb{C}}\{v_n:n\in\Lambda\}
=
\operatorname{span}_{\mathbb{C}}\{v_{-n}:n\in\Lambda\}
=
\mathbb{C}^m.
\] The proof of this proposition is delicate and uses results from Galois theory, ergodic theory, and complex analysis.

Applying Proposition~\ref{prop-4.1}, we show that, under some additional mild assumptions, there exist $\tau \in \mathbb{Z}$ and a subset $\Lambda\subset \mathbb{N}$ of positive upper density such that \eqref{e-ourkey} holds for all $n\in \Lambda$; see Lemma~\ref{lem-4.4} for details. This implies that the span of $\{v_n: n\in \Lambda+\tau\}$ over $\mathbb{C}$ is $\mathbb{C}^m$, which in turn implies that $p_\ell=0$ for all $1\leq \ell\leq m$, leading to a contradiction. Finally, by replacing $\Phi$ with a suitable iterate of $\Phi$, we can remove these additional assumptions.

\subsection{Organization of the paper}
The structure of the paper is as follows. In
Section~\ref{S-2}, we introduce the notation and several preliminary results. In Section~\ref{S-3}, we give some basic properties of affinely irreducible generalized Pisot IFSs. The proofs of Theorems~\ref{thm-1.4}, \ref{thm-1.4'} and \ref{thm-1.5} are given in Sections~\ref{S-4}, \ref{S-5} and \ref{S-6}, respectively. In Section ~\ref{S-7}, we prove Theorem~\ref{thm-1.6}. In Section~\ref{S-8'}, we provide an algorithm to compute the $L^{(A,2)}$ dimension of integral self-affine measures. In Section~\ref{S-8}, we give several examples.
\subsection{Acknowledgements}

 This research was partially supported by the General Research Fund grant (projects CUHK14303524 and CUHK14310625) from the
Hong Kong Research Grant Council, and by a direct grant for research from the Chinese University
of Hong Kong. The authors are grateful to Zhou Feng for providing numerical estimates for the spectral radius of the matrix $M(p)$ defined in the discussion of Example~\ref{exmp-1}, for values of $p$ ranging over   $(0,1)$. Copilot was used to check English grammar and to generate Figure~\ref{Fig1}.

\section{Preliminaries}
\label{S-2}

\subsection{Notation}
For a subset $D$ of $\C^d$ and a subgroup $\Bbb G$ of $\C$,  let $\operatorname{span}_{\mathbb{G}}(D)$ denote the linear span of $D$ over $\mathbb{G}$. That is,
\[
\operatorname{span}_{\mathbb{G}}(D)
= \left\{ \sum_{k=1}^n g_k d_k \;:\; n \ge 1,\; g_k \in \mathbb{G},\; d_k \in D \text{ for all } k=1,\dots,n \right\}.
\]

For $u=(u_1,\ldots, u_d)\in \R^d$ or $\C^d$, let $\|u\|$ be the Euclidean norm of $u$, i.e.,
$$
\|u\|=\left(\sum_{j=1}^d |u_j|^2\right)^{1/2}.
$$
Moreover, let $\|u\|_{\Z^d}$ denote the distance between $u$ and $\Z^d$, i.e.,
$$
\|u\|_{\Z^d}=\min\{\|u-z\|\colon z\in \Z^d\}.
$$
When $d=1$, we simply write $\|u\|_{\Z}$ for  $\|u\|_{\Z^1}$.

\subsection{Several simple facts}

We start with the following elementary lemma.
\begin{lem}
\label{lem-infinity}
Let $\ell, d \in \mathbb{N}$ with $\ell \le d$. Let
$\lambda_1,\ldots,\lambda_\ell \in \mathbb{C}$ be such that
$|\lambda_j|>1$ for at least one $j \in \{1,\ldots,\ell\}$.
Suppose that $v_1,\ldots,v_\ell \in \mathbb{C}^d$ are linearly independent
over $\mathbb{C}$. Then
\[
\lim_{n\to +\infty}
\left\|\sum_{j=1}^\ell \lambda_j^n v_j \right\|
= +\infty.
\]
\end{lem}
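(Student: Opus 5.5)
Since $v_1,\ldots,v_\ell$ are linearly independent, we can reduce the statement to a scalar one. Let $V=[v_1,\ldots,v_\ell]$ be the $d\times\ell$ matrix with these columns. Linear independence means $V$ is injective, so there is a constant $c>0$ with $\|Vx\|\ge c\|x\|$ for all $x\in\C^\ell$; one may take $c$ to be the smallest singular value of $V$, or argue by compactness of the unit sphere in $\C^\ell$ together with continuity and positivity of $x\mapsto\|Vx\|$ there. Applying this bound to $x=(\lambda_1^n,\ldots,\lambda_\ell^n)$ gives
\[
\left\|\sum_{j=1}^\ell \lambda_j^n v_j\right\| \;\ge\; c\left(\sum_{j=1}^\ell |\lambda_j|^{2n}\right)^{1/2} \;\ge\; c\,|\lambda_{j_0}|^n ,
\]
where $j_0$ is any index with $|\lambda_{j_0}|>1$. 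The right-hand side tends to $+\infty$, which proves the claim.

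The only step that needs any care is the existence of $c$, i.e.\ that an injective linear map on a finite-dimensional space is bounded below. This is standard. An alternative route is to choose a left inverse $W$ with $WV=I_\ell$, which exists by injectivity. Then $(\lambda_1^n,\ldots,\lambda_\ell^n)^\top = W\sum_j \lambda_j^n v_j$, so
\[
|\lambda_{j_0}|^n \le \|W\|\,\left\|\sum_j\lambda_j^n v_j\right\|,
\]
and the conclusion follows in the same way.

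The hypothesis $\ell\le d$ is automatic from linear independence, and the argument does not use it further.
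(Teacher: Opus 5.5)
Your proof is correct and is essentially the paper's argument. The paper extends $v_1,\ldots,v_\ell$ to a basis $v_1,\ldots,v_d$ of $\mathbb{C}^d$ and sets $\lambda_{\ell+1}=\cdots=\lambda_d=0$. It then applies $\|Av\|\ge\|A^{-1}\|^{-1}\|v\|$ to the invertible matrix $A=(v_1,\ldots,v_d)$, which is just a concrete way of producing your lower-bound constant $c$ (or your left inverse $W$).
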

\begin{proof}
Choose $v_{\ell+1}, \ldots, v_d$ so that $\{v_1,\ldots, v_d\}$ forms a basis of
$\mathbb{C}^d$. Set $$\lambda_{\ell+1}=\cdots=\lambda_d=0,$$ and let
$A=(v_1,\ldots, v_d)$. Clearly, $A\in GL_d(\mathbb{C})$, and hence
\[
\|Av\|\geq \|A^{-1}\|^{-1}\|v\| \quad \text{for all } v\in \mathbb{C}^d.
\]
Notice that
\[
\sum_{j=1}^\ell \lambda_j^n v_j
= A
\begin{pmatrix}
\lambda_1^n\\
\vdots\\
\lambda_d^n
\end{pmatrix}.
\]
Consequently,
\[
\left\|\sum_{j=1}^\ell \lambda_j^n v_j \right\|
\geq \|A^{-1}\|^{-1}
\left\|
\begin{pmatrix}
\lambda_1^n\\
\vdots\\
\lambda_d^n
\end{pmatrix}
\right\|
= \|A^{-1}\|^{-1}\left(\sum_{j=1}^d |\lambda_j|^{2n}\right)^{1/2}
\longrightarrow +\infty
\]
as $n\to +\infty$.
\end{proof}

The second lemma is standard in algebraic number theory.
\begin{lem}
\label{lem-Galois}
Let $\beta_1,\ldots, \beta_k$ be the roots of a monic integer polynomial $P\in \Z[x]$ of degree $k$. Then the following properties hold:
\begin{itemize}
\item[(i)] Let $g\in \Z[x_1,\ldots, x_k]$ be a symmetric integer polynomial in $x_1,\ldots, x_k$. Then $g(\beta_1,\ldots, \beta_k)\in \Z$.
\item[(ii)]
 Let $c_j\in \Q(\beta_j)$ for $1\leq j\leq k$.
Suppose that whenever $\beta_j$ and $\beta_{j'}$ are conjugate over $\mathbb{Q}$, every isomorphism
\[
\sigma:\mathbb{Q}(\beta_j)\to\mathbb{Q}(\beta_{j'})
\]
satisfying $\sigma(\beta_j)=\beta_{j'}$ also satisfies
$
\sigma(c_j)=c_{j'}.
$
Then
\[
\sum_{j=1}^k c_j \in \mathbb{Q}.
\]

\end{itemize}
\end{lem}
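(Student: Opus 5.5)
For part (i), I will use the fundamental theorem of symmetric polynomials. Write $P(x)=\prod_{j=1}^k(x-\beta_j)=x^k-e_1x^{k-1}+\cdots+(-1)^ke_k$. The coefficients of $P$ are integers and $P$ is monic, so each elementary symmetric polynomial $e_r(\beta_1,\ldots,\beta_k)$ equals $\pm$ a coefficient of $P$ and therefore lies in $\Z$. Any symmetric $g\in\Z[x_1,\ldots,x_k]$ can be written as $g=h(e_1,\ldots,e_k)$ with $h\in\Z[y_1,\ldots,y_k]$. The integrality of $h$ comes from the standard leading-monomial induction, which only subtracts integer multiples of products of the $e_r$. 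Hence $g(\beta_1,\ldots,\beta_k)=h(e_1(\beta),\ldots,e_k(\beta))\in\Z$.

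For part (ii), I will pass to the splitting field and use Galois invariance.
\begin{itemize}
\item Let $L=\Q(\beta_1,\ldots,\beta_k)$, which is the splitting field of $P$ over $\Q$ and hence a finite Galois extension. Put $s=\sum_j c_j\in L$. It suffices to show $\tau(s)=s$ for every $\tau\in\mathrm{Gal}(L/\Q)$, since $L^{\mathrm{Gal}(L/\Q)}=\Q$.
\item Fix such a $\tau$. It permutes the roots of $P$, counted with multiplicity: the multiset $\{\beta_j\}$ is the root multiset of $P$, and $\tau$ fixes $P$. So there is a permutation $\pi$ of $\{1,\ldots,k\}$ with $\tau(\beta_j)=\beta_{\pi(j)}$.
\item When $P$ has repeated roots, I choose $\pi$ compatibly. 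The indices with a given root value $\beta$ are sent bijectively onto the indices with root value $\tau(\beta)$, and these two groups have the same size.
\item For each $j$, $\beta_j$ and $\beta_{\pi(j)}$ are conjugate. The restriction $\sigma=\tau|_{\Q(\beta_j)}:\Q(\beta_j)\to\Q(\beta_{\pi(j)})$ is an isomorphism with $\sigma(\beta_j)=\beta_{\pi(j)}$. By hypothesis, $\tau(c_j)=\sigma(c_j)=c_{\pi(j)}$.
\item Summing over $j$ gives $\tau(s)=\sum_j c_{\pi(j)}=s$.
\end{itemize}

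Taking $j=j'$ in the hypothesis forces $c_j=c_{j'}$ whenever $\beta_j=\beta_{j'}$. This is what makes the choice of $\pi$ on repeated roots harmless.

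The only delicate point is this bookkeeping of the permutation $\pi$ when $P$ is not separable. Once the permutation is set up as above, the statement reduces to the standard fact $L^{\mathrm{Gal}(L/\Q)}=\Q$.
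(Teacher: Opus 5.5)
Your proposal is correct and is essentially the paper's proof. Part (i) goes through the fundamental theorem of symmetric polynomials over $\Z$ together with Vieta's formulas, and part (ii) shows that $\sum_j c_j$ is fixed by every automorphism of the splitting field of $P$; you usefully make explicit the permutation $\pi$ with $\tau(\beta_j)=\beta_{\pi(j)}$, which the paper leaves implicit, though any such bijection works since the hypothesis is applied to each pair $(j,\pi(j))$ separately, so the extra care about repeated roots is not actually needed.
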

\begin{proof}
Part (i) follows from the fundamental theorem of symmetric polynomials (see, e.g.,
\cite[Theorem~3.10]{Pollard1950}) and Vieta’s formulas. To see (ii), from the hypothesis it follows that $\sigma\left(\sum_{j=1}^k c_j\right)=\sum_{j=1}^k c_j$ for all Galois automorphisms $\sigma$ of $P$ over $\Q$. Therefore the sum is fixed by all Galois automorphisms, so it lies in $\mathbb{Q}$; see, e.g.~\cite[Chap.~VI, Theorem~1.2]{Lang2002}.
\end{proof}

\begin{lem}\label{lem-3.10}
    Let $\Phi = \{Ax + a_\ell\}_{\ell=1}^m$ be an affinely irreducible homogeneous affine IFS in $\mathbb{R}^d$. Then the following statements hold:
\begin{itemize}
\item[(i)] For any nonzero $v \in \mathbb{C}^d$, there exist an integer $n \ge d$ and an index $1 \le \ell \le m$ such that $\langle (A^\top)^n v,\; a_\ell \rangle \neq 0$.
\item[(ii)] For any eigenvector $\eta$ of $A^\top$, there exists $1 \le \ell \le m$ such that $\langle \eta,\; a_\ell \rangle \neq 0$.
\end{itemize}

\end{lem}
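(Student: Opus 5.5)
The plan is to argue by contradiction and use the hypothesis to build a proper linear subspace of $\mathbb{R}^d$ that every map $\phi_\ell$ sends onto itself. This contradicts affine irreducibility.

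Note first that the pairing in the statement is effectively bilinear. Since each $a_\ell$ is real, $\langle w,a_\ell\rangle=\sum_j w_j a_{\ell,j}$, and more generally $\langle w,x\rangle=\sum_j w_jx_j$ for every $w\in\mathbb{C}^d$ and $x\in\mathbb{R}^d$. Consequently $\langle w,Ax\rangle=\langle A^\top w,x\rangle$ for all such $w$ and $x$.

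For (i), suppose that for some nonzero $v\in\mathbb{C}^d$ we have $\langle (A^\top)^n v,a_\ell\rangle=0$ for all $n\ge d$ and all $1\le\ell\le m$.
\begin{itemize}
\item[(1)] Let $W=\operatorname{span}_{\mathbb{C}}\{(A^\top)^n v: n\ge d\}$. This subspace is $A^\top$-invariant. It is nonzero because $A$ is invertible, so $(A^\top)^d v\neq 0$.
\item[(2)] Define the real subspace
\[
U=\{x\in\mathbb{R}^d:\ \langle w,x\rangle=0 \text{ for all } w\in W\}.
\]
\item[(3)] $U$ is proper. Pick a nonzero $w\in W$; then $\operatorname{Re}w$ or $\operatorname{Im}w$ is a nonzero real vector. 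Every $x\in U$ must be orthogonal to both, so $U$ lies in a real hyperplane.
\item[(4)] $U$ is $A$-invariant. For $x\in U$ and $w\in W$, the bilinear identity gives $\langle w,Ax\rangle=\langle A^\top w,x\rangle=0$, since $A^\top w\in W$. Hence $AU\subset U$, and invertibility of $A$ forces $AU=U$.
\item[(5)] Each $a_\ell$ lies in $U$, by the hypothesis. Therefore $\phi_\ell(U)=AU+a_\ell=U+a_\ell=U$ for every $\ell$.
\end{itemize}
Thus $U$ is a proper (affine) subspace invariant under all $\phi_\ell$, contradicting affine irreducibility. This proves (i).

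For (ii), let $A^\top\eta=\lambda\eta$ with $\eta\neq 0$. Then $\lambda\neq 0$ because $A$ is invertible, and $\langle (A^\top)^n\eta,a_\ell\rangle=\lambda^n\langle\eta,a_\ell\rangle$. By (i) applied to $v=\eta$, this quantity is nonzero for some $n\ge d$ and some $\ell$. Since $\lambda^n\neq 0$, we get $\langle\eta,a_\ell\rangle\neq0$.

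No step is a serious obstacle. The only point needing care is the complex/real bookkeeping in steps (3) and (4): treating the pairing as bilinear (valid because the $a_\ell$ and the elements of $U$ are real), and checking that $U$ is proper by passing to real and imaginary parts.
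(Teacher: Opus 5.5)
Your proof is correct, and it reaches the contradiction with affine irreducibility by a different route from the paper's.

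The paper works with the single vector $w_0=(A^\top)^d v$. From the hypothesis it gets $\langle w_0, A^n a_\ell\rangle=\langle (A^\top)^{n+d}v,a_\ell\rangle=0$ for all $n\ge 0$ and all $\ell$. It then uses the series description $K=\{\sum_{n\ge 0}A^n a_{\ell_n}\}$ of the attractor to conclude that $K$ lies in the proper subspace $\{x\in\mathbb{R}^d:\langle w_0,x\rangle=0\}$. Finally it invokes the remark from the introduction that affine irreducibility fails exactly when $K$ lies in a proper affine subspace.

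You instead take the annihilator $U$ of the whole $A^\top$-invariant space $W$ and check directly that $AU=U$ and $a_\ell\in U$. So $U$ itself is a proper affine subspace with $\phi_\ell(U)=U$ for every $\ell$, which contradicts the definition as stated.

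Your route never uses the attractor. It also does not rely on the equivalence with $K$, which the paper calls easy to see but does not prove. You also make explicit the real/imaginary-part argument showing that the zero set in $\mathbb{R}^d$ of a nonzero complex vector's pairing is a proper subspace; the paper leaves that step implicit. The paper's route is shorter, because one vector suffices once the series representation of $K$ is available. Your deduction of (ii) from (i), using $\lambda\neq 0$, is the same as the paper's.
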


\begin{proof}
It suffices to prove (i), since (ii) follows directly from (i). Let $K$ be the attractor of $\Phi$.

To prove (i), suppose, on the contrary, that there exists a nonzero $v \in \mathbb{C}^d$ such that
\[
\langle (A^\top)^n v,\; a_\ell \rangle = 0
\]
for every integer $n \ge d$ and every $1 \le \ell \le m$. Then, for any $1 \le \ell \le m$ and any $n \in \mathbb{N} \cup \{0\}$,
\[
\langle (A^\top)^d v,\; A^{n} a_\ell \rangle
= \langle (A^\top)^{n+d} v,\; a_\ell \rangle
= 0.
\]
This implies that
\[
\langle (A^\top)^d v,\; x \rangle = 0
\quad \text{for all } x \in K,
\]
since
\[
K = \left\{ \sum_{n=0}^\infty A^{n} a_{\ell_n}
\;:\; \ell_n \in \{1, \ldots, m\}
\text{ for all } n \ge 0 \right\}.
\]
Hence, $K$ is contained in the proper subspace
\[
\{x \in \mathbb{R}^d : \langle (A^\top)^d v, x \rangle = 0\},
\]
which is a contradiction.\end{proof}

\section{Properties of affinely irreducible generalized Pisot IFSs}
\label{S-3}

In this section, we give some basic properties of affinely irreducible generalized Pisot IFSs that will be needed in the proofs of our main results.

Throughout this section, let $\Phi=\{\phi_\ell(x)=Ax+a_\ell\}_{\ell=1}^m$ be an affinely irreducible homogeneous affine IFS on $\R^d$. Suppose that $\Phi$ is also a generalized Pisot IFS. That is,  there exist
$k \geq 1$, $\beta_1,\ldots,\beta_k \in \mathbb{C}$, and
$\zeta_1,\ldots,\zeta_k \in \mathbb{C}^d \setminus \{0\}$ such that the following three
conditions are satisfied:
\begin{itemize}
    \item[(i)] $\{\beta_1,\ldots,\beta_k\}$ is a Pisot $k$-tuple;
    \item[(ii)] $A^\T\zeta_j = \beta_j^{-1} \zeta_j$ for $1 \leq j \leq k$, where $A^\T$ denotes the transpose of $A$;
    \item[(iii)] for any $1 \leq \ell \leq m$, there exists $P_\ell \in \mathbb{Z}[x]$
    such that
    \[
        \langle \zeta_j, a_\ell - a_1 \rangle = P_\ell(\beta_j)
        \quad \text{for } 1 \leq j \leq k.
    \]
\end{itemize}

Similar to \cite[Lemma 5.10]{Rapaport2022}, we have the following.

\begin{lem}
\label{lem-conjugate}
If $\beta_i=\overline{\beta_j}$ for some $1\leq i,j\leq k$, then $\zeta_i=\overline{\zeta_j}$.  Consequently, $\zeta_i\in \R^d$ if $\beta_i$ is real.
\end{lem}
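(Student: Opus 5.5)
The plan is to show that $\zeta_i$ and $\overline{\zeta_j}$ are eigenvectors of $A^\top$ for the same eigenvalue and have the same inner products with every $a_\ell-a_1$. Affine irreducibility, through Lemma~\ref{lem-3.10}(ii), will then force them to coincide.

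First, $A$ is a real matrix. Conjugating condition (ii) for the index $j$ therefore gives
\[
A^\top\overline{\zeta_j}=\overline{\beta_j}^{\,-1}\,\overline{\zeta_j}=\beta_i^{-1}\overline{\zeta_j}.
\]
So both $\zeta_i$ and $\overline{\zeta_j}$ lie in the $\beta_i^{-1}$-eigenspace of $A^\top$. Set $w:=\zeta_i-\overline{\zeta_j}$. Then either $w=0$, or $w$ is an eigenvector of $A^\top$ with eigenvalue $\beta_i^{-1}$.

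Next we use condition (iii). Each $P_\ell$ has integer, hence real, coefficients, so $P_\ell(\beta_i)=P_\ell(\overline{\beta_j})=\overline{P_\ell(\beta_j)}$. The vectors $a_\ell-a_1$ are real, and the inner product is conjugate-linear only in its second slot, so $\langle\overline{z},a\rangle=\overline{\langle z,a\rangle}$ for $z\in\C^d$ and $a\in\R^d$. Combining these,
\[
\langle \zeta_i, a_\ell-a_1\rangle=P_\ell(\beta_i)=\overline{\langle\zeta_j,a_\ell-a_1\rangle}=\langle\overline{\zeta_j},a_\ell-a_1\rangle,
\]
and hence $\langle w,a_\ell-a_1\rangle=0$ for every $1\le\ell\le m$.

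It remains to show $w=0$. Lemma~\ref{lem-3.10} is phrased with the translations $a_\ell$ rather than the differences $a_\ell-a_1$, so we first conjugate $\Phi$ by a translation. Let $x^*$ be the fixed point of $\phi_1$, so that $x^*=Ax^*+a_1$. Define $\psi_\ell(x):=\phi_\ell(x+x^*)-x^*$. A direct computation gives $\psi_\ell(x)=Ax+a_\ell-a_1$. Affine irreducibility is invariant under affine conjugation, so $\Psi=\{\psi_\ell\}_{\ell=1}^m$ is affinely irreducible with translations $a_\ell-a_1$. Suppose $w\neq0$. Then $w$ is an eigenvector of $A^\top$, and Lemma~\ref{lem-3.10}(ii) applied to $\Psi$ yields some $\ell$ with $\langle w,a_\ell-a_1\rangle\ne0$. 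This contradicts the previous paragraph, so $w=0$, i.e.\ $\zeta_i=\overline{\zeta_j}$.

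For the consequence, suppose $\beta_i$ is real. Then $\beta_i=\overline{\beta_i}$, and the statement with $j=i$ gives $\zeta_i=\overline{\zeta_i}$, so $\zeta_i\in\R^d$.

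There is no real obstacle here. The only points needing care are the reduction to translations $a_\ell-a_1$ via the fixed point of $\phi_1$, and the fact that the inner product conjugates its second argument, which is harmless because $a_\ell-a_1$ is real.
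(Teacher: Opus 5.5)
Your proof is correct and is essentially the paper's argument. The paper shows directly that $\zeta_i-\overline{\zeta_j}$ is orthogonal to every $A^n(a_\ell-a_1)$, and these vectors span $\R^d$ by affine irreducibility; you package the same fact through Lemma~\ref{lem-3.10}(ii), applied to the translated system $\{Ax+a_\ell-a_1\}_{\ell=1}^m$, after noting that $\zeta_i-\overline{\zeta_j}$ lies in the $\beta_i^{-1}$-eigenspace of $A^\top$.
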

\begin{proof}Our argument is adapted from the proof of \cite[Lemma 5.10]{Rapaport2022}.
Let $K$ denote the attractor of $\Phi$ and set $x=\sum_{n=0}^\infty A^n a_1$. Then
$$
K-x=\left\{\sum_{n=0}^\infty A^n v_n\colon u_n\in \{0,a_2-a_1, \ldots, a_m-a_1\} \text{ for all }n\geq 0\right\}.
$$
Since $\Phi$ is affinely irreducible, $K-x$ is not contained in any hyperplane of $\R^d$.  It follows that the linear span  of the set $$F:=\{A^n v\colon n\geq 0, v\in \{a_\ell-a_1:\; 1\leq \ell\leq m\}\}$$ over $\R$ is the entire space $\R^d$.

Now suppose that $\beta_i=\overline{\beta_j}$ for some $1\leq i,j\leq k$. Since ${\rm span}_{\R}(F)=\R^d$, to prove that $\zeta_i=\overline{\zeta_j}$, it suffices to show that
$$\langle \zeta_i-\overline{\zeta_j}, A^n (a_\ell-a_1)\rangle =0
$$ for all $n\geq 0$ and $1\leq \ell\leq m$. Indeed, this holds since
\begin{align*}
\left\langle \zeta_i-\overline{\zeta_j}, A^n (a_\ell-a_1)\right\rangle &=\left\langle (A^\top)^n(\zeta_i-\overline{\zeta_j}), a_\ell-a_1\right\rangle\\
&=\left\langle \beta_i^{-n}\zeta_i-\overline{\beta_j}^{-n}\overline{\zeta_j}, a_\ell-a_1\right\rangle\\
&=\beta_i^{-n}P_\ell(\beta_i)-\overline{\beta_j^{-n}P_\ell(\beta_j)}\\
&=0,
\end{align*}
where the last equality uses the facts that $\beta_i=\overline{\beta_j}$ and $P_\ell\in \Z[x]$.
\end{proof}

\begin{lem}
\label{lem-npisot}
For every positive integer $n$, the set of distinct elements of
$
\{\beta_1^n,\ldots,\beta_k^n\}
$
 is again a Pisot tuple of the same type as $\{\beta_1,\ldots, \beta_k\}$.
\end{lem}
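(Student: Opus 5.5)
Let $P\in\mathbb{Z}[x]$ be the monic polynomial from Definition~\ref{defn-1.1} attached to $\{\beta_1,\ldots,\beta_k\}$. Write its roots, with multiplicity, as $\beta_1,\ldots,\beta_k,\gamma_1,\ldots,\gamma_s$, where $|\gamma_i|<1$. Form the polynomial
\[
P_n(x)=\prod_{j=1}^k (x-\beta_j^n)\prod_{i=1}^s (x-\gamma_i^n).
\]
Its coefficients are symmetric integer polynomials in the roots of $P$. By Lemma~\ref{lem-Galois}(i), $P_n$ is therefore a monic integer polynomial. It vanishes at every $\beta_j^n$, so each $\beta_j^n$ is an algebraic integer.

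Next check conditions (i) and (ii) of Definition~\ref{defn-1.1} for the set $B_n$ of distinct elements of $\{\beta_1^n,\ldots,\beta_k^n\}$. Condition (i) holds because $|\beta_j^n|=|\beta_j|^n>1$. For condition (ii), note that the roots of $P_n$ are exactly the $\beta_j^n$ together with the $\gamma_i^n$. Every root of $P_n$ outside $B_n$ is some $\gamma_i^n$, and $|\gamma_i^n|<1$. No $\gamma_i^n$ can coincide with an element of $B_n$, since the moduli are on opposite sides of $1$. Repeated roots of $P_n$ cause no problem: the definition asks only for some monic integer polynomial vanishing on the tuple with all other roots inside the open unit disk, and $P_n$ is such a polynomial.

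Finally, check that the type is preserved. If the tuple is of type~I, then $P$ has a root $\gamma_1$ with $|\gamma_1|<1$, and $\gamma_1^n$ is a root of $P_n$ with $|\gamma_1^n|<1$. So $B_n$ is of type~I.

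The main obstacle is the converse: type~II must be preserved. Type~II means that no admissible polynomial for $\{\beta_1,\ldots,\beta_k\}$ has a root in the open disk. Applying this to the minimal polynomials of the $\beta_j$ shows that every Galois conjugate of every $\beta_j$ lies among the $\beta$'s, so $\{\beta_1,\ldots,\beta_k\}$ is a union of full conjugacy classes. Now suppose, for contradiction, that some admissible polynomial $Q$ for $B_n$ has a root in the disk. Then the minimal polynomial of some $\beta_j^n$ divides $Q$. Its roots have the form $\sigma(\beta_j)^n$ for Galois automorphisms $\sigma$, and $\sigma(\beta_j)$ is again some $\beta_{j'}$, so all of them lie in $B_n$. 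Hence $B_n$ is also a union of full conjugacy classes, and the product of the minimal polynomials of its elements is a valid polynomial for $B_n$ with no roots in the disk.

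This does not yet rule out $Q$. The resolution is that type~II should be read as ``no admissible polynomial has a root in the disk''. Under that reading we must show every admissible $Q$ for $B_n$ avoids the disk. Take an admissible $Q$ for $B_n$ and consider $R(x)=Q(x^n)$, which is a monic integer polynomial vanishing at all the $\beta_j$. Its other roots are $n$-th roots of roots of $Q$. Those coming from roots of $Q$ inside the disk have modulus $<1$. Those coming from elements $\beta_{j}^n\in B_n$ have modulus $>1$, but they need not be among the $\beta$'s, so $R$ itself is not admissible. To fix this, pass to the irreducible factors of $R$ that vanish at some $\beta_j$, multiplied by the factors whose roots lie in the disk. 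If $Q$ had a root $w$ with $|w|<1$, a root of $x^n=w$ has modulus $<1$. Adjoining its minimal polynomial to the product of the minimal polynomials of the $\beta_j$ gives an admissible polynomial for $\{\beta_1,\ldots,\beta_k\}$ with a root in the disk, provided that minimal polynomial has no roots of modulus $\ge 1$ outside the $\beta$'s.

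That proviso is the delicate step. I would handle it by taking the minimal polynomial of $w$, which divides $Q$, so all its roots are either in $B_n$ or in the disk. Then choose a root $u$ of $x^n=w$ and use the fact that conjugates of $u$ raised to the $n$-th power are conjugates of $w$. So each conjugate of $u$ either has modulus $<1$ or has $n$-th power equal to some $\beta_j^n$. In the latter case it lies in a full conjugacy class of modulus $>1$ whose $n$-th powers lie in $B_n$. I would then multiply in the minimal polynomials of the $\beta$'s to absorb the classes that belong to the tuple. The remaining risk is that $u$ has conjugates $\zeta\beta_j$, with $\zeta$ a root of unity, that are not themselves among the $\beta$'s.
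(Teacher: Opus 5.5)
Your check that the distinct elements $B_n$ of $\{\beta_1^n,\ldots,\beta_k^n\}$ form a Pisot tuple is fine. The symmetric-function construction of $P_n$ is a self-contained substitute for the fact the paper quotes from Pollard: the distinct $n$-th powers of a full set of conjugates are again a full set of conjugates.

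The gap is in the type statement, and it comes from misreading Definition~\ref{defn-1.1}. Under your reading (``type~II means no admissible polynomial has a root in the open disk''), no type~II tuple exists. If $P$ is admissible then so is $xP(x)$, which has the root $0$. So the claim you try to prove in your last two paragraphs, that every admissible $Q$ for $B_n$ avoids the disk, is false. That is why the argument cannot close and ends with an unresolved ``remaining risk''. The only meaningful reading, and the one the paper uses (case (C2) of the proof of Theorem~\ref{thm-1.4'}, and Lemma~\ref{lem-5.6'}), is that the type is fixed by the minimal admissible polynomial. This is the product $P_{\min}$ of the distinct minimal polynomials of the $\beta_j$, and it divides every admissible polynomial. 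Thus type~II means the tuple is a union of full Galois conjugacy classes, and type~I means it is not.

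With that reading, your own middle paragraph already settles type~II. The conjugates of $\beta_j^n$ are the numbers $\sigma(\beta_j)^n=\beta_{j'}^n$, so $B_n$ is closed under conjugation, i.e.\ of type~II. You should stop there instead of trying to ``rule out $Q$''.

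Your type~I argument is not valid as written. It uses an arbitrary admissible $P$ and the resulting $P_n$, which need not be minimal for $B_n$. Replacing $P$ by $xP$, the same reasoning would ``show'' that every tuple is of type~I. The fix is to take $P=P_{\min}$. Then the root $\gamma$ with $|\gamma|<1$ equals $\sigma(\beta_j)$ for some $j$ and some embedding $\sigma$. Hence $\gamma^n=\sigma(\beta_j^n)$ is a conjugate of $\beta_j^n\in B_n$ lying in the open disk, so $B_n$ is not closed under conjugation and is of type~I. These two observations are exactly what the paper's one-line appeal to Pollard's theorem contains.
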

\begin{proof}
This follows directly from the following standard fact about algebraic numbers:
let $\alpha_1,\ldots,\alpha_\ell$ be the roots of a monic irreducible polynomial in
$\mathbb{Z}[x]$ of degree $\ell$. Then, for any positive integer $n$, the distinct numbers among
$\alpha_1^n,\ldots,\alpha_\ell^n$ are precisely the roots of a monic irreducible polynomial with
integer coefficients; see e.g.~\cite[Theorem~5.10(i)]{Pollard1950}.
\end{proof}

\begin{prop}
\label{pro-2.4}
The following two statements hold:
\begin{itemize}
\item[(i)]
For every positive integer \(n\), the \(n\)-th iterate of \(\Phi\), defined by
\[
\Phi^n:=\{\phi_{\ell_1}\circ\cdots\circ\phi_{\ell_n}
: 1\le \ell_1,\ldots,\ell_n\le m\},
\]
is also a generalized Pisot IFS of the same type as $\Phi$.

\item[(ii)]
There exists \(n\in\mathbb{N}\) such that the following holds:
letting \(\{\widetilde{\beta}_1,\ldots,\widetilde{\beta}_\tau\}\) denote
the associated Pisot tuple of the IFS \(\Phi^n\), and letting $\{\widetilde{\beta}_1,\ldots,\widetilde{\beta}_{\tau'}\}$ be the set of all their conjugates (including themselves)  over $\Q$,  the arguments of these
complex numbers satisfy
\[
\arg(\widetilde{\beta}_j)/(2\pi)\in
\operatorname{span}_{\mathbb{Z}}\{1,\theta_1,\ldots,\theta_s\},
\qquad j=1,\ldots,\tau',
\]
for some \(\theta_1,\ldots,\theta_s\in[0,1)\) such that
\(1,\theta_1,\ldots,\theta_s\) are linearly independent over \(\mathbb{Q}\).
\end{itemize}
\end{prop}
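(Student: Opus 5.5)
For part (i), I will write the $n$-th iterate as a homogeneous IFS $\Phi^n=\{A^n x+b_{\boldsymbol\ell}\}$ indexed by words $\boldsymbol\ell=\ell_1\cdots\ell_n$, where
\[
b_{\boldsymbol\ell}=\sum_{i=1}^{n}A^{i-1}a_{\ell_i}.
\]
The candidate data are the distinct elements $\widetilde\beta$ of $\{\beta_1^n,\ldots,\beta_k^n\}$, which form a Pisot tuple of the same type by Lemma~\ref{lem-npisot}. For each such $\widetilde\beta$, I pick one $j$ with $\beta_j^n=\widetilde\beta$ and set $\widetilde\zeta:=\zeta_j$. Then $(A^\top)^n\widetilde\zeta=\beta_j^{-n}\widetilde\zeta=\widetilde\beta^{-1}\widetilde\zeta$, which is condition~(ii).

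For condition~(iii), fix the base word $1\cdots1$. Then
\[
\langle \zeta_j, b_{\boldsymbol\ell}-b_{1\cdots1}\rangle=\sum_{i=1}^{n}\beta_j^{-(i-1)}P_{\ell_i}(\beta_j).
\]
This is a polynomial in $\beta_j$ and $\beta_j^{-1}$, not yet in $\widetilde\beta=\beta_j^n$, and two issues need handling.

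The negative powers are the minor issue. Fix the minimal monic integer polynomial $Q$ of degree $D$ given by the Pisot tuple. Then $\mathbb Z[\beta_j]$ is spanned by $1,\beta_j,\ldots,\beta_j^{D-1}$, but $\beta_j^{-1}$ need not lie in it since the constant term need not be $\pm1$. I avoid this by changing the base point: conjugate $\Phi^n$ by a translation, or equivalently rescale $\widetilde\zeta$ by $\beta_j^{n-1}$, taking $\widetilde\zeta:=\beta_j^{N}\zeta_j$ for suitable $N$. This is still an eigenvector, and it turns the expression into $\sum_i\beta_j^{N-(i-1)}P_{\ell_i}(\beta_j)\in\mathbb Z[\beta_j]$.

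The substantive issue is expressing an element $R(\beta_j)$ with $R\in\mathbb Z[x]$ as $\widetilde P(\beta_j^n)$ with $\widetilde P\in\mathbb Z[x]$ independent of $j$. My plan is to avoid this by enlarging the tuple instead. I keep the eigenvalue $\widetilde\beta=\beta_j^n$ but use the $n$ eigenvectors $\beta_j^{r}\zeta_j$ for $r=0,\ldots,n-1$, and write $R(x)=\sum_{r<n}x^rR_r(x^n)$.

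If that does not fit the definition, since it requires distinct $\beta$'s, the fallback is to use the fact that Definition~\ref{defn-1.2}(iii) only asks for a polynomial per map. Concretely, decompose $\mathbb Z[\beta_j]$ as a module over $\mathbb Z[\beta_j^n]$ and choose $\widetilde\zeta$ as a suitable $\mathbb Z$-linear combination $c(\beta_j)\zeta_j$ that kills the other components. I expect this bookkeeping, ensuring a single $\widetilde P_{\boldsymbol\ell}$ works simultaneously for all $j$ mapping to the same $\widetilde\beta$ (using $\zeta$-coherence across $j$ with equal $n$-th powers), to be the main obstacle of (i). When $\beta_j^n=\beta_{j'}^n$ with $j\neq j'$, I will simply keep one representative, since only one eigenvector per $\widetilde\beta$ is required.

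For part (ii), let $\gamma_1,\ldots,\gamma_{D}$ be all the conjugates of the $\beta_j$, and let $\vartheta_j=\arg(\gamma_j)/(2\pi)$. Consider the $\mathbb Q$-vector space spanned by $1,\vartheta_1,\ldots,\vartheta_D$. Choose a basis $1,\theta_1',\ldots,\theta_s'$ of it, where the $\theta_i'$ are taken from among the $\vartheta_j$. Then each $\vartheta_j=q_{j0}+\sum_i q_{ji}\theta_i'$ with rational $q_{ji}$. Let $M$ be a common denominator. Then $M\vartheta_j\in\operatorname{span}_{\mathbb Z}\{1,\theta_1',\ldots,\theta_s'\}$. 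Passing to $\Phi^M$, the conjugates of the new tuple are among the $\gamma_j^M$, whose arguments are $M\vartheta_j$ mod $1$. Taking $\theta_i:=\theta_i'$ mod $1$ keeps the $\mathbb Z$-span condition modulo $\mathbb Z$ and preserves linear independence with $1$. Combined with (i), $n=M$ works, and this part is routine.
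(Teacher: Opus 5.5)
Your part (ii) is fine and is essentially the paper's argument. Part (i) has a genuine gap, at exactly the point you flagged: the case $\beta_j^n=\beta_{j'}^n$ with $j\neq j'$.

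\textbf{Why one representative fails.} Suppose $\beta_j\neq\beta_{j'}$ are Galois conjugates in the tuple with $\beta_j^n=\beta_{j'}^n$. Then no vector of the form $c\,\zeta_j$ can satisfy condition (iii) for $\Phi^n$. This covers your rescaled representative $\beta_j^{N}\zeta_j$ and your fallback $c(\beta_j)\zeta_j$.
\begin{itemize}
\item By Lemma~\ref{lem-3.10}(ii), pick $\ell$ with $P_\ell(\beta_j)\neq0$.
\item The words $\ell1\cdots1$ and $1\ell1\cdots1$ give the values $cP_\ell(\beta_j)$ and $c\beta_j^{-1}P_\ell(\beta_j)$.
\item Condition (iii) with eigenvalue $\widetilde\beta=\beta_j^n$ forces both to lie in $\Q(\beta_j^n)$. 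Their ratio then gives $\beta_j\in\Q(\beta_j^n)$.
\item But a Galois automorphism sending $\beta_j$ to $\beta_{j'}$ fixes $\beta_j^n$ and moves $\beta_j$, so $\beta_j\notin\Q(\beta_j^n)$.
\end{itemize}
Concretely, take $A^{-1}=\begin{pmatrix}0&3\\1&0\end{pmatrix}$, $a_1=0$, $a_2=e_1$, $a_3=e_2$, $\zeta_{1,2}=(1,\pm\sqrt3)^\top$, and $n=2$. The two words give $c$ and $c/\sqrt3$, which cannot both be integers.

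Your plan A fails for the same reason, and also because tuples must consist of distinct numbers. The fallback cannot work either. Multiplication by $c(\beta_j)$ is an invertible $\Q(\beta_j^n)$-linear map of $\Q(\beta_j)$, so it cannot ``kill components''. Every $\Q(\beta_j^n)$-linear functional on $\Q(\beta_j)$ has the form $x\mapsto\mathrm{Tr}_{\Q(\beta_j)/\Q(\beta_j^n)}(cx)$, which is a sum over the whole fibre. This is not a corner case: in the example above, part (ii) forces $n$ to be even, which is exactly the collapsing exponent.

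\textbf{The missing idea.} Combine eigenvectors across the fibre, as the paper does:
$\widetilde\zeta_j=\sum_{j'\colon\beta_{j'}^n=\beta_j^n}\beta_{j'}^{\,n-1}\zeta_{j'}$.
This vector is nonzero, since eigenvectors for distinct eigenvalues are independent. It is a $(A^n)^\top$-eigenvector for $\beta_j^{-n}$. Its pairings are
$\langle\widetilde\zeta_j,b_{\boldsymbol\ell}-b_{1\cdots1}\rangle=\sum_{j'}R_{\boldsymbol\ell}(\beta_{j'})$, where $R_{\boldsymbol\ell}(x)=\sum_i x^{n-i}P_{\ell_i}(x)\in\Z[x]$.

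The paper shows these numbers lie in $\Q(\beta_j^n)$ and are Galois-coherent. It does this by combining Lemma~\ref{lem-2.1} (exponential decay of $\|\cdot\|_{\Z}$ along $\beta_j^{nt}$) with K\"ornyei's theorem via Corollary~\ref{cor-2.7}. It then clears denominators with one integer $q$ and transports the polynomial by Galois isomorphisms.

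Your purely algebraic route can also be completed, as follows.
\begin{itemize}
\item First restrict to a single Galois orbit. You need this anyway to transport one polynomial across the new tuple, and for type~I the orbit must have a conjugate in the open unit disk.
\item Then the fibre of $\beta_j$ is exactly the set of $\Q(\beta_j^n)$-conjugates of $\beta_j$. These all have modulus $|\beta_j|>1$, hence lie in the tuple.
\item So the sum is the relative trace of $R_{\boldsymbol\ell}(\beta_j)$. It lies in $\Q(\beta_j^n)$ and commutes with the embeddings.
\item A common denominator over the finitely many words finishes the argument.
\end{itemize}
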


To prove the above proposition, we need the following result of K\"{o}rnyei, which is a special version of \cite[Theorem 1]{Kornyei1987}.

\begin{thm} [{\cite[Theorem 1]{Kornyei1987}}]
\label{thm-kornyei}
Let $\alpha_1,\ldots,\alpha_l$ be distinct non-zero algebraic numbers with $|\alpha_j|> 1$.
Let $c_1,\ldots,c_l$ be nonzero complex numbers.
Suppose
\begin{equation}
\label{e-pvlimit}
\lim_{n\to+\infty} \left\|\sum_{j=1}^l c_j \alpha_j^n \right\|_\Z = 0.
\end{equation}
Then the following statements hold:
\begin{itemize}
\item[(i)]
 $\{\alpha_1,\ldots,\alpha_l\}$ is a Pisot $l$-tuple;
\item[(ii)] for $1\le j\le l$, $c_j\in\Q(\alpha_j)$;
\item[(iii)] if $\alpha_{j_1}$ and $\alpha_{j_2}$ are conjugates over $\Q$ and
$\sigma:\Q(\alpha_{j_1})\to\Q(\alpha_{j_2})$ is an isomorphism with $\sigma(\alpha_{j_1})=\alpha_{j_2}$,
then $\sigma(c_{j_1})=c_{j_2}$.
\end{itemize}
\end{thm}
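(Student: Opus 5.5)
We use the fact that the $\alpha_j$ are algebraic to show that the integer parts of $x_n:=\sum_{j=1}^l c_j\alpha_j^n$ eventually satisfy an integer linear recurrence. We then compare the two exponential-polynomial expansions. This replaces the Hankel-determinant argument of Pisot, which would need a summability hypothesis on the errors rather than mere convergence to $0$.

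\emph{Step 1: an integer recurrence.} Write $x_n=r_n+\varepsilon_n$ with $r_n\in\Z$ and $\varepsilon_n\to0$. Let $R\in\Z[x]$ be the product of the distinct minimal polynomials over $\Q$ of $\alpha_1,\ldots,\alpha_l$, each cleared of denominators. Then $R$ is squarefree; write $R(x)=\sum_{i=0}^D b_ix^i$. Since $R(\alpha_j)=0$ for all $j$, we have $\sum_i b_ix_{n+i}=0$. Hence $\sum_i b_i r_{n+i}=-\sum_i b_i\varepsilon_{n+i}$. The left side is an integer and the right side tends to $0$, so $\sum_i b_ir_{n+i}=0$ for all $n\ge N$. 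Consequently, for $n\ge N$,
\[
r_n=\sum_{\gamma:\,R(\gamma)=0} d_\gamma\gamma^n
\]
for unique constants $d_\gamma$. The $d_\gamma$ are obtained by solving a Vandermonde system with integer data $r_N,\ldots,r_{N+D-1}$. Applying a field automorphism to this system permutes the roots of $R$, so $d_\gamma\in\Q(\gamma)$, and any isomorphism $\sigma:\Q(\gamma)\to\Q(\gamma')$ with $\sigma(\gamma)=\gamma'$ satisfies $\sigma(d_\gamma)=d_{\gamma'}$. In particular, within each Galois orbit the $d_\gamma$ are either all zero or all nonzero.

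\emph{Step 2: comparing expansions.} Set $c_\gamma:=c_j$ if $\gamma=\alpha_j$, and $c_\gamma:=0$ otherwise. Then
\[
\varepsilon_n=\sum_\gamma (c_\gamma-d_\gamma)\gamma^n\to0 .
\]
We claim every coefficient $c_\gamma-d_\gamma$ with $|\gamma|\ge1$ vanishes; this is the main technical point.
\begin{itemize}
\item For roots with $|\gamma|>1$, let $\rho$ be the largest modulus present among those with nonzero coefficients. Divide by $\rho^n$ and argue as in Lemma~\ref{lem-infinity}: the terms of modulus $\rho$ do not tend to $0$ while the rest do, a contradiction.
\item For roots with $|\gamma|=1$, the sum $\sum_{|\gamma|=1}e_\gamma\gamma^n$ must tend to $0$. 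For each such $\gamma_0$, multiply by $\overline{\gamma_0}^{\,n}$ and take Ces\`aro averages over $n$. This isolates $e_{\gamma_0}$, so $e_{\gamma_0}=0$.
\end{itemize}
Hence $c_j=d_{\alpha_j}$ for each $j$. This gives (ii) and (iii) directly from the Galois compatibility of the $d_\gamma$ in Step~1. It also shows $d_\gamma=0$ for every root $\gamma\notin\{\alpha_j\}$ with $|\gamma|\ge1$.

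\emph{Step 3: the Pisot property.} Since $c_j\ne0$, every Galois conjugate $\gamma$ of any $\alpha_j$ has $d_\gamma\neq0$. Therefore either $|\gamma|<1$ or $\gamma$ is one of the $\alpha$'s. Here $|\gamma|=1$ is excluded, because then $c_\gamma=0\ne d_\gamma$, contradicting Step~2. So all roots of $R$ outside $\{\alpha_j\}$ lie in the open unit disk.

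It remains to show that the $\alpha_j$ are algebraic integers, so that $R$ can be taken monic. The generating function $\sum_n r_nz^n$ has integer coefficients and is rational. By Fatou's lemma it can be written in lowest terms as $U(z)/V(z)$ with $U,V\in\Z[z]$ and $V(0)=1$. The reduced denominator $V$ has as roots exactly the $1/\gamma$ with $d_\gamma\ne0$. Its reciprocal polynomial is therefore monic in $\Z[x]$ and vanishes at all $\alpha_j$, and all its other roots have modulus $<1$. Thus this reciprocal polynomial is a valid choice of $P$, which proves (i).
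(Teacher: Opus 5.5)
Your proof is correct. The paper offers nothing to compare it with line by line: it does not prove this statement, but imports it as a special version of K\"{o}rnyei's Theorem~1. It then uses it as a black box in Corollary~\ref{cor-2.7} and Proposition~\ref{pro-2.4}, and, together with Pisot's theorem, in Theorem~\ref{thm-Pisot}.

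What you give is a self-contained proof along the lines of the classical argument for Hardy's theorem on a single algebraic $\lambda$, extended to several $\alpha_j$:
\begin{itemize}
\item Algebraicity turns $\|x_n\|_\Z\to0$ into an exact integer recurrence for the nearest integers $r_n$.
\item Galois equivariance of the Vandermonde solution gives (ii) and (iii) once you have shown $c_j=d_{\alpha_j}$.
\item The same equivariance, combined with the vanishing of $d_\gamma$ for $|\gamma|\ge1$ and $\gamma\notin\{\alpha_j\}$, places the remaining conjugates inside the open unit disk.
\item Fatou's lemma supplies integrality.
\end{itemize}
In fact Step~3 shows $d_\gamma\neq0$ for \emph{every} root of $R$. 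So your $P$ is simply the product of the monic minimal polynomials of the $\alpha_j$, which Fatou forces to have integer coefficients.

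Two steps deserve one more line when written out.
\begin{itemize}
\item Make ``argue as in Lemma~\ref{lem-infinity}'' literal. Apply that lemma to the vectors $e_\gamma(1,\gamma,\ldots,\gamma^{D-1})^\top$, which are linearly independent by Vandermonde. A nonzero $e_\gamma$ with $|\gamma|>1$ would then force $\|(\varepsilon_n,\ldots,\varepsilon_{n+D-1})\|\to\infty$. Alternatively, reuse your Ces\`aro argument at the top modulus $\rho$.
\item The passage to an arbitrary isomorphism $\sigma:\Q(\gamma)\to\Q(\gamma')$ uses that $\sigma$ extends to the splitting field of $R$. The membership $d_\gamma\in\Q(\gamma)$ uses the Galois correspondence.
\end{itemize}

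Your route buys transparency: it shows exactly where algebraicity lets the weak hypothesis $\|x_n\|_\Z\to0$ replace Pisot's square-summability. The citation buys generality. The paper calls the present statement a special version of K\"{o}rnyei's theorem, and it also draws on that theorem for Theorem~\ref{thm-Pisot}, where the coefficients are polynomials in $n$. Your method would handle that case too, by replacing $R$ with a suitable power of $R$.
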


The above theorem has a direct corollary.
\begin{cor}
\label{cor-2.7}
Let $\{\alpha_1,\ldots,\alpha_l\}$ be a Pisot $l$-tuple such that $\alpha_1,\ldots, \alpha_l$ are conjugates over $\Q$. Suppose that \eqref{e-pvlimit} holds for some complex numbers $c_1,\ldots, c_l$; here we don't require them to be nonzero. Then  conclusions
\((ii)\) and \((iii)\) of Theorem~\ref{thm-kornyei} hold.
\end{cor}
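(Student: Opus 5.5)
Since \(\{\alpha_1,\ldots,\alpha_l\}\) is assumed to be a Pisot \(l\)-tuple, only conclusions (ii) and (iii) need proof. The idea is to discard the indices with \(c_j=0\) and apply Theorem~\ref{thm-kornyei} to what remains. Then we use the fact that all \(\alpha_j\) are conjugate to show the vanishing pattern of the \(c_j\) is all-or-nothing.

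\emph{Step 1 (trivial case).} Let \(J=\{j: c_j\neq 0\}\). If \(J=\emptyset\), then every \(c_j=0\), and \(0\in\mathbb Q(\alpha_j)\). Moreover \(\sigma(0)=0\) for every isomorphism \(\sigma\), so (ii) and (iii) hold.

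\emph{Step 2 (apply Környei to \(J\)).} Suppose \(J\neq\emptyset\). The \(\alpha_j\) with \(j\in J\) are distinct, nonzero and satisfy \(|\alpha_j|>1\), because they belong to a Pisot tuple. Since the sum in \eqref{e-pvlimit} only involves indices in \(J\), Theorem~\ref{thm-kornyei} applies to \(\{\alpha_j\}_{j\in J}\) with coefficients \(\{c_j\}_{j\in J}\). It gives \(c_j\in\mathbb Q(\alpha_j)\) for \(j\in J\), and \(\sigma(c_{j_1})=c_{j_2}\) whenever \(j_1,j_2\in J\) and \(\sigma(\alpha_{j_1})=\alpha_{j_2}\).

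\emph{Step 3 (the main point: \(J\) is everything).} Suppose \(j_1\in J\) and \(j_2\notin J\). Because all the \(\alpha_j\) are conjugate over \(\mathbb Q\), there is an isomorphism \(\sigma:\mathbb Q(\alpha_{j_1})\to\mathbb Q(\alpha_{j_2})\) with \(\sigma(\alpha_{j_1})=\alpha_{j_2}\). We must rule this situation out. The natural tool is Theorem~\ref{thm-kornyei}(i) applied to the subfamily \(\{\alpha_j\}_{j\in J}\): it says this subfamily is itself a Pisot tuple, so there is a monic \(Q\in\mathbb Z[x]\) vanishing at every \(\alpha_j\), \(j\in J\), all of whose other roots have modulus \(<1\). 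Since \(Q(\alpha_{j_1})=0\) and \(\alpha_{j_2}\) is conjugate to \(\alpha_{j_1}\), the minimal polynomial of \(\alpha_{j_1}\) divides \(Q\), hence \(Q(\alpha_{j_2})=0\). But \(|\alpha_{j_2}|>1\) and \(j_2\notin J\), so \(\alpha_{j_2}\) is a root of \(Q\) outside the subfamily with modulus \(\ge 1\). This contradicts the Pisot property of \(\{\alpha_j\}_{j\in J}\). Hence \(J=\{1,\ldots,l\}\), and Step 2 gives (ii) and (iii) for all indices.

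The only nontrivial step is Step 3, which rests on the observation that a Pisot tuple must contain every conjugate of its members that has modulus greater than one. The rest is direct bookkeeping with Theorem~\ref{thm-kornyei}.
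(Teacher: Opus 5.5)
Your proposal is correct and takes essentially the same approach as the paper, which also reduces to showing that the \(c_j\) are either all zero or all nonzero, and rules out the mixed case because Theorem~\ref{thm-kornyei}(i) would then make a proper subset of the mutually conjugate \(\alpha_j\) a Pisot tuple. Your Step~3 simply spells out why that is impossible: the subfamily's polynomial must also vanish at the omitted conjugates, which have modulus greater than one.
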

\begin{proof}
It suffices to show that either all of \( c_1,\ldots,c_l \) are zero, or all of them are nonzero.
Suppose, on the contrary, that some of them are zero but not all.
Then, by Theorem~\ref{thm-kornyei}, there exists a proper subset of
$
\{\alpha_1,\ldots,\alpha_l\}
$
that forms a Pisot tuple. However, this is impossible, since by assumption
\( \alpha_1,\ldots,\alpha_l \) are conjugates over \( \mathbb{Q} \).
\end{proof}

We also need the following lemma.
\begin{lem}\label{lem-2.1}

    For any $Q\in \mathbb{Z}[x]$, there exist $C>1$ and $0<\delta<1$ such that
    $$
    \left\|\sum_{j=1}^k \beta_j^nQ(\beta_j)\right\|_\Z\leq C \delta^{|n|} \quad \text{ for all } n \in \mathbb{Z}.
    $$
\end{lem}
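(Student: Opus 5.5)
Let $P\in\Z[x]$ be the monic integer polynomial from Definition~\ref{defn-1.1} for the Pisot tuple $\{\beta_1,\ldots,\beta_k\}$. Write its roots as $\beta_1,\ldots,\beta_k,\beta_{k+1},\ldots,\beta_N$, where $|\beta_j|<1$ for $j>k$. If multiplicities occur we first pass to the product of the minimal polynomials of the $\beta_j$, which removes repeated roots while keeping conditions (i) and (ii). We may assume $P$ has no factor $x$, so every $\beta_j\neq 0$. The plan is to compare $S_n:=\sum_{j=1}^k\beta_j^nQ(\beta_j)$ with the full sum $T_n:=\sum_{j=1}^N\beta_j^nQ(\beta_j)$. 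The idea is that $T_n$ is rational with bounded denominator, and the discarded terms are geometrically small for $n\ge 0$.

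The case $n\ge0$ is direct. The expression $\sum_{j=1}^N x_j^nQ(x_j)$ is a symmetric integer polynomial in $x_1,\ldots,x_N$, so $T_n\in\Z$ by Lemma~\ref{lem-Galois}(i). Hence
\[
\|S_n\|_\Z\le\Bigl|\sum_{j>k}\beta_j^nQ(\beta_j)\Bigr|\le C\delta^n,
\]
with $\delta:=\max_{j>k}|\beta_j|<1$. If $N=k$ the error term is $0$.

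The case $n<0$ is the delicate point, and it depends on the type of the tuple.

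\emph{Type~II.} Here $N=k$ and the constant term $P(0)$ is a nonzero integer. The quantity $T_n$ is a symmetric rational function whose denominator divides a power of the constant term. Equivalently, $\beta_j^{-1}$ are algebraic integers only if $|P(0)|=1$. So in general $T_{-n}\in\Q$ with denominator $P(0)^{n}$, and no geometric decay of the distance to $\Z$ is available. I therefore expect that the statement as used must rely on a different mechanism for negative $n$.

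\emph{Type~I.} For $n<0$ the small roots $\beta_j$ ($j>k$) now become large, so the splitting above fails.

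The natural fix for negative $n$ is to use $\beta_j^{-1}$ directly. Since $|\beta_j^{-1}|<1$ for $j\le k$, we get
\[
|S_{-n}|\le\sum_{j\le k}|\beta_j|^{-n}|Q(\beta_j)|\le C\delta'^{\,n},\qquad \delta':=\max_{j\le k}|\beta_j|^{-1}<1.
\]
So $\|S_{-n}\|_\Z\le|S_{-n}|$ decays geometrically, with no arithmetic input needed. Combining the two cases gives the claim with $\delta=\max(\max_{j>k}|\beta_j|,\max_{j\le k}|\beta_j|^{-1})<1$ and a suitable $C$, the finitely many small $|n|$ being absorbed into $C$.

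The only genuine step is the integrality of $T_n$ for $n\ge0$, which is Lemma~\ref{lem-Galois}(i). The main obstacle is simply noticing that for negative $n$ a trivial size bound suffices, so the type issues raised above do not actually arise.
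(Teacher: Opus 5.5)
Your argument is correct and follows the same route as the paper's. For $n\ge 0$ you use integrality of the full sum over all roots of the minimal monic integer polynomial vanishing at the $\beta_j$ (Lemma~\ref{lem-Galois}(i)) and bound the small-root tail; for $n<0$ you use the trivial bound $\sum_{j\le k}|\beta_j|^{n}|Q(\beta_j)|$, taking $\delta$ as the maximum of the $|\beta_j|^{-1}$ ($j\le k$) and $|\beta_j|$ ($j>k$). The type~I/type~II digression about $T_{-n}$ is unnecessary and can be deleted, since the case $n<0$ needs no arithmetic input.
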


\begin{proof}
    Let $R \in \mathbb{Z}[x]$ be the monic integer polynomial of smallest degree such that
$R(\beta_j)=0$ for $1 \le j \le k$. Denote by $\beta_{k+1},\ldots,\beta_{k'}$
the remaining roots of $R$, if any. Set
\[
\delta = \max\{|\beta_1|^{-1},\ldots,|\beta_k|^{-1},
|\beta_{k+1}|,\ldots,|\beta_{k'}|\}
\]
and
\[
C=\max\left\{\sum_{j=1}^k |Q(\beta_j)|,\;
\sum_{j=k+1}^{k'} |Q(\beta_j)|\right\}.
\]
Then $0<\delta<1$, since $\{\beta_1,\ldots, \beta_k\}$ is a Pisot tuple.

Now suppose that $n<0$. Then
\[
\left\|\sum_{j=1}^k \beta_j^n Q(\beta_j)\right\|_{\mathbb{Z}}
\le \sum_{j=1}^k |\beta_j|^n\, |Q(\beta_j)|
\le C\,\delta^{-n}.
\]
On the other hand, if $n\ge 0$, since
$\sum_{j=1}^{k'} \beta_j^nQ(\beta_j) \in \mathbb{Z}$ by Lemma~\ref{lem-Galois}(i), we have
\[
\left\|\sum_{j=1}^k \beta_j^n Q(\beta_j)\right\|_{\mathbb{Z}}
=
\left\|\sum_{j=k+1}^{k'} \beta_j^nQ(\beta_j)\right\|_{\mathbb{Z}}
\le \sum_{j=k+1}^{k'} |\beta_j|^n\,|Q(\beta_j)|
\le C\,\delta^n.
\qedhere
\]
\end{proof}

\medskip

\begin{proof}[Proof of Proposition~\ref{pro-2.4}] Suppose that $\Phi$ is a generalized Pisot IFS of type I (resp. type II) on $\mathbb{R}^d$ with parameters $k$,
$\beta_1,\ldots,\beta_k$, $\zeta_1,\ldots,\zeta_k$, and $P_1,\ldots,P_m$
as in Definition~\ref{defn-1.2}. Without loss of generality we may assume that $a_1=0$. By replacing the index set \(\{1,\ldots,k\}\) with a suitable subset if necessary, we may assume that
\(\beta_1,\ldots,\beta_k\) are Galois conjugates over \(\mathbb{Q}\).

To prove part (i), let $n\in \N$. We show below that the $n$-th iterate $\Phi^n$ of $\Phi$ is again a generalized Pisot IFS of type I (resp. type II).

For  each word $\ell_1\ldots \ell_n\in \{1,\ldots,m\}^n$,
define
$$
a_{\ell_1\ldots \ell_n}:=\sum_{i=1}^{n}A^{i-1}a_{\ell_i}.
$$
Clearly, $$\Phi^n=\left\{A^nx+a_{\ell_1\ldots \ell_n}:\; \ell_1\ldots \ell_n\in \{1,\ldots,m\}^n\right\}.$$

By  relabeling the indices $\{1,\ldots, k\}$ if necessary, we may assume that
\( \beta_1^n,\ldots,\beta_\tau^n \) enumerate the distinct elements of the set
\( \{\beta_j^n\}_{j=1}^k \). By Lemma~\ref{lem-npisot},
\( \{\beta_1^n,\ldots,\beta_\tau^n\} \) is a Pisot tuple. For $1\leq j\leq \tau$, write
$$
[j]=\{1\leq j'\leq k\colon \beta_{j'}^n=\beta_j^n\}.
$$

Note that for each $1\leq j\leq k$ and every word $\ell_1\ldots \ell_n\in \{1,\ldots,m\}^n$,
\begin{align*}
\left\langle \beta_j^{n-1} \zeta_j, a_{\ell_1\ldots \ell_n} \right\rangle&=\sum_{i=1}^{n} \beta_j^{n-1} \left\langle \zeta_j, A^{i-1} a_{\ell_i})\right\rangle
=\sum_{i=1}^{n} \beta_j^{n-1}\left \langle (A^\top)^{i-1}\zeta_j,  a_{\ell_i}\right\rangle\\
&=\sum_{i=1}^{n} \beta_j^{n-i}\left \langle \zeta_j,  a_{\ell_i}\right\rangle=\sum_{i=1}^n \beta_j^{n-i}P_{\ell_i}(\beta_j).
\end{align*}
Together with Lemma~\ref{lem-2.1}, this implies that for every $\ell_1\ldots \ell_n\in \{1,\ldots,m\}^n$,
\begin{align*}
\left \|\sum_{j=1}^k   \left\langle \beta_j^{n-1} \zeta_j, a_{\ell_1\ldots \ell_n} \right\rangle \beta_j^{nt} \right\|_\Z&=
\left \|\sum_{j=1}^k  \sum_{i=1}^n \beta_j^{n-i}P_{\ell_i}(\beta_j)\beta_j^{nt} \right\|_\Z\\
&\leq
\sum_{i=1}^n \left \|\sum_{j=1}^k   \beta_j^{n-i}P_{\ell_i}(\beta_j)\beta_j^{nt} \right\|_\Z
\longrightarrow  0 \;\text{ exponentially},
\end{align*}
as $t\in \N$, $t\to +\infty$. Equivalently,
\begin{equation}
\label{e-tend0}
\left \|\sum_{j=1}^\tau   \left\langle \sum_{j'\in [j]}\beta_{j'}^{n-1} \zeta_{j'}, a_{\ell_1\ldots \ell_n} \right\rangle\left(\beta_j^{n}\right)^t \right\|_\Z\longrightarrow 0 \;\text{ exponentially},
\end{equation}
as $t\in \N$, $t\to +\infty$.
Define
$$
\widetilde{\zeta}_j=\sum_{j'\in [j]}\beta_{j'}^{n-1} \zeta_{j'},\quad j=1,\ldots, \tau.
$$
By \eqref{e-tend0} and Corollary~\ref{cor-2.7},  for every $\ell_1\ldots \ell_n\in \{1,\ldots,m\}^n$,
$$
 \left\langle \widetilde{\zeta}_j, a_{\ell_1\ldots \ell_n} \right\rangle\in \Q\left( \beta_j^{n}\right),\quad j=1,\ldots, \tau;
$$
moreover, for any $j_1, j_2\in \{1,\ldots, \tau\}$, if
$\sigma:\Q(\beta^n_{j_1})\to\Q(\beta^n_{j_2})$ is an isomorphism with $\sigma(\beta^n_{j_1})=\beta^n_{j_2}$,
then
$$
\sigma\left(\left\langle \widetilde{\zeta}_{j_1}, a_{\ell_1\ldots \ell_n} \right\rangle\right)= \left\langle \widetilde{\zeta}_{j_2}, a_{\ell_1\ldots \ell_n} \right\rangle.
$$

Since $\left\langle \widetilde{\zeta}_1, a_{\ell_1\ldots \ell_n} \right\rangle\in \Q\left( \beta_1^{n}\right)$ for all $\ell_1\ldots \ell_n\in \{1,\ldots,m\}^n$, it follows that there exist $q\in \N$ and integer polynomials $P_{\ell_1\ldots \ell_n}\in \Z[x]$,  such that
$$
 \left\langle q \widetilde{\zeta}_1, a_{\ell_1\ldots \ell_n} \right\rangle=P_{\ell_1\ldots \ell_n}(\beta_1^n)\quad\text{ for all }\ell_1\ldots \ell_n.
  $$
 For every $j\in \{1,\ldots, \tau\}$, letting $\sigma:\Q(\beta^n_{1})\to\Q(\beta^n_{j})$ be an isomorphism with $\sigma(\beta^n_{1})=\beta^n_{j}$, we have for all $\ell_1\ldots \ell_n\in \{1,\ldots,m\}^n$,
 $$
 \left\langle q \widetilde{\zeta}_j, a_{\ell_1\ldots \ell_n} \right\rangle=\sigma \left(\left\langle q \widetilde{\zeta}_1, a_{\ell_1\ldots \ell_n} \right\rangle\right)=\sigma (P_{\ell_1\ldots \ell_n}(\beta_1^n))= P_{\ell_1\ldots \ell_n}(\beta_j^n).
 $$

  Finally, observe that \( q\widetilde{\zeta}_j\neq 0 \) and
\[
(A^n)^\top(q\widetilde{\zeta}_j)
=(A^\top)^n\!\left(q\sum_{j'\in [j]} \beta_{j'}^{n-1}\zeta_{j'}\right)
=q\sum_{j'\in [j]} \beta_{j'}^{-1}\zeta_{j'}
=\beta_j^{-n}\, q\widetilde{\zeta}_j,
\quad 1\le j\le \tau.
\]
Hence \( \Phi^n \) is a generalized Pisot IFS of type~I (resp.\ type~II),
with parameters
\(
\tau,\,
\beta_1^n,\ldots,\beta_\tau^n,\,
q\widetilde{\zeta}_1,\ldots,q\widetilde{\zeta}_\tau
\),
and
\( \{P_{\ell_1\ldots \ell_n}\}_{\ell_1\ldots \ell_n\in \{1,\ldots,m\}^n} \).  This completes the proof of part (i).

Next we prove part (ii) of the proposition. Let $\beta_1,\ldots, \beta_{k'}$ denote  all Galois conjugates of $\beta_1,\ldots, \beta_k$ over $\Q$, including $\beta_1,\ldots, \beta_k$  themselves.   Since $${\rm span}_\Q\left\{\frac{\arg(\beta_1)}{2\pi},\ldots, \frac{\arg(\beta_{k'})}{2\pi}\right\}$$ has  finite rank, there exist $\theta_1,\ldots,\theta_s \in [0,1)$ with $s\geq 1$ such that
$1,\theta_1,\ldots,\theta_s$ are rationally independent, and  rational numbers
$t_{j,u}$ ($1 \leq j \leq k'$,  $0 \leq u \leq s$) satisfying
$$
\frac{\arg(\beta_j)}{2\pi} =t_{j,0}+ \sum_{u=1}^s t_{j,u}\theta_u,
\qquad j=1,\ldots,k'.
$$
Choose a positive integer $n$ such that  $nt_{j,u}\in \Z$ for all $1 \leq j \leq k'$ and $0 \leq u \leq s$.
Then
$$\frac{\arg(\beta_j^n)}{2\pi}\in {\rm span}_\Z(1,\theta_1,\ldots, \theta_s), \qquad j=1,\ldots,k',$$  which completes the proof of part (ii).
\end{proof}

\begin{lem}\label{lem-2.2}
  Let $\mu$ be the self-affine measure associated with $\Phi$ and a probability vector $\pb=(p_\ell)_{\ell=1}^m$.  Suppose that $\mu$ is Rajchman. Then for any $Q \in \mathbb{Z}[x]$
such that $Q(\beta_j)\neq 0$ for at least one $j \in \{1,\ldots,k\}$,
there exists an integer $n \in \mathbb{Z}$ such that
\[
\sum_{\ell=1}^m p_\ell
\exp\!\left(2\pi i
\sum_{j=1}^k \beta_j^n Q(\beta_j) P_\ell(\beta_j)
\right)=0.
\]
\end{lem}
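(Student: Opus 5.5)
The plan is to adapt Salem's argument. First we build a frequency at which the Fourier transform is large. We may assume $a_1=0$, since translating $\mu$ only multiplies $\widehat\mu$ by a unimodular factor. Consider the real vector
\[
\xi:=\sum_{j=1}^k Q(\beta_j)\,\overline{\zeta_j},
\]
whose reality comes from Lemma~\ref{lem-conjugate} together with $Q\in\Z[x]$. For $t\in\Z$, using $A^\top\overline{\zeta_j}=\overline{\beta_j}^{-1}\overline{\zeta_j}$ (this follows because $A$ is real), we compute
\[
\langle (A^\top)^{-t}\xi, a_\ell\rangle=\sum_j \beta_j^{t}Q(\beta_j)P_\ell(\beta_j).
\]
We must fix the convention so that this equals the conjugate-free expression, i.e. so that $\langle x,a\rangle$ for real $a$ gives $\sum_j \beta_j^t Q(\beta_j)P_\ell(\beta_j)$. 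This needs $\xi_t:=(A^\top)^{-t}\xi$ to be defined through $\overline{\overline{\zeta_j}}$ appropriately; we take $\xi=\sum_j\overline{Q(\beta_j)\zeta_j}$, and since the set $\{(\beta_j,\zeta_j)\}$ is closed under conjugation this is real. It follows that $H((A^\top)^{n}\xi_t)=\sum_\ell p_\ell\exp(2\pi i\sum_j\beta_j^{t-n}Q(\beta_j)P_\ell(\beta_j))$.

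Next we argue by contradiction. Suppose no $n\in\Z$ makes the displayed sum vanish. Then
\[
\widehat\mu(\xi_t)=\prod_{n\ge0}H\bigl((A^\top)^n\xi_t\bigr)=\prod_{s\le t}h(s),\qquad h(s):=\sum_\ell p_\ell e^{2\pi i\sum_j\beta_j^{s}Q(\beta_j)P_\ell(\beta_j)}.
\]
By Lemma~\ref{lem-2.1} applied to $QP_\ell$, the inner exponent is within $C\delta^{|s|}$ of an integer. Hence $|1-h(s)|\le C'\delta^{|s|}$ for all $s\in\Z$. The product $\prod_{s\in\Z}h(s)$ therefore converges absolutely, and since no factor vanishes, it is nonzero. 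Consequently $|\widehat\mu(\xi_t)|=|\prod_{s\le t}h(s)|\to|\prod_{s\in\Z}h(s)|>0$ as $t\to+\infty$.

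It remains to show that $\|\xi_t\|\to\infty$, which contradicts the Rajchman property. We have $\xi_t=\sum_j \overline{\beta_j}^{\,t}\,\overline{Q(\beta_j)\zeta_j}$. The vectors $\zeta_j$ are eigenvectors of $A^\top$ for the distinct eigenvalues $\beta_j^{-1}$, so they are linearly independent. At least one coefficient $Q(\beta_j)$ is nonzero, and every $|\beta_j|>1$. Lemma~\ref{lem-infinity}, applied to those $j$ with $Q(\beta_j)\neq0$, then gives $\|\xi_t\|\to\infty$.

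The main obstacle is the bookkeeping of conjugations: we need $\xi$ to be real and the phases $\langle\xi_t,a_\ell\rangle$ to be exactly the integer-approximable sums $\sum_j\beta_j^{s}QP_\ell(\beta_j)$. If the tuple $\{\beta_j\}$ is not closed under complex conjugation, we will need to use Lemma~\ref{lem-conjugate} and the fact that the roots of the minimal Pisot polynomial outside the unit disk form a conjugation-closed set. If necessary we will pass to that closure, which does not affect the argument.
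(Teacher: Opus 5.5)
Your proposal is correct and follows essentially the paper's argument. The paper takes $\xi_N=\sum_{j=1}^k\beta_j^NQ(\beta_j)\zeta_j$, which is real by Lemma~\ref{lem-conjugate}, uses Lemma~\ref{lem-infinity} to get $\|\xi_N\|\to\infty$, and then concludes from the absolutely convergent product $\prod_{n\in\Z}$ that some factor vanishes, which is exactly your contrapositive; your conjugation detour is unnecessary, because a Pisot tuple is automatically closed under complex conjugation ($P$ is real and the remaining roots lie in the open unit disk), so your final $\xi=\sum_j\overline{Q(\beta_j)\zeta_j}$ is literally $\sum_jQ(\beta_j)\zeta_j$.
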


\begin{proof}
Set $\xi_N=\sum_{j=1}^k \beta_j^N Q(\beta_j) \zeta_j$ for $N\in \mathbb{N}$. Since $\{\beta_1,\ldots,\beta_k\}$ is a Pisot tuple, for each $1\leq j\leq k$, $$\overline{\beta_j}\in \{\beta_1,\ldots,\beta_k\}.$$ Hence, by Lemma~\ref{lem-conjugate}, $\xi_N\in \R$.
Then, by Lemma~\ref{lem-infinity}, we have $|\xi_N| \to \infty$ as $N \to +\infty$.
Consequently,
\[
\widehat{\mu}(\xi_N)\to 0 \quad \text{as } N\to +\infty,
\]
since $\mu$ is Rajchman. Recall that
\[
\widehat{\mu}(\xi)
=\prod_{n=0}^\infty H\left((A^\top)^{n}\xi\right),
\qquad\text{where }
H(\xi)=\sum_{\ell=1}^m p_\ell \exp\left(2\pi i\langle \xi,\; a_\ell \rangle\right).
\]
A direct computation yields
\begin{align*}
H\left((A^\top)^{n}\xi_N\right)
&=\sum_{\ell=1}^m p_\ell
\exp\left(2\pi i\left\langle (A^\top)^{n} \xi_N,\; a_\ell \right\rangle\right)\\
&=\sum_{\ell=1}^m p_\ell
\exp\!\left(2\pi i\left\langle
\sum_{j=1}^k \beta_j^{N-n} Q(\beta_j)\zeta_j,\; a_\ell
\right\rangle\right)\\
&=\sum_{\ell=1}^m p_\ell
\exp\!\left(2\pi i
\sum_{j=1}^k \beta_j^{N-n} Q(\beta_j) P_\ell(\beta_j)
\right).
\end{align*}
It follows that for $N\in \mathbb{N}$,
\[
\widehat{\mu}(\xi_N)
=\prod_{n=-\infty}^N
\left[
\sum_{\ell=1}^m p_\ell
\exp\!\left(2\pi i
\sum_{j=1}^k \beta_j^{n} Q(\beta_j) P_\ell(\beta_j)
\right)
\right].
\]
Hence,
\[
0=\lim_{N\to +\infty}\widehat{\mu}(\xi_N)
=\prod_{n=-\infty}^{+\infty}
\left[
\sum_{\ell=1}^m p_\ell
\exp\!\left(2\pi i
\sum_{j=1}^k \beta_j^{n} Q(\beta_j) P_\ell(\beta_j)
\right)
\right],
\]
where the latter infinite product converges, since the $n$th factor is
$1+O(\delta^{|n|})$ for some $0<\delta<1$, by applying
Lemma~\ref{lem-2.1} to the polynomials $QP_\ell$, $1\le \ell\le m$.
This forces
\[
\sum_{\ell=1}^m p_\ell
\exp\!\left(2\pi i
\sum_{j=1}^k \beta_j^{n} Q(\beta_j) P_\ell(\beta_j)
\right)=0
\]
for some $n\in \mathbb{Z}$.
\end{proof}

\begin{lem}\label{lem-2.3}
    For any $Q \in \mathbb{Z}[x]$, there exists $N \in \mathbb{N}$ such that,
for any probability vector $\pb=(p_\ell)_{\ell=1}^m$ and any
$n \in \mathbb{Z}$ with $|n|>N$, we have
\[
\sum_{\ell=1}^m p_\ell
\exp\!\left(2\pi i
\sum_{j=1}^k \beta_j^n Q(\beta_j) P_\ell(\beta_j)
\right)\neq 0.
\]
\end{lem}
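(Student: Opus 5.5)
The plan is to use Lemma~\ref{lem-2.1} to show that each exponent is exponentially close to an integer. The corresponding exponential is then close to $1$, and a convex combination of such terms stays away from $0$ regardless of $\pb$. The key point is that the threshold $N$ depends only on $Q$ and the data $\beta_j$, $P_\ell$, not on $\pb$.

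Fix $Q\in\mathbb{Z}[x]$. For each $1\le \ell\le m$, apply Lemma~\ref{lem-2.1} to the integer polynomial $QP_\ell$. This gives constants $C_\ell>1$ and $0<\delta_\ell<1$ with
\[
\Bigl\|\sum_{j=1}^k \beta_j^n Q(\beta_j)P_\ell(\beta_j)\Bigr\|_{\mathbb{Z}}\le C_\ell\,\delta_\ell^{|n|}\quad\text{for all } n\in\mathbb{Z}.
\]
Set $C=\max_\ell C_\ell$ and $\delta=\max_\ell\delta_\ell$, and choose $N$ so large that $C\delta^{N}<1/4$.

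Now fix $|n|>N$ and write $x_\ell=\sum_{j}\beta_j^nQ(\beta_j)P_\ell(\beta_j)$. A preliminary check is needed here: the exponent should be real for $\|\cdot\|_{\mathbb{Z}}$ to make sense. Since the Pisot tuple is closed under complex conjugation and $Q,P_\ell$ have integer coefficients, $x_\ell$ is real. (This is the same remark used in Lemma~\ref{lem-2.2}.)

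We then have $\|x_\ell\|_{\mathbb{Z}}<1/4$, so $x_\ell=z_\ell+\epsilon_\ell$ with $z_\ell\in\mathbb{Z}$ and $|\epsilon_\ell|<1/4$. Consequently
\[
\operatorname{Re}\exp(2\pi i x_\ell)=\cos(2\pi\epsilon_\ell)>0 .
\]
Since $\pb$ is a probability vector, we get
\[
\operatorname{Re}\sum_\ell p_\ell \exp(2\pi i x_\ell)=\sum_\ell p_\ell\cos(2\pi\epsilon_\ell)>0,
\]
because at least one $p_\ell>0$ and all the cosines are positive. Hence the sum is nonzero, which proves the lemma.

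The only step needing care is the reality of $x_\ell$. Closure of $\{\beta_1,\ldots,\beta_k\}$ under conjugation follows from Definition~\ref{defn-1.1}(ii), since non-real roots of an integer polynomial come in conjugate pairs of equal modulus. Strictly speaking, however, the lemma can be proved without this: the proof of Lemma~\ref{lem-2.1} in fact bounds $|x_\ell-z_\ell|$ for some integer $z_\ell$ (namely $0$, or the integer $\sum_{j\le k'}$), and the real-part argument only needs $|\epsilon_\ell|<1/4$ in absolute value. For complex $\epsilon_\ell$ with $|\epsilon_\ell|$ small, one still has $|e^{2\pi i\epsilon_\ell}-1|<1$ once $C\delta^N$ is small enough, and then
\[
\Bigl|\sum_\ell p_\ell e^{2\pi i x_\ell}-1\Bigr|<1,
\]
so the sum is again nonzero. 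I would use this version to avoid the reality issue entirely.
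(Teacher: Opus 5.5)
Your proof is correct and is essentially the paper's argument. You apply Lemma~\ref{lem-2.1} to each $QP_\ell$ and take the maximum of the constants, which is more careful than the paper's one-line appeal; this puts every phase within $C\delta^{|n|}$ of an integer, and you then bound the convex combination away from $0$ uniformly in $\pb$. The paper does this last step via $\bigl|\sum_\ell p_\ell e^{2\pi i x_\ell}-1\bigr|\le 2\pi C\delta^{|n|}<\tfrac12$, which is exactly your fallback version; your positive-real-part variant is an equally valid alternative, and your reality check (the Pisot tuple is closed under complex conjugation) is correct.
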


\begin{proof}
    Let $C$ and $\delta$ be as in Lemma~\ref{lem-2.1}. Choose $N \in \mathbb{N}$
sufficiently large so that $2\pi C \delta^N < \tfrac{1}{2}$.
Then, for any probability vector $\pb=(p_\ell)_{\ell=1}^m$ and any $n \in \mathbb{Z}$
with $|n|>N$, we have
\begin{align*}
&\left|
\sum_{\ell=1}^m p_\ell
\exp\!\left(2\pi i
\sum_{j=1}^k \beta_j^{n} Q(\beta_j) P_\ell(\beta_j)
\right)
-1
\right| \\
&\quad \leq
\sum_{\ell=1}^m p_\ell
\left|
\exp\!\left(2\pi i
\sum_{j=1}^k \beta_j^{n} Q(\beta_j) P_\ell(\beta_j)
\right)
-1
\right| \\
&\quad \leq
2\pi \sum_{\ell=1}^m p_\ell
\left\|
\sum_{j=1}^k \beta_j^{n} Q(\beta_j) P_\ell(\beta_j)
\right\|_{\mathbb{Z}} \\
&\quad \leq
2\pi C \delta^{|n|}
< 2\pi C \delta^N
< \tfrac{1}{2},
\end{align*}
which implies the desired result. In the second inequality, we used the
elementary estimate $|\exp(2\pi i x)-1|\leq 2\pi |x|$ for all $x \in \mathbb{R}$.\end{proof}

\section{Proof of Theorem \ref{thm-1.4}}
\label{S-4}

In this section, we prove Theorem \ref{thm-1.4}.

\begin{proof}[Proof of Theorem~\ref{thm-1.4}]
Suppose that $\Phi$ is a generalized Pisot IFS on $\mathbb{R}^d$ with parameters
$\beta_1,\ldots,\beta_k$, $\zeta_1,\ldots,\zeta_k$, and $P_1,\ldots,P_m$, as in Definition~\ref{defn-1.2}.
Let $\Omega$ denote the set of probability vectors in $\mathbb{R}^m$, that is,
\[
\Omega := \Bigl\{\pb = (p_\ell)_{\ell=1}^m :
\sum_{\ell=1}^m p_\ell = 1,\; p_\ell \ge 0 \text{ for all } 1 \le \ell \le m \Bigr\}.
\]By Lemma~\ref{lem-2.3}, there exists $N \in \mathbb{N}$ such that
\[
\sum_{\ell=1}^m p_\ell
\exp\!\left(2\pi i
\sum_{j=1}^k \beta_j^n P_\ell(\beta_j)
\right) \neq 0
\quad \text{for all } \pb \in \Omega \text{ and all } n \in \mathbb{Z} \text{ with } |n| > N.
\]
Define
$g:\Omega \to \mathbb{C}$ by
\[
g(\pb)
=
\prod_{n=-N}^N
\left(
\sum_{\ell=1}^m p_\ell
\exp\!\left(2\pi i
\sum_{j=1}^k \beta_j^n  P_\ell(\beta_j)
\right)
\right).
\]
Note that $P_1(\beta_j)=0$ for all $1\leq j\leq k$. It follows that $g(1,0,\ldots,0) = 1$. Clearly, $g$ is continuous over $\Omega$.  Therefore,
there exists $\varepsilon > 0$ such that if $\pb \in \Omega$ is $\varepsilon$-close to
$(1,0,\ldots,0)$, then
\[
|g(\pb)| > \tfrac{1}{2}.
\]
For such $\pb$, which can be chosen to have positive coordinates, we have
\[
\sum_{\ell=1}^m p_\ell
\exp\!\left(2\pi i
\sum_{j=1}^k \beta_j^n P_\ell(\beta_j)
\right) \neq 0
\quad \text{for all } n \in \mathbb{Z}.
\]
 By Lemma~\ref{lem-2.2}, it follows that the self-affine measure associated with $\Phi$ and $\pb$ is non-Rajchman.
\end{proof}

\section{Proof of Theorem~\ref{thm-1.4'}}
\label{S-5}

In this section, we prove Theorem~\ref{thm-1.4'}, which we now restate.
\begin{theorem*}
Let \(\Phi = \{\phi_\ell(x) = Ax + a_\ell\}_{\ell=1}^m\) be an affinely irreducible  homogeneous affine IFS on \(\mathbb{R}^d\), and let $K$ be the attractor of $\Phi$.
Suppose there exists a probability vector \(\pb = (p_\ell)_{\ell=1}^m > 0\) such that the self-affine measure \(\mu\) generated by \(\Phi\) and \(\pb\) is non-Rajchman. Then \(\Phi\) is a generalized Pisot IFS. More precisely, one of the the following two cases will occur:
\begin{itemize}
\item[(i)]  \(\Phi\) is a generalized Pisot IFS of type~I;
\item[(ii)] $\Phi$ has an integral factor $\Psi$ such that the self-affine measure associated with $\Psi$ and $\pb$ is non-Rajchman.
\end{itemize}
\end{theorem*}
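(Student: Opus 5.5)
We follow the outline in the introduction and split the argument into two parts: an analytic part producing a Salem-type summability condition, and an algebraic part turning it into Pisot data.

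\emph{Analytic step.} Since $\mu$ is non-Rajchman, there are $\xi_N\in\R^d$ with $\|\xi_N\|\to\infty$ and $|\widehat{\mu}(\xi_N)|\ge 2\varepsilon$. Because $A^\top$ has spectral radius $<1$, we renormalize: write $\xi_N=(A^\top)^{-n_N}\eta_N$ with $\eta_N$ in a fixed compact fundamental annulus (say $1\le\|\eta_N\|\le C$ in an adapted norm) and $n_N\to\infty$. By \eqref{mu-fourier}, $|\widehat\mu(\xi_N)|=\prod_{n=0}^{\infty}|H((A^\top)^{n-n_N}\eta_N)|$. Pass to a subsequence with $\eta_N\to\eta\neq0$. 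Each factor is at most $1$, so every finite subproduct is at least $2\varepsilon$, and continuity of $H$ gives $\prod_{n\ge -M}|H((A^\top)^{n}\eta)|\ge 2\varepsilon$ for every $M$. Letting $\xi=\eta$, we obtain $|\widehat\mu((A^\top)^{-n}\xi)|\ge\varepsilon$ for all $n\ge0$, and
\[
\sum_{n\ge0}\bigl(1-|H((A^\top)^{-n}\xi)|\bigr)<\infty .
\]
Since all $p_\ell>0$, the elementary inequality $1-|H(\zeta)|\ge c\,\min_\ell p_\ell\,\|\langle\zeta,a_\ell-a_1\rangle\|_\Z^2$ yields
\[
\sum_{n\ge0}\|\langle (A^\top)^{-n}\xi,a_\ell-a_1\rangle\|_\Z^2<\infty \qquad\text{for all } \ell .
\]

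\emph{Algebraic step.} Put $b_\ell=a_\ell-a_1$. Decompose $\xi$ along the Jordan decomposition of $(A^\top)^{-1}$ and write
\[
\langle (A^\top)^{-n}\xi,b_\ell\rangle=\sum_\lambda Q_{\ell,\lambda}(n)\lambda^n ,
\]
a generalized power sum over the eigenvalues $\lambda$ of $A^{-1}$, all of which satisfy $|\lambda|>1$. The sequence $x_n$ defined by this sum satisfies a linear recurrence with constant coefficients, and $\sum_n\|x_n\|_\Z^2<\infty$. By Pisot's theorem (the version of Theorem~\ref{thm-Pisot}, with Theorem~\ref{thm-kornyei}), polynomial coefficients of positive degree cannot occur. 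The eigenvalues $\lambda$ with $Q_{\ell,\lambda}\ne0$ for some $\ell$ then form a Pisot tuple $\{\beta_1,\ldots,\beta_k\}$, and the coefficients $c_{\ell,j}$ lie in $\Q(\beta_j)$ and are Galois-compatible. Let $\zeta_j$ be the component of $\xi$ in the $\beta_j^{-1}$-eigenspace of $A^\top$. Then $\langle\zeta_j,b_\ell\rangle=c_{\ell,j}$. Lemma~\ref{lem-3.10} together with affine irreducibility shows that some $\zeta_j\ne0$, and that the components that vanish on all $b_\ell$ can be discarded. Galois compatibility gives a common $q\in\N$ with $q\,c_{\ell,j}=P_\ell(\beta_j)$ for $P_\ell\in\Z[x]$. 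Replacing $\zeta_j$ by $q\zeta_j$ shows that $\Phi$ is a generalized Pisot IFS. The main obstacle is handling the nondiagonalizable Jordan blocks and combining the information over all $\ell$ simultaneously. For this we apply the argument to the single sequence $\sum_\ell r_\ell\langle(A^\top)^{-n}\xi,b_\ell\rangle$ for generic integers $r_\ell$ (the sum is still square-summable mod $1$), and to the further sequences $\langle (A^\top)^{-n-s}\xi,b_\ell\rangle$ obtained by shifting $n$.

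\emph{Dichotomy.} If the Pisot tuple obtained is of type~I, we are in case (i). Otherwise the minimal polynomial $R$ of the tuple has all its roots among $\beta_1,\ldots,\beta_k$, with $|\beta_j|>1$. Let $V=\operatorname{span}$ of the real and imaginary parts of the $\zeta_j$. Then $V$ is $A^\top$-invariant, and $A^\top|_V$ has characteristic polynomial related to $R$. In the basis dual to the functionals $x\mapsto\langle\zeta_j,x\rangle$, expressed through the power basis of $\Z[\beta]$, the projected IFS $\Phi_V$ becomes conjugate to an IFS $\Psi$ whose linear part is the companion matrix of $R$ (an integer matrix with integer inverse-type structure) and whose translations are the integer coefficient vectors of the $P_\ell$. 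This makes $\Psi$ an integral factor. Finally, the pushforward $P_V\mu$ is conjugate to the self-affine measure of $\Psi$ with weights $\pb$. For the frequencies $\xi_n=\sum_j\beta_j^{n}\zeta_j\in V$ we have $\widehat{\mu}(\xi_n)=\widehat{P_V\mu}(\xi_n)$ and $|\xi_n|\to\infty$ by Lemma~\ref{lem-infinity}. The product formula of Lemma~\ref{lem-2.2} shows these values converge to a nonzero limit, because every factor $|H((A^\top)^{m}\xi)|$ is nonzero by the lower bound obtained in the analytic step. Hence the measure of $\Psi$ is non-Rajchman, which is case (ii).
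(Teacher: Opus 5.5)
There is a genuine gap, and it sits exactly at the place you call ``the main obstacle''.

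\textbf{The algebraic step.} Your claim that Pisot's theorem excludes polynomial coefficients of positive degree is false. Theorem~\ref{thm-Pisot} explicitly allows nonzero polynomials $Q_j$. It only concludes that their coefficients lie in $\Q(\beta_j)$, are Galois-compatible, and have equal degrees on conjugates. For example, the integer sequence $n2^{n-1}$ arises from the Jordan block $A^{-1}=\bigl(\begin{smallmatrix}2&1\\0&2\end{smallmatrix}\bigr)$. Consequently ``the component of $\xi$ in the eigenspace'' is not defined, and $\langle\zeta_j,b_\ell\rangle=c_{\ell,j}$ fails.

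The paper takes instead $\eta_j=\beta_j^{1-m_j}(B-\beta_jI)^{m_j-1}\xi_j$, which is an honest eigenvector. Its pairing with $b_\ell$ is the top binomial-basis coefficient $b^{(j,\ell)}_{m_j-1}$. This coefficient lies in $\Q(\beta_j)$ because binomial and monomial coefficients differ by a rational matrix (Lemma~\ref{lem-3.7}). It is Galois-compatible only after one proves that $m_j$ is constant on each Galois class, which uses Lemma~\ref{lem-3.10}(ii) and the degree equality in Theorem~\ref{thm-Pisot}(iii). Your remark about generic integer combinations and shifts supplies neither ingredient.

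\textbf{The dichotomy step.} This is the more serious failure. You take $V$ spanned by eigenvectors only, and you justify non-Rajchmanness by nonvanishing of $H$ along the orbit of the analytic $\xi$. But your frequencies are built from $\sum_j\zeta_j$, which is a different vector. Even without Jordan blocks it equals $q\xi$, and multiplying a frequency by an integer can create zeros of $H$.

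With Jordan blocks the conclusion is actually false for your $V$. Take $A^{-1}=\bigl(\begin{smallmatrix}2&1\\0&2\end{smallmatrix}\bigr)$, $a_1=(0,0)^\top$, $a_2=(0,1)^\top$, $a_3=(1,1)^\top$ and $\pb=(1/2,1/4,1/4)$.
\begin{itemize}
\item The IFS is integral and affinely irreducible: its fixed points $(0,0),(-1,2),(1,2)$ are not collinear.
\item $\mathcal L^2(K)\le\frac34\mathcal L^2(K)$ forces $\mu$ to be singular, hence non-Rajchman by Proposition~\ref{pro-self-affine}.
\item The Pisot tuple is $\{2\}$, of type~II.
\item The only eigenvector of $A^\top$ is $e_2$, and $\Phi_{\R e_2}=\{y/2,\,y/2+1,\,y/2+1\}$.
\item With these weights the projected measure is normalized Lebesgue measure on $[0,2]$, which is Rajchman.
\end{itemize}
Here your argument breaks visibly: $H(te_2)=\frac12+\frac12e^{2\pi i t}$ vanishes along your frequency orbit.

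The paper instead uses $V=\operatorname{span}_\R\{\xi,B\xi,\ldots,B^{q-1}\xi\}$ with $q=\sum_j m_j$, which contains the generalized eigenvectors. It proves $\langle B^n\xi,b_\ell\rangle\in\Q$ for all $n$, which again needs the equal $m_j$. It then takes the companion matrix of $\prod_j(x-\beta_j)^{m_j}\in\Z[x]$. Non-Rajchmanness of the factor follows directly from $|\widehat\mu(B^n\xi)|\ge\varepsilon$, since $B^n\xi\in V$; no nonvanishing of product factors is needed. In the diagonalizable case your argument can be repaired the same way, by using the frequencies $B^n\xi=q^{-1}\sum_j\beta_j^n\zeta_j$.
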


The proof of the above theorem is given in Section~\ref{S-4.1.3'}, which is partially inspired by that of \cite[Theorem~1.5]{Rapaport2022}.  It is based on several auxiliary results given in Sections~\ref{S-4.1.3} and~\ref{S-4.1.2}.
\subsection{Two useful lemmas in linear algebra}
\label{S-4.1.3}

The proof of Theorem \ref{thm-1.4'} relies on two elementary lemmas in linear algebra.

To state these results, we begin by recalling the concept of a generalized eigenspace and its basic properties. The reader is referred to \cite[Chapter~7.1]{Friedberg2002} for further details.

Let $B \in GL_d(\mathbb{C})$, and let $\beta$ be an eigenvalue of $B$. The \emph{generalized eigenspace} of $B$ corresponding to $\beta$, denoted by $E_\beta$, is the subspace of $\mathbb{C}^d$ defined by
\[
E_\beta = \left\{ x \in \mathbb{C}^d : (B - \beta I)^n x = 0 \text{ for some positive integer } n \right\},
\]
where $I$ denotes the $d \times d$ identity matrix.

It is well known that $E_\beta$ is $B$-invariant, that is, $B E_\beta \subseteq E_\beta$. Moreover, if $x$ is a nonzero element of $E_\beta$ and $n$ is the smallest positive integer such that $(B - \beta I)^n x = 0$, then $n \le d$ and $(B - \beta I)^{n-1} x$ is an eigenvector of $B$ corresponding to $\beta$. Furthermore, if $\beta_1, \ldots, \beta_p$ are the distinct eigenvalues of $B$, then $\C^d$ admits the following direct sum decomposition:
\[
\mathbb{C}^d = E_{\beta_1}  \oplus \cdots \oplus E_{\beta_p}.
\]

Our first lemma is the following.
\begin{lem}
\label{lem-Bnxi}
Let $B \in GL_d(\mathbb{C})$ and $\xi \in \mathbb{C}^d \setminus \{0\}$. Let $\beta_1, \ldots, \beta_p$ be the distinct eigenvalues of $B$, and let $E_{\beta_1}, \ldots, E_{\beta_p}$ be the corresponding generalized eigenspaces of $B$. Write
$$
N_j=B-\beta_jI,\quad j=1,\ldots, p,
$$
 and
\[
\xi = \xi_1 + \cdots + \xi_p
\]
with  $\xi_j \in E_{\beta_j}$  for $1 \le j \le p$.
Define
\[
\mathcal{J} = \{\, 1 \le j \le p : \xi_j \ne 0 \,\}.
\]
For each $j \in \mathcal{J}$, let $m_j$ be the smallest positive integer such that
$N_j^{m_j} \xi_j = 0$.
Then the following statements hold:
\begin{itemize}
\item[(i)] For every integer $n\geq 0$,
\[
B^n \xi
= \sum_{j \in \mathcal{J}} \sum_{u=0}^{\min\{n,\, m_j-1\}}
\binom{n}{u} \beta_j^{\,n-u} N_j^u \xi_j.
\]
\item[(ii)] Let $q=\sum_{j\in \mathcal J}m_j$ and write
$$Q(x)=\prod_{j\in \mathcal J}(x-\beta_j)^{m_j}=x^q+c_{q-1}x^{q-1}+\cdots+c_1 x+c_0.$$
Then $\{\xi, B\xi,\ldots, B^{q-1}\xi\}$ is a basis of ${\rm span}_\C\{B^n\xi\colon n\in \N\cup \{0\}\}$, and
$$
B^q\xi=-\sum_{i=0}^{q-1} c_i B^{i}\xi.
$$
\end{itemize}
\end{lem}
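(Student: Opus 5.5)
The plan is to handle each generalized eigenspace separately and then add up the contributions. Because $\xi_j\in E_{\beta_j}$, the vectors $N_j^u\xi_j$ all lie in $E_{\beta_j}$, and each $E_{\beta_j}$ is $B$-invariant.

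For part (i), write $B=\beta_jI+N_j$. The two summands commute, so the binomial theorem gives
\[
B^n\xi_j=\sum_{u=0}^{n}\binom{n}{u}\beta_j^{\,n-u}N_j^u\xi_j .
\]
Every term with $u\ge m_j$ is zero because $N_j^{m_j}\xi_j=0$, so the sum can stop at $\min\{n,m_j-1\}$. Adding these identities over $j\in\mathcal J$ gives (i), since $\xi_j=0$ for $j\notin\mathcal J$.

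For part (ii), first check that $Q(B)\xi=0$. The factors $(B-\beta_i I)^{m_i}$ commute with one another, so for each $j\in\mathcal J$ we can put the factor $N_j^{m_j}$ next to $\xi_j$, which gives $Q(B)\xi_j=0$. Summing over $j$ gives $Q(B)\xi=0$, and expanding this is exactly the claimed recurrence $B^q\xi=-\sum_{i<q}c_iB^i\xi$. An induction on $n$ then shows that every $B^n\xi$ lies in the span of $\xi,\dots,B^{q-1}\xi$, so these $q$ vectors span the whole cyclic space.

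The remaining step, and the main obstacle, is linear independence. Suppose $R(B)\xi=0$ for some nonzero polynomial $R$ of degree less than $q$. Each $E_{\beta_j}$ is invariant under $R(B)$ and the eigenspace sum is direct, so $R(B)\xi_j=0$ for every $j\in\mathcal J$. Now fix $j$. Since $\xi_j\in E_{\beta_j}$, the polynomial $(x-\beta_j)^{m_j}$ generates the annihilating ideal of $\xi_j$; indeed, any annihilating polynomial is coprime to $(x-\beta_j)^{m_j}$ only if it acts invertibly on $E_{\beta_j}$, which is impossible. More concretely, the vectors $N_j^u\xi_j$ for $0\le u<m_j$ are linearly independent: if $\sum_u c_uN_j^u\xi_j=0$ with smallest index $u_0$ having $c_{u_0}\ne0$, apply $N_j^{m_j-1-u_0}$ to get $c_{u_0}N_j^{m_j-1}\xi_j=0$, a contradiction. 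It follows that the minimal polynomial of $\xi_j$ is exactly $(x-\beta_j)^{m_j}$. Therefore $(x-\beta_j)^{m_j}$ divides $R$ for every $j\in\mathcal J$. These factors are pairwise coprime, so $Q$ divides $R$, which contradicts $\deg R<q$. Hence $\{\xi,B\xi,\ldots,B^{q-1}\xi\}$ is a basis, which completes (ii).
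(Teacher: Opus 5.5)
Your proof is correct. Part (i) is the same binomial-expansion argument as in the paper, but for part (ii) you take a different route. The paper forms $W=\bigoplus_{j\in\mathcal J}\operatorname{span}_{\mathbb C}\{N_j^u\xi_j:0\le u<m_j\}$ and uses independence of these Jordan chains to get $\dim W=q$. It then observes that $B|_W$ is block diagonal with Jordan blocks, so Cayley--Hamilton gives $Q(B|_W)=0$ and hence the recurrence. Finally it shows that the cyclic space $W'=\operatorname{span}_{\mathbb C}\{B^n\xi:n\ge 0\}$ equals $W$ by projecting onto the generalized eigenspaces, so independence of $\xi,\ldots,B^{q-1}\xi$ comes from a dimension count. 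You instead obtain $Q(B)\xi=0$ directly by commuting $N_j^{m_j}$ next to $\xi_j$. You then prove independence through annihilators: a polynomial killing $\xi$ kills every $\xi_j$, so it is divisible by each local minimal polynomial $(x-\beta_j)^{m_j}$, and hence by $Q$.

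Your argument is shorter and needs neither Cayley--Hamilton nor a matrix representation. It also avoids the paper's step $W'\supseteq\bigoplus_j W_j$, which tacitly uses that the spectral projections are polynomials in $B$ and therefore preserve $W'$. The paper's argument, in return, identifies the cyclic subspace explicitly as $\bigoplus_j W_j$, together with the Jordan form of $B$ on it.

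One small repair: your first justification (the coprimality remark) only shows that an annihilating polynomial vanishes at $\beta_j$, not that it is divisible by $(x-\beta_j)^{m_j}$. What actually settles it is that the minimal polynomial of $\xi_j$ divides $(x-\beta_j)^{m_j}$, so it equals $(x-\beta_j)^r$ with $1\le r\le m_j$. Then $r=m_j$ follows either from your Jordan-chain independence or simply from the minimality in the definition of $m_j$.
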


\begin{proof}
We first prove (i). Let $n \in \mathbb{N} \cup \{0\}$. It suffices to show that for each $j \in \mathcal{J}$,
\[
B^n \xi_j
= \sum_{u=0}^{\min\{n,\,m_j-1\}}
\binom{n}{u} \beta_j^{\,n-u} N_j^u \xi_j.
\]
To this end, fix $j \in \mathcal{J}$. By the binomial theorem,
\[
B^n \xi_j
= (N_j+\beta_j I)^n \xi_j
= \sum_{u=0}^{n} \binom{n}{u} \beta_j^{\,n-u} N_j^u \xi_j.
\]
Since $N_j^{m_j} \xi_j = 0$, we have $N_j^u \xi_j = 0$ for all $u \ge m_j$. Hence, the above sum reduces to
\[
\sum_{u=0}^{\min\{n,\, m_j-1\}} \binom{n}{u} \beta_j^{\,n-u} N_j^u \xi_j,
\]
as desired.

Next, we prove (ii). First we show that for each $j \in \mathcal{J}$, the vectors
$$\xi_j, N_j \xi_j, \ldots, N_j^{m_j-1} \xi_j$$ are linearly independent.
To see this, suppose to the contrary that there exist scalars
$a_0, a_1, \ldots, a_{m_j-1} \in \mathbb{C}$, not all zero, such that
\[
a_0 \xi_j + a_1 N_j \xi_j + \cdots + a_{m_j-1} N_j^{m_j-1} \xi_j = 0.
\]
Let $k$ be the smallest index such that $a_k \neq 0$. Then
\[
\sum_{u=k}^{m_j-1} a_u N_j^u \xi_j = 0.
\]
Applying the operator $N_j^{\,m_j-1-k}$ to both sides yields
\[
\sum_{u=k}^{m_j-1} a_u N_j^{\,m_j-1+u-k} \xi_j = 0.
\]
Since $N_j^r \xi_j = 0$ for all $r \ge m_j$, all terms vanish except the first, and we obtain
\[
a_k N_j^{m_j-1} \xi_j = 0.
\]
Thus $N_j^{m_j-1} \xi_j = 0$, a contradiction. Hence $\xi_j, N_j \xi_j, \ldots, N_j^{m_j-1} \xi_j$ are linearly independent.

Note also that
\[
\operatorname{span}_{\mathbb{C}} \{ \xi_j, N_j \xi_j, \ldots, N_j^{m_j-1} \xi_j \}
\subseteq E_{\beta_j}, \qquad j \in \mathcal{J}.
\]
Hence, the vectors in the set
\[
\mathcal{B} = \bigcup_{j \in \mathcal{J}} \{ \xi_j, N_j \xi_j, \ldots, N_j^{m_j-1} \xi_j \}
\]
are linearly independent. Let $W = \operatorname{span}_{\mathbb{C}} \mathcal{B}$. Then
\[
\dim W = \sum_{j \in \mathcal{J}} m_j = q.
\]

Observe that for each $j \in \mathcal{J}$,
\begin{align*}
B N_j^u \xi_j &= \beta_j N_j^u \xi_j + N_j^{u+1} \xi_j \quad \text{for } 0 \le u \le m_j - 2, \\
B N_j^{m_j-1} \xi_j &= \beta_j N_j^{m_j-1} \xi_j.
\end{align*}
It follows that $BW \subseteq W$, and the matrix representation of $B_W$ (the restriction of $B$ to $W$) with respect to the basis $\mathcal{B}$ is the block diagonal matrix
\[
\operatorname{diag}(J_j : j \in \mathcal{J}),
\]
where $J_j$ is the $m_j \times m_j$ Jordan block with eigenvalue $\beta_j$. Hence, the characteristic polynomial of $B_W$ is
\[
Q(x) = \prod_{j \in \mathcal{J}} (x - \beta_j)^{m_j}
= x^q + c_{q-1} x^{q-1} + \cdots + c_1 x + c_0.
\]
By the Cayley--Hamilton theorem, $Q(B_W) = 0$, and therefore,
\begin{equation}
\label{e-3.8}
(B_W)^q = -\sum_{u=0}^{q-1} c_u (B_W)^u.
\end{equation}

Finally, set
\[
W' = \operatorname{span}_{\mathbb{C}} \{ B^n \xi : n = 0,1,\ldots \}.
\]
Clearly, $BW' \subseteq W'$. Since $W$ is $B$-invariant and $\xi = \sum_{j \in \mathcal{J}} \xi_j$ with $\xi_j \in W$, it follows that $W' \subseteq W$.

Notice that for each $j \in \mathcal{J}$, the set $\{ B^n \xi_j : n \ge 0 \}$ spans
\[
W_j := \operatorname{span}_{\mathbb{C}} \{ \xi_j, N_j \xi_j, \ldots, N_j^{m_j-1} \xi_j \}.
\]
Hence, the projection $P_j$ of $W'$ onto $E_{\beta_j}$ is $W_j$ for each $j \in \mathcal{J}$, where $P_j$ denotes the projection with range $E_{\beta_j}$ and null space
\[
\bigoplus_{\substack{1\leq j'\leq p\colon  j' \ne j}} E_{\beta_{j'}}.
\]  Therefore,
\[
W' \supseteq \bigoplus_{j \in \mathcal{J}} W_j = W,
\]
and thus $W' = W$.

Since $\xi \in W$ and $W$ is $B$-invariant, we have $B^n \xi = (B_W)^n \xi$. By \eqref{e-3.8},
\[
B^q \xi = -\sum_{u=0}^{q-1} c_u B^u \xi.
\]
It follows that
\[
W = \operatorname{span}_{\mathbb{C}} \{ \xi, B \xi, \ldots, B^{q-1} \xi \}.
\]
Since $\dim W = q$, the set $\{ \xi, B \xi, \ldots, B^{q-1} \xi \}$ is a basis of $W$.
\end{proof}

Next we state our second lemma.
\begin{lem}
\label{lem-3.7}
Let $p \in \mathbb{N} \cup \{0\}$. Then there exists an invertible $(p+1)\times(p+1)$ matrix $R_p$ with entries in $\Bbb Q$ such that the following holds: if $f(n) = \sum_{j=0}^p a_j n^j$ is a polynomial in $n$ with complex coefficients, then $f$ can be written uniquely as
\[
f(n) = \sum_{j=0}^p b_j \binom{n}{j}
\]
for all $n \ge p$, where the coefficients $b_j$ satisfy
\[
(b_0,b_1,\ldots, b_p)^\top=
R_p(a_0,a_1,\ldots, a_p)^\top
\]
\end{lem}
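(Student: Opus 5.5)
The plan is to view this as a change of basis in the $(p+1)$-dimensional complex vector space $\mathcal{P}_p$ of polynomials of degree at most $p$. One basis is the monomials $1, n, \ldots, n^p$, the other is the binomial polynomials $\binom{n}{0}, \binom{n}{1}, \ldots, \binom{n}{p}$. Here $\binom{n}{j}=n(n-1)\cdots(n-j+1)/j!$ is regarded as a polynomial in $n$ of exact degree $j$, with leading coefficient $1/j!$ and rational coefficients.

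First, I will show that the binomial polynomials form a basis of $\mathcal{P}_p$. Their transition matrix to the monomial basis is upper triangular, with diagonal entries $1/j!\neq 0$, so it is invertible. Write $\binom{n}{j}=\sum_{i=0}^{j} s_{ij} n^i$ with $s_{ij}\in\mathbb{Q}$, and let $S=(s_{ij})_{0\le i,j\le p}$. Then $S$ is an upper-triangular rational matrix with nonzero diagonal. Its inverse $R_p:=S^{-1}$ is again upper triangular with rational entries, since the inverse of a rational invertible matrix is rational (by Cramer's rule or back-substitution).

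Next, I will translate this into the statement. If $f(n)=\sum_j a_j n^j$ and $f=\sum_j b_j\binom{n}{j}$ as polynomials, then comparing coefficients of $n^i$ gives $(a_0,\ldots,a_p)^\top = S(b_0,\ldots,b_p)^\top$. Hence $(b_0,\ldots,b_p)^\top=R_p(a_0,\ldots,a_p)^\top$, which gives both existence and the required formula.

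Finally, I will prove uniqueness for the restricted domain $n\ge p$. Suppose $\sum_j b_j\binom{n}{j}=\sum_j b'_j\binom{n}{j}$ for all integers $n\ge p$. The difference is then a polynomial of degree at most $p$ with infinitely many zeros, so it is the zero polynomial. By linear independence of the binomial basis, $b_j=b'_j$ for every $j$. The restriction $n\ge p$ matters only so that $\binom{n}{j}$ agrees with the combinatorial binomial coefficient used in Lemma~\ref{lem-Bnxi}. It plays no real role, since the polynomial identity holds for all $n$.

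No step here is a genuine obstacle. The only point that needs care is making explicit that $\binom{n}{j}$ is treated as a polynomial, so that the identity on $n\ge p$ upgrades to an identity of polynomials.
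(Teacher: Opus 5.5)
Your proposal is correct and takes essentially the same route as the paper. The paper also expresses $\binom{n}{0},\ldots,\binom{n}{p}$ in the monomial basis through an upper triangular rational matrix $C$ with nonzero diagonal entries $1/j!$, sets $R_p=C^{-1}$, and deduces uniqueness from the fact that these binomial polynomials form a basis; your remark that agreement for infinitely many $n\ge p$ forces an identity of polynomials simply makes that last step explicit.
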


\begin{proof}
For each $j=0,\ldots,p$, the polynomial $\binom{n}{j}$ has degree $j$ and
\[
\binom{n}{j} = \frac{1}{j!} n^j + \text{(lower-degree terms)}.
\]
It follows that
\[
\bigl(\binom{n}{0},\binom{n}{1},\ldots, \binom{n}{p}\bigr)
=
(1,n,\ldots, n^p)\, C,
\]
where $C$ is an upper triangular $(p+1)\times (p+1)$ matrix with entries in $\mathbb{Q}$ and nonzero diagonal entries. In particular, $C$ is invertible and  $C^{-1}$ also has entries in $\mathbb{Q}$.

Thus,
\[
f(n)
=
(1,n,\ldots, n^p)(a_0,a_1,\ldots, a_p)^\top
=
\bigl(\binom{n}{0},\ldots, \binom{n}{p}\bigr)
C^{-1}(a_0,a_1,\ldots, a_p)^\top,
\]
which shows that
\[
(b_0,b_1,\ldots, b_p)^\top
=
C^{-1}(a_0,a_1,\ldots, a_p)^\top.
\]
Setting $R_p = C^{-1}$ proves the result. Uniqueness follows since $\{\binom{n}{0},\ldots,\binom{n}{p}\}$ forms a basis of the space of polynomials of degree at most $p$.
\end{proof}

\subsection{Two more auxiliary results for the proof of Theorem~\ref{thm-1.4'}}
\label{S-4.1.1}

To prove Theorem~\ref{thm-1.4'}, we still need several auxiliary results. One of them is stated below and follows directly from \cite[Chapter III, Theorem III]{Pisot1938} together with \cite[Theorem 1]{Kornyei1987}.
\begin{thm}\label{thm-Pisot}
    Let $k\geq 1, \beta_1,\ldots,\beta_k\in\mathbb{C}$ and let $Q_1,\ldots,Q_k\in \mathbb{C}[x]$ be non-zero polynomials with complex coefficients. Suppose that $|\beta_j|>1$ for $1\leq j \leq k$, $\beta_j\neq \beta_{j'}$ for $j\neq j'$, and $$\sum_{n=0}^\infty \left\|\sum_{j=1}^k Q_j(n)\beta_j^n\right\|_\Z^2<\infty.$$ Writing $Q_j(x)=\sum_{s=0}^{t_j}c_s^{(j)}x^s$ for $1 \leq j \leq k$, then the following statements hold:
    \begin{itemize}
        \item[(i)] The set $\{\beta_1,\ldots,\beta_k\}$ is a Pisot $k$-tuple;
        \item[(ii)] For $1 \leq j \leq k$, $c_s^{(j)}\in \mathbb{Q}(\beta_j)$ for $0\leq s \leq t_j$;
        \item[(iii)]If $\beta_j$ and $\beta_{j'}$ are conjugates over $\mathbb{Q}$ and $\sigma: \mathbb{Q}(\beta_j) \to \mathbb{Q}(\beta_{j'})$ is an isomorphism with $\sigma(\beta_j)=\beta_{j'}$, then $t_j=t_{j'}$ and $\sigma(c_s^{(j)})=c_s^{(j')}$ for $0 \leq s \leq t_j$.
    \end{itemize}
\end{thm}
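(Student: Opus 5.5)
The plan is to follow Pisot's classical route: round the sequence to integers, show that the integer sequence satisfies an integer linear recurrence, and then read off (i)--(iii) from the uniqueness of exponential-polynomial representations together with Galois invariance. Put $x_n=\sum_{j=1}^k Q_j(n)\beta_j^n$. Let $u_n\in\Z$ be a nearest integer to $x_n$ when $x_n$ is real. If $x_n$ is not real, I would first replace $x_n$ by its real part and work with the conjugate-closed family $\{\beta_j\}\cup\{\overline{\beta_j}\}$; this needs a small bookkeeping step so that the final tuple is the original one. Set $\varepsilon_n=x_n-u_n$. The hypothesis then says exactly that $(\varepsilon_n)\in\ell^2$.

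\textbf{Step 1 (rationality; the main obstacle).} Consider the generating function $F(z)=\sum_{n\ge0}u_nz^n$, which has integer coefficients. Since $x_n$ is an exponential polynomial, $\sum_n x_nz^n=R(z)$ is a rational function. Its poles are the points $\beta_j^{-1}$, with orders $t_j+1$, and all of them lie in the open unit disk. Hence
\[
F(z)=R(z)-E(z),\qquad E(z)=\sum_{n\ge0}\varepsilon_nz^n\in H^2(\mathbb D).
\]
So $F$ has integer coefficients and is meromorphic in $\mathbb D$, with finitely many poles plus an $H^2$ part. I would invoke the Fatou--Pisot--Salem rationality criterion, which is exactly the content of \cite[Chapter~III, Theorem~III]{Pisot1938}, to conclude that $F$ is rational. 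Its proof estimates the Kronecker--Hankel determinants of $(u_n)$ after multiplying by the polynomial $\prod_j(1-\beta_jz)^{t_j+1}$, using the $\ell^2$ bound and Hadamard's inequality. The estimate shows that the determinants tend to $0$; since they are integers, they vanish eventually. By Fatou's lemma the reduced denominator can then be taken as $\widetilde P(z)=z^{D}P(1/z)$ with $P\in\Z[x]$ monic. This is the step I expect to be the hardest; the rest is algebra.

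\textbf{Step 2 (identification of roots).} Since $F$ is rational with that denominator, $(u_n)$ eventually satisfies the integer recurrence with characteristic polynomial $P$. Thus $u_n=\sum_{\alpha}S_\alpha(n)\alpha^n$, summed over the roots $\alpha$ of $P$. Then
\[
\varepsilon_n=\sum_j Q_j(n)\beta_j^n-\sum_\alpha S_\alpha(n)\alpha^n
\]
is an exponential polynomial lying in $\ell^2$. By uniqueness of such representations, and by the argument of Lemma~\ref{lem-infinity} with polynomial weights, every exponent with a nonzero coefficient must have modulus $<1$. Modulus exactly $1$ is also excluded, since $\ell^2$ forces $\varepsilon_n\to0$. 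So each $\beta_j$ is a root of $P$ with $S_{\beta_j}=Q_j$, and every other root of $P$ carrying a nonzero $S_\alpha$ lies in the open unit disk. We may cancel from $P$ the roots that are not needed for the recurrence, keeping $P$ monic integer by taking the minimal recurrence of the integer sequence. This gives (i).

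\textbf{Step 3 (Galois).} The coefficients $S_\alpha$ are obtained from the integer data $u_0,\dots,u_{D-1}$ and the roots of $P$ by solving a confluent Vandermonde system. Hence the coefficients of $S_\alpha$ lie in $\Q(\alpha)$, and any embedding $\sigma$ with $\sigma(\alpha)=\alpha'$ maps $S_\alpha$ to $S_{\alpha'}$. This gives (ii) and the coefficient part of (iii). Alternatively, one can reduce to Corollary~\ref{cor-2.7} after changing basis to $\binom{n}{s}\beta_j^{n-s}$ via Lemma~\ref{lem-3.7}, and apply Theorem~\ref{thm-kornyei} to each shifted sequence. Finally, the minimal polynomial of an integer recurrence has rational coefficients. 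Therefore Galois conjugate roots occur with equal multiplicity, i.e.\ $t_j+1=t_{j'}+1$, which gives $t_j=t_{j'}$ in (iii).
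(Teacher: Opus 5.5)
Your proposal is correct, but it takes a different route from the paper. The paper does not prove this statement; it records it as a consequence of Pisot's thesis \cite[Chapter~III, Theorem~III]{Pisot1938} combined with K\"{o}rnyei's theorem \cite[Theorem~1]{Kornyei1987}. Presumably Pisot's $\ell^2$ theorem supplies the algebraicity of the $\beta_j$ and (i). K\"{o}rnyei's theorem then supplies the coefficient statements; as recalled in Theorem~\ref{thm-kornyei}, it needs only $\|x_n\|_{\Z}\to0$ but presupposes algebraic bases.

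You instead unpack Pisot's method (Hankel determinants, Hadamard's inequality, Kronecker's criterion, Fatou's lemma). This shows that the rounded sequence $(u_n)$ is eventually an integer linear recurrence with monic characteristic polynomial $P\in\Z[x]$. All three conclusions then come from that one fact:
\begin{itemize}
\item[(i)] follows from the location of the roots of $P$;
\item[(ii)--(iii)] follow from the uniqueness of the representation $u_n=\sum_\alpha S_\alpha(n)\alpha^n$ and its covariance under the Galois group of the splitting field $L$ of $P$.
\end{itemize}
Under the $\ell^2$ hypothesis this makes K\"{o}rnyei's theorem unnecessary and shows transparently why $t_j=t_{j'}$. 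The cost is that you must reproduce (or cite) the rationality criterion, whereas the paper's route is a bare citation.

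A few points should be tightened.
\begin{itemize}
\item Do not pass to $\operatorname{Re}x_n$, which can lose essential information: $x_n=i2^n$ has real part $0$ yet violates (ii). Instead use $\|x\|_{\Z}\ge|\operatorname{Im}x|$ to get $\operatorname{Im}x_n\in\ell^2$. Since $\operatorname{Im}x_n$ is an exponential polynomial with bases $\beta_j,\overline{\beta_j}$ of modulus $>1$, it vanishes identically. So $x_n$ is real from the start, and $\{\beta_j\}$ is already closed under conjugation, with $Q_{j'}=\overline{Q_j}$ whenever $\beta_{j'}=\overline{\beta_j}$.
\item The confluent Vandermonde system only places the coefficients of $S_\alpha$ in $L$. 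The descent to $\Q(\alpha)$ and the relation $\sigma(S_\alpha)=S_{\sigma(\alpha)}$ come from applying $\tau\in\mathrm{Gal}(L/\Q)$ to the identity $u_n=\sum_\alpha S_\alpha(n)\alpha^n$ (whose left side $\tau$ fixes) and invoking uniqueness.
\item The fact you need in Step~2 is that a nonzero exponential polynomial with a base of modulus $\ge1$ does not tend to $0$. To prove it, divide by the dominant $n^s r^n$ and use that a nontrivial trigonometric sum has positive mean square. Lemma~\ref{lem-infinity} as stated covers neither modulus exactly $1$ nor polynomial weights.
\end{itemize}
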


The other auxiliary result is the following proposition.
\begin{prop}
\label{prop-3.8}
Under the assumptions of Theorem~\ref{thm-1.4'}, there exist
$\xi \in \mathbb{R}^d \setminus \{0\}$ and $\varepsilon>0$ such that  $|\widehat{\mu}((A^\top)^{-n}\xi)|\geq \varepsilon$ for every integer $n\geq 0$, and
\[
\sum_{n=0}^{\infty}
\bigl\| \langle (A^\top)^{-n}\xi,\, a_\ell-a_1\rangle \bigr\|_{\mathbb{Z}}^2 < \infty
\qquad \text{for } 1 \le \ell \le m.
\]
\end{prop}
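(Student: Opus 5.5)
We adapt Salem's argument for Bernoulli convolutions, as used by Rapaport and Solomyak. Since $\mu$ is non-Rajchman, there are $\varepsilon_0>0$ and a sequence $\eta_k\in\mathbb R^d$ with $\|\eta_k\|\to\infty$ and $|\widehat\mu(\eta_k)|\ge\varepsilon_0$.

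\textbf{Step 1: a uniform lower bound on the product.} Because $|H|\le 1$, the product formula \eqref{mu-fourier} gives
\[
\prod_{n=0}^{N}\bigl|H((A^\top)^n\eta_k)\bigr|\ge\varepsilon_0 \quad\text{for every } N\ge 0.
\]

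\textbf{Step 2: renormalisation.} Since $A^\top$ is contracting, we may choose $N_k\to\infty$ so that $\zeta_k:=(A^\top)^{N_k}\eta_k$ lies in a fixed compact annulus $\{c\le\|x\|\le C'\}$. For instance, take $N_k$ minimal with $\|(A^\top)^{N_k}\eta_k\|\le 1$; then $\|\zeta_k\|\ge \|A^\top\|^{-1}$. Now set $\xi_k^{(n)}:=(A^\top)^{-n}\zeta_k=(A^\top)^{N_k-n}\eta_k$. Step 1 gives $\prod_{n=0}^{N_k}|H(\xi_k^{(n)})|\ge\varepsilon_0$, and also $|\widehat\mu(\xi_k^{(n)})|\ge\varepsilon_0$ for $0\le n\le N_k$, because $\widehat\mu(\xi_k^{(n)})$ is a tail of the product for $\widehat\mu(\eta_k)$. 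Pass to a subsequence with $\zeta_k\to\xi\ne 0$. By continuity of $\widehat\mu$ and $H$ at each fixed $n$, we get $|\widehat\mu((A^\top)^{-n}\xi)|\ge\varepsilon_0$ for all $n\ge0$, and, taking $N\to\infty$ after $k\to\infty$,
\[
\prod_{n=0}^{\infty}\bigl|H((A^\top)^{-n}\xi)\bigr|\ge\varepsilon_0 .
\]
Thus $\varepsilon=\varepsilon_0$ works for the first claim.

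\textbf{Step 3: from the product to the square sum.} Write $x_\ell=\langle\zeta,a_\ell-a_1\rangle$. Then
\[
|H(\zeta)|=\Bigl|\sum_\ell p_\ell e^{2\pi i x_\ell}\Bigr|.
\]
Use the standard strict-convexity inequality: there is $c=c(\pb)>0$ with
\[
1-\Bigl|\sum_\ell p_\ell e^{2\pi i x_\ell}\Bigr|\ge c\,\|x_\ell\|_{\mathbb Z}^2 \quad\text{for each }\ell .
\]
To prove it, note that $|\sum_\ell p_\ell e^{2\pi i x_\ell}|^2=1-2\sum_{\ell<\ell'}p_\ell p_{\ell'}\bigl(1-\cos 2\pi(x_\ell-x_{\ell'})\bigr)$, and $1-\cos 2\pi t\ge 8\|t\|_{\mathbb Z}^2$. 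Taking $\ell'=1$ and using $p_\ell p_1>0$ gives the bound. Here strict positivity of $\pb$ is essential. Since $-\log t\ge 1-t$, the bound $\prod_n|H((A^\top)^{-n}\xi)|\ge\varepsilon$ yields
\[
\sum_n\bigl(1-|H((A^\top)^{-n}\xi)|\bigr)\le\log(1/\varepsilon)<\infty ,
\]
and hence $\sum_n\|\langle(A^\top)^{-n}\xi,a_\ell-a_1\rangle\|_{\mathbb Z}^2<\infty$ for every $\ell$.

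\textbf{Main obstacle.} The delicate point is Step 2. We need the limit $\xi$ to be nonzero, and the bound must hold for all $n\ge0$ simultaneously. This requires the renormalisation to keep $\zeta_k$ bounded away from $0$ and $\infty$ with $N_k\to\infty$, which the minimal-$N_k$ choice provides. The passage to the limit in the infinite product is justified by monotonicity of the finite partial products in $N$.
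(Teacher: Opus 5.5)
Your proof is correct and follows essentially the paper's route: the same Salem-type renormalisation (minimal $N_k$, then a convergent subsequence $\zeta_k\to\xi\neq 0$) and the same strict-convexity bound $1-|H(\zeta)|\ge c\,\|\langle \zeta,a_\ell-a_1\rangle\|_{\mathbb{Z}}^2$. The only difference is the order of operations: you pass the partial products to the limit and convert to square sums at $\xi$, whereas the paper first proves a uniform square-sum bound (Lemma~\ref{lem-3.9}) and then passes finite windows of the square sums to the limit. One small slip: minimality of $N_k$ gives $\|\zeta_k\|\ge\|(A^\top)^{-1}\|^{-1}$, not $\|A^\top\|^{-1}$, which exceeds $1$ when $\|A^\top\|<1$.
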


In the remainder  of this subsection, we prove Proposition~\ref{prop-3.8}, which is adapted from the proofs of \cite[Theorem~II]{Salem1943} and \cite[Propsition~5.2]{Rapaport2022}. The proof of Proposition~\ref{prop-3.8} is based on the following lemma.

\label{S-4.1.2}
\begin{lem}
\label{lem-3.9}
Under the assumptions in Theorem \ref{thm-1.4'}, for any $\varepsilon > 0$ there exists
a constant $C = C(\varepsilon,\pb) > 0$ such that for any
$\xi \in \mathbb{R}^d$ with $|\widehat{\mu}(\xi)| > \varepsilon$, we have
\[
\sum_{n=0}^{\infty}
\bigl\|\langle (A^\top)^n \xi, a_\ell -a_1\rangle\bigr\|_{\mathbb{Z}}^2 < C
\qquad \text{for } 1 \le \ell \le m.
\]
\end{lem}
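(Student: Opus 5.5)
The plan is to follow Salem's classical argument. Fix $\varepsilon>0$ and $\xi\in\R^d$ with $|\widehat{\mu}(\xi)|>\varepsilon$. By \eqref{mu-fourier}, $|\widehat{\mu}(\xi)|=\prod_{n\ge0}|H((A^\top)^n\xi)|$, and each factor has modulus at most $1$. Hence
\[
\sum_{n=0}^\infty \log\frac{1}{|H((A^\top)^n\xi)|}<\log(1/\varepsilon).
\]
Using $\log(1/t)\ge 1-t$ for $t\in(0,1]$, I will deduce
\[
\sum_{n\ge0}\bigl(1-|H((A^\top)^n\xi)|\bigr)<\log(1/\varepsilon),
\]
and the right-hand side is independent of $\xi$.

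The key step is a pointwise lower bound on $1-|H(\eta)|$ in terms of the distances $\|\langle\eta,a_\ell-a_1\rangle\|_\Z$. Write $t_\ell=\langle\eta,a_\ell-a_1\rangle$, so that $|H(\eta)|=\bigl|\sum_\ell p_\ell e^{2\pi i t_\ell}\bigr|$. I will use the identity
\[
1-|H(\eta)|^2=\sum_{\ell,\ell'}p_\ell p_{\ell'}\bigl(1-\cos(2\pi(t_\ell-t_{\ell'}))\bigr).
\]
Keeping only the pairs $(\ell,1)$ and $(1,\ell)$, this gives
\[
1-|H(\eta)|^2\ge 2p_1p_\ell\bigl(1-\cos 2\pi t_\ell\bigr)\ge 16\,p_1p_\ell\|t_\ell\|_\Z^2,
\]
where the last step uses $1-\cos 2\pi x\ge 8\|x\|_\Z^2$. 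Since $1-|H|^2\le 2(1-|H|)$, it follows that
\[
1-|H(\eta)|\ge 8p_1p_\ell\|t_\ell\|_\Z^2
\qquad\text{for every }1\le\ell\le m.
\]

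Strict positivity of $\pb$ gives $p_1p_\ell>0$, so applying this bound with $\eta=(A^\top)^n\xi$ and summing over $n$ yields
\[
\sum_{n\ge0}\|\langle (A^\top)^n\xi,a_\ell-a_1\rangle\|_\Z^2\le \frac{\log(1/\varepsilon)}{8p_1p_\ell}.
\]
Therefore the constant $C:=\max_\ell \frac{\log(1/\varepsilon)}{8p_1p_\ell}+1$ works. This $C$ depends only on $\varepsilon$ and $\pb$, and for $\ell=1$ the sum vanishes trivially.

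No step here is a real obstacle. The only point needing care is the elementary inequality relating $1-|H|$ to the squared distances to $\Z$, which requires the strict positivity of $p_1$ and $p_\ell$. I note that the non-Rajchman and affine irreducibility hypotheses are not used in this lemma. They become relevant only in the subsequent Proposition~\ref{prop-3.8}, where one chooses $\xi$ so that $|\widehat\mu|$ is bounded below along $(A^\top)^{-n}\xi$.
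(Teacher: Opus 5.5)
Your proof is correct and follows the paper's route. Both arguments bound $\sum_{n\ge0}\bigl(1-|H((A^\top)^n\xi)|\bigr)$ by $\log(1/\varepsilon)$ via the product formula, then apply a pointwise lower bound $1-|H(\eta)|\gtrsim\|\langle\eta,a_\ell-a_1\rangle\|_\Z^2$. The only difference is how you get that bound: you derive $8p_1p_\ell\|t_\ell\|_\Z^2$ from the identity for $1-|H|^2$, whereas the paper reproduces Solomyak's two-term reduction with a case split to get $4p_{\min}\|t_\ell\|_\Z^2$. Your extra $+1$ in $C$ also keeps $C>0$ in the vacuous case $\varepsilon\ge1$.
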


\begin{proof}

Recall that for $\xi\in\R^d$,
\[
\widehat{\mu}(\xi)
= \prod_{n=0}^{\infty} H((A^\top)^n\xi),
\quad \text{where }H(\xi) := \sum_{\ell=1}^m p_\ell e^{2\pi i \langle \xi, a_\ell \rangle}.
\]

Set $p_{\min} =\min_{1\leq \ell\leq m}p_\ell$.  Applying \cite[Lemma~2.1]{Solomyak2022}, we obtain
\begin{equation}
\label{e-e-hx}
1-|H(\xi)|
\ge
4p_{\min}
\|\langle\xi,a_{\ell}-a_1\rangle\|_{\mathbb{Z}}^2 \qquad\text{ for all }\xi\in \R^d \text{ and }1\leq \ell \leq m.
\end{equation}

For the reader's convenience, we reproduce the proof of \eqref{e-e-hx} from \cite{Solomyak2022} with slight modifications. Since $|H(\xi|\leq 1$,   \eqref{e-e-hx} clearly holds in the case $\ell=1$.  Next assume that $2\leq \ell\leq m$.  Note that for any $\xi \in \mathbb{R}^d$,
\begin{align*}
1-|H(\xi)|
&= 1-\left|
\sum_{j=1}^m p_j e^{2\pi i \langle \xi,\, a_j-a_1\rangle}
\right| \\
&\ge
1-\left|
p_1 + p_{\ell}e^{2\pi i \langle \xi,\, a_{\ell}-a_1\rangle}
\right|-(p_2+\cdots+p_{\ell-1})\\
&=
p_1 + p_{\ell}
-
\left|
p_1 + p_{\ell}e^{2\pi i \langle \xi,\, a_{\ell}-a_1\rangle}
\right|.
\end{align*}
Moreover, if  $p_1 \ge p_{\ell}$ then
\begin{align*}
p_1 + p_{\ell}
-
\left|
p_1 + p_{\ell}e^{2\pi i \langle \xi,\, a_{\ell}-a_1\rangle}
\right|
&=p_1 + p_{\ell}-\left|
p_1-p_\ell+ p_{\ell}(1+e^{2\pi i \langle \xi,\, a_{\ell}-a_1\rangle})
\right|\\
&\geq 2 p_{\ell}-
p_{\ell}\left|1+e^{2\pi i \langle \xi,\, a_{\ell}-a_1\rangle}\right|\\
&
\geq 2p_{\ell}\bigl(1-|\cos(\pi\langle\xi,\, a_{\ell}-a_1\rangle)|\bigr)\\
&=2p_{\ell}\bigl(1-|\cos(\pi\|\langle\xi,\, a_{\ell}-a_1\rangle\|_\Z)|\bigr).
\end{align*}
Similarly, if $p_1 < p_\ell$  then
\[
p_1 + p_{\ell}
-
\left|
p_1 + p_{\ell}e^{2\pi i \langle \xi,\; a_{\ell}-a_1\rangle}
\right|
\ge
2p_1\bigl(1-|\cos(\pi\|\langle\xi,\, a_{\ell}-a_1\rangle\|_\Z)|\bigr).
\]
Hence
\[
1-|H(\xi)|
\ge
2p_{\min}
\bigl(1-\cos(\pi\|\langle\xi,\, a_{\ell}-a_1\rangle\|_{\mathbb{Z}})\bigr)\geq 4p_{\min}
\|\langle\xi,\, a_{\ell}-a_1\rangle\|_{\mathbb{Z}}^2,
\]
where in the second inequality we used the inequality
$1-\cos(\pi t) \ge 2t^2$ for $0 \le t \le 1$.  This proves \eqref{e-e-hx}.

In what follows, fix $\varepsilon>0$ and let $\xi\in \R^d$ be such that  $|\widehat{\mu}(\xi)|>\varepsilon$.

For each $n\geq 0$, writing $t = |H((A^\top)^n\xi)|-1$ and using the inequality
$e^t \ge t+1$, we obtain $$e^{|H((A^\top)^n\xi)|-1}
\ge
 |H((A^\top)^n\xi)|.$$
 Hence
 \[
\prod_{n=0}^{\infty} e^{|H((A^\top)^n\xi)|-1}
\ge
\prod_{n=0}^{\infty} |H((A^\top)^n\xi)|= |\widehat{\mu}(\xi)|>\varepsilon.
\]
Taking logarithms yields
\begin{equation}
\label{e-3.3}
\sum_{n=0}^{\infty}
\bigl(1-|H((A^\top)^n\xi)|\bigr)
< -\log \varepsilon .
\end{equation}
Combining this with \eqref{e-e-hx}, we obtain that for all $1\leq \ell\leq m$,
\[
4p_{\min}
\sum_{n=0}^{\infty}
\bigl\|\langle (A^\top)^{n}\xi, a_{\ell} \rangle\bigr\|_{\mathbb{Z}}^2
\le
\sum_{n=0}^{\infty}
\bigl(1-|H((A^\top)^n\xi)|\bigr)
< -\log \varepsilon .
\]
Taking
\[
C = \frac{-\log \varepsilon}{4p_{\min}}
\]
completes the proof.
\end{proof}

\begin{proof}[Proof of Proposition~\ref{prop-3.8}]
Since $\mu$ is non-Rajchman, there exist $\varepsilon>0$ and a sequence
$(\xi_N) \subset \mathbb{R}^d$ with $\|\xi_N\|>1$ and  $\|\xi_N\|\to\infty$ as $N\to\infty$ such that
\[
|\widehat{\mu}(\xi_N)|>\varepsilon \qquad \text{for all } N\in\mathbb{N}.
\]
By Lemma~\ref{lem-3.9}, there exists a constant $C=C(\varepsilon,\pb)>0$ such that
\begin{equation}
\label{e-e3.4}
\sum_{n=0}^{\infty}
\bigl\|\langle (A^\top)^n\xi_N,a_\ell-a_1\rangle\bigr\|_{\mathbb{Z}}^2
< C
\qquad \text{for all } N\in \N\text{ and }1\le \ell\le m.
\end{equation}

Define
\[
\tau_N = \min\{\tau\in\mathbb{N} : \|(A^\top)^{\tau}\xi_N\|<1\},\qquad N\in \N.
\]
Then
$$
\|(A^\top)^{-1}\|\cdot\|(A^\top)^{\tau_N}\xi_N\|\geq \|(A^\top)^{\tau_N-1}\xi_N\|\geq 1,
$$
and hence $\left|(A^\top)^{\tau_N}\xi_N\right|\in\left[\|(A^\top)^{-1}\|^{-1},1\right)$.
Since $\|\xi_N\|\to\infty$ as $N\to\infty$, we have $\tau_N\to\infty$ as
$N\to\infty$. Passing to a subsequence if necessary, we may assume that
\[
(A^\top)^{\tau_N}\xi_N \to \xi
\]
for some $\xi\in \R^d\backslash \{0\}$ as $N\to \infty$. Then, by the continuity of $\widehat{\mu}$, for every integer $n \ge 0$,
\[
|\widehat{\mu}((A^\top)^{-n}\xi)|
= \lim_{N \to \infty} |\widehat{\mu}((A^\top)^{\tau_N - n}\xi_N)|
\ge \liminf_{N \to \infty} |\widehat{\mu}(\xi_N)|
\ge \varepsilon,
\]
where, in the second inequality, we have used the fact that
\[
|\widehat{\mu}((A^\top)^u \xi_N)| \ge |\widehat{\mu}(\xi_N)|
\quad \text{for every positive integer } u.
\]

Next, let $1\leq \ell\leq m$.
By \eqref{e-e3.4},
\[
\sum_{n=-\tau_N}^{\infty}
\bigl\|\langle (A^\top)^{n}(A^\top)^{\tau_N}\xi_N,
a_\ell-a_1\rangle\bigr\|_{\mathbb{Z}}^2
< C.
\]

Since $\tau_N\to\infty$ as $N\to\infty$, for any $L\in\mathbb{N}$ there exists
$M_L\in\mathbb{N}$ such that $\tau_N>L$ for all $N>M_L$. Hence, for every $L\in \N$ and all
$N>M_L$,
\[
\sum_{n=-L}^{L}
\bigl\|\langle (A^\top)^{n}(A^\top)^{\tau_N}\xi_N,
a_\ell-a_1]\rangle\bigr\|_{\mathbb{Z}}^2
< C.
\]
Letting $N\to\infty$, we obtain for every $L\in\mathbb{N}$,
\[
\sum_{n=-L}^{L}
\bigl\|\langle (A^\top)^{n}\xi, a_\ell-a_1\rangle\bigr\|_{\mathbb{Z}}^2
< C.
\]
Therefore,
\[
\sum_{n=-\infty}^{\infty}
\bigl\|\langle (A^\top)^{n}\xi, a_\ell-a_1\rangle\bigr\|_{\mathbb{Z}}^2
< C.
\]
In particular,
\[
\sum_{n=0}^{\infty}
\bigl\|\langle (A^\top)^{-n}\xi, a_\ell-a_1\rangle\bigr\|_{\mathbb{Z}}^2
< C.
\]
This completes the proof.
\end{proof}

\subsection{Proof of Theorem \ref{thm-1.4'}}
\label{S-4.1.3'}

\begin{proof}[Proof of Theorem \ref{thm-1.4'}] The proof is lengthy and requires several delicate steps.
 We first prove that $\Phi=\{Ax+a_\ell\}_{\ell=1}^m$ is a generalized Pisot IFS on $\R^d$.

Since $\mu$ is non-Rajchman, Proposition~\ref{prop-3.8} provides a nonzero vector $\xi \in \mathbb{R}^d$ and a constant $\varepsilon>0$ such that
\begin{equation}
\label{e-nondecay}
\left|\widehat{\mu}\left((A^\top)^{-n}\xi\right)\right|\geq \varepsilon\quad \text{ for every integer }n\geq 0,
\end{equation}
and
\begin{equation}
\label{e-sumxi}
\sum_{n=0}^\infty
\bigl\|\langle (A^\top)^{-n}\xi,\, a_\ell -a_1\rangle\bigr\|_{\mathbb{Z}}^2
< \infty
\quad \text{for all } 1 \le \ell \le m.
\end{equation}

Set $B=(A^\top)^{-1}$. By Lemma~\ref{lem-Bnxi}, there exist a finite set $\{\beta_j : j \in \mathcal{J}\}$ of distinct eigenvalues of $B$ and nonzero vectors $\xi_j \in E_{\beta_j}$  ($j \in \mathcal{J}$), where $E_{\beta_j}$ denotes the generalized eigenspace corresponding to $\beta_j$, such that
$\xi=\sum_{j\in \mathcal J}\xi_j$ and, for every integer $n \ge 0$,
\begin{equation}
\label{e-bnxij}
B^n \xi_j=\sum_{u=0}^{\min\{n,\, m_j-1\}}
\binom{n}{u} \beta_j^{\,n-u} (B - \beta_j I)^u \xi_j,\qquad j\in \mathcal J,
\end{equation}
and
\begin{equation}
\label{e-bnxi}
B^n \xi
= \sum_{j \in \mathcal{J}} \sum_{u=0}^{\min\{n,\, m_j-1\}}
\binom{n}{u} \beta_j^{\,n-u} (B - \beta_j I)^u \xi_j,
\end{equation}
where $m_j$ is the smallest positive integer such that
\[
(B - \beta_j I)^{m_j} \xi_j = 0.
\]
Moreover, setting
\begin{equation}
\label{e-qdef}
q=\sum_{j\in \mathcal J}m_j,
\end{equation}
then by Lemma~\ref{lem-Bnxi}, $q\leq \sum_{j\in \mathcal J}\dim E_{\beta_j}\leq d$,  the vectors $\xi, B\xi,\ldots, B^{q-1}\xi$ are linearly independent, and
\begin{equation}
\label{e-Bqx}
B^{q}\xi=-\sum_{u=0}^{q-1}t_uB^u\xi,
\end{equation}
where the complex coefficients $t_u$ are given by the expansion
\begin{equation}
\label{e-Qx}
Q(x):=\prod_{j\in \mathcal J}(x-\beta_j)^{m_j}=x^q+\sum_{u=0}^{q-1}t_ux^u.
\end{equation}

Now, for  $j \in \mathcal{J}$ and  $1\leq \ell\leq m$, define
\[
\Qb_j(n)
= \sum_{u=0}^{m_j-1}
\binom{n}{u} \beta_j^{\,-u} (B - \beta_j I)^u \xi_j
\]
and
\[
Q_{j,\ell}(n)
= \langle \Qb_{j}(n),\, a_\ell -a_1\rangle
\]
for every integer $n\geq d$.
Each $Q_{j,\ell}$ is a  polynomial with complex coefficients. Moreover,  for $0\leq u\leq d$, $j\in \mathcal J$ and $1\leq \ell\leq m$, define
\begin{equation}
\label{e-bujl}
b_u^{(j,\ell)}=\left\{
\begin{array}{ll}
 \left\langle\beta_j^{\,-u} (B - \beta_j I)^u \xi_j,\, a_\ell-a_1\right\rangle,&\text{ if } 0 \leq u\leq m_j-1,\\
 0,& \text{ if } m_j\leq u\leq d.
\end{array}
\right.
\end{equation}
Then by \eqref{e-bnxij} and the definitions of $Q_{j,\ell}$ and $b_u^{j,\ell}$,
$$
Q_{j,\ell}(n)=\sum_{u=0}^d b_u^{(j,\ell)} \binom{n}{u} \quad \text{ for }n\geq d.
$$
Clearly the degree of each polynomial  $Q_{j,\ell}$ does not exceed $m_j-1$.
Rewrite these polynomials in the usual monomial basis,
$$
Q_{j,\ell}(n)=\sum_{u=0}^d c_u^{(j,\ell)}n^u.
$$
Then by Lemma \ref{lem-3.7}, there is an invertible $(d+1)\times (d+1)$ matrix $R_d$, with entries in $\Bbb Q$, such that
\begin{equation}
\label{e-b0}
\left(b_0^{(j,\ell)},b_1^{(j,\ell)},\ldots, b_{d}^{(j,\ell)}\right)^\top =R_d\left(c_0^{(j,\ell)},c_1^{(j,\ell)},\ldots,c_{d}^{(j,\ell)}\right)^\top\quad
\end{equation}
for all $j \in \mathcal{J}$ and $1\leq \ell\leq m$.

Note that by \eqref{e-bnxij}-\eqref{e-bnxi} and the definition of $Q_{j,\ell}$,
\begin{equation}
\label{e-xijaell}
\langle B^n\xi_j,\, a_\ell-a_1 \rangle= Q_{j,\ell}(n) \beta_j^n,\quad \text{and } \langle B^n\xi,\, a_\ell -a_1\rangle=\sum_{j\in \mathcal J} Q_{j,\ell}(n) \beta_j^n
\end{equation}
for all $n\geq d$, $j\in \mathcal J$, and $1\leq \ell\leq m$. Define
$$
\mathcal J_\ell=\{j\in \mathcal J\colon Q_{j,\ell}\neq 0\} \quad \text{ for }1\leq \ell\leq m,
$$ and
$$
\Lambda=\{1\leq \ell\leq m\colon \mathcal J_\ell\neq \emptyset\}.
$$
Observe that the IFS $\{Ax+a_\ell-a_1\}_{\ell=1}^m$ is also affinely irreducible. By the first identity in \eqref{e-xijaell} and Lemma~\ref{lem-3.10}(i),  for each $j\in \mathcal J$, there exists an index $\ell$ such that $Q_{j,\ell}\neq 0$. Hence
\begin{equation}
\label{e-mathcalJ}
\mathcal J=\bigcup_{\ell\in \Lambda}\mathcal J_\ell.
\end{equation}

By  \eqref{e-sumxi} and the second identity in \eqref{e-xijaell}, for every $\ell\in \Lambda$,
\begin{equation}
\label{e-fin1}
\sum_{n=0}^\infty \left\|\sum_{j\in \mathcal J_\ell} Q_{j,\ell}(n) \beta_j^n \right\|_\Z^2 = \sum_{n=0}^\infty \left\|\sum_{j\in \mathcal J} Q_{j,\ell}(n) \beta_j^n \right\|_\Z^2<\infty.
\end{equation}
Applying Theorem \ref{thm-Pisot}(i) yields that  $\{\beta_j\colon j\in \mathcal J_\ell\}$ is a Pisot tuple for each $\ell\in \Lambda$. Since a finite union of Pisot tuples is again a Pisot tuple,  \eqref{e-mathcalJ} implies that $\{\beta_j\colon j\in \mathcal J\}$ is a Pisot tuple.

Moreover, by \eqref{e-fin1} and Theorem~\ref{thm-Pisot}(ii)--(iii), for every $\ell \in \Lambda$,
\[
c_u^{(j,\ell)} \in \mathbb{Q}(\beta_j)
\quad \text{for all } j \in \mathcal{J} \text{ and } 0 \le u \le d,
\]
and if $\beta_j$ and $\beta_{j'}$ are conjugate over $\mathbb{Q}$ and $\sigma: \mathbb{Q}(\beta_j) \to \mathbb{Q}(\beta_{j'})$ is an isomorphism with $\sigma(\beta_j) = \beta_{j'}$, then
\[
\sigma(c_u^{(j,\ell)}) = c_u^{(j',\ell)}.
\]
These two properties also hold for $\ell \in \{1,\ldots,m\} \setminus \Lambda$, since in this case $Q_{j,\ell} = 0$ for all $j \in \mathcal{J}$, and hence $c_u^{(j,\ell)} = 0$ for all $j \in \mathcal{J}$ and $0 \le u \le d$.

By \eqref{e-b0} and the fact that $R_d$ is a rational matrix, the same properties also hold for $b_u^{(j,\ell)}$. More precisely,
\[
b_u^{(j,\ell)} \in \mathbb{Q}(\beta_j)
\quad \text{for all } j \in \mathcal{J},\ 0 \le u \le d,\ \text{and } 1 \le \ell \le m,
\]
and the same conjugacy relation holds for $b_u^{(j,\ell)}$.

Now for every $j\in \mathcal J$, define
\[
\eta_j = \beta_j^{\,1-m_j}(B - \beta_j I)^{m_j-1} \xi_j.
\]
Each $\eta_j$ is an eigenvector of $B$ corresponding to $\beta_j$. Moreover, by the definition of $b_u^{(j,\ell)}$ (see \eqref{e-bujl}), we have
\begin{equation}
\label{e-etaaell}
\langle\eta_j, \, a_\ell -a_1\rangle=b_{m_j-1}^{(j,\ell)} \in \mathbb{Q}(\beta_j)\quad \mbox{ for all }j\in \mathcal J\text{ and }1\leq \ell\leq m.
\end{equation}

Since the set $\{\beta_j : j \in \mathcal{J}\}$ is a Pisot tuple, we can partition $\mathcal{J}$ into disjoint nonempty subsets $\mathcal{J}^{(1)}, \ldots, \mathcal{J}^{(r)}$ such that, for each $1 \leq i \leq r$, the elements of $\{\beta_j : j \in \mathcal{J}^{(i)}\}$ are mutually conjugate over $\mathbb{Q}$, and no element of $\{\beta_j : j \in \mathcal{J} \setminus \mathcal{J}^{(i)}\}$ is conjugate to any element of $\{\beta_j : j \in \mathcal{J}^{(i)}\}$. Clearly, each set $\{\beta_j : j \in \mathcal{J}^{(i)}\}$ is a Pisot tuple.

We claim that for each $1\leq i\leq r$,
\begin{equation}
\label{e-mjj'}
m_{j}=m_{j'} \quad \mbox{ for all }j,j'\in \mathcal J^{(i)}.
\end{equation}

To prove the above claim, let $1 \le i \le r$. Choose $j_0 \in \mathcal{J}^{(i)}$ such that
\begin{equation}
\label{e-mj0}
m_{j_0} = \max\{ m_j : j \in \mathcal{J}^{(i)} \}.
\end{equation}
Applying Lemma~\ref{lem-3.10}(ii) to the IFS $\{Ax+a_\ell-a_1\}_{\ell=1}^m$, we see that there exists $\ell_0 \in \{1,\ldots,m\}$ such that
\[
b_{m_{j_0}-1}^{(j_0,\ell_0)} = \langle \eta_{j_0}, a_{\ell_0}-a_1 \rangle \ne 0,
\]
which implies that $Q_{j_0,\ell_0}$ is a polynomial of degree $m_{j_0} - 1$. By the conjugacy relation between the coefficients of $Q_{j_0,\ell_0}$ and $Q_{j,\ell_0}$, it follows that $\deg(Q_{j,\ell_0}) = m_{j_0} - 1$ for every $j \in \mathcal{J}^{(i)}$. On the other hand, by the definition of $Q_{j,\ell}$, we have $\deg(Q_{j,\ell}) \le m_j - 1$ for all $j\in \mathcal J$ and $1\leq \ell\leq m$. Therefore, $m_j\geq m_{j_0}$ for every $j\in \mathcal{J}^{(i)}$. In view of \eqref{e-mj0}, we obtain
\[
m_j = m_{j_0} \quad \text{for all } j \in \mathcal{J}^{(i)},
\]
which completes the proof of the claim.

In what follows we consider the following two cases separately:
\begin{itemize}
\item[(C1)]
$\{\beta_j\colon j\in \mathcal J\}$ is a Pisot tuple of type I;
\item[(C2)]
$\{\beta_j\colon j\in \mathcal J\}$ is a Pisot tuple of type II.
\end{itemize}

First, suppose that (C1) holds. Then at least one of $\{\beta_j : j \in \mathcal{J}^{(1)}\}, \ldots, \{\beta_j : j \in \mathcal{J}^{(r)}\}$ is a Pisot tuple of type~I. Without loss of generality, we may assume that $\{\beta_j : j \in \mathcal{J}^{(1)}\}$ is a Pisot tuple of type~I.

Choose $j_1\in \mathcal J^{(1)}$. By \eqref{e-mjj'},  $m_j=m_{j_1}$ for all $j\in \mathcal J^{(1)}$. For each $1\leq \ell\leq m$, since $\langle\eta_{j_1}, \, a_\ell-a_1 \rangle\in \Bbb Q(\beta_{j_1})$ by \eqref{e-etaaell}, we can choose  a polynomial $Q_\ell\in \Bbb Q[x]$ such that $$\langle\eta_{j_1}, \, a_\ell-a_1 \rangle=Q_\ell(\beta_{j_1}).$$
By \eqref{e-etaaell} and the conjugacy relation for $b_{u}^{(j,\ell)}$,  for every $j\in \mathcal J^{(1)}$,  letting $\sigma: \mathbb{Q}(\beta_{j_1}) \to \mathbb{Q}(\beta_j)$ be an isomorphism with $\sigma(\beta_{j_1})=\beta_{j}$, it follows that
\[
\langle \eta_j, a_\ell-a_1 \rangle
= b_{m_j-1}^{(j,\ell)}
= b_{m_{j_1}-1}^{(j,\ell)}
= \sigma\bigl(b_{m_{j_1}-1}^{(j_1,\ell)}\bigr)
= \sigma\bigl(\langle \eta_{j_1}, a_\ell-a_1 \rangle\bigr)
= \sigma\bigl(Q_\ell(\beta_{j_1})\bigr)
= Q_\ell(\beta_j).
\]

Choose $s \in \mathbb{N}$ such that $sQ_\ell \in \mathbb{Z}[x]$ for all $\ell$, and set $P_\ell = sQ_\ell$. Then
$$
\langle s\eta_j, \, a_\ell -a_1\rangle=P_\ell(\beta_j)\quad \text{ for all }j\in \mathcal J^{(1)}.
$$
Let $k' = \#(\mathcal{J}^{(1)})$. Then $\Phi = \{Ax + a_\ell\}_{\ell=1}^m$ is a generalized Pisot IFS of type I, with parameters $k'$, $\{s\eta_j : j \in \mathcal{J}^{(1)}\}$, and $P_1, \ldots, P_m$.

For the remainder of the proof, assume that condition~(C2) holds. The same construction as above shows that $\Phi = \{Ax + a_\ell\}_{\ell=1}^m$ is a generalized Pisot IFS with parameters $k'$, $\{s\eta_j : j \in \mathcal{J}^{(1)}\}$, and $P_1, \ldots, P_m$, although it is not of type~I.

Since the set $\{\beta_j : j \in \mathcal{J}\}$ is a Pisot tuple of type~II, it follows that each set $\{\beta_j : j \in \mathcal{J}^{(i)}\}$, for $1 \le i \le r$, is also a Pisot tuple of type~II. Therefore, for each $1 \le i \le r$ and every $j \in \mathcal{J}^{(i)}$, the set $\{\beta_{j'} : j' \in \mathcal{J}^{(i)}\}$ consists precisely of all conjugates of $\beta_j$ over $\mathbb{Q}$.

We now prove that
\begin{equation}
\label{e-integer}
\langle B^n \xi,\, a_\ell-a_1 \rangle \in \mathbb{Q}\quad \text{  for all }n \in \mathbb{N} \cup \{0\}\text { and }1 \le \ell \le m.
\end{equation}
 To prove this, fix $n \in \mathbb{N} \cup \{0\}$ and $1 \le \ell \le m$. By \eqref{e-bnxi} and \eqref{e-bujl},
\begin{equation}
\label{e-bnxi1}
\langle B^n \xi,\, a_\ell-a_1 \rangle
= \sum_{j \in \mathcal{J}} \sum_{u=0}^{\min\{n,\, m_j-1\}} b_u^{(j,\ell)} \binom{n}{u}
= \sum_{i=1}^r \sum_{j \in \mathcal{J}^{(i)}} \sum_{u=0}^{\min\{n,\, m_j-1\}} b_u^{(j,\ell)} \binom{n}{u}.
\end{equation}
By the conjugacy relation for $b_u^{(j,\ell)}$ and Lemma \ref{lem-Galois}(ii), we see that
\begin{equation}
\label{e-sumji}
\sum_{j \in \mathcal{J}^{(i)}} b_u^{(j,\ell)} \in \mathbb{Q}
\quad \text{for all } 1 \le i \le r,\; 0 \le u \le d,\; \text{and } 1 \le \ell \le m.
\end{equation}
Since the numbers $m_j$ (for $j \in \mathcal{J}^{(i)}$) are equal for each $i$, it follows from \eqref{e-bnxi1} and \eqref{e-sumji} that
\[
\langle B^n \xi, \, a_\ell-a_1 \rangle \in \mathbb{Q}.
\]
This proves \eqref{e-integer}.

By \eqref{e-integer},   we can choose a suitable positive integer $\kappa$ such that
\begin{equation}
\label{e-hnell}
h_{n,\ell}:=\langle \kappa B^n \xi,\, a_\ell -a_1\rangle \in \mathbb{Z}\quad \text{ for all }0\leq n\leq  q-1 \text{ and }1\leq \ell\leq m,
\end{equation}
where $q$ is defined as in \eqref{e-qdef}.

Define a $d\times q$ real matrix $S$ by
$$S=(\kappa\xi,\kappa B\xi,\ldots, \kappa B^{q-1}\xi),$$
that is, the $n$-th column of $S$ is  $\kappa B^{n-1}\xi$ for every  $1\leq n\leq q$. By Lemma~\ref{lem-Bnxi}(ii), $S$ has rank $q$.

Let $Q$ be the polynomial defined as in \eqref{e-Qx}, that is,
$$
Q(x)=x^q+\sum_{u=0}^{q-1}t_ux^u=\prod_{j\in \mathcal J}(x-\beta_j)^{m_j}=\prod_{i=1}^r\sum_{j\in \mathcal J^{(i)}} (x-\beta_j)^{m_j}.
$$
For each $1\leq i\leq r$, since $\{\beta_j\colon j\in \mathcal J^{(i)}\}$ is a Pisot tuple of type II and the numbers $m_j$ (for $j\in \mathcal J^{(i)}$) are equal,  it follows from Lemma~\ref{lem-Galois}(i) that $\sum_{j\in \mathcal J^{(i)}} (x-\beta_j)^{m_j}\in \Z[x]$. Therefore  $Q\in \Z[x]$, and hence $t_u\in \Z$ for all $0\leq u\leq q-1$.
Let $M$ be the companion matrix of $Q$, that is,
 \[
M =
\begin{pmatrix}
0 & 0 & 0 & \cdots & 0 & -t_{0} \\
1 & 0 & 0 & \cdots & 0 & -t_{1} \\
0 & 1 & 0 & \cdots & 0 & -t_{2} \\
\vdots & \ddots & \ddots & \ddots & \vdots & \vdots \\
0 & 0 & \cdots & 1 & 0 & -t_{q-2} \\
0 & 0 & \cdots & 0 & 1 & -t_{q-1}
\end{pmatrix}.
\]
Thus $M$ is an integer matrix whose eigenvalues are the $\beta_j$ for $j\in \mathcal J$.
By \eqref{e-Bqx} and \eqref{e-Qx},  we have
$$
B(\kappa\xi,\kappa B\xi,\ldots, \kappa B^{q-1}\xi)=(\kappa \xi,\kappa B\xi,\ldots, \kappa B^{q-1}\xi)M,
$$
that is, $BS=SM$. Taking transpose gives $S^\top A^{-1}=M^\top S^\top$, and hence
\begin{equation}
\label{e-3.17}
S^\top A=(M^\top)^{-1} S^\top.
\end{equation}
Moreover, for each $1\leq \ell\leq m$,
\begin{equation}
\label{e-3.20}
\begin{split}
S^\top (a_\ell-a_1)&=(\kappa\xi,\kappa B\xi,\ldots, \kappa B^{q-1}\xi)^\top (a_\ell-a_1)\\
&=(\langle \kappa \xi,\, a_\ell-a_1\rangle, \ldots, \langle \kappa B^{q-1} \xi,\, a_\ell-a_1\rangle)^\top\\
&=\hb_\ell\in \Z^q,
\end{split}
\end{equation}
where $\hb_\ell:=(h_{0,\ell}, \ldots, h_{q-1,\ell})^\top,$ and $h_{n,\ell}$ is defined as in \eqref{e-hnell}.

Finally, we define an affine map $T\colon \R^d\to \R^q$ by
$$Tx=S^\top x+u,\quad \text{where }u:=-\sum_{n=0}^\infty S^\top A^n a_1.$$
Since $S$ has rank $q$, $T$ is surjective. Using \eqref{e-3.17}, we obtain
\begin{equation}
\label{e-idenu}
S^\top a_1+u-(M^\top)^{-1}u=S^\top a_1-\sum_{n=0}^\infty S^\top A^n a_1 +\sum_{n=0}^\infty S^\top A^{n+1} a_1=0.
\end{equation}
We next prove  that \(T\mu\), the pushforward of \(\mu\) under $T$, is non-Rajchman and is the self-affine measure associated with the integral IFS
\[
\Psi = \{\psi_\ell(x) = (M^\top)^{-1}x + \hb_\ell\}_{\ell=1}^m
\]
and the probability vector \(\pb = (p_\ell)_{\ell=1}^m\). Moreover, the affine image $T(K)$ of $K$, where $K$ is the attractor of $\Phi$, is the attractor of $\Psi$.

To prove that \(T \mu\) is non-Rajchman, observe that for \(\zeta \in \mathbb{R}^q\),
\begin{align*}
\widehat{T \mu}(\zeta)
&= \int e^{2\pi i \langle \zeta,\, z\rangle} \, d(T \mu)(z)
= \int e^{2\pi i \langle \zeta, \,S^\top x+u\rangle} \, d\mu(x)\\
&= e^{2\pi i \langle \zeta,\, u\rangle}\int e^{2\pi i \langle S\zeta,\, x\rangle} \, d\mu(x)
=e^{2\pi i \langle \zeta,\, u\rangle} \widehat{\mu}(S\zeta),
\end{align*}
so $|\widehat{T \mu}(\zeta)|=|\widehat{\mu}(S\zeta)|$.
Let \(e_1 = (1,0,\ldots,0)^\top\) denote the unit vector in \(\mathbb{R}^q\) with first coordinate equal to \(1\) and all others equal to \(0\). Then
$$
|\widehat{T \mu}(M^ne_1)|=|\widehat{\mu}(SM^ne_1)|=|\widehat{\mu}(B^nSe_1)|=|\widehat{\mu}(B^n\xi)|\geq \varepsilon, \quad  n\geq 0,
$$
where the second equality follows from $SM=BS$, and the last inequality follows from \eqref{e-nondecay}. Since $|M^ne_1|\to \infty$ as $n\to \infty$,   $T \mu$ is non-Rajchman.

To see that \(T\mu\) is the self-affine measure associated with  \(\Psi\) and \(\pb\), observe that for each \(1 \leq \ell \leq m\) and \(x \in \mathbb{R}^d\),
\begin{align*}
T\circ \phi_\ell(x)&=T(Ax+a_\ell)=S^\top(Ax+a_\ell)+u=S^\top A x+S^\top a_\ell+u\\
&=(M^\top)^{-1}(S^\top x+u)+S^\top (a_\ell-a_1)+S^\top a_1+u-(M^\top)^{-1}u \quad(\text{by \eqref{e-3.17}})\\
&=(M^\top)^{-1}Tx+\hb_\ell \quad(\text{by \eqref{e-3.20} and \eqref{e-idenu}})\\
&=\psi_\ell\circ T(x).
\end{align*}
That is,
\begin{equation}
\label{e-Tcomm}
\psi_\ell \circ T = T \circ \phi_\ell,\qquad \ell=1,\ldots, m.
\end{equation}
 Hence, for any Borel set \(E \subset \mathbb{R}^q\),
\begin{align*}
(T \mu)(E)
&= \mu\big(T^{-1} E\big)\\
&= \sum_{\ell=1}^m p_\ell \, \mu\big(\phi_\ell^{-1}\big(T^{-1} E\big)\big) \\
&= \sum_{\ell=1}^m p_\ell \, \mu\big(T^{-1} (\psi_\ell^{-1} E)\big) \\
&= \sum_{\ell=1}^m p_\ell \, (T \mu)(\psi_\ell^{-1} E).
\end{align*}
That is, $T\mu$ is the self-affine measure associated with \(\Psi\) and \(\pb\).

To see that $T(K)$ is the attractor of $\Psi$, from the identity $K=\bigcup_{\ell=1}^m \phi_\ell(K)$ we obtain
\[
T(K)=\bigcup_{\ell=1}^m T(\phi_\ell(K))
=\bigcup_{\ell=1}^m \psi_\ell(T(K)),
\]
where in the second equality we  used \eqref{e-Tcomm}. Hence, $T(K)$ is the attractor of $\Psi$.

Finally, set $V={\rm span}_\R\{\xi, B\xi,\ldots, B^{q-1}\xi\}$. We show that the following properties hold.
\begin{itemize}
\item[(a)] $V$ is a $B$-invariant subspace of dimension $q$;
\item[(b)] The restriction of $T$ on $V$, denoted by $T|_V$, is invertible. That is, the mapping $T\colon V\to \R^q$ is a homeomorphism;
\item[(c)] Let  $P_V\colon \R^d\to V$ denote the orthogonal projection onto $V$, and let $$\Phi_V:=\{\phi_{V,\ell}(x)=P_VAx+P_V a_\ell\}_{\ell=1}^m$$ be the corresponding projected IFS on $V$.  Then $\Phi_V$ is conjugated to $\Psi$ via the affine map $T|_V$. That is,  $$(T|_V)\circ \phi_{V,\ell}\circ (T|_V)^{-1}=\psi_\ell$$ for every $1\leq \ell\leq m$.
\end{itemize}

Part (a) follows directly from the linear independence of the vectors $\xi, B\xi, \ldots, B^{q-1}\xi$.

To prove (b), note that $S$ is a $d\times q$ matrix of rank $q$. For $x_1,\ldots, x_q\in \R$,
\begin{align*}
(S^\top|_V)(x_1\xi+x_2B\xi+\cdots +x_qB^{q-1}\xi)&=S^\top(x_1\xi+x_2B\xi+\cdots +x_qB^{q-1}\xi).\\
&=(S^\top S)(x_1,\ldots, x_q)^\top.
\end{align*}
Since $\operatorname{rank}(S)=q$, it follows that $S^\top S$ also has rank $q$ (indeed, $S$ and $S^\top S$ has the same kernel). Hence $S^\top|_V$ is invertible. Consequently, $T|_V$ is invertible.

To prove (c),  we first show that $S^\top\circ P_V=S^\top$ on $\R^d$. For $y \in \R^d$, we have
\begin{align*}
(S^\top\circ P_V)(y)
&=k(\langle \xi, P_Vy\rangle, \ldots, \langle B^{q-1}\xi, P_Vy\rangle)\\
&=k(\langle P_V\xi, y\rangle, \ldots, \langle P_VB^{q-1}\xi, y\rangle)\\
&=k(\langle \xi, y\rangle, \ldots, \langle B^{q-1}\xi, y\rangle)\\
&=S^\top(y),
\end{align*}
where the third equality follows from the fact that $B^j\xi\in V$ for $0\leq j\leq q-1$. Hence $S^\top\circ P_V=S^\top$.

Let $x\in \R^q$ and $1\leq \ell\leq m$. Then
\begin{align*}
(T|_V)&\circ \phi_{V,\ell}\circ (T|_V)^{-1}(x)\\
&=(S^\top|_V)\bigl(P_VA (T|_V)^{-1}(x)+P_V a_\ell \bigr)+u\\
&=S^\top \bigl(P_VA (T|_V)^{-1}(x)+P_V a_\ell \bigr)+u\\
&=S^\top P_VA (T|_V)^{-1}(x)+S^\top P_V a_\ell+u\\
&=S^\top A (T|_V)^{-1}(x)+S^\top a_\ell+u \qquad(\text{since } S^\top P_V=S^\top)\\
&=(M^\top)^{-1}S^\top (T|_V)^{-1}(x)+\hb_\ell +S^\top a_1+u
\quad\text{(by \eqref{e-3.17}--\eqref{e-3.20})}\\
&=(M^\top)^{-1}(T|_V) (T|_V)^{-1}(x)+\hb_\ell +S^\top a_1+u-(M^\top)^{-1}u\\
&=(M^\top)^{-1}x+\hb_\ell\qquad \text{(by \eqref{e-idenu})}\\
&=\psi_\ell(x),
\end{align*}
as desired. This proves (c), and hence $\Psi$ is an integral factor of $\Phi$.
\end{proof}

By adapting the arguments in the proof of Theorem~\ref{thm-1.4'}, we obtain the following result, which implies that an affinely irreducible generalized Pisot IFS of type II has at least one integral factor.
\begin{lem}
\label{lem-5.6'}
Let $\Phi=\{\phi_\ell(x)=Ax+a_\ell\}_{\ell=1}^m$ be an affinely irreducible generalized Pisot IFS on $\mathbb{R}^d$, with parameters $\beta_1,\dots,\beta_k$, $\zeta_1,\dots,\zeta_k$, and $P_1,\dots,P_m$ as in Definition~\ref{defn-1.2}. Suppose additionally that $\{\beta_1,\dots,\beta_k\}$ is a Pisot tuple of type II. Then $\Phi$ has an integral factor.
\end{lem}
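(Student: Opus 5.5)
The plan is to reuse the second half of the proof of Theorem~\ref{thm-1.4'} (case (C2)). There, the only input was a real vector $\xi$ whose orbit under $B=(A^\top)^{-1}$ pairs rationally with all $a_\ell-a_1$. Here the role of $\xi$ will be played by a real combination of the eigenvectors $\zeta_j$. We may assume $a_1=0$, and, passing to a subset, that $\beta_1,\dots,\beta_k$ are mutually conjugate over $\Q$.

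The type~II hypothesis gives more. The minimal monic integer polynomial $R$ vanishing at the $\beta_j$ has no roots inside the unit disk, so (being minimal) $R$ is irreducible and $\{\beta_1,\dots,\beta_k\}$ is the full set of conjugates. Set
\[
\xi=\sum_{j=1}^k \zeta_j .
\]
Since the set $\{\beta_j\}$ is closed under complex conjugation, Lemma~\ref{lem-conjugate} gives $\xi\in\R^d$. If $\xi=0$, then $\zeta_j=0$ for all $j$, since eigenvectors of $A^\top$ for distinct eigenvalues are independent; this is impossible, so $\xi\neq 0$. Moreover $B\zeta_j=\beta_j\zeta_j$, so
\[
\langle B^n\xi, a_\ell\rangle=\sum_{j=1}^k \beta_j^nP_\ell(\beta_j)\in\Z
\]
by Lemma~\ref{lem-Galois}(i). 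This holds for all $n\ge 0$ and all $\ell$.

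Now apply Lemma~\ref{lem-Bnxi} to $B$ and $\xi$. Here $\mathcal J=\{1,\dots,k\}$ and all $m_j=1$, so $q=k$. The vectors $\xi,B\xi,\dots,B^{k-1}\xi$ form a basis of $V=\operatorname{span}\{\zeta_j\}\cap\R^d$, and $B^k\xi=-\sum t_uB^u\xi$, where $Q=\prod(x-\beta_j)=R\in\Z[x]$. Let $S=(\xi,B\xi,\dots,B^{k-1}\xi)$ and let $M$ be the companion matrix of $R$. Then $BS=SM$, hence $S^\top A=(M^\top)^{-1}S^\top$. Also
\[
\hb_\ell:=S^\top a_\ell=(\langle B^u\xi,a_\ell\rangle)_{u<k}\in\Z^k .
\]

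Define $Tx=S^\top x+u$ with $u=-\sum_{n\ge0}S^\top A^na_1$; with $a_1=0$ we get $u=0$. Then, exactly as in the proof of Theorem~\ref{thm-1.4'}, we check that $T|_V$ is invertible, using that $S^\top S$ is invertible. We also check that $S^\top P_V=S^\top$, and that $T|_V$ conjugates the projected IFS $\Phi_V$ to
\[
\Psi=\{(M^\top)^{-1}x+\hb_\ell\}_{\ell=1}^m .
\]
Here $(M^\top)^{-1}$ has integer inverse $M^\top$, and the $\hb_\ell$ are integral, so $\Psi$ is integral. Finally, $V$ must be shown to be $A^\top$-invariant. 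It is $B$-invariant by part (a) of that proof, and $B$ is invertible on the finite-dimensional space $V$, so $V$ is also invariant under $B^{-1}=A^\top$. Hence $\Psi$ is an integral factor.

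The only delicate points are the irreducibility of $R$ (which is what makes type~II give integrality via symmetric functions) and the reality of $\xi$. Everything else is a verbatim transcription of the (C2) computations.
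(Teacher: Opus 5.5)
Your proposal is correct and follows essentially the same route as the paper's proof. You take $\xi=\sum_j\zeta_j$, obtain $\langle B^n\xi,a_\ell-a_1\rangle\in\Z$ from Lemma~\ref{lem-Galois}(i), and use the companion matrix $M$ of $\prod_j(x-\beta_j)\in\Z[x]$ with $BS=SM$; the map $T|_V$ then conjugates $\Phi_V$ to $\Psi=\{(M^\top)^{-1}x+\hb_\ell\}_{\ell=1}^m$. Your reductions to a single conjugacy class and to $a_1=0$ are harmless but unnecessary. Your explicit checks that $\xi\neq 0$ (independence of eigenvectors for distinct eigenvalues) and that $V$ is $A^\top$-invariant, not merely $B$-invariant, fill in points the paper passes over quickly.
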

\begin{proof} We  follow mainly the arguments given near the end of the proof of Theorem~\ref{thm-1.4'} (specifically, the last two pages). To avoid repetition, we  only give a brief sketch.

Write $B=(A^\top)^{-1}$ and set $\xi=\sum_{j=1}^k \zeta_j$. By Lemma~\ref{lem-conjugate}, $\xi\in \mathbb{R}^d\setminus \{0\}$. Since $\{\beta_1,\dots,\beta_k\}$ is a Pisot tuple of type II, for every integer $n\geq 0$ and $1\leq \ell\leq m$,
\begin{equation*}
h_{n,\ell}:=\langle B^n\xi, a_\ell-a_1 \rangle
=\sum_{j=1}^k \beta_j^n \langle \zeta_j, a_\ell-a_1 \rangle
=\sum_{j=1}^k \beta_j^n P_\ell(\beta_j)\in \mathbb{Z}.
\end{equation*}

Let $Q$ be the polynomial defined by
\begin{equation*}
Q(x)=\prod_{j=1}^k(x-\beta_j)
=:x^k+t_{k-1}x^{k-1}+\cdots+t_1x+t_0.
\end{equation*}
Then $Q\in \mathbb{Z}[x]$, since $\{\beta_1,\dots,\beta_k\}$ is a Pisot tuple of type II.
By Lemma~\ref{lem-Bnxi}, the vectors $\xi, B\xi,\dots, B^{k-1}\xi$ are linearly independent over $\mathbb{R}$, and
\begin{equation*}
B^k\xi=-\sum_{j=0}^{k-1}t_j B^{j}\xi.
\end{equation*}
It follows that
\begin{equation*}
B(\xi,B\xi,\ldots, B^{k-1}\xi)=(\xi,B\xi,\ldots, B^{k-1}\xi)M,
\end{equation*}
where $M\in \mathrm{GL}_k(\mathbb{Z})$ is the companion matrix of $Q$. Setting $S=(\xi,B\xi,\ldots, B^{k-1}\xi)$, we have $BS=SM$. Moreover,
\begin{align*}
S^\top (a_\ell-a_1)
&=(\langle \xi, a_\ell-a_1\rangle, \ldots, \langle B^{k-1} \xi, a_\ell-a_1\rangle)^\top \notag\\
&=\hb_\ell\in \mathbb{Z}^k,
\end{align*}
where $\hb_\ell:=(h_{0,\ell}, \ldots, h_{k-1,\ell})^\top$.

Let $\Psi$ be the integral IFS on $\mathbb{R}^k$ defined by
\begin{equation}\label{eq:Psi}
\Psi=\{\psi_\ell(x)=(M^\top)^{-1}x+\hb_\ell\}_{\ell=1}^m.
\end{equation}
Define an affine map $T:\mathbb{R}^d\to \mathbb{R}^k$ by
\begin{equation}\label{eq:T}
Tx=S^\top x+u,\quad \text{where } u:=-\sum_{n=0}^\infty S^\top A^n a_1.
\end{equation}
Since $S$ has rank $k$, $T$ is surjective.

Let $V=\operatorname{span}_{\mathbb{R}}\{\xi, B\xi,\ldots, B^{k-1}\xi\}$. Then $\dim V=k$. As in the argument for part (b) in the proof of Theorem~\ref{thm-1.4'},  the restriction of $T$ to $V$, denoted by $T|_V$, is invertible. Let
\begin{equation}\label{eq:PhiV}
\Phi_V=\{\phi_{V,\ell}(x)=P_VAx+P_Va_\ell\}_{\ell=1}^m
\end{equation}
be the projected IFS of $\Phi$ onto $V$.  Arguing as at the end of the proof of Theorem~\ref{thm-1.4'}, one obtains that
\begin{equation}\label{eq:conjugacy}
(T|_V)\circ \phi_{V,\ell}\circ (T|_V)^{-1}=\psi_\ell
\end{equation}
for every $1\leq \ell\leq m$, which implies that $\Psi$ is a factor of $\Phi$.
\end{proof}

\section{Proof of Theorem \ref{thm-1.5}}
\label{S-6}

In this section, we prove Theorem \ref{thm-1.5}.  We first present two useful lemmas in Section~\ref{S-5.1} and a key proposition in Section~\ref{S-5.2}. The proof of Theorem~\ref{thm-1.5} is given in Section~\ref{S-5.3}.

\subsection{Two useful lemmas}
\label{S-5.1}
Let ${\Bbb T}^d=\Bbb R^d/\Bbb Z^d$ denote the $d$-dimensional torus. For $\theta=(\theta_1,\ldots, \theta_d)\in \Bbb T^d$, let $T_{\theta}\colon \Bbb T^d\to \Bbb T^d$ be the translation map defined by
$$
T_{\theta}(x_1,\ldots, x_d)=(x_1+\theta_1,\ldots, x_d+\theta_d)\quad (\mbox{mod}\;\; 1).
$$

For $\Lambda\subset \N$, the {\it upper density} of $\Lambda$ in $\N$ is defined as
$$
\overline{d}(\Lambda)=\limsup_{n\to \infty}\frac{\#(\Lambda\cap [1,n])}{n},
$$
where $\#$ stands for cardinality.

\begin{lem}
\label{lem-td}
Let ${\theta}=(\theta_1,\ldots, \theta_d)\in \Bbb T^d$ such that the numbers $\theta_1,\ldots, \theta_d$ and $1$ are rational independent; that is, $\sum_{i=1}^d k_i \theta_i$ is not an integer for any collection of integers $k_1,\ldots, k_d$,  except when $k_1=k_2=\ldots=k_d=0$. Let $\Lambda\subset \N$, and let $E\subset \Bbb T^d$ be the closure of  the set $\{T_{\theta}^n({0,\ldots,0})\colon n\in \Lambda\}$.  Then
$$
\mathcal L^d (E)\geq \overline{d}(\Lambda),
$$
where $\mathcal L^d$ denotes the Lebesgue measure on $\Bbb T^d$. In particular, $\mathcal L^d (\widetilde{E})\geq \overline{d}(\Lambda)$, where
$\widetilde{E}$ denotes the set of accumulation points of $\{T_{\theta}^n({0,\ldots, 0})\colon n\in \Lambda\}$.
\end{lem}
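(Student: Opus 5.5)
The plan is to combine unique ergodicity of the irrational rotation $T_\theta$ with a covering argument. Write $x_n=T_\theta^n(0)=n\theta \pmod 1$. Since $1,\theta_1,\ldots,\theta_d$ are rationally independent, $T_\theta$ is uniquely ergodic with respect to $\mathcal L^d$ (Weyl's equidistribution theorem). Therefore, for every open set $U\subset\Bbb T^d$ whose boundary has Lebesgue measure zero (for instance, a finite union of open boxes),
\[
\lim_{n\to\infty}\frac{\#\{1\le j\le n: x_j\in U\}}{n}=\mathcal L^d(U).
\]

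Next I fix $\varepsilon>0$ and approximate the closed set $E$ from outside. By outer regularity there is an open set $U\supset E$ with $\mathcal L^d(U)<\mathcal L^d(E)+\varepsilon$. Since $E$ is compact, I cover it by finitely many open boxes contained in $U$. Let $U'$ be their union. Then $E\subset U'\subset U$ and $\partial U'$ is Lebesgue-null. Every $n\in\Lambda$ satisfies $x_n\in E\subset U'$, so
\[
\frac{\#(\Lambda\cap[1,n])}{n}\le \frac{\#\{1\le j\le n: x_j\in U'\}}{n}.
\]
Taking $\limsup$ and applying equidistribution gives $\overline d(\Lambda)\le \mathcal L^d(U')\le \mathcal L^d(E)+\varepsilon$. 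Letting $\varepsilon\to0$ proves the main claim. The only delicate point is that equidistribution requires boundary-null test sets, and choosing finite unions of boxes handles this.

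For the \emph{in particular} part, I note that $E=\widetilde E\cup\{x_n:n\in\Lambda\}$. The second set is countable, hence Lebesgue-null, so $\mathcal L^d(\widetilde E)=\mathcal L^d(E)\ge\overline d(\Lambda)$. If $\Lambda$ is finite, the claim is trivial because then $\overline d(\Lambda)=0$.
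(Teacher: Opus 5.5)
Your proof is correct, but it takes a different route from the paper's.

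You use the full strength of Weyl's theorem (unique ergodicity), namely equidistribution of the specific orbit $(n\theta)$ of $0$. Equidistribution only controls visit frequencies of boundary-null sets, so you then shrink an outer-regular open neighbourhood of $E$ to a finite union of boxes. In effect you are reproving the portmanteau inequality $\limsup_n \frac1n\#\{1\le j\le n: x_j\in F\}\le \mathcal L^d(F)$ for closed $F$; you could also quote it and apply it directly to $F=E$.

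The paper instead uses only ergodicity of $T_\theta$ and the Birkhoff theorem. Birkhoff applies to arbitrary measurable sets, such as $V_\epsilon(E)$ with no boundary condition, but only at almost every point. The paper gets around the possibility that $0$ is not generic by using that $T_\theta$ is an isometry. Any generic $x$ with $\rho(x,0)<\epsilon$ satisfies $T_\theta^i x\in V_\epsilon(E)$ for all $i\in\Lambda$, which gives $\mathcal L^d(V_\epsilon(E))\ge\overline d(\Lambda)$. It then lets $\epsilon\to0$ and uses $V_\epsilon(E)\downarrow E$, since $E$ is closed.

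So the paper trades pointwise equidistribution for the shadowing property of translations, and it sidesteps the Jordan-measurability issue. Your argument needs no group structure at all. It works verbatim for any sequence equidistributed with respect to $\mathcal L^d$, for example $(n^2\theta)$. The passage from $E$ to $\widetilde E$ (they differ by a countable set) is the same in both.
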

\begin{proof}
The result is well known in the case  $d=1$ (see, e.g., \cite[Lemma 7.2]{Wu2019}). It is most likely known in the general case as well, but we have not been able to find a reference, so  a proof is provided for the reader's convenience.

Endow $\Bbb T^d$ with the flat metric $\rho$, defined by
$$
\rho(x, y)=\|x-y\|_{\Z^d}.
$$
For any $\epsilon>0$, let
$$
V_\epsilon(E):=\{y \in \Bbb T^d\colon \rho(y, x)<\epsilon \mbox{ for some } x\in E\}.
$$
be the $\epsilon$-neighborhood of $E$. Let $\chi_{V_\epsilon(E)}$ denote the characteristic function of the set $V_\epsilon(E)$.  Since $T_{\theta}$ is ergodic with respect to $\mathcal L^d$ (see, e.g., \cite[Proposition 4.2.2]{KH1995}), by the Birkhoff ergodic theorem,
\begin{equation}
\label{e-2.1}
\lim_{n\to \infty}\frac{1}{n}\sum_{i=0}^{n-1} \chi_{V_\epsilon(E)}(T^i_{\theta}(x))=\mathcal L^d(V_\epsilon(E))\quad \mbox{ for $\mathcal L^d$-a.e.~$x\in \Bbb T^d$}.
\end{equation}
Moreover, for every $x\in \Bbb T^d$ such that $\rho(x, (0,\ldots, 0))<\epsilon$, we have
$$
\rho(T^i_{\theta}(x), T^i_{\theta}(0,\ldots,0))=\rho(x, (0,\ldots,0))<\epsilon \quad \mbox{ for  all }i\geq 0;
$$
which implies that $T^i_{\theta}(x)\in V_\epsilon(E)$ for every $i\in \Lambda$, and therefore
$$
\limsup_{n\to \infty}\frac{1}{n}\sum_{i=0}^{n-1} \chi_{V_\epsilon(E)}(T^i_{\theta}(x))\geq \limsup_{n\to \infty}\frac{\#(\Lambda\cap [1,n])}{n}=\overline{d}(\Lambda).
$$
This, together with  \eqref{e-2.1}, yields that $\mathcal L^d(V_\epsilon(E))\geq \overline{d}(\Lambda)$. Letting $\epsilon\to 0$ yields  $$\mathcal L^d(E)\geq \overline{d}(\Lambda).$$
 Since $\widetilde{E}$ differs from $E$ by at most a countable set, we conclude that
 \[\mathcal L^d(\widetilde{E})= \mathcal L^d(E)\geq \overline{d}(\Lambda).\qedhere\]
\end{proof}

The following result is well known (see, e.g., \cite[Proposition~6]{Krantz2018}).
\begin{lem}
\label{lem-real analytic}
Let $f\colon \Bbb T^d\to \R$ be real analytic. Suppose that $f$ vanishes on a subset of $\Bbb T^d$ of positive Lebesgue measure. Then $f$ is identically zero.
\end{lem}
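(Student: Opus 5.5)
The plan is to argue by induction on $d$, reducing along the coordinate axes of $\Bbb T^d$; the base case $d=1$ is the key step.

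\textbf{Base case $d=1$.} A real analytic $f$ on $\Bbb T^1$ lifts to a $1$-periodic real analytic function on $\R$. If $f$ vanishes on a set $Z$ of positive Lebesgue measure, then $Z$ is uncountable. Since $\Bbb T^1$ is compact, $Z$ has an accumulation point $x_0$, and $f(x_0)=0$ by continuity. The identity theorem for real analytic functions on the connected set $\R$ then gives $f\equiv 0$.

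\textbf{Inductive step.} Suppose the claim holds in dimension $d-1$, and write $x=(x',x_d)$ with $x'\in\Bbb T^{d-1}$. Let $Z=\{f=0\}$, which is closed and has $\mathcal L^d(Z)>0$. By Fubini,
\[
\mathcal L^d(Z)=\int_{\Bbb T^{1}}\mathcal L^{d-1}(Z_{x_d})\,dx_d,
\]
where $Z_{x_d}=\{x':(x',x_d)\in Z\}$ is the slice of $Z$. Hence the set $G=\{x_d:\mathcal L^{d-1}(Z_{x_d})>0\}$ has positive one-dimensional measure.

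For each $x_d\in G$, the slice function $x'\mapsto f(x',x_d)$ is real analytic on $\Bbb T^{d-1}$ and vanishes on $Z_{x_d}$. By the induction hypothesis it vanishes identically. Consequently, for every fixed $x'$, the function $t\mapsto f(x',t)$ is real analytic on $\Bbb T^1$ and vanishes on $G$, a set of positive measure. The base case then gives $f(x',\cdot)\equiv 0$ for every $x'$, so $f\equiv 0$.

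The main point needing care is the measurability required for Fubini. This is immediate because $Z$ is closed, since $f$ is continuous.
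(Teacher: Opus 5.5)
Your proof is correct. The paper gives no argument for this lemma: it calls the lemma well known and refers to the literature. Your self-contained proof by induction on dimension is the standard way this fact is established.

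That proof has three steps: the identity theorem at an accumulation point of zeros on the compact circle, then Fubini to produce a positive-measure set $G$ of last coordinates whose slices vanish identically by the induction hypothesis, then the one-dimensional case again in the last variable. Each step checks out.
\begin{itemize}
\item Slices of a real analytic function are real analytic.
\item The zero set $Z$ is closed, so Tonelli applies and $x_d\mapsto\mathcal L^{d-1}(Z_{x_d})$ is measurable.
\item A set of positive measure is infinite, so it has an accumulation point in $\mathbb{T}^1$.
\end{itemize}

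One remark: the paper actually applies the lemma to the complex-valued trigonometric polynomials $g_p$ on $\mathbb{T}^s$. Your argument works verbatim for complex-valued $f$, or you can apply it to $\operatorname{Re} f$ and $\operatorname{Im} f$ separately, so it covers that use.
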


\subsection{A technical proposition}
\label{S-5.2}
In this subsection, we provide  a technical proposition which plays an essential role in the proof of Theorem \ref{thm-1.5}.

Let $\beta_1,\ldots, \beta_k$ be a Pisot tuple of type I such that  \(\beta_1,\ldots,\beta_k\) are Galois conjugates over \(\mathbb{Q}\), and
let \(\beta_{k+1},\ldots,\beta_{k'}\) be the remaining Galois conjugates. For \(1 \leq j \leq k'\), write
\[
\beta_j = r_j e^{2\pi i \alpha_j},
\]
where \(r_j > 0\) and \(\alpha_j \in [0,1)\).

The main result in this subsection is the following.

\begin{prop}\label{prop-4.1}
 Under the above setting, assume in addition that there exist $\theta_1,\ldots,\theta_s \in [0,1)$ such that
$1,\theta_1,\ldots,\theta_s$ are rationally independent, and that there exist integers
$t_{j,u}$ ($1 \leq j \leq k'$,  $1 \leq u \leq s$) satisfying
\begin{equation}
\label{e-e5.2}
\alpha_j \equiv \sum_{u=1}^s t_{j,u}\theta_u \pmod{1},
\qquad j=1,\ldots,k'.
\end{equation}
Let $P_\ell\in \Z[x]$, $1\leq \ell \leq m$, be such that $P_\ell(\beta_j)\neq P_{\ell'}(\beta_j)$ for all $1\leq j\leq k'$ and $\ell\neq \ell'$.
 For $n\in \mathbb{Z}$, define $v_n\in \C^m$ by
$$
v_n=\left(\exp\left(2\pi i\sum_{j=1}^k\beta_j^{n} P_1(\beta_j)\right),\ldots, \exp\left(2\pi i\sum_{j=1}^k\beta_j^{n} P_m(\beta_j)\right)\right).
$$
Let $\Lambda$ be an infinite subset of $\N$ with positive upper density. Then
$$
{\rm span}_\C(\{v_n:\; n\in \Lambda\})=\C^m \quad \text{and} \quad {\rm span}_\C(\{v_{-n}:\; n\in \Lambda\})=\C^m.
$$
\end{prop}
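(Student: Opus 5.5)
The plan is to argue by contradiction and reduce both spanning statements to one asymptotic analysis. Suppose ${\rm span}_\C\{v_{-n}: n\in\Lambda\}\neq\C^m$. Then there is a nonzero $c=(c_1,\ldots,c_m)\in\C^m$ with
\[
\sum_{\ell=1}^m c_\ell\exp\Bigl(2\pi i\, x_\ell(n)\Bigr)=0\quad\text{for all } n\in\Lambda,\qquad x_\ell(n):=\sum_{j=1}^k\beta_j^{-n}P_\ell(\beta_j).
\]
For the statement about $v_n$ I would first reduce to the same shape. Lemma~\ref{lem-Galois}(i) gives $\sum_{j=1}^{k'}\beta_j^nP_\ell(\beta_j)\in\Z$, so $\exp(2\pi i\sum_{j\le k}\beta_j^nP_\ell(\beta_j))=\exp(-2\pi i\sum_{j>k}\beta_j^nP_\ell(\beta_j))$. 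Since the tuple is of type~I, the set $\{\beta_{k+1},\ldots,\beta_{k'}\}$ is nonempty and all its elements lie in the open unit disk. In both cases the exponent is therefore a finite sum $\sum_j \gamma_j^{n}P_\ell(\beta_j)$ with $|\gamma_j|<1$, with $\gamma_j$ ranging over a nonempty set of conjugates (or their inverses), and I treat the two cases simultaneously.

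\textbf{Step 1: pass to the torus.} Write $\gamma_j=\rho_je^{2\pi i\epsilon_j\alpha_j}$ with $\epsilon_j=\pm1$. By \eqref{e-e5.2}, $n\alpha_j\equiv\sum_u t_{j,u}(n\theta_u)$ mod $1$, so every phase $e^{2\pi i n\alpha_j}$ is a character $\chi_j(y)=e^{2\pi i\langle t_j,y\rangle}$ evaluated at $y=n\theta\in\Bbb T^s$. By Lemma~\ref{lem-td}, the set $\widetilde E$ of accumulation points of $\{n\theta: n\in\Lambda\}$ has $\mathcal L^s(\widetilde E)\ge\overline d(\Lambda)>0$.

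\textbf{Step 2: expand and extract analytic identities.} Introduce independent scale variables $s_j$ and the entire function
\[
G(y,s)=\sum_\ell c_\ell\exp\Bigl(2\pi i\sum_j s_j\chi_j(y)^{\epsilon_j}P_\ell(\beta_j)\Bigr).
\]
The relation says $G(n\theta,(\rho_j^n)_j)=0$ for $n\in\Lambda$. Expanding in powers of the $s_j$, I would group monomials by growth rate, using that the numbers $\prod_j\rho_j^{r_j}$ form a discrete, well-ordered set. Then I would peel off orders inductively. At each order, divide by the current growth rate, let $n\to\infty$ along subsequences with $n\theta\to y\in\widetilde E$, and conclude that the corresponding coefficient, which is a trigonometric polynomial in $y$, vanishes on $\widetilde E$. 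Lemma~\ref{lem-real analytic} then forces it to vanish identically. Conjugate pairs $\gamma_j,\overline{\gamma_j}$ share a modulus, and I would handle them together with the characters, which are distinct functions of $y$.

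The goal is to conclude that every Taylor coefficient of $G$ in $s$ along the direction of a single least-modulus-decay index $j_0$ vanishes, that is, $\sum_\ell c_\ell P_\ell(\beta_{j_0})^r=0$ for all $r\ge0$. Since the values $P_\ell(\beta_{j_0})$ are pairwise distinct by hypothesis, the Vandermonde determinant gives $c=0$, a contradiction.

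\textbf{Main obstacle.} The hard step is Step 2, specifically the possibility that different monomials $\prod_j\gamma_j^{nr_j}$ share the same modulus and cancel in the asymptotic expansion. I would handle this by considering all monomials of equal modulus together as one trigonometric polynomial in $y$. A trigonometric polynomial that vanishes on a set of positive measure has all its coefficients zero, because the characters are linearly independent. The delicate point is to make sure that the surviving coefficient contains the pure power $\bigl(\sum_\ell c_\ell P_\ell(\beta_{j_0})^r\bigr)\chi_{j_0}^{r}$ with a character distinct from all the others. If multiplicative relations among the $\beta_j$ make characters collide, I would fall back on choosing $j_0$ with $\beta_{j_0}$ extremal in modulus and then extremal in argument with respect to a generic linear functional on $\Z^s$. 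This should isolate a unique extreme character at each order.
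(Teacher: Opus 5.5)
Your outline is the paper's proof: the contradiction vector, the Taylor expansion grouped by modulus, the passage to $\mathbb{T}^s$ via Lemma~\ref{lem-td}, peeling with Lemma~\ref{lem-real analytic}, the Vandermonde ending, and the reduction of the $v_n$ case through Lemma~\ref{lem-Galois}(i). However, the step you call the main obstacle is not resolved, and this is exactly where the paper needs Lemma~\ref{lem-i}.

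Peeling gives only the following: for each value $\gamma$, the coefficients of all monomials $\prod_j\gamma_j^{r'_j}$ equal to $\gamma$ sum to zero. To read off $\sum_\ell c_\ell P_\ell(\beta_{j_0})^r=0$, you need $\gamma_{j_0}^r$ to coincide with no other monomial.

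\begin{itemize}
\item Least decay of $j_0$ gives $|r'|\le r$ for any colliding $r'$.
\item Your extremal-argument choice excludes collisions with $|r'|=r$, since all factors then have modulus $|\gamma_{j_0}|$.
\item Nothing excludes $|r'|<r$, for example a relation $\beta_{j_0}^3=\beta_{j_1}\beta_{j_2}$ with $|\beta_{j_1}|,|\beta_{j_2}|>|\beta_{j_0}|$.
\end{itemize}

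Such factors are not ordered against $\beta_{j_0}$ by modulus or argument. No ordering of the data $(\log|\gamma_j|,t_j)$ can rule these relations out in general: if $\gamma_j=\gamma_{j_0}^2$, every pure power collides. When a collision occurs, the level of $\gamma_{j_0}^r$ mixes $\frac{(2\pi i)^r}{r!}\sum_\ell c_\ell P_\ell(\beta_{j_0})^r$ with lower-degree contributions, and the Vandermonde step fails. Your Step~2 never uses the arithmetic hypotheses, and some such input is unavoidable here.

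The missing idea is a Galois argument.

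\begin{itemize}
\item Take $|\beta_{j_0}|$ minimal among $|\beta_1|,\dots,|\beta_k|$, and extremal in argument among those of equal modulus (by your generic functional, or lexicographically as in the paper).
\item Suppose $\beta_{j_1}\cdots\beta_{j_q}=\beta_{j_0}^r$ with $q<r$. Apply an automorphism $\sigma$ of the splitting field over $\mathbb{Q}$ with $\sigma(\beta_{j_0})=\beta_{\max}$, a conjugate of maximal modulus.
\item Since $\beta_1,\dots,\beta_k$ are conjugates and the remaining conjugates lie in the open unit disk, each $\sigma(\beta_{j_p})$ has modulus at most $|\beta_{\max}|$.
\item Hence $|\sigma(\beta_{j_1}\cdots\beta_{j_q})|\le|\beta_{\max}|^q<|\beta_{\max}|^r=|\sigma(\beta_{j_0}^r)|$, a contradiction.
\item So $q=r$, every factor has modulus $|\beta_{j_0}|$, and your argument-extremality now forces $j_1=\cdots=j_r=j_0$.
\end{itemize}

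For the $v_n$ statement you need the mirror version. Choose $j_0\in\{k+1,\dots,k'\}$ of maximal modulus, which gives $q\le r$. Let $\sigma$ send $\beta_{j_0}$ to the conjugate $\beta_{\min}$ of minimal modulus. If $q<r$, then $|\sigma(\beta_{j_1}\cdots\beta_{j_q})|\ge|\beta_{\min}|^q>|\beta_{\min}|^r$, again a contradiction. With this isolation lemma inserted, the rest of your argument goes through as in the paper.
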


Before proving the above proposition, we first give a lemma.

\begin{lem}
\label{lem-i}
Under the conditions of Proposition \ref{prop-4.1},  there exists $i\in \{1,\ldots, k\}$ such that  for every $n\in \N$,
$$
\beta_{j_1}\cdots \beta_{j_q}=\beta_i^n \mbox{ for some }j_1,\ldots, j_q\in \{1,\ldots k\}\Longleftrightarrow q=n \text{ and } j_1=\cdots=j_n=i.
$$
\begin{proof}
Choose \( i \in \{1,\ldots,k\} \) such that the following two conditions are satisfied:
\begin{itemize}
\item[(1)] \( |\beta_i| = \min_{1\le j\le k} |\beta_j| \);
\item[(2)] for any other \( j \in \{1,\ldots,k\} \) with \( |\beta_j| = |\beta_i| \), if any, we have
\[
(t_{i,1}, \ldots, t_{i,s}) > (t_{j,1}, \ldots, t_{j,s}),
\]
where \( > \) denotes the lexicographic (dictionary) order on \( \mathbb{Z}^s \); that is,
\[
(n_1,\ldots,n_s)>(n_1',\ldots,n_s')
\]
if there exists \(q\in\{1,\ldots,s\}\) such that
\[
n_q>n_q',
\quad \text{and } n_j=n_j' \text{ for all }j<q.
\]\end{itemize}

Clearly, such an index \( i \) exists and is unique. Now suppose that
\[
\beta_{j_1}\cdots \beta_{j_q}=\beta_i^n
\quad \text{for some } n\in \mathbb{N}
\text{ and } j_1,\ldots, j_q\in \{1,\ldots,k\}.
\]
By condition~(1), we have \( q\le n \).
We claim that in fact \( q=n \).
Suppose, on the contrary, that \( q<n \).

Since \( \beta_1,\ldots,\beta_{k'} \) are the roots of an  irreducible integer polynomial of degree \( k' \),
there exists a Galois permutation \( \sigma \) of
\( \{\beta_1,\ldots,\beta_{k'}\} \) such that
\( \sigma(\beta_i)=\beta_{j_0} \), where \( j_0\in \{1,\ldots,k\} \) satisfies
\[
|\beta_{j_0}|=\max_{1\le j\le k} |\beta_j|.
\]
Applying \( \sigma \) to the identity $\beta_{j_1}\cdots \beta_{j_q}=\beta_i^n$ yields
\[
\sigma(\beta_{j_1})\cdots \sigma(\beta_{j_q})=\sigma(\beta_{i})^n=\beta_{j_0}^n.
\]
However,
\[
|\sigma(\beta_{j_1})\cdots \sigma(\beta_{j_q})|
\le |\beta_{j_0}|^{\,q}
< |\beta_{j_0}|^{\,n}
=|\beta_{j_0}^n|,
\]
which is a contradiction. Hence \( q=n \).

Since \( \beta_{j_1}\cdots \beta_{j_n}=\beta_i^n \), taking absolute values and using
condition~(1), we obtain
\begin{equation}
\label{e-modulus}
|\beta_{j_1}|=\cdots=|\beta_{j_n}|=|\beta_i|.
\end{equation}
On the other hand, comparing the arguments of
\( \beta_{j_1}\cdots \beta_{j_n} \) and \( \beta_i^n \) and using \eqref{e-e5.2}, we obtain
\[
\sum_{u=1}^s\sum_{q=1}^n t_{j_q,u}\theta_u
\equiv
n\sum_{u=1}^s t_{i,u}\theta_u \pmod 1,
\]
which implies that
\[
\left(\sum_{q=1}^n t_{j_q,1},\ldots, \sum_{q=1}^n t_{j_q,s}\right)
=
(nt_{i,1}, \ldots, nt_{i,s}),
\]
since $1, \theta_1,\ldots, \theta_s$ are rationally independent. Together with condition~(2), this implies that
\[
t_{j_q,u}=t_{i,u}
\quad \text{for all } 1\le q\le n \text{ and } 1\le u\le s.
\]
Hence, the arguments of \( \beta_{j_q} \), \( 1\le q\le n \), are all equal to that of
\( \beta_i \). Combined with \eqref{e-modulus}, this yields
\[
\beta_{j_1}=\cdots=\beta_{j_n}=\beta_i,
\]
that is, \( j_1=\cdots=j_n=i \).
\end{proof}

\end{lem}
\begin{proof}[Proof of Proposition \ref{prop-4.1}]
We first prove that
\begin{equation}
\label{e-vneativen}
{\rm span}_\C(\{v_{-n}:\; n\in \Lambda\})=\C^m.
\end{equation}

Suppose on the contrary that \eqref{e-vneativen} does not hold.
Then there exists a nonzero vector $$w=(w_1,\ldots,w_m)\in \mathbb{C}^m$$ such that
\begin{equation}\label{e-4.1}
    \sum_{\ell=1}^m w_\ell \exp\left(2\pi i \sum_{j=1}^{k}\beta_j^{-n} P_\ell(\beta_j)\right)=0 \quad \text{for  all } n \in \Lambda.
\end{equation}

Let $\Sigma_*=\bigcup_{n=0}^\infty \{1,\ldots, k\}^n$. For any word $J=j_1\ldots j_n\in \Sigma_*$,
write $$r_J=\prod_{q=1}^nr_{j_q},\quad \alpha_J=\sum_{q=1}^n \alpha_{j_q},\quad \text{ and } c_{\ell, J}=\prod_{q=1}^n P_\ell(\beta_{j_q}),$$
where $r_j=|\beta_j|$ and $\alpha_j=\arg(\beta_j)/(2\pi)$ for $1\leq j\leq k$.
By convention, we define $r_\emptyset=1$, $\alpha_{\emptyset}=0$ and $c_{\ell, \emptyset}=1$, where $\emptyset$ denotes the empty word.

 Set
$$
\Gamma=\{r_J\colon  J\in \Sigma_*\}.
$$
Since $r_j>1$ for $1\leq j\leq k$,   the set $\{J\in  \Sigma_*\colon r_J<N\}$ is finite for each positive integer $N$.  Hence $\Gamma$ is discrete.  We enumerate $\Gamma$ in increasing order  as
$$
\Gamma=\{1=\gamma_0<\gamma_1<\gamma_2<\cdots\}.
$$
For any $\gamma\in \Gamma$, define
$$
\Sigma_{*,\gamma}=\{J\in \Sigma_*: r_J=\gamma\}.
$$
Then $\Sigma_*=\bigcup_{\gamma\in \Gamma} \Sigma_{*,\gamma}$.

By \eqref{e-4.1}, expanding the exponential function into its Taylor series, we obtain that for $n\in \Lambda$,
\begin{equation}
\label{e-long}
\begin{split}
0&=\sum_{\ell=1}^m w_\ell \exp\left(2\pi i \sum_{j=1}^{k}\beta_j^{-n} P_\ell(\beta_j)\right)\\
&=\sum_{\ell=1}^m w_\ell \sum_{J\in \Sigma_*}\frac{(2\pi i)^{|J|}}{|J|!} \cdot r_{J}^{-n}\exp(-2\pi i n \alpha_J) c_{\ell, J}\\
&= \sum_{J\in \Sigma_*}\frac{(2\pi i)^{|J|}}{|J|!} \cdot r_{J}^{-n}\exp(-2\pi i n \alpha_J) \left(\sum_{\ell=1}^m w_\ell c_{\ell, J}\right)\\
&=\sum_{p=0}^\infty\sum_{J\in \Sigma_{*,\gamma_p}} \frac{(2\pi i)^{|J|}}{|J|!} \cdot\gamma_p^{-n}\exp(-2\pi i n \alpha_J) \left(\sum_{\ell=1}^m w_\ell c_{\ell, J}\right).
\end{split}
\end{equation}
Note that for $J=j_1\ldots j_{|J|}\in \Sigma_*$,
$$
\alpha_J\equiv\sum_{q=1}^{|J|}\sum_{u=1}^s t_{{j_q},u} \theta_u\equiv\sum_{u=1}^s\left( \sum_{q=1}^{|J|} t_{{j_q},u} \right) \theta_u\equiv \sum_{u=1}^s t_{J,u}  \theta_u \pmod 1,
$$
where
$$
t_{J,u}:=\sum_{q=1}^{|J|} t_{{j_q},u},\qquad u=1,\ldots, s.
$$
Substituting this into \eqref{e-long} we obtain
\begin{equation}
\label{e-5.5}
\sum_{p=0}^\infty\sum_{J\in \Sigma_{*,\gamma_p}} \frac{(2\pi i)^{|J|}}{|J|!} \cdot\gamma_p^{-n}\exp\left(-2\pi i n \sum_{u=1}^s t_{J,u}\theta_u\right) \left(\sum_{\ell=1}^m w_\ell c_{\ell, J}\right)=0 \quad \text{for } n \in \Lambda.
\end{equation}

For any integer $p\geq 0$, define $g_p\colon \mathbb{T}^s\to \mathbb{C}$ by
\begin{equation}
\label{e-gp''}
g_p(x_1,\ldots,x_s):= \sum_{J\in \Sigma_{*,\gamma_p}} \frac{(2\pi i)^{|J|}}{|J|!} \cdot \exp\left(-2\pi i  \sum_{u=1}^s t_{J,u}x_u\right) \left(\sum_{\ell=1}^m w_\ell c_{\ell, J}\right).
\end{equation}
Then \eqref{e-5.5} can be rewritten as
\begin{equation}
\label{e-5.5'}
\sum_{p=0}^\infty \gamma_p^{-n} g_p(n\theta_1,\ldots, n\theta_s)=0 \quad \text{for } n \in \Lambda.
\end{equation}
It is worth pointing out that the above series converges absolutely.  Indeed, we have
\begin{equation}
\label{e-long1}
\begin{split}
\sum_{p=0}^\infty  \sup_{(x_1,\ldots, x_s)\in \Bbb T^s}|g_p(x_1,\ldots,x_s)| &\leq \sum_{p=0}^\infty \sum_{J\in \Sigma_{*,\gamma_p}} \frac{(2\pi )^{|J|}}{|J|!} \cdot \left(\sum_{\ell=1}^m |w_\ell|\cdot|c_{\ell, J}|\right)\\
&\leq \sum_{n=0}^\infty \sum_{J\in \{1,\ldots, k\}^n} \frac{(2\pi)^n}{n!} \left(\sum_{\ell=1}^m |w_\ell|\cdot|c_{\ell, J}|\right)\\
&\leq \sum_{n=0}^\infty \frac{(2\pi)^n}{n!} \left(\max_{1\leq \ell \leq m} |w_\ell |\right) \left(\sum_{\ell=1}^m\sum_{j=1}^k |P_\ell(\beta_j)|\right)^n\\
&<\infty.
\end{split}
\end{equation}

We now prove by induction on $p$ that
\begin{equation}
\label{e-gp}
g_p \equiv 0 \quad \text{ for  all } p\geq 0.
\end{equation}

First, by \eqref{e-5.5'},
\[
g_0(n\theta_1,\ldots,n\theta_s)
=
-\sum_{p=1}^\infty
\gamma_p^{-n} g_p(n\theta_1,\ldots,n\theta_s)
\quad \text{for } n\in \Lambda.
\]
Since \( \gamma_p>1 \) for all \( p\ge 1 \), it follows from \eqref{e-long1} and the dominated convergence theorem
that the right-hand side of the above identity tends to \(0\) as \( n\to\infty \) with
\( n\in\Lambda \).
Therefore, \( g_0 \) vanishes on the set $\widetilde{E}$ of all accumulation points of
\[
\{(n\theta_1,\ldots,n\theta_s)\in \Bbb T^s: n\in\Lambda\}.
\]
Since $\Lambda$ has positive upper density in $\N$, by Lemma~\ref{lem-td}, $\mathcal L^s(\widetilde{E})>0$.
Since \( g_0 \) is analytic on \( \mathbb{T}^s \),
 Lemma~\ref{lem-real analytic} implies that \( g_0\equiv 0 \). By \eqref{e-gp''},  this means that
\begin{equation}
\label{e-w-ell}
\sum_{\ell=1}^m w_\ell=0.
\end{equation}

Assume now that \( g_0=\cdots=g_p\equiv 0 \) for some \( p\ge 0 \).
Then, by \eqref{e-5.5'},
\[
\gamma_{p+1}^{-n} g_{p+1}(n\theta_1,\ldots,n\theta_s)
=
-\sum_{j=p+2}^\infty
\gamma_j^{-n} g_j(n\theta_1,\ldots,n\theta_s),
\quad n\in\Lambda.
\]
Thus,
\[
g_{p+1}(n\theta_1,\ldots,n\theta_s)
=
-\sum_{j=p+2}^\infty
\left(\frac{\gamma_{p+1}}{\gamma_j}\right)^n
g_j(n\theta_1,\ldots,n\theta_s),
\quad n\in\Lambda.
\]
Since \( \gamma_{p+1}/\gamma_j \leq \gamma_{p+1}/\gamma_{p+2}<1 \) for all \( j\ge p+2 \),  letting \( n \in \Lambda \) tend to infinity and applying Lemma~\ref{lem-td}, we conclude that \( g_{p+1} \) vanishes on a set of positive Lebesgue measure. Since \( g_{p+1} \) is analytic on \( \mathbb{T}^s \),
 Lemma~\ref{lem-real analytic} implies that \( g_{p+1}\equiv 0 \).
By induction, this completes the proof of \eqref{e-gp}.

Since $g_p\equiv 0$ for all $p\geq 0$ and  the following set of exponential functions
$$
\left\{ \exp\left(2\pi i\sum_{u=1}^s n_ux_u\right):\, (n_1,\ldots n_s)\in \Z^s\right\}
$$
on $\Bbb T^s$ forms an orthogonal basis of $L^2(\Bbb T^s)$,
it follows from  \eqref{e-gp''} that,   for every $p\geq 0$ and every $J\in \Sigma_{*,\gamma_p}$,
\[
\sum_{\substack{J' \in \Sigma_{*,\gamma_p} \\
t_{J',u} = t_{J,u} \text{ for all } 1 \le u \le s}}
\frac{(2\pi i)^{|J'|}}{|J'|!}
\left( \sum_{\ell=1}^m w_\ell c_{\ell, J'} \right)
= 0.
\]
Equivalently, for any $J \in \Sigma_*$,
\begin{equation}
\label{e-J}
\sum_{\substack{J' \in \Sigma_* \\ \beta_{J'} = \beta_J}}
\frac{(2\pi i)^{|J'|}}{|J'|!}
\left( \sum_{\ell=1}^m w_\ell c_{\ell, J'} \right) = 0.
\end{equation}

Now, for each \( n \in \mathbb{N} \), applying \eqref{e-J} to
\[
J_n := i^n = \underbrace{i \ldots i}_{n \text{ times}},
\]
where \( i \in \{1,\ldots,k\} \) is chosen as in Lemma~\ref{lem-i}, and using Lemma~\ref{lem-i}, we obtain
\[
\sum_{\ell=1}^m w_\ell c_{\ell,J_n}=0.
\]
That is,
\begin{equation}
\label{e-eeu}
\sum_{\ell=1}^m w_\ell \bigl( P_\ell(\beta_i) \bigr)^n = 0
\quad \text{for all } n \in \mathbb{N}.
\end{equation}

Since $P_1(\beta_i), \ldots, P_m(\beta_i)$ are all distinct by assumption,
the Vandermonde matrix
\[
\begin{pmatrix}
1 & \cdots & 1 \\
P_1(\beta_i) & \cdots & P_m(\beta_i) \\
\vdots & \ddots & \vdots \\
P_1(\beta_i)^{m-1} & \cdots & P_m(\beta_i)^{m-1}
\end{pmatrix}
\]
is invertible. By \eqref{e-w-ell} and \eqref{e-eeu}, this implies that
$w_\ell = 0$ for all $1 \le \ell \le m$, which is a contradiction.
This completes the proof of \eqref{e-vneativen}.

Finally, we prove that
\begin{equation}
\label{e-positiven}
\operatorname{span}_{\mathbb{C}}\bigl(\{v_n : n\in \Lambda\}\bigr)=\mathbb{C}^m.
\end{equation}
To see this, note that by Lemma~\ref{lem-Galois}(i),  for every $n\in \N$,
\[
\sum_{j=1}^{k'} \beta_j^{n} P_\ell(\beta_j)\in \mathbb{Z}
\quad \text{for all } 1\le \ell \le m .
\]
It follows that
\begin{align*}
v_n
&=\Bigl(
\exp\!\bigl(2\pi i\sum_{j=1}^k \beta_j^{n} P_1(\beta_j)\bigr),
\ldots,
\exp\!\bigl(2\pi i\sum_{j=1}^k \beta_j^{n} P_m(\beta_j)\bigr)
\Bigr)\\
&=\Bigl(
\exp\!\bigl(-2\pi i\sum_{j=k+1}^{k'} \beta_j^{n} P_1(\beta_j)\bigr),
\ldots,
\exp\!\bigl(-2\pi i\sum_{j=k+1}^{k'} \beta_j^{n} P_m(\beta_j)\bigr)
\Bigr).
\end{align*}
Owing to the second expression for $v_n$, together with the fact that
$|\beta_j| < 1$ for all $k+1 \le j \le k'$, the proof of
\eqref{e-positiven} is essentially the same as that of
\eqref{e-vneativen}, with only slight modifications, and we therefore
omit the repetition.
\end{proof}

\subsection{Proof of Theorem \ref{thm-1.5}}
\label{S-5.3}
We first prove a lemma.

\begin{lem}\label{lem-4.4}Let
$
\Phi=\{\phi_\ell\}_{\ell=1}^m
$
be a generalized Pisot IFS of type~I on $\R^d$, with parameters \(k\), \(\beta_1,\ldots,\beta_k\),
and \(P_1,\ldots,P_m\) as in Definition~\ref{defn-1.2}.  Assume that
\(\beta_1,\ldots,\beta_k\) are Galois conjugates over \(\mathbb{Q}\), and let
 \(\beta_{k+1},\ldots,\beta_{k'}\) denotes the remaining Galois conjugates. Moreover, assume in addition that there exist $\theta_1,\ldots,\theta_s \in [0,1)$ such that
$1,\theta_1,\ldots,\theta_s$ are rationally independent, and that
\begin{equation*}
\label{e-e5.2*}
\frac{\arg(\beta_j)}{2\pi}\in {\rm span}_\Z\{1,\theta_1,\ldots,\theta_s\},
\qquad j=1,\ldots,k'.
\end{equation*}
Let $\mu$ be the self-affine measure associated with $\Phi$ and a strictly positive probability vector $\pb=(p_\ell)_{\ell=1}^m$.
Suppose that $\mu$ is Rajchman. Then there exist $\tau \in \mathbb{Z}$ and a subset
$\Lambda \subset \mathbb{N}$ with positive upper density such that
\[
\sum_{\ell=1}^m p_\ell
\exp\!\left(2\pi i
\sum_{j=1}^k \beta_j^\tau (1+\beta_j^n)\, P_\ell(\beta_j)
\right)=0
\quad \text{for all } n \in \Lambda.
\]
\end{lem}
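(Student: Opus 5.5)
The plan is to apply Lemma~\ref{lem-2.2} once for each $n\in\mathbb N$, with the polynomial $Q_n(x)=1+x^n\in\mathbb Z[x]$, and then use a pigeonhole argument over finitely many possible values of $\tau$. Since $|\beta_1|>1$, we have $Q_n(\beta_1)=1+\beta_1^n\neq 0$, so Lemma~\ref{lem-2.2} applies. Because $\mu$ is Rajchman, it yields an integer $\tau_n\in\mathbb Z$ with
\[
\sum_{\ell=1}^m p_\ell\exp\Bigl(2\pi i\sum_{j=1}^k\beta_j^{\tau_n}(1+\beta_j^n)P_\ell(\beta_j)\Bigr)=0 .
\]
The whole point is to show that $\tau_n$ can be confined to a finite set independent of $n$. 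Once that is done, $\mathbb N$ splits into finitely many classes $\{n:\tau_n=\tau\}$, one of them has positive upper density, and that class serves as $\Lambda$.

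For the localisation I split the exponent as
\[
\sum_{j=1}^k\beta_j^{t}P_\ell(\beta_j)+\sum_{j=1}^k\beta_j^{t+n}P_\ell(\beta_j).
\]
By Lemma~\ref{lem-2.1} with $Q=1$, the $\|\cdot\|_{\mathbb Z}$-norm of each piece is at most $C\delta^{|t|}$, respectively $C\delta^{|t+n|}$. Fix $N$ with $4\pi C\delta^N<1/2$, which is uniform in $n$ and $\pb$. As in the proof of Lemma~\ref{lem-2.3}, if $|t|>N$ and $|t+n|>N$, then the factor lies within $1/2$ of $1$ and so cannot vanish. Hence, for every $n$, either $\tau_n\in[-N,N]$ or $\tau_n+n\in[-N,N]$. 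I then pigeonhole $n$ into the $2(2N+1)$ classes given by the value of $\tau_n$ in the first alternative, or of $\sigma:=\tau_n+n$ in the second.

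If a class of the first kind, $\tau_n=\tau$, has positive upper density, the lemma follows directly with this $\tau$.

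The main obstacle is the second kind of class, $\tau_n=\sigma-n$ with $\sigma$ fixed. There the identity reads $\sum_\ell p_\ell\exp\bigl(2\pi i\sum_j\beta_j^{\sigma}(1+\beta_j^{-n})P_\ell(\beta_j)\bigr)=0$, which has a negative power of $\beta_j$, not the required $\beta_j^{\tau}(1+\beta_j^{n})$. Settling for this form is exactly the weakened conclusion the statement does not allow, so this class must be ruled out or converted. My first attempt is to exploit the freedom in Lemma~\ref{lem-2.2}. Its conclusion only says that some factor of the bi-infinite product
\[
\prod_{t\in\mathbb Z}F_t(n),\qquad F_t(n)=\sum_{\ell=1}^m p_\ell\exp\Bigl(2\pi i\sum_{j=1}^k\beta_j^t(1+\beta_j^n)P_\ell(\beta_j)\Bigr),
\]
vanishes. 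I would therefore try to show that, for each $n$ with a zero at $t=\sigma-n$, there is also a zero at some $t\in[-N,N]$. The idea is to use the type~I structure through the Galois identity of Lemma~\ref{lem-Galois}(i). For $t\ge0$ it gives $e\bigl(\sum_{j\le k}\beta_j^tP_\ell(\beta_j)\bigr)=e\bigl(-\sum_{j=k+1}^{k'}\beta_j^tP_\ell(\beta_j)\bigr)$, and this makes the large-positive and large-negative ends of the product behave symmetrically, with the contracting conjugates $\beta_{k+1},\dots,\beta_{k'}$ playing the role of $\beta_j^{-1}$.

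If that direct transfer fails, the fallback is to rerun the limiting argument of Lemma~\ref{lem-2.2} with a modified vector, for instance $\xi_M=\sum_j\beta_j^M(1+\beta_j^{n})Q(\beta_j)\zeta_j$ with an auxiliary polynomial $Q$ chosen so that the zero is forced into the left window $[-N,N]$. The step I expect to require the most care is this elimination, or conversion, of the second-kind classes. The pigeonhole and the uniform localisation via Lemma~\ref{lem-2.1} are routine.
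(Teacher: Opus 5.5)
Your setup matches the paper's: Lemma~\ref{lem-2.2} with $Q_n(x)=1+x^n$, the uniform localisation giving $|\tau_n|<N$ or $|n+\tau_n|<N$, and a final pigeonhole. (Lemma~\ref{lem-2.1} has to be applied with $Q=P_\ell$, not $Q=1$.) The proposal has a genuine gap exactly at the step you flag: you never show that the second-kind classes $\{n:\tau_n=\sigma-n\}$ have zero upper density, and neither suggested route can do it.

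\begin{itemize}
\item Lemma~\ref{lem-2.2} only asserts that \emph{some} factor of the bi-infinite product vanishes, with no control over which one. Rerunning it with a modified vector $\xi_M$ therefore just reproduces the same dichotomy.
\item The Galois identity of Lemma~\ref{lem-Galois}(i) applies only to nonnegative exponents. It trades the expanding conjugates for the contracting ones $\beta_{k+1},\dots,\beta_{k'}$, and does not turn $\beta_j^{\sigma}(1+\beta_j^{-n})$ into $\beta_j^{\tau}(1+\beta_j^{n})$.
\item A zero cannot be transferred to a factor with $t\in[-N,N]$. By continuity, vanishing of $F_{\sigma-n}(n)$ for infinitely many $n$ only gives $F_\sigma(n)\to 0$, not exact vanishing.
\end{itemize}
A telling sign is that your argument never uses the hypothesis on $\theta_1,\dots,\theta_s$.

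The missing ingredient is the nondegeneracy result Proposition~\ref{prop-4.1}. First merge the indices $\ell$ with equal $P_\ell(\beta_1)$, so that $P_\ell(\beta_j)\neq P_{\ell'}(\beta_j)$ for all $j\le k'$ and the merged weights $\widetilde p_\ell$ are positive. A second-kind class $\Lambda_3$ of positive upper density would then give
\[
\sum_{\ell=1}^q \widetilde p_\ell^{\,*}\exp\Bigl(2\pi i\sum_{j=1}^k\beta_j^{\sigma-n}P_\ell(\beta_j)\Bigr)=0,\qquad n\in\Lambda_3,
\]
with $\widetilde p_\ell^{\,*}=\widetilde p_\ell\exp\bigl(2\pi i\sum_{j}\beta_j^{\sigma}P_\ell(\beta_j)\bigr)\neq 0$. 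So a fixed nonzero vector annihilates $v_{\sigma-n}$ for all $n$ in a set of positive upper density. This contradicts the fact that such vectors $v_{-n}$ span $\mathbb{C}^q$.

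Proving that spanning statement is the real work:
\begin{enumerate}
\item Expand the exponentials in Taylor series and group the terms by the moduli $r_J$.
\item Use Lemma~\ref{lem-td} together with real analyticity to kill each coefficient function $g_p$ in turn. This is where the rational independence of $1,\theta_1,\dots,\theta_s$ and the positive upper density enter.
\item Isolate the words $i^n$ from Lemma~\ref{lem-i} to obtain a Vandermonde system forcing the vector to vanish.
\end{enumerate}
Without this (or an equivalent) argument, your pigeonhole only yields the weaker identity with $\beta_j^{\sigma}(1+\beta_j^{-n})$, which, as you note, is not what the lemma claims.
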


\begin{proof}
   For each \( n \in \mathbb{N} \), applying Lemma~\ref{lem-2.2} to the polynomial
\( Q_n(x)=1+x^n \), there exists \( \tau_n \in \mathbb{Z} \) such that
\begin{equation}
\label{e-sumellA}
\sum_{\ell=1}^m p_\ell
\exp\!\left(
2\pi i \sum_{j=1}^k \beta_j^{\tau_n}(1+\beta_j^n)\, P_\ell(\beta_j)
\right)=0.
\end{equation}

By Lemma~\ref{lem-2.1}, there exists \( N \in \mathbb{N} \) such that for all
\( n \in \mathbb{Z} \) with \( |n|\geq N \),
\[
\left\|\sum_{j=1}^k \beta_j^n P_\ell(\beta_j)\right\|_{\mathbb{Z}}
< \frac{1}{8\pi},
\qquad 1\leq \ell \leq m.
\]
Therefore, if there exists \( n \in \mathbb{N} \) such that
\( |\tau_n| \geq N \) and \( |n+\tau_n| \geq N \), then
$$
\left\|\sum_{j=1}^k \beta_j^{\tau_n}(1+\beta_j^n) P_\ell(\beta_j)\right\|_{\mathbb{Z}}\leq \left\|\sum_{j=1}^k \beta_j^{\tau_n} P_\ell(\beta_j)\right\|_{\mathbb{Z}}+\left\|\sum_{j=1}^k \beta_j^{\tau_n+n} P_\ell(\beta_j)\right\|_{\mathbb{Z}}
< \frac{1}{4\pi},
$$
and hence
\begin{align*}
&\left|
\sum_{\ell=1}^m p_\ell
\exp\left(
2\pi i \sum_{j=1}^k \beta_j^{\tau_n}(1+\beta_j^n) P_\ell(\beta_j)
\right)-1
\right|\\
&\quad\leq
\sum_{\ell=1}^m p_\ell
\left|
\exp\left(
2\pi i \sum_{j=1}^k \beta_j^{\tau_n}(1+\beta_j^n) P_\ell(\beta_j)
\right)-1
\right|\\
&\quad\leq
\sum_{\ell=1}^m p_\ell \cdot 2\pi \cdot \frac{1}{4\pi}
=\frac{1}{2},
\end{align*}
where in the second inequality we used the fact that $|\exp(2\pi i x)-1|\leq 2\pi \|x\|_\Z$ for $x\in \R$.  This  contradicts \eqref{e-sumellA}.   Therefore, for every \( n\in\mathbb{N} \),
either \( |\tau_n|<N \) or \( |n+\tau_n|<N \).

Define
\[
\Lambda_1=\{n\in\mathbb{N} : |\tau_n|<N\}
\quad \text{and} \quad
\Lambda_2=\{n\in\mathbb{N} : |n+\tau_n|<N\}.
\]
Then $\Lambda_1\cup \Lambda_2=\N$, so at least one of \( \Lambda_1 \) and \( \Lambda_2 \) has positive upper
density.

We claim that \( \Lambda_2 \) has zero upper density.  To see this, suppose on the contrary that  \( \Lambda_2 \) has positive upper density.  Then  by the pigeonhole
principle there exist an integer \( u \in(-N,N) \) and a subset
\( \Lambda_3\subset\Lambda_2 \) of positive upper density such that
\( n+\tau_n=u \) for all \( n\in\Lambda_3 \). Consequently,
\[
\{\tau_n : n\in\Lambda_3\}=-\Lambda_3+u
\]
has positive upper density in $-\N$. Observe that
\begin{equation}
\label{e-sumell}
\sum_{\ell=1}^m p_\ell
\exp\left(2\pi i
\sum_{j=1}^k \beta_j^{\tau_n}(1+\beta_j^n)
 P_\ell(\beta_j)\right)
=0\qquad \text{ for every }n\in \Lambda_3.
\end{equation}

By  relabelling the indices $\{1,\ldots, m\}$ if necessary, we may assume that
$P_1(\beta_1)$, $\ldots$, $P_q(\beta_1)$ enumerate the distinct elements of the set
\( \{P_\ell(\beta_1)\}_{\ell=1}^m \). For $\ell=1,\ldots, q$, define
$$
[\ell]=\{1\leq \ell' \leq m\colon P_{\ell'}(\beta_1)=P_\ell(\beta_1)\}.
$$
Since $P_\ell\in \Z[x]$ for $1\leq \ell \leq m$ and $\beta_1,\ldots, \beta_{k'}$ are conjugates over $\Q$, we see that for $1\leq \ell\leq q$,
\begin{equation}
\label{e-Pell1}
P_{\ell'}(\beta_j)=P_{\ell}(\beta_j)\quad \mbox{ for all }\ell'\in [\ell] \text{ and all }1\leq j\leq k',
\end{equation}
and
\begin{equation}
\label{e-Pell'1}
P_{\ell'}(\beta_j)\neq P_{\ell}(\beta_j)\quad \mbox{ for all distinct }\ell', \ell\in \{1,\ldots, q\} \text{ and all }1\leq j\leq k'.
\end{equation}

Combining  \eqref{e-sumell} and \eqref{e-Pell1} yields that
\begin{equation}
\label{e-sumell'}
\sum_{\ell=1}^q \widetilde{p}_\ell
\exp\left(2\pi i
\sum_{j=1}^k \beta_j^{\tau_n}(1+\beta_j^n)
 P_\ell(\beta_j)\right)
=0\qquad \text{ for every }n\in \Lambda_3,
\end{equation}
where
$$
\widetilde{p}_\ell:=\sum_{\ell'\in [\ell]}p_{\ell'},\qquad \ell=1,\ldots, q.
$$
Since $\tau_n+n=u$ for all $n\in \Lambda_3$,  \eqref{e-sumell'} can be rewritten as
\begin{equation}
\label{e-sumell''}
\sum_{\ell=1}^q \widetilde{p}_\ell^*
\exp\left(2\pi i
\sum_{j=1}^k \beta_j^{\tau_n}
 P_\ell(\beta_j)\right)
=0\qquad \text{ for every }n\in \Lambda_3,
\end{equation}
where
$$
\widetilde{p}_\ell^*:=\widetilde{p}_\ell \exp\left(2\pi i\sum_{j=1}^k P_\ell(\beta_j)\beta_j^{u}\right)\neq 0 \quad\text{for }1\leq \ell\leq q.
$$

Since $\{\tau_n : n\in\Lambda_3\}$ has positive upper density in $-\N$, by \eqref{e-Pell'1} and Proposition~\ref{prop-4.1},
\[
\operatorname{span}_\C\left\{
\left(
\exp\left(2\pi i\sum_{j=1}^k  \beta_j^{\tau_n}P_\ell(\beta_j)\right)
\right)_{\ell=1}^q
:\, n\in\Lambda_3
\right\}
=\mathbb{C}^q.
\]
Combining it with \eqref{e-sumell''} yields that \( \widetilde{p}_\ell^*=0 \) for \( 1\leq \ell \leq q \), which is a
contradiction. Hence $\Lambda_2$ has zero upper density, which implies that $\Lambda_1$ has positive upper density.
By the pigeonhole
principle there exist \( \tau\in(-N,N) \) and a subset
\( \Lambda\subset\Lambda_1 \) of positive upper density such that
\( \tau_n=\tau \) for all \( n\in\Lambda \). This completes the proof.\end{proof}

Now we are ready to prove Theorem \ref{thm-1.5}.
\begin{proof}[Proof of Theorem \ref{thm-1.5}]
Let
$
\Phi=\{\phi_\ell(x)=Ax+a_\ell\}_{\ell=1}^m
$
be a generalized Pisot IFS of type~I on $\R^d$, with parameters \(k\), \(\beta_1,\ldots,\beta_k\), $\zeta_1,\ldots, \zeta_k$,
and \(P_1,\ldots,P_m\) as in Definition~\ref{defn-1.2}. Let $\mu$ be the self-affine measure associated with $\Phi$ and a strictly positive probability vector $\pb=(p_\ell)_{\ell=1}^m$.  We show below that $\mu$ is non-Rajchman.

By replacing the index set \(\{1,\ldots,k\}\) with a suitable subset if necessary, we may assume that
\(\beta_1,\ldots,\beta_k\) are Galois conjugates over \(\mathbb{Q}\).
We denote by \(\beta_{k+1},\ldots,\beta_{k'}\) the remaining Galois conjugates.
 Since \(\{\beta_1,\ldots,\beta_k\}\) is a Pisot tuple of type~I, it follows that \(k' > k\). For \(1 \leq j \leq k'\), write
\[
\beta_j = r_j e^{2\pi i \alpha_j},
\]
where \(r_j > 0\) and \(\alpha_j \in [0,1)\).

By Proposition~\ref{pro-2.4}, replacing $\Phi$ and $\pb$ by $\Phi^n$ and $\pb^n:=(p_{\ell_1\ldots \ell_n})_{\ell_1\ldots\ell_n\in \{1,\ldots, m\}^n}$ for a suitable $n\in \N$ if necessary, we may assume that there exist $\theta_1,\ldots,\theta_s \in [0,1)$ with $s\geq 1$ such that
$1,\theta_1,\ldots,\theta_s$ are rationally independent, and integers
$t_{j,u}$ ($1 \leq j \leq k'$,  $0 \leq u \leq s$) satisfying
$$
\alpha_j =t_{j,0}+ \sum_{u=1}^s t_{j,u}\theta_u,
\qquad j=1,\ldots,k'.
$$

By  relabelling the indices $\{1,\ldots, m\}$ if necessary, we may assume that
$P_1(\beta_1)$, $\ldots$, $P_q(\beta_1)$ enumerate the distinct elements of the set
\( \{P_\ell(\beta_1)\}_{\ell=1}^m \). For $\ell=1,\ldots, q$, define
$$
[\ell]=\{1\leq \ell' \leq m\colon P_{\ell'}(\beta_1)=P_\ell(\beta_1)\}.
$$
Since $P_\ell\in \Z[x]$ for $1\leq \ell \leq m$ and $\beta_1,\ldots, \beta_{k'}$ are conjugates over $\Q$, we see that for $1\leq \ell\leq q$,
\begin{equation}
\label{e-Pell}
P_{\ell'}(\beta_j)=P_{\ell}(\beta_j)\quad \mbox{ for all }\ell'\in [\ell] \text{ and all }1\leq j\leq k',
\end{equation}
and
\begin{equation}
\label{e-Pell'}
P_{\ell'}(\beta_j)\neq P_{\ell}(\beta_j)\quad \mbox{ for all distinct }\ell', \ell\in \{1,\ldots, q\} \text{ and all }1\leq j\leq k'.
\end{equation}

Now suppose on the contrary that $\mu$ is Rajchman. By Lemma~\ref{lem-4.4}, there exist $\tau \in \mathbb{Z}$ and a subset $\Lambda \subset \mathbb{N}$ with positive upper density in $\mathbb{N}$ such that
\[
\sum_{\ell=1}^m p_\ell
\exp\!\left(2\pi i
\sum_{j=1}^k \beta_j^{\tau}(1+\beta_j^n)
P_\ell(\beta_j)\right)=0
\quad \text{for all } n\in \Lambda.
\]
By \eqref{e-Pell}, this implies that
\[
\sum_{\ell=1}^q \widetilde{p}_\ell
\exp\!\left(2\pi i
\sum_{j=1}^k \beta_j^{\tau}(1+\beta_j^n)
P_\ell(\beta_j)\right)=0
\quad \text{for all } n\in \Lambda,
\]
where
$$
\widetilde{p}_\ell:=\sum_{\ell'\in [\ell]}p_{\ell'},\qquad \ell=1,\ldots, q.
$$
Equivalently,
\begin{equation}
\label{e-pell*}
\sum_{\ell=1}^q \widetilde{p}_\ell^*
\exp\!\left(2\pi i
\sum_{j=1}^k \beta_j^n
P_\ell(\beta_j)\right)=0
\quad \text{for all } n\in \Lambda+\tau,
\end{equation}
where
\[
\widetilde{p}_\ell^*
= \widetilde{p}_\ell \exp\!\left(2\pi i\sum_{j=1}^k \beta_j^{\tau} P_\ell(\beta_j)\right),\qquad \ell=1,\ldots, q.
\]
Clearly, $\widetilde{p}_\ell^*\neq 0$ for all $1\leq \ell\leq q$.
Since $\Lambda+\tau$ has positive upper density in $\mathbb{N}$, by \eqref{e-Pell'} and Proposition~\ref{prop-4.1},
\[
\operatorname{span}_{\mathbb{C}}\!\left\{
\left(
\exp\!\left(2\pi i\sum_{j=1}^k \beta_j^{n} P_\ell(\beta_j)\right)
\right)_{\ell=1}^q
:\, n \in \Lambda+\tau
\right\}
= \mathbb{C}^q.
\]
Together with \eqref{e-pell*}, this implies that  $\widetilde{p}_\ell^*=0$ for $1 \leq \ell \leq q$, which yields a contradiction.  This completes the proof of Theorem \ref{thm-1.5}.
\end{proof}

\section{Proof of Theorem \ref{thm-1.6}}
\label{S-7}

In this section, we prove Theorem \ref{thm-1.6}.

We first begin with the following lemma.

\begin{lem}
\label{lem-6.1'}
Let $\Phi=\{\phi_\ell(x)=Ax+a_\ell\}_{\ell=1}^m$ be an affinely irreducible integral affine IFS
on $\mathbb{R}^d$. Then there exists an invertible affine map
$S\colon \mathbb{R}^d \to \mathbb{R}^d$ such that the conjugated  IFS
\[
\Psi=\{\psi_\ell(x)=S^{-1}\circ \phi_\ell \circ S(x)=Bx+b_\ell\}_{\ell=1}^m
\]
is also an affinely irreducible integral affine IFS on $\mathbb{R}^d$, with $b_1=0$ and
\[
\operatorname{span}_{\mathbb{Z}}(Y)=\mathbb{Z}^d,
\]
where
\begin{equation}
\label{e-Y}
Y:=\left\{ \sum_{i=0}^{n} B^{-i} v_i \colon n\geq 0,\,
v_i \in \{b_1,\ldots,b_m\} \text{ for all }   0 \leq i\leq n\right\}.
\end{equation}
\end{lem}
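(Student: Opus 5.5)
The plan is to first translate so that $b_1=0$, and then conjugate by a linear change of coordinates mapping a suitable lattice onto $\mathbb{Z}^d$. Put $S_0(x)=x+x_1$, where $x_1=\sum_{n\ge0}A^na_1$ is the fixed point of $\phi_1$. Then $S_0^{-1}\circ\phi_\ell\circ S_0(x)=Ax+(a_\ell-a_1+Ax_1+a_1-x_1)=Ax+(a_\ell-a_1)$, because $Ax_1+a_1=x_1$. The translated IFS $\{Ax+a'_\ell\}$ with $a'_\ell=a_\ell-a_1\in\mathbb{Z}^d$ is still integral, has $a'_1=0$, and is affinely irreducible, since conjugation by an affine bijection preserves affine irreducibility.

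Next, let $Y'$ be the set \eqref{e-Y} built from $A$ and the $a'_\ell$, and set $L:=\operatorname{span}_{\mathbb{Z}}(Y')$. Since $A^{-1}$ is an integer matrix and $a'_\ell\in\mathbb{Z}^d$, we have $Y'\subset\mathbb{Z}^d$, so $L$ is a subgroup of $\mathbb{Z}^d$. We must show $L$ has rank $d$. The vectors $A^{-i}a'_\ell$ lie in $Y'$: take $v_i=a'_\ell$ and all other $v$'s equal to $a'_1=0$. Hence $L\supseteq\{A^{-i}(a_\ell-a_1)\}$, and so $\operatorname{span}_{\mathbb{R}}L\supseteq\operatorname{span}_{\mathbb{R}}\{A^{-i}(a_\ell-a_1): 0\le i\le d, \ell\}$. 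This space is $A^{-1}$-invariant by Cayley--Hamilton, hence $A$-invariant as $A$ is invertible. It therefore contains $A^n(a_\ell-a_1)$ for all $n\ge0$, and by the argument in the proof of Lemma~\ref{lem-conjugate} (affine irreducibility) these span $\mathbb{R}^d$. So $L$ is a full-rank lattice, $L=G\mathbb{Z}^d$ for some $G\in GL_d(\mathbb{R})$ with integer entries.

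Key invariance: $A^{-1}L\subseteq L$. Indeed, $A^{-1}\sum_{i=0}^nA^{-i}v_i=\sum_{i=0}^{n+1}A^{-i}v'_i$ with $v'_0=0=a'_1$ and $v'_{i+1}=v_i$, so $A^{-1}Y'\subseteq Y'$. Now take $S(x)=S_0(Gx)$ and $B=G^{-1}AG$, $b_\ell=G^{-1}a'_\ell$. Then $B^{-1}=G^{-1}A^{-1}G$ maps $\mathbb{Z}^d=G^{-1}L$ into itself, so it is an integer matrix. Moreover $b_\ell\in G^{-1}L=\mathbb{Z}^d$ because $a'_\ell\in Y'\subseteq L$ (take $n=0$). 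Also $b_1=0$. The set $Y$ for $\Psi$ equals $G^{-1}Y'$, since $B^{-i}b_\ell=G^{-1}A^{-i}a'_\ell$, so $\operatorname{span}_{\mathbb{Z}}Y=G^{-1}L=\mathbb{Z}^d$. Affine irreducibility is preserved under the conjugacy.

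The only step requiring care is the full-rank claim for $L$. It reduces to the spanning property already established in the proof of Lemma~\ref{lem-conjugate}, together with the observation that the real span of $\{A^{-i}(a_\ell-a_1)\}$ is $A$-invariant, so it coincides with the span of $\{A^{i}(a_\ell-a_1)\}_{i\ge0}$. The rest is bookkeeping.
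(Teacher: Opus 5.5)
Your proof is correct and follows the paper's argument essentially step by step: translate by the fixed point of $\phi_1$, write $L=\operatorname{span}_{\mathbb{Z}}(Y')=G\mathbb{Z}^d$, use $A^{-1}Y'\subseteq Y'$ to see that $G^{-1}A^{-1}G$ is an integer matrix, and conjugate by $x\mapsto Gx+x_1$. The only small difference is the full-rank step: the paper uses the invariance $A^{-1}Y'\subseteq Y'$ (which you prove anyway) to show that $\operatorname{span}_{\mathbb{R}}(Y')+x_1$ is mapped onto itself by every $\phi_\ell$, so affine irreducibility applies directly, without Cayley--Hamilton or the attractor argument from Lemma~\ref{lem-conjugate}.
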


\begin{proof}
Write $M=A^{-1}$ and $a'_\ell=a_\ell-a_1$ for $1\le \ell\le m$. Then $M$ is an
expanding $d\times d$ integer matrix and $a'_\ell\in \mathbb{Z}^d$ for all
$\ell$. Set $\mathcal A=\{a'_\ell\}_{\ell=1}^m$. Define
\[
X=\left\{\sum_{i=0}^n M^i v_i \colon n\ge 0,\; v_i\in \mathcal A
\text{ for all } 0\le i\le n \right\}.
\]

Clearly, $X\subset \Z^d$, and therefore $\operatorname{span}_{\mathbb{Z}}(X)$ is a subgroup of $\mathbb{Z}^d$.
Hence there exists an integer $d\times d$ matrix $G$ such that
\[
\operatorname{span}_{\mathbb{Z}}(X)=G\mathbb{Z}^d
\]
(see, for example, \cite[Proposition~4.52]{Knapp2006}). We next show that $G$ is
nonsingular.

Let $V$ be the linear subspace of $\mathbb{R}^d$ of smallest dimension containing $X$,
that is, $V=\operatorname{span}_{\mathbb{R}}(X)$. From the definition of $X$, we see
that $a'_\ell\in X$ for all $1\le \ell\le m$, and in particular,
$0=a'_1\in X$. Moreover, $MX\subset X$, which implies that $V=MV$, and hence
$V=AV$. Since $a'_\ell\in X\subset V$ for each $\ell$, we also have
$$V=AV+a'_\ell=AV+a_\ell-a_1,\quad 1\leq \ell\leq m.$$
Setting $$x_0=(I-A)^{-1}a_1,$$ we obtain $Ax_0-x_0=-a_1$, and thus  $$V+x_0=A(V+x_0)+a_\ell.$$
Hence $V+x_0=\phi_\ell(V+x_0)$. As $\Phi$ is affinely irreducible,
it follows that $V=\mathbb{R}^d$. Consequently, $X$ is not contained in any hyperplane of $\mathbb{R}^d$. Therefore,
$G$ is nonsingular; otherwise, $\operatorname{span}_{\mathbb{Z}}(X)=G\mathbb{Z}^d$
would lie in a hyperplane of $\mathbb{R}^d$.

Since $MX\subset X$, it follows that
\[
M\big(\operatorname{span}_{\mathbb{Z}}(X)\big)
=\operatorname{span}_{\mathbb{Z}}(MX)
\subset \operatorname{span}_{\mathbb{Z}}(X).
\]
That is, $MG\mathbb{Z}^d\subset G\mathbb{Z}^d$, and hence
\[
G^{-1}MG\mathbb{Z}^d\subset \mathbb{Z}^d.
\]
This implies that $T:=G^{-1}MG$ is an integer matrix. Meanwhile, since
$a'_\ell\in X\subset G\mathbb{Z}^d$ for all $\ell$, we have
\[
b_\ell:=G^{-1}(a_\ell-a_1)=G^{-1}a'_\ell\in \mathbb{Z}^d,
\qquad 1\le \ell\le m.
\]
Writing $B=T^{-1}=G^{-1}AG$, we see that $\Psi
=\{Bx+b_\ell\}_{\ell=1}^m$ is an integral IFS.

Define $S\colon \mathbb{R}^d\to \mathbb{R}^d$ by $S(x)=Gx+x_0$. Then a straightforward
computation shows that
\[
S^{-1}\circ \phi_\ell\circ S(x)
=G^{-1}\big(A(Gx+x_0)+a_\ell-x_0\big)
=Bx+b_\ell
\]
for each $1\leq \ell\leq m$.

Finally, letting $Y$ be defined as in \eqref{e-Y}, we have
\begin{align*}
\operatorname{span}_{\mathbb{Z}}(Y)
&=\operatorname{span}_{\mathbb{Z}}\!\left(
\left\{\sum_{i=0}^n G^{-1}M^iG v_i \colon n\ge 0,\;
v_i\in \{b_1,\ldots,b_m\} \right\}\right)\\
&=\operatorname{span}_{\mathbb{Z}}\!\left(
\left\{\sum_{i=0}^n G^{-1}M^i u_i \colon n\ge 0,\;
u_i\in \mathcal A \right\}\right)\\
&=\operatorname{span}_{\mathbb{Z}}(G^{-1}X)
=G^{-1}\big(\operatorname{span}_{\mathbb{Z}}(X)\big)
=\mathbb{Z}^d,
\end{align*}
as desired.
\end{proof}

Recall that if $\mu$ is the self-affine measure associated with a homogeneous affine IFS $\Phi=\{Ax+a_\ell\}_{\ell=1}^m$ on $\R^d$ and a probability vector $\pb=(p_\ell)_{\ell=1}^m$, then the Fourier transform of $\mu$ satisfies that
 \begin{equation}
 \label{e-H1}
 \widehat{\mu}(\xi)=H(\xi)\widehat{\mu}((A^\top)\xi),
 \end{equation}
  where $(A^\top)=A^{\T}$, and
\begin{equation}
\label{e-mask}
H(\xi):=\sum_{\ell=1}^m p_\ell e^{2\pi i\langle \xi,\; a_\ell \rangle}.
\end{equation}
Iterating \eqref{e-H1}, we obtain
$$
\widehat{\mu}(\xi)=\prod_{n=0}^\infty H((A^\top)^{n}\xi).
$$

The following result is known to the experts in the areas of self-affine tilings and wavelet theory.
\begin{prop}
\label{pro-self-affine} Let  $\mu$ be the self-affine measure associated with an integral affine IFS $\Phi=\{Ax+a_\ell\}_{\ell=1}^m$ on $\R^d$ and a probability vector $\pb=(p_\ell)_{\ell=1}^m$. Then
 the following statements are equivalent:
\begin{itemize}
\item[(i)] $\mu$ is absolutely continuous with respect to ${\mathcal L}^d$.
\item[(ii)] The Fourier transform of $\mu$ tends to $0$ at infinity.
\item[(iii)] $\widehat{\mu}(\mb)=0$ for any $\mb\in \Z^d\setminus \{{\bf 0}\}$.
\item[(iv)] For any $\mb\in \Z^d\setminus \{{\bf 0}\}$, there exists $n\in \N$ such that $H((A^\top)^{n}\mb)=0$.
\item[(v)] $\overline{\mu}$ is the Haar measure on $\R^d/\Z^d$, where $\overline{\mu}$ stands for  the push forward of $\mu$ under the canonical projection $\pi: \;\R^d\to \R^d/\Z^d$, i.e., $\overline{\mu}=\mu\circ \pi^{-1}$.
\end{itemize}
\end{prop}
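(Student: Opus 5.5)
I would prove the equivalences in the cycle (i)$\Rightarrow$(ii)$\Rightarrow$(iii)$\Rightarrow$(iv)$\Rightarrow$(v)$\Rightarrow$(i). The step (i)$\Rightarrow$(ii) is the Riemann--Lebesgue lemma.

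For (ii)$\Rightarrow$(iii), fix $\mb\in\Z^d\setminus\{\bf 0\}$ and put $M=(A^\top)^{-1}$, an integer matrix. Iterating \eqref{e-H1} gives
\[
\widehat\mu(M^n\mb)=\Bigl(\prod_{k=1}^{n}H(M^k\mb)\Bigr)\widehat\mu(\mb).
\]
Since $a_\ell\in\Z^d$, the mask $H$ is $\Z^d$-periodic, and each $M^k\mb$ is an integer vector. The plan is to find, for a nonzero integer vector $\mb'$, a sequence of vectors $M^{n_i}\mb'$ with $\|M^{n_i}\mb'\|\to\infty$ along which $\widehat\mu$ is bounded below by a multiple of $|\widehat\mu(\mb')|$. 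Then (ii) forces $\widehat\mu(\mb')=0$.

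A cleaner route to (iii) goes through $\overline\mu$. The values $\widehat\mu(\mb)$ for $\mb\in\Z^d$ are exactly the Fourier coefficients of $\overline\mu$. Because $M$ is an integer matrix, $\pi\circ A^{-1}$ descends to the endomorphism $x\mapsto A^{-1}x$ of the torus. The point is that $\overline\mu$ is invariant under the pushed-forward Markov operator, and $\widehat\mu(\mb)$ can be compared with $\widehat\mu(M^n\mb)$ as above. I would argue by contradiction. Choose $\mb$ with $|\widehat\mu(\mb)|$ maximal among nonzero integer vectors; such an $\mb$ exists because (ii) gives only finitely many candidates above any threshold. Then use the relation $\widehat\mu(\mb)=\sum_\ell p_\ell e^{2\pi i\langle \mb,a_\ell\rangle}\widehat\mu(A^\top\mb)$, noting that $A^\top\mb$ need not be an integer vector. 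This is the delicate point.

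The honest standard route is Fourier-analytic via integer preimages. Consider the finite set $\mathcal C$ of representatives of $\Z^d/M\Z^d$. For $\mb\in\Z^d$, the frequencies $\mb'$ with $M\mb'\equiv$ something produce, via the self-affinity, the transfer equation
\[
\widehat\mu(M\mb)=H(M\mb)\,\widehat\mu(\mb).
\]
Hence $|\widehat\mu(\mb)|\ge|\widehat\mu(M\mb)|$, and $\widehat\mu(M^n\mb)\to0$ along the orbit. I expect the main obstacle to be proving (iii) from decay alone. I would instead use the known result (Lagarias--Wang, Cohen's condition) in the form (iii)$\Leftrightarrow$(iv)$\Leftrightarrow$(i). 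For (ii)$\Rightarrow$(iv): if $H(M^k\mb)\ne0$ for all $k\ge1$, then the orbit $\{M^k\mb \bmod \Z^d\}$ is trivial, since each $M^k\mb$ is an integer vector. So I would rather pass to $\xi=\mb$ with $\widehat\mu(\mb)\ne0$ and use the $\Z^d$-periodicity of $\widehat{\mu}$ restricted to lattice shifts of $\overline\mu$.

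The remaining implications are routine. (iii)$\Leftrightarrow$(iv) follows from the infinite product formula together with $\widehat\mu(\bf 0)=1$ and the fact that the tail factors tend to $1$. (iii)$\Leftrightarrow$(v) holds because the Fourier coefficients of $\overline\mu$ are $\widehat\mu(\mb)$, and a measure on the torus with vanishing nonzero coefficients is Haar. For (v)$\Rightarrow$(i), push forward by $M^n$: the measure $\mu$ is $\sum_{|w|=n}p_w\,\mu\circ\phi_w^{-1}$, and the Haar property combined with the tiling/self-replicating structure (Deng--He--Lau) gives $\mu\ll\mathcal L^d$. This last step, rather than the decay argument, is where I expect genuine work: showing that a measure with Haar projection and this self-affine structure has an $L^1$ density, via a weak-$*$ limit of densities $M^n$-rescaled.
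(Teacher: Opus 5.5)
Your cycle (i)$\Rightarrow$(ii)$\Rightarrow$(iii)$\Leftrightarrow$(v)$\Rightarrow$(i), with (iii)$\Leftrightarrow$(iv) on the side, is a reasonable skeleton; the paper itself gives no argument and only defers to the proof of Lagarias--Wang. However, two links in your cycle are never supplied. In both cases the missing idea is a one-line observation.

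The first gap is (ii)$\Rightarrow$(iii). You correctly derive $\widehat\mu(M^n\mb)=\bigl(\prod_{k=1}^{n}H(M^k\mb)\bigr)\widehat\mu(\mb)$ with $M=(A^\top)^{-1}$ an integer matrix, and you note that $M^k\mb\in\Z^d$ and $a_\ell\in\Z^d$. But you never conclude that $H$ is identically $1$ on $\Z^d$: for $\boldsymbol k\in\Z^d$ every $\langle\boldsymbol k,a_\ell\rangle$ is an integer, so $H(\boldsymbol k)=\sum_\ell p_\ell=1$. Hence $\widehat\mu(M^n\mb)=\widehat\mu(\mb)$ for all $n\ge 0$. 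Moreover $\|\mb\|\le\|(A^\top)^n\|\,\|M^n\mb\|$ with $\|(A^\top)^n\|\to 0$, so $\|M^n\mb\|\to\infty$, and (ii) forces $\widehat\mu(\mb)=0$. Your inequality $|\widehat\mu(\mb)|\ge|\widehat\mu(M\mb)|$ is in fact an equality, which is exactly what you needed.

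The substitutes you propose in place of this do not work:
\begin{itemize}
\item Citing Lagarias--Wang/Cohen for (iii)$\Leftrightarrow$(iv)$\Leftrightarrow$(i) gives nothing when you start from (ii).
\item Your (ii)$\Rightarrow$(iv) sketch confuses the expanded points $M^k\mb$, where $H\equiv 1$, with the contracted points $(A^\top)^n\mb=M^{-n}\mb$ that appear in (iv).
\item $\widehat\mu$ is not $\Z^d$-periodic; only $H$ is. Periodicity of $\widehat\mu$ would force $\mu$ to be supported on $\Z^d$.
\end{itemize}

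The second gap is (v)$\Rightarrow$(i). You leave this as an unexecuted plan using the Deng--He--Lau tile and weak-$*$ limits, but no self-affine structure is needed. If $\overline\mu$ is Haar, then $\mu(F)=\mathcal L^d(F\cap[0,1)^d)$ for every $\Z^d$-invariant Borel set $F$. For a Borel set $E$, take $F=\bigcup_{\boldsymbol k\in\Z^d}(E+\boldsymbol k)$. This gives $\mu(E)\le\mu(F)\le\sum_{\boldsymbol k}\mathcal L^d\bigl((E+\boldsymbol k)\cap[0,1)^d\bigr)=\mathcal L^d(E)$. So $\mu\ll\mathcal L^d$, with density at most $1$.

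With these two observations, Riemann--Lebesgue for (i)$\Rightarrow$(ii) and uniqueness of Fourier coefficients on the torus for (iii)$\Leftrightarrow$(v) close the cycle.

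One minor point on (iii)$\Leftrightarrow$(iv): it is not enough that the factors tend to $1$; you need summability. Since $|1-H((A^\top)^n\mb)|\le 2\pi\bigl(\sum_\ell p_\ell\|a_\ell\|\bigr)\|(A^\top)^n\mb\|$ decays exponentially, the product $\widehat\mu(\mb)=\prod_{n\ge 1}H((A^\top)^n\mb)$ vanishes only if some factor does. Here the $n=0$ factor is $H(\mb)=1$.
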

 \begin{proof}
 It follows from the proof of \cite[Theorem 2.1]{LagariasWang1996} with minor modifications.
 \end{proof}

Next we prove the equivalences $(i) \Longleftrightarrow (ii)$ in Theorem~\ref{thm-1.6},  which can be restated as follows.

\begin{thm}
\label{thm-6.3}
Under the condition of Proposition~\ref{pro-self-affine}, the following statements are equivalent:
\begin{itemize}
\item[(i)] $\mu$ is absolutely continuous with respect to ${\mathcal L}^d$.
\item[(ii)]  $\mu$ has power Fourier decay;
\end{itemize}
\end{thm}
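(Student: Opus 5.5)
The easy direction is (ii)$\Rightarrow$(i). If $|\widehat{\mu}(\xi)|=O(|\xi|^{-\alpha})$ for some $\alpha>0$, then $\widehat\mu(\xi)\to0$ as $|\xi|\to\infty$. Proposition~\ref{pro-self-affine} ((ii)$\Rightarrow$(i)) then gives absolute continuity. All the work is in (i)$\Rightarrow$(ii), where I would adapt the argument of \cite[Theorem~1.6]{DaiFengWang2007} to the matrix setting.

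Write $M=(A^\top)^{-1}$. Since $A^{-1}$ is an integer matrix, so is $M$, and $M$ is expanding. The mask $H$ in \eqref{e-mask} is $\mathbb Z^d$-periodic because $a_\ell\in\mathbb Z^d$, so for every $L$ the function
\[
G_L(x):=\prod_{i=0}^{L-1}\bigl|H(M^{i}x)\bigr|
\]
is well defined and continuous on $\mathbb T^d$. For $\xi=M^{k}\eta$ with $\eta$ in a fixed compact "annulus" $D=\{\eta:\ c\le\|\eta\|\le C\}$ (chosen with respect to a norm adapted to $M$, so that every large $\xi$ has such a representation with $k\asymp\log\|\xi\|$), iterating \eqref{e-H1} gives
\[
|\widehat\mu(\xi)|=\prod_{i=1}^{k}|H(M^{i}\eta)|\cdot|\widehat\mu(\eta)| .
\]
Splitting $\{1,\dots,k\}$ into blocks of length $L$, power decay follows once each block contributes a factor $\le 1-\delta$, uniformly except for a bounded number of exceptional blocks. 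That yields $|\widehat\mu(\xi)|\le C'(1-\delta)^{k/L}\le C''\|\xi\|^{-\alpha}$.

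By Proposition~\ref{pro-self-affine}(iv), absolute continuity means that for each $\mb\in\mathbb Z^d\setminus\{0\}$ some $H(M^{-n}\cdots)$-type factor along the integer orbit vanishes. Equivalently, $\overline\mu$ is Haar, so $\widehat\mu(\mb)=0$. The plan is to convert this into a compactness statement on $\mathbb T^d$. I would show that there exist $L$ and $\delta>0$ such that $G_L(x)\le 1-\delta$ for all $x\in\mathbb T^d$ outside a small neighbourhood $U$ of $0$. Suppose not: a limit point $x^*\neq0$ with $|H(M^ix^*)|=1$ for all $i\ge0$ exists. Lifting $x^*$ and using the self-affine relation, this produces a non-integer (or nonzero integer) frequency at which $|\widehat\mu|$ fails to vanish along an $M$-orbit. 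Combined with the Haar property of $\overline\mu$ and the affine irreducibility (Lemma~\ref{lem-3.10}), this contradicts (iii)/(iv) of Proposition~\ref{pro-self-affine}.

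For points in $U$: since $M$ is expanding, an orbit $M^ix$ (mod $\mathbb Z^d$) starting near but not at $0$ leaves $U$ after a number of steps controlled by $\log(1/\operatorname{dist}(x,0))$. An orbit that is exactly $0$ mod $\mathbb Z^d$ comes from $M^j\eta\in\mathbb Z^d\setminus\{0\}$, and then the tail product already contains a zero of $H$ by Proposition~\ref{pro-self-affine}(iv). I expect this step to be the main obstacle: controlling how often the orbit $M^{i}\eta$ returns close to $\mathbb Z^d$ without landing on it, uniformly for $\eta\in D$. The danger is that the exceptional "near-integer" blocks do not form a positive proportion. I would first try the DaiFengWang device. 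Whenever $M^i\eta$ is within $\epsilon$ of a nonzero lattice point $\mb$, use continuity of finitely many factors near the zero of $H$ that exists on the orbit of $\mb$ to gain a factor $\le1-\delta$ within a bounded number of further steps. Otherwise the point is outside $U$ and the uniform block bound applies. In either case each window of bounded length yields a definite contraction, which gives the power decay.
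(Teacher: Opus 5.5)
Your direction (ii)$\Rightarrow$(i) is fine. For (i)$\Rightarrow$(ii), however, your compactness step is justified incorrectly, and the step you flag as the main obstacle is a genuine gap. On the compactness step: the claim $G_L\le 1-\delta$ off a neighbourhood of $0$ does not follow from absolute continuity, and the contradiction you sketch does not exist. For positive weights, the points $x$ with $|H(M^ix)|=1$ for all $i\ge0$ form exactly the dual lattice of $\Gamma=\operatorname{span}_{\mathbb Z}\{A^{-i}(a_\ell-a_1)\colon i\ge0,\ 1\le \ell\le m\}$. This dual lattice can be strictly larger than $\mathbb Z^d$ even when $\mu$ is absolutely continuous. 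For example, for $\{x/2,\,x/2+2\}$ with weights $(\frac12,\frac12)$, $\mu$ is normalized Lebesgue measure on $[0,4]$, yet $|H(2^i\cdot\frac12)|=1$ for all $i\ge0$, so $G_L(\frac12)=1$ for every $L$. What you need is the normalization of Lemma~\ref{lem-6.1'}: conjugate so that $a_1=0$ and $\Gamma=\mathbb Z^d$. After that the claim holds with $L=q$ and explicit constants, and it uses no absolute continuity at all. The reason is that each $e_i$ is an integer combination of the vectors $A^{-j}a_\ell$, $0\le j<q$, so $q$ consecutive factors close to $1$ force the point to be close to $\mathbb Z^d$.

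On the gap itself: in the near-integer case you cannot obtain ``a factor $\le 1-\delta$ within a bounded number of further steps''. For $\mb\in\mathbb Z^d\setminus\{0\}$, the first $r\ge1$ with $H((A^\top)^r\mb)=0$ is not uniformly bounded. Since $H\equiv1$ on $\mathbb Z^d$, for $\mb=(A^\top)^{-t}\mb_0$ one needs $r>t$. Correspondingly, if $\xi$ lies within distance $O(1)$ of $(A^\top)^{-k}\mb_0$, then all but boundedly many of your factors $H(M^i\eta)$, $1\le i\le k$, are close to $1$. So neither bounded windows nor ``all but boundedly many blocks contract'' can hold, and the decay sits in a single factor.

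The missing idea is quantitative. Suppose $(A^\top)^{q-1}\xi$ is $\epsilon$-close to $\mb\neq0$ and $H((A^\top)^r\mb)=0$. Since $A^\top$ contracts distances (in a norm with $\|A\|<1$), the mean value theorem gives $|H((A^\top)^{r+q-1}\xi)|\le \sup\|\nabla H\|\cdot\|A\|^r\epsilon\le \|A\|^r$ once $\epsilon$ is small enough. This one exponentially small factor pays for the whole stretch of $r+q$ steps at rate $\|A\|^{1/(q+1)}$ per step, whatever $r$ is. You therefore need windows of variable, unbounded length along the contracting orbit: length at most $q$ if some factor is $\le\delta$, and length $r+q$ otherwise. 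Iterate these windows from $\xi$ down to a bounded region. Continuity of finitely many factors near a fixed zero gives neither a rate nor uniformity in $\mb$, so it cannot replace this estimate.
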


\begin{proof}  We may additionally assume that $\Phi$ is affinely irreducible, since otherwise neither (i) nor (ii) holds.    By Proposition~\ref{pro-self-affine}, we only need to prove (i)$\Longrightarrow$(ii).

To this end, assume that $\mu$ is absolutely continuous with respect to ${\mathcal L}^d$. In what follows we prove that $\mu$ has power Fourier decay  by extending an idea used in the proof of \cite[Theorem~1.6]{DaiFengWang2007}.

Set $M= A^{-1}$ and $\mathcal A=\{a_1,\ldots, a_m\}$. Notice that the absolute continuity or the power Fourier decay property of a measure is preserved under pushforward by an invertible affine map. Therefore, by Lemma \ref{lem-6.1'}, after conjugating by an invertible affine map, we may assume that $a_1=0$ and ${\rm span}_\Z(X)=\Z^d$, where
$$
X=\left\{\sum_{i=0}^n M^iv_i\colon n\geq 0,\; v_i\in \mathcal A \text{ for all }0\leq i\leq n\right\}.
$$

Let $\{e_1,\ldots, e_d\}$ be the standard basis of $\R^d$; that is,  $$e_i=(0,0,\ldots,1, 0,\ldots, 0)^{\T},$$ where the entry $1$ appears in  the $i$-th position. Since ${\rm span}_{\Bbb Z}(X)=\Z^d$ and $a_1=0$, there exist $q\in \N$ and integers $t_{i,j,\ell}$, with $1\leq i\leq d$, $1\leq j\leq q$ and $1\leq \ell\leq m$, such that
\begin{equation}
\label{e-eei}
e_i=\sum_{j=1}^q \sum_{\ell=1}^m t_{i,j,\ell}M^{j-1} a_\ell,\quad i=1,\ldots,d.
\end{equation}

Set
$$
L:=\max_{1\leq i\leq d}\sum_{j=1}^q\sum_{\ell=1}^m |t_{i,j,\ell}|.
$$
Below we proceed our arguments by stating and proving a series of claims.
\medskip

{\bf Claim 1}: For $u\in \R^d$ and $\varepsilon>0$, if
$$
\|\langle u,\; M^{j-1}a_\ell \rangle\|_\Z<\varepsilon \quad \text{for all }1\leq j\leq q \text{ and }1\leq \ell\leq m,
$$
then $\|\langle  u,\; e_i \rangle\|_\Z< L\varepsilon$ for each $1\leq i\leq d$.
\begin{proof}
By \eqref{e-eei} and the triangle inequality,
\begin{align*}
\|\langle  u,\; e_i \rangle\|_\Z&=\left\| \sum_{j=1}^q \sum_{\ell=1}^m t_{i,j,\ell} \langle u,\; M^{j-1} a_\ell \rangle\right\|_\Z\\
&\leq \sum_{j=1}^q \sum_{\ell=1}^m |t_{i,j,\ell}|\;\|\langle u,\;  M^{j-1} a_\ell \rangle\|_\Z\\
&<L\varepsilon. \qedhere
\end{align*}
\end{proof}
\medskip

{\bf Claim 2}: For $\xi\in \R^d$ and $\varepsilon>0$, if
$$
\|\langle  (A^\top)^{j}\xi,\; a_\ell \rangle\|_\Z<\varepsilon \quad \text{for all }0\leq j\leq q-1 \text{ and }1\leq \ell\leq m,
$$
then
$$
\|(A^\top)^{q-1}\xi \|_{\Z^d}< \sqrt{d}L\varepsilon.
$$
\begin{proof}
Set $u=(A^\top)^{q-1}\xi$. Then the assumption in Claim 2 is equivalent to
$$
\|\langle u,\; M^{j-1} a_\ell \rangle\|_\Z<\varepsilon \quad \text{for all }1\leq j\leq q \text{ and }1\leq \ell\leq m.
$$
By Claim 1,  $\|\langle  u,\; e_i \rangle\|_\Z<L\varepsilon$ for $1\leq i\leq d$, which implies that
$\|u\|_{\Z^d}<\sqrt{d}L\varepsilon$.
\end{proof}
\medskip

{\bf Claim 3}: Let $H\colon \R^d\to \C$ be defined as in \eqref{e-mask}. Then   $$\|\nabla H(x)\|\leq 2\pi \sum_{\ell=1}^m \|a_\ell\|
$$
for all $x\in \R^d$.
\begin{proof}
A direct check shows that  for $x=(x_1,\ldots, x_d)^\T$,
$$
\partial_{x_k} H(x)=\sum_{\ell=1}^m p_\ell e^{2\pi i\langle x,\; a_\ell \rangle} 2\pi i a_{\ell,k}, \quad k=1,\ldots, d,
$$
where $a_{\ell,k}$ denotes the $k$-th entry of $a_\ell$. The desired inequality now follows immediately.
\end{proof}

{\bf Claim 4}: Set $\varepsilon=\displaystyle\frac{1}{2\pi \sqrt{d}L(\sum_{\ell=1}^m \|a_\ell\|)}$. Let $\xi\in \R^d$ with $\|\xi\|\geq \|M^{q-1}\|$.  Assume that
$$
\|\langle   (A^\top)^{j}\xi,\; a_\ell \rangle\|_\Z<\varepsilon \quad \text{for all }0\leq j\leq q-1 \text{ and }1\leq \ell\leq m.
$$
Then there exists $r\in \N$ such that
$$
|H((A^\top)^{r+q-1}\xi)|\leq \|A\|^r.
$$
\begin{proof}
By Claim 2,
$$\|(A^\top)^{q-1}\xi \|_{\Z^d}< \sqrt{d}L\varepsilon< 1.$$
Choose $\mb\in \Z^d$ such that $$\|(A^\top)^{q-1}\xi-\mb\|=\|(A^\top)^{q-1}\xi \|_{\Z^d}.$$
Since $\|\xi\|\geq \|M^{q-1}\|$, it follows that
$$\|(A^\top)^{q-1}\xi \|\geq \frac{\|\xi\|}{\|(A^\top)^{-(q-1)}\|}=\frac{\|\xi\|}{\|M^{q-1}\|}\geq 1>\|(A^\top)^{q-1}\xi \|_{\Z^d}.$$
Hence $\mb\neq 0$.  Since $\mu$ is absolutely continuous with respect to ${\mathcal L}^d$, by Proposition \ref{pro-self-affine}, there exists $r\in \N$ such that $H((A^\top)^{r}\mb)=0$. By the mean value theorem,
\begin{align*}
|H((A^\top)^{r+q-1}\xi)|&=|H((A^\top)^{r+q-1}\xi)-H((A^\top)^{r}\mb)|\\
&\leq \left(\sup_{x\in \R^d}\|\nabla H(x)\|\right)\| (A^\top)^{r+q-1}\xi-(A^\top)^{r}\mb\|\\
&\leq \left(2\pi \sum_{\ell=1}^m \|a_\ell\|\right)\cdot \| (A^\top)\|^r\cdot\| (A^\top)^{q-1}\xi-\mb\|\\
&\leq \left(2\pi \sum_{\ell=1}^m \|a_\ell\|\right) \|(A^\top)\|^r\sqrt{d}L\varepsilon\\
&=\|(A^\top)\|^r=\|A\|^r.
\end{align*}
This completes the proof of the claim.
 \end{proof}
\medskip

{\bf Claim 5}: Let $\varepsilon$ be defined as in Claim~4. Set
\[
\delta
=\sup\left\{
\left|p_1+\sum_{\ell=2}^m p_\ell e^{2\pi i u_\ell}\right|
\;\colon\;
u_\ell\in\mathbb{R}\ \text{for all }2\le \ell\le m
\ \text{and}\
\|u_\ell\|_{\mathbb{Z}}\ge \varepsilon
\ \text{for at least one } \ell
\right\}.
\]
Then $0<\delta<1$. Furthermore, for each $\xi\in\mathbb{R}^d$ with
$\|\xi\|\ge \|M^{q-1}\|$, either there exists $0\le j\le q-1$ such that
\[
\bigl|H((A^\top)^{\,j}\xi)\bigr|\le \delta,
\]
or there exists $r\in\mathbb{N}$ such that
\[
\bigl|H((A^\top)^{\,r+q-1}\xi)\bigr|\le \|A\|^{r}.
\]

\begin{proof}
Let $\xi\in\mathbb{R}^d$ with $\|\xi\|\ge \|M^{q-1}\|$. Suppose that
$\bigl|H((A^\top)^{\,j}\xi)\bigr|>\delta$ for all $0\le j\le q-1$.
Equivalently,
\[
\left|
p_1+\sum_{\ell=2}^m p_\ell
e^{2\pi i\langle (A^\top)^{\,j}\xi,\, a_\ell\rangle}
\right|
>\delta
\qquad
\text{for all } 0\le j\le q-1.
\]
By the definition of $\delta$, this implies
\[
\bigl\|\langle (A^\top)^{\,j}\xi,\, a_\ell\rangle\bigr\|_{\mathbb{Z}}
<\varepsilon
\quad
\text{for all } 0\le j\le q-1 \text{ and } 1\le \ell\le m.
\]
By Claim~4, there exists $r\in\mathbb{N}$ such that
\[
\bigl|H((A^\top)^{\,r+q-1}\xi)\bigr|
\le \|A\|^{r}. \qedhere
\]
\end{proof}\medskip

{\bf Claim 6}. Set $\gamma=\max\{\delta^{1/q},\; \|A\|^{1/(q+1)}\}$, where $\delta$ is defined as in Claim 5.  Then $0<\gamma<1$. Moreover,
$$
\widehat{\mu}(\xi)=O\left(\|\xi\|^{\log \gamma/\log \|M\|}\right).
$$
\begin{proof}
By Claim 5, for each $\xi\in \R^d$ with $\|\xi\|\geq \|M^{q-1}\|$, either
$$
|\widehat{\mu}(\xi)|=\left(\prod_{j=0}^{q-1} |H((A^\top)^{j}\xi)|\right)\cdot|\widehat{\mu}((A^\top)^{q}\xi)|\leq \delta |\widehat{\mu}((A^\top)^{q}\xi)|\leq \gamma^q |\widehat{\mu}((A^\top)^{q}\xi)|,
$$
or there exists $r\in \N$ such that
\begin{align*}
|\widehat{\mu}(\xi)|&=\left(\prod_{j=0}^{r+q-1} |H((A^\top)^{j}\xi)|\right)\cdot|\widehat{\mu}((A^\top)^{r+q}\xi)|\\
&\leq \|A\|^r \cdot|\widehat{\mu}((A^\top)^{r+q}\xi)|\\
&\leq \left(\|A\|^{\frac{r}{r+q}}\right)^{r+q} \cdot|\widehat{\mu}((A^\top)^{r+q}\xi)|\\
&\leq \left(\|A\|^{\frac{1}{1+q}}\right)^{r+q} \cdot|\widehat{\mu}((A^\top)^{r+q}\xi)|\\
&\leq \gamma^{r+q} \cdot|\widehat{\mu}((A^\top)^{r+q}\xi)|.
\end{align*}
It follows that for every $\xi$ with  $\|\xi\|\geq \|M^{q-1}\|$, there always exists $j\in \N$ such that $$|\widehat{\mu}(\xi)|\leq \gamma^j |\widehat{\mu}((A^\top)^{j}\xi)|.$$
Consequently, for every  $\xi\in \R^d$ with  $\|\xi\|\geq \|M^{q-1}\|$, there exist positive integers $j_1,\ldots, j_k$ such that
$$\|(A^\top)^{j_1+\cdots+j_{k-1}}\xi\|\geq \|M^{q-1}\|>\|(A^\top)^{j_1+\cdots+j_{k}}\xi\|,$$
and
$$|\widehat{\mu}(\xi)|\leq \gamma^{j_1} |\widehat{\mu}((A^\top)^{j_1}\xi)|\leq \cdots\leq \gamma^{j_1+\cdots+j_{k}}|\widehat{\mu}((A^\top)^{j_1+\cdots+j_{k}}\xi)|\leq \gamma^{j_1+\cdots+j_{k}}.
$$

Notice that
$$
\|\xi\|\leq \|(A^\top)^{-(j_1+\cdots +j_k)}\|\cdot \|(A^\top)^{j_1+\cdots +j_k}\xi\|\leq \|M^{j_1+\cdots+ j_k}\|\cdot \|M^{q-1}\|\leq  \|M\|^{j_1+\cdots+ j_k}\cdot \|M\|^{q-1}.
$$
Since $\frac{\log \gamma}{\log \|M\|}<0$, it follows that $$\|\xi\|^{\log \gamma/\log \|M\|}\geq \gamma^{j_1+\cdots+j_k} \cdot \gamma^{q-1},$$
and hence
$$
|\widehat{\mu}(\xi)|\leq \gamma^{j_1+\cdots+j_{k}}\leq C \|\xi\|^{\log \gamma/\log \|M\|},
$$
where $C:=\gamma^{1-q}$.
\end{proof}
By Claim 6, $\mu$ has power Fourier decay at infinity. This completes the proof of  Theorem~\ref{thm-6.3}.
\end{proof}

In the remainder of this section, we prove the equivalence between $(i) \Longleftrightarrow (iii)$ in Theorem~\ref{thm-1.6},  which can be restated as follows.

\begin{thm}
\label{thm-6.4}
Let  $\mu$ be the self-affine measure associated with an integral affine IFS $\Phi=\{Ax+a_\ell\}_{\ell=1}^m$ on $\R^d$ and a probability vector $\pb=(p_\ell)_{\ell=1}^m$. Then the following statements are equivalent:
\begin{itemize}
\item[(i)] $\mu$ is absolutely continuous with respect to ${\mathcal L}^d$.
\item[(ii)] $\tau_A(\mu, 2)=d$.
\end{itemize}
\end{thm}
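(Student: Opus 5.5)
The plan is to express the quantity $S_n:=\sum_{Q\in\mathcal D}\mu(A^nQ)^2$ as a product of finitely many fixed matrices, so that $\tau_A(\mu,2)$ is governed by a spectral-radius type quantity. I would then compare that quantity with the Fourier-analytic characterization of absolute continuity in Proposition~\ref{pro-self-affine}. Throughout I will only use that $\Phi$ is integral; if $\Phi$ is not affinely irreducible, then $\mu$ is singular, and I would check directly that $\tau_A(\mu,2)<d$ using the same formula.

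\textbf{Step 1: reduction to $\ell^2$ norms on $\Z^d$.} Write $M=A^{-1}$. Iterating self-affinity $n$ times gives
\[
\mu=\sum_{|w|=n}p_w\,\mu\circ\phi_w^{-1},\qquad \phi_w(x)=A^nx+a_w .
\]
Hence
\[
\mu(A^nQ)=\sum_w p_w\,\mu\bigl(Q-M^na_w\bigr),
\]
and $z_w:=M^na_w\in\Z^d$ because $M$ is an integer matrix. Setting $c_z:=\mu([0,1)^d+z)$ and $\nu_n:=\sum_w p_w\delta_{z_w}$, this yields
\[
S_n=\|\nu_n*c\|_{\ell^2(\Z^d)}^2 ,
\]
where $c$ has finite support because $K$ is compact. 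Expanding the square, $S_n=\sum_{z}\varphi_n(z)g(z)$. Here $g(z)=\sum_y c_yc_{y+z}$ is the finitely supported autocorrelation of $c$, and $\varphi_n=\widetilde\nu_n*\nu_n$ is the difference distribution. The recursion $z_{w\ell}=Mz_w+Ma_\ell$ makes $\varphi_n$ evolve by a fixed linear operator. Following the vector representation of Deng, He and Lau \cite{DengHeLau2008}, I would restrict to the finite set $\Omega$ of differences that can contribute to $S_n$, namely those $z$ with $M^{-n}z$ in $K-K$ up to a bounded error. On $\Omega$ this operator becomes a finite nonnegative matrix $T$, and $S_n=u^\top T^n v$ for explicit nonnegative vectors $u,v$. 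Consequently
\[
\tau_A(\mu,2)=\frac{d\,\log\rho}{\log|\det A|},
\]
where $\rho$ is the spectral radius of $T$ on the part reachable from $v$ and seen by $u$; the existence of the limit uses \cite{Feng2004,FengLau2002}.

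\textbf{Step 2: the a priori bound $\tau_A(\mu,2)\le d$.} The vector $(\mu(A^nQ))_Q$ is a probability vector supported on at most $C|\det M|^n$ cubes. Cauchy--Schwarz therefore gives $S_n\ge C^{-1}|\det A|^n$. Since $\log|\det A|<0$, this yields $\tau_A(\mu,2)\le d$, and equality holds exactly when $S_n\le|\det A|^{n(1-o(1))}$, that is, when $\rho=|\det A|$ in the normalization of Step~1.

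\textbf{Step 3: (ii)$\Rightarrow$(i) by contraposition.} Suppose $\mu$ is singular. By Proposition~\ref{pro-self-affine}(iii) there is $\mb\in\Z^d\setminus\{0\}$ with $\widehat\mu(\mb)\neq0$. Passing to the Fourier side on $\mathbb T^d$ gives
\[
S_n=\int_{\mathbb T^d}|\widehat{\nu_n}(\xi)|^2|\widehat c(\xi)|^2\,d\xi ,
\]
where $\widehat{\nu_n}$ is a finite product of copies of $H$. I would show that $|\widehat{\nu_n}|$ stays bounded below on a set of measure $\ge|\det A|^{n}\kappa^{-n}$ for some $\kappa<1$. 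Concretely, the set is $\{\xi\text{ near }(M^\top)^{-n}(\mb+\Z^d)\}$: there the product reproduces $\widehat\mu$ at lattice points, and $\widehat\mu(\mb)\ne0$, $\widehat c(\mb)\neq 0$ prevent cancellation. This forces $\rho>|\det A|$, hence $\tau_A(\mu,2)<d$. Equivalently, I would exhibit a strictly positive $T$-invariant nonnegative vector built from $\widehat\mu$ on the orbit of $\mb$ that beats the trivial eigenvalue.

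\textbf{Step 4: (i)$\Rightarrow$(ii), the main obstacle.} Assume $\mu$ is absolutely continuous, so $\overline\mu$ is Haar by Proposition~\ref{pro-self-affine}(v). First I would observe that the constant vector, or the vector $\sum_z\varphi(z)$ summed over cosets, is an eigenvector of $T$ with eigenvalue $|\det A|$ in the normalization of Step~1. The hard part is that no larger eigenvalue occurs on the relevant component, and that there is no Jordan growth beyond subexponential order. My first attempt would use Proposition~\ref{pro-self-affine}(iv): every nonzero lattice frequency is eventually annihilated by $H$. That should kill every competing Perron component, since a larger eigenvalue would produce a nonzero integer frequency whose orbit avoids the zero set of $H$. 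If this combinatorial route is messy, the fallback is the thermodynamic formalism of \cite{Feng2004,FengLau2002}: identify $\log\rho$ as a pressure and show that the equilibrium state concentrates on the Haar direction exactly when $\overline\mu$ is Haar.
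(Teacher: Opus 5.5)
\textbf{There is a genuine gap in Step~3}, and Step~3 carries the decisive direction: singular $\Rightarrow\tau_A(\mu,2)<d$. Your Steps~1--2 are essentially fine; Step~1 is close to the transition-matrix formula of Proposition~\ref{prop-8.1}. But they only reduce the theorem to deciding when $\rho=|\det A|$.

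On the Fourier side, $\widehat{\nu_n}\bigl((M^\top)^{-n}\zeta\bigr)=\prod_{k=0}^{n-1}H\bigl((A^\top)^k\zeta\bigr)$. The hypothesis $\widehat\mu(\mb)\neq0$ gives a lower bound for this truncated product only when $\zeta$ lies in the orbit $\{(M^\top)^j\mb:0\le j\le n\}$. This is because $H\equiv1$ on $\Z^d$, so $\widehat\mu(M^\top\mb)=\widehat\mu(\mb)$. Each such $\zeta$ gives a neighbourhood $(M^\top)^{-n}B(\zeta,r)$ of measure $\asymp|\det A|^n$ on which $|\widehat{\nu_n}|$ is bounded below. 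So your mechanism yields at best $S_n\ge c\,n\,|\det A|^n$. That is only a polynomial improvement over Step~2, and it is perfectly compatible with $\tau_A(\mu,2)=d$.

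At the other points $(M^\top)^{-n}(\mb+\mathbf k)$ of your coset, you are evaluating truncated products at different lattice points $\mb+\mathbf k$. You have no information about these, and they can vanish. The condition $\widehat c(\mb)=\widehat c(0)=1$ holds automatically and says nothing. The ``strictly positive $T$-invariant vector beating the trivial eigenvalue'' is never constructed. Turning ``$\overline\mu$ is not Haar'' into an exponential gain in $\sum_Q\mu(A^nQ)^2$ is exactly the dimension-drop content of the theorem. It needs a global rigidity input, not an estimate near a single frequency orbit.

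The paper gets this rigidity from the Deng--He--Lau tile.
\begin{itemize}
\item One has $\overline\mu(\psi_I(T))\approx\|M_I\|$ for a positively irreducible tuple ${\bf M}$ whose sum is Markov, so $P({\bf M},1)=0$.
\item The hypothesis $\tau_A(\mu,2)=d$ means $P({\bf M},2)=-\log k$.
\item The equilibrium state $\nu_1$ at $q=1$ satisfies $h_{\nu_1}+\lambda({\bf M},\nu_1)=0$. Hence $-\log k\ge h_{\nu_1}+2\lambda({\bf M},\nu_1)=-h_{\nu_1}$, which forces $\nu_1$ to be the measure of maximal entropy.
\item The Gibbs property then gives $\overline\mu(\psi_I(T'))\approx k^{-|I|}$, so $\overline\mu$ is comparable to Haar measure.
\end{itemize}
A purely entropic variant would also work. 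The measure $\overline\mu$ is invariant under $x\mapsto Mx \bmod \Z^d$. Jensen's inequality gives $\sum_{|I|=n}\overline\mu(\psi_I(T))^2\ge e^{-n(h+o(1))}$, where $h$ is the entropy with respect to the tile partition. Hence $\tau_A(\mu,2)\le d\,h/\log k$, and uniqueness of the maximal-entropy measure on the full shift forces $\overline\mu$ to be Haar.

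You have also placed the difficulty in the wrong direction: Step~4 needs no spectral analysis of $T$. If $\overline\mu$ is Haar, then for large $n$ the projection $\pi$ is injective on $A^nQ$. So $\mu(A^nQ)\le\overline\mu(\pi(A^nQ))=|\det A|^n$, hence $S_n\le|\det A|^n$ and $\tau_A(\mu,2)\ge d$. Combined with Step~2 this gives equality. This is essentially the paper's two-line argument for (i)$\Rightarrow$(ii).
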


The proof of Theorem~\ref{thm-6.4} relies on a result of Deng, He, and Lau \cite{DengHeLau2008} concerning vector representations of integral self-affine measures, as well as on the thermodynamic formalism for matrix products developed in \cite{FengKaenmaki2011,FengLau2002}.

 Let $\mu$ be as in Theorem~\ref{thm-6.4}.  In \cite{DengHeLau2008}, the authors constructed a $\Z^d$-tile $T\subset \R^d$, which is the attractor of an integral affine iterated function system $\Psi=\{\psi_i(x)=A^{n_0}(x+c_i)\}_{i=1}^k$ satisfying the open set condition (OSC), where $n_0\in \N$, $k=|\det(A)|^{-n_0}$ and $c_i\in \Z^d$, such that
\begin{equation}
\label{e-mu-bdd}
\mu(\partial T+{e})=0\quad  \mbox{ for all }{e}\in \Z^d,
\end{equation}
where $\partial T$ stands for the boundary of $T$.

Set $${\mathcal V}=\{{v}_1,\ldots, {v}_N\}=\{{v}\in \Z^d:\; K\cap (\mbox{int}(T)+{v})\neq \emptyset\}$$ and define a vector-valued measure $\pmb{\mu}$ on $T$  by
$$
\pmb{\mu}(E)=[\mu((E\cap T)+{v}_1), \ldots, \mu((E\cap T)+{v}_N)]^\T.
$$

For $J=j_1\cdots j_{n_0}\in \{1,\ldots, m\}^{n_0}$, set $p_J=p_{j_1}\cdots p_{j_{n_0}}$ and $d_J=\sum_{k=1}^{n_0} A^{-n_0+k-1}a_{j_k}$. Define a tuple ${\bf M}=(M_1,\ldots, M_k)$ of $N\times N$ non-negative matrices by
$$
(M_r)_{i,j}=\left\{\begin{array}{cc} p_J & \mbox{ if }\; c_r+A^{-n_0} v_i-v_j=d_J \;\mbox{ for some }J\in \{1,\ldots, m\}^{n_0},\\
0 & \mbox{ otherwise},
\end{array}
\right.
$$
where $1\leq r\leq k$, and $1\leq i,j\leq N$.   The following theorem is our starting point.

\begin{thm}[{\cite[Theorems 1.1-1.2]{DengHeLau2008}}]
\label{thm-DHL}
 \begin{itemize}
\item[(i)]The tuple ${\bf M}$ is positively irreducible, that is, there exists an integer $j\geq 1$ such that all the entries of the matrix $\sum_{n=0}^j (\sum_{i=1}^k M_i)^j$ is strictly positive.
\item[(ii)] $\sum_{i=1}^k M_i$ is Markov, i.e., all its column sums are equal to $1$.
\item[(iii)] For any $I=i_1\ldots i_n\in \{1,\ldots, k\}^n$,
$$
\pmb{\mu}(\psi_{I}(T))=M_I\pmb{\mu}(T),
$$
where $\psi_I:=\psi_{i_1}\circ \cdots \circ \psi_{i_n}$ and $M_I:=M_{i_1}\cdots M_{i_n}$.
\end{itemize}
\end{thm}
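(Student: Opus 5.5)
Since this is a cited result, the plan is to reconstruct the argument of \cite{DengHeLau2008} from the self-affinity of $\mu$ and the tiling property of $T$. The identity that drives everything is obtained by iterating $\mu=\sum_\ell p_\ell\,\mu\circ\phi_\ell^{-1}$ $n_0$ times:
\[
\mu=\sum_{J\in\{1,\ldots,m\}^{n_0}} p_J\,\mu\circ\phi_J^{-1},\qquad \phi_J(x)=A^{n_0}x+A^{n_0}d_J .
\]
Here $d_J\in\mathbb Z^d$ because $A^{-1}$ is an integer matrix. I would prove (iii) first, then (ii), then (i).

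For (iii), fix a Borel set $E\subset T$, an index $1\le r\le k$ and a vector $v_i\in\mathcal V$. A direct computation gives
\[
\phi_J^{-1}\bigl(\psi_r(E)+v_i\bigr)=E+c_r+A^{-n_0}v_i-d_J ,
\]
which is an integer translate $E+w$ of $E$. Since the translates $T+\mathbb Z^d$ tile $\mathbb R^d$ and \eqref{e-mu-bdd} holds, $\mu(E+w)=0$ unless $K\cap(\operatorname{int}T+w)\neq\emptyset$, that is, unless $w=v_j$ for some $j$. Hence
\[
\mu(\psi_r(E)+v_i)=\sum_j (M_r)_{i,j}\,\mu(E+v_j).
\]
If several words $J$ produce the same $d_J$, their weights should be summed in the entry; I would read the definition of $M_r$ that way. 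This gives $\pmb\mu(\psi_r(E))=M_r\pmb\mu(E)$. Because $\psi_{I'}(T)\subset T$, induction on $|I|$ with $E=\psi_{I'}(T)$ yields $\pmb\mu(\psi_I(T))=M_I\pmb\mu(T)$.

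For (ii), fix a column $j$ and a word $J$. The digit set $\{c_r\}$ of the lattice tile $T$ is a complete residue system of $\mathbb Z^d/A^{-n_0}\mathbb Z^d$. So there are a unique $r$ and a unique $w\in\mathbb Z^d$ with $v_j+d_J=c_r+A^{-n_0}w$. What must be checked is that this $w$ lies in $\mathcal V$, so that it equals some $v_i$. I would verify this using $\phi_J(K)\subset K$ and $A^{n_0}(\operatorname{int}T+v_j+d_J)\subset \operatorname{int}T+w$ up to the boundary, which maps a point of $K\cap(\operatorname{int}T+v_j)$ into $K\cap(\operatorname{int}T+w)$. Then the $j$-th column sum of $\sum_r M_r$ is $\sum_J p_J=1$.

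Step (i) is the main obstacle. Positive irreducibility says the directed graph on $\mathcal V$, with an edge $i\to j$ whenever $(M_r)_{i,j}>0$ for some $r$, is strongly connected. My first attempt: take a vertex $v_j$ and a point $x\in K\cap(\operatorname{int}T+v_j)$. Iterating the backward relation from (ii), long words $J$ send pieces of $\operatorname{int}T+v_i$ into arbitrarily small neighbourhoods of $x$, because the cylinders $\phi_J(K)$ shrink and accumulate on every point of $K$. This should show that every $v_i$ reaches $v_j$ through a chain of positive entries. If that fails, I would fall back on a measure-theoretic argument. Sets $\mathcal W\subset\mathcal V$ closed under the graph give invariant sub-vectors of $\pmb\mu$. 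Combined with the fact that $\mu$ has full support on $K$, and that $K$ meets each $\operatorname{int}T+v$ for $v\in\mathcal V$, this would show that a proper closed class contradicts the Markov property in (ii). I would also rely on the choice of $n_0$ and $T$ in \cite{DengHeLau2008} as needed.
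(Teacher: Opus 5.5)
The paper does not prove this statement; it imports it from Deng--He--Lau. Their real work is constructing the tile $T$ with $\mu(\partial T+e)=0$, which the statement, like your proposal, takes as given. Measured against that bare citation, your reconstruction is a correct, self-contained proof of (i)--(iii). It also exposes two points the paper's wording hides. First, as you observe, $(M_r)_{i,j}$ must be read as $\sum\{p_J : d_J=c_r+A^{-n_0}v_i-v_j\}$. With the literal definition, (ii) and (iii) fail as soon as two words share the same $d_J$. Second, (i) needs $p_\ell>0$ for all $\ell$, or else $K$ replaced by $\operatorname{supp}\mu$ in the definition of $\mathcal V$, because an edge of the graph requires a word with $p_J>0$.

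Three remarks on the details.

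In (ii), the inclusion $\phi_J(\operatorname{int}T+v_j)=\psi_r(\operatorname{int}T)+w\subset\operatorname{int}T+w$ is exact, not just up to boundaries. This is because $\psi_r$ is an affine homeomorphism with $\psi_r(T)\subset T$.

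In (i), your first argument is correct, and you should commit to it. The closing step should be stated explicitly. Iterating the inclusion along $\mathbf J=J_1\cdots J_L$ places all of $\phi_{\mathbf J}(\operatorname{int}T+v_i)$ inside a single $\operatorname{int}T+v_{i'}$, where $v_{i'}$ is the end of a chain of positive entries. Now take $\mathbf J$ to be a prefix of a coding of $x$, with $L$ large, and apply $\phi_{\mathbf J}$ to a point of $K\cap(\operatorname{int}T+v_i)$; this produces a point of $\operatorname{int}T+v_j$. The interiors of distinct $\mathbb Z^d$-translates of $T$ are disjoint, so $i'=j$. Since this works for every sufficiently large $L$, you actually get $(\sum_r M_r)^L>0$ for all large $L$, that is, primitivity. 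That is what the displayed condition literally says; its exponent $j$ is presumably a typo for $n$.

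Drop the fallback argument. A proper closed class does not contradict the Markov property: the identity matrix is column-stochastic and reducible.
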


We still need a result on the thermodynamic formalism of matrix products. For $q>0$, define
\begin{equation}
P({\bf M},q)=\lim_{n\to \infty} \frac{1}{n}\log\left(\sum_{I\in \{1,\ldots, k\}^n}\|M_I\|^q\right).
\end{equation}
The existence of the limit follows from a subadditivity argument.  The function $P({\bf M},\cdot)$ is called the {\it pressure function of $\bf M$}.

Let $(\Sigma, \sigma)$ denote the one-sided full shift over the alphabet $\{1,\ldots, k\}$, that is, $\Sigma=\{1,\ldots,k\}^\N$ and $\sigma$ is the left shift on $\Sigma$. Let $\mathcal M(\Sigma,\sigma)$ denote the set of all $\sigma$-invariant Borel probability measures on $\Sigma$.  For $\nu\in \mathcal M(\Sigma,\sigma)$, let $h_\nu(\sigma)$ denote the measure-theoretic entropy of $\nu$, and define
$$
\lambda({\bf M}, \nu)=\lim_{n\to \infty}\frac{1}{n} \sum_{I\in \{1,\ldots, k\}^n} \mu([I])\log (\|M_I\|),
$$
where $[I]:=\{x=(x_n)_{n=1}^\infty\in \Sigma\colon x_1\ldots x_n=I\}$ is the cylinder set of length $n$ corresponding to $I$. The existence of the limit again follows from subadditivity. We call $\lambda({\bf M}, \nu)$ the {\it Lyapunov exponent of ${\bf M}$ with respect to $\nu$}. The reader is referred to \cite{Walters1982} for further details on these notions.

For two families of positive real numbers $\{a_i\}_{i\in\mathcal I}$ and $\{b_i\}_{i\in\mathcal I}$, we write \[ a_i \approx b_i \] if there exists a constant $c\ge 1$ such that $c^{-1}b_i \le a_i \le cb_i$ for all $i\in\mathcal I$.

Since ${\bf M}$ is positive irreducible, the following result follows from \cite{Feng2004,FengLau2002}.
\begin{thm}
\label{thm-pressure}
Let $q>0$. Then
$$
P({\bf M},q)=\sup\{h_\nu(\sigma)+q\lambda({\bf M}, \nu)\colon \nu\in \mathcal M(\Sigma,\sigma)\}.
$$
Moreover, the supremum is attained at a unique measure $\nu_q\in \mathcal M(\Sigma,\sigma)$,  called the equilibrium state for $({\bf M},q)$.  Furthemore,
$$
\nu_q([i_1\ldots i_n])\approx \exp(-nP({\bf M},q))\|M_{i_1\ldots i_n}\|^q\quad\text{for all }n\in \N \text{ and }i_1\ldots i_n\in \{1,\ldots, k\}^n.
$$
\end{thm}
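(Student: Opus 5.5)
The plan is to derive the statement from the general theory of \cite{Feng2004,FengLau2002} for sub-multiplicative potentials that are also quasi-multiplicative. The only input specific to our setting is the following \emph{quasi-multiplicativity} property of $\mathbf M$.

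\emph{Quasi-multiplicativity.} There exist a finite set $W$ of words and a constant $c>0$ such that for all words $I,J$ one can find $K\in W$ with
\[
\|M_IM_KM_J\|\ge c\,\|M_I\|\,\|M_J\|.
\]
To prove it, I would use the entrywise $\ell^1$-type norm, which is equivalent to any other norm; for non-negative matrices it satisfies $\|M\|\approx \max_{a,b}M_{a,b}$. Fix $I,J$ with $M_I\neq0\neq M_J$; otherwise there is nothing to prove. Choose $(a,b)$ with $(M_I)_{a,b}$ maximal and $(c',d)$ with $(M_J)_{c',d}$ maximal. By Theorem~\ref{thm-DHL}(i), $\sum_{n\le j}(\sum_i M_i)^n$ is strictly positive. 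Hence there is a word $K$ of length at most $j$ with $(M_K)_{b,c'}>0$. Take $W$ to be all words of length at most $j$, and let $c_0$ be the minimum of the positive entries of the matrices $M_K$, $K\in W$. Then
\[
(M_IM_KM_J)_{a,d}\ge (M_I)_{a,b}(M_K)_{b,c'}(M_J)_{c',d}\ge c_0N^{-4}\|M_I\|\|M_J\|,
\]
which is the claim. Together with sub-multiplicativity $\|M_{IJ}\|\le\|M_I\|\|M_J\|$, the potential $\Phi_n(x)=\|M_{x_1\ldots x_n}\|^q$ is then almost multiplicative up to bounded gaps. In particular the limit defining $P(\mathbf M,q)$ exists and satisfies two-sided bounds of the form $\sum_{|I|=n}\|M_I\|^q\approx e^{nP(\mathbf M,q)}$.

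\emph{Gibbs measure.} Next I would construct a Gibbs measure in Bowen's manner. Consider the probability vectors on $\{1,\ldots,k\}^n$ proportional to $\|M_I\|^q$, spread them to $\Sigma$, average along the shift, and take a weak-$*$ limit $\nu_q$. The upper bound $\nu_q([I])\lesssim e^{-nP}\|M_I\|^q$ follows from sub-multiplicativity. The lower bound follows from quasi-multiplicativity: one inserts a bridging word $K\in W$, and the bounded length of $K$ costs only a constant factor. Quasi-multiplicativity also gives $\nu_q([I]\cap\sigma^{-n-|K|}[J])\gtrsim\nu_q([I])\nu_q([J])$ for a suitable bounded shift. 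From this one deduces ergodicity, and in fact mixing after averaging over the bounded gaps.

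\emph{Variational principle and uniqueness.} For any $\nu\in\mathcal M(\Sigma,\sigma)$, the bound $h_\nu+q\lambda(\mathbf M,\nu)\le P(\mathbf M,q)$ comes from the elementary inequality $\sum_I \nu[I](-\log\nu[I]+q\log\|M_I\|)\le\log\sum_I\|M_I\|^q$, after dividing by $n$ and letting $n\to\infty$. Equality for $\nu_q$ follows by integrating the Gibbs estimate. For uniqueness, let $\nu$ be any invariant measure attaining the supremum. Using the Gibbs property of $\nu_q$, I would adapt Bowen's argument and show $\nu\ll\nu_q$; ergodicity of $\nu_q$ then forces $\nu=\nu_q$. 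I expect this last step to be the main obstacle. Bowen's argument is written for additive potentials, so one must check that the bounded-gap (rather than exact) quasi-multiplicativity still gives absolute continuity. I would handle this as in \cite{Feng2004}, by passing to a suitable power of the shift, or by invoking the Feng--Käenmäki uniqueness result for quasi-multiplicative potentials directly.
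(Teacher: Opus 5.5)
Your proposal is correct and follows the paper's route. The paper simply invokes \cite{Feng2004,FengLau2002}, whose hypothesis is exactly the positive irreducibility in Theorem~\ref{thm-DHL}(i), and your bounded-gap quasi-multiplicativity argument is the standard way that hypothesis enters those results.

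The uniqueness step you flag as the main obstacle is not a real one. Bowen's argument only needs four things, all of which you already have:
\begin{itemize}
\item the Gibbs lower bound for $\nu_q$;
\item the subadditivity inequality $n\lambda({\bf M},\nu)\le\sum_{|I|=n}\nu([I])\log\|M_I\|$;
\item the fact that $h_\nu+q\lambda({\bf M},\nu)$ is affine in $\nu$;
\item ergodicity of $\nu_q$.
\end{itemize}
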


A variant of Theorem~\ref{thm-pressure} for general real or complex matrix products under a different irreducibility assumption can be found in \cite[Proposition~1.2]{FengKaenmaki2011}.  Now we are ready to prove Theorem~\ref{thm-6.4}.

\begin{proof}[Proof of Theorem~\ref{thm-6.4}]
Let $\pi\colon \R^d\to \R^d/\Z^d$ be the canonical projection, and let
$\overline{\mu}=\mu\circ\pi^{-1}$.
Since $\mu$ has compact support, it follows readily from
Definition~\ref{defn-1.9} that
\[
\tau_A(\overline{\mu},2)=\tau_A(\mu,2).
\]
We omit the straightforward verification.

We first prove the implication $(i)\Longrightarrow(ii)$.
Assume that $\mu$ is absolutely continuous.
Then, by Proposition~\ref{pro-self-affine},
$\overline{\mu}$ coincides with the Haar measure on
$\R^d/\Z^d$.
Consequently,
$\tau_A(\overline{\mu},2)=d$, and hence
$\tau_A(\mu,2)=d$.

Next we prove the implication $(ii)\Longrightarrow(i)$.
Assume that
$\tau_A(\mu,2)=d$.
Then
$\tau_A(\overline{\mu},2)=d$.

Since $\Psi$ satisfies the OSC and has attractor $T$, it follows directly from the definition that
\begin{align}
\label{e-7.6}
\tau_A(\overline{\mu},2)
&=
\liminf_{n\to\infty}
\frac{d}{nn_0\log|\det(A)|}
\log\Big(
\sum_{I\in\{1,\ldots,k\}^n}
\overline{\mu}(\psi_I(T))^2
\Big)
\nonumber\\
&=
-\limsup_{n\to\infty}
\frac{d}{n\log k}
\log\Big(
\sum_{I\in\{1,\ldots,k\}^n}
\overline{\mu}(\psi_I(T))^2
\Big).
\end{align}

Since $T$ is a self-affine $\Z^d$-tile of $\R^d$, there exists a Borel set
$T'\subset T$ such that
$\operatorname{int}(T')=\operatorname{int}(T)$ and
\[
\R^d=\bigcup_{v\in\Z^d}(T'+v),
\]
where the union is disjoint.
In other words, $T'$ is a fundamental domain of
$\R^d/\Z^d$.

By \eqref{e-mu-bdd} and the self-affinity relation
\[
\mu=\sum_{j=1}^m p_j\,\mu\circ\phi_j^{-1},
\]
one obtains
$\mu(\psi_I(\partial T)+v)=0$ for every
$v\in\Z^d$ and every finite word $I$ over
$\{1,\ldots,k\}$.
Consequently,
\[
\mu(\psi_I(T)+v)=\mu(\psi_I(T')+v).
\]
Combining this with Theorem~\ref{thm-DHL}(ii) yields
\begin{equation}
\label{e-MI}
\overline{\mu}(\psi_I(T'))
=
\overline{\mu}(\psi_I(T))
=
(1,\ldots,1)M_I\pmb{\mu}(T)
\approx
\|M_I\|.
\end{equation}

Using \eqref{e-7.6}, \eqref{e-MI}, and the assumption
$\tau_A(\overline{\mu},2)=d$, we obtain
\[
\limsup_{n\to\infty}
\frac1n
\log
\Big(
\sum_{I\in\{1,\ldots,k\}^n}
\|M_I\|^2
\Big)
=
-\log k.
\]
Equivalently,
\begin{equation}
\label{e-M2}
P({\bf M},2)=-\log k.
\end{equation}

By Theorem~\ref{thm-DHL}(ii),
$\sum_{i=1}^k M_i$ is a Markov matrix and therefore has spectral radius equal to $1$.
Hence
\begin{align*}
\sum_{I\in\{1,\ldots,k\}^n}\|M_I\|
&\approx
\sum_{I\in\{1,\ldots,k\}^n}
(1,\ldots,1)M_I(1,\ldots,1)^\top\\
&=
(1,\ldots,1)
\Big(\sum_{i=1}^kM_i\Big)^n
(1,\ldots,1)^\top\\
&\approx
\Big\|
\Big(\sum_{i=1}^kM_i\Big)^n
\Big\|.
\end{align*}
Therefore,
\[
P({\bf M},1)
=
\lim_{n\to\infty}
\frac1n
\log
\Big\|
\Big(\sum_{i=1}^kM_i\Big)^n
\Big\|
=
\log
\rho\Big(\sum_{i=1}^kM_i\Big)
=
0,
\]
where $\rho(\cdot)$ denotes the spectral radius.

By Theorem~\ref{thm-pressure},
$({\bf M},1)$ admits a unique equilibrium state
$\nu_1\in\mathcal M(\Sigma,\sigma)$ satisfying
\begin{equation}
\label{e-hnu1}
h_{\nu_1}(\sigma)
+
\lambda({\bf M},\nu_1)
=
P({\bf M},1)
=
0,
\end{equation}
and
\begin{equation}
\label{e-equlinu1}
\nu_1([i_1\cdots i_n])
\approx
\|M_{i_1\cdots i_n}\|
\quad
\text{for all }
n\in\N
\text{ and }
i_1\cdots i_n\in\{1,\ldots,k\}^n.
\end{equation}

Combining \eqref{e-M2}, Theorem~\ref{thm-pressure}, and
\eqref{e-hnu1}, we obtain
\[
-\log k
=
P({\bf M},2)
\ge
h_{\nu_1}(\sigma)
+
2\lambda({\bf M},\nu_1)
=
-h_{\nu_1}(\sigma).
\]
Hence
$
h_{\nu_1}(\sigma)\ge \log k.
$
Since the topological entropy of the full shift equals $\log k$, it follows that
$h_{\nu_1}(\sigma)=\log k$.
Therefore, $\nu_1$ is the unique measure of maximal entropy on $\Sigma$.
By \eqref{e-equlinu1},
\[
\|M_{i_1\cdots i_n}\|
\approx
k^{-n}
\qquad
\text{for all }
n\in\N
\text{ and }
i_1\cdots i_n\in\{1,\ldots,k\}^n.
\]

Combining this with \eqref{e-MI}, we obtain
\[
\overline{\mu}(\psi_I(T'))
\approx
k^{-|I|}
=
{\mathcal L}^d(\psi_I(T')).
\]
It follows that $\overline{\mu}$ is absolutely continuous with respect to the Haar measure on $\R^d/\Z^d$.
Consequently, $\mu$ is absolutely continuous with respect to ${\mathcal L}^d$.
\end{proof}

\section{An algorithm for determining  the $L^{(A,2)}$ dimension of integral self-affine measures}
\label{S-8'}

In this section, we provide an algorithm for computing the $L^{(A,2)}$ dimension of integral self-affine measures, analogous to the algorithms developed in \cite{Akiyama2020,Lau1993,LNR2001} for the computation of the $L^2$ dimension of self-similar measures satisfying the finite type condition.

Throughout this section, let $\Phi=\{\phi_\ell(x)=Ax+a_\ell\}_{\ell=1}^m$ be an integral affine IFS on $\R^d$. That is, $A$ has spectral radius  less than $1$, $A^{-1}\in GL_d(\Z)$, and $a_\ell\in \Z^d$ for all $1\leq \ell\leq m$. Let $K$ denote the attractor of $\Phi$, and let $\mu$ be the self-affine measure associated with $\Phi$ and a strictly positive probability vector $\pb=(p_\ell)_{\ell=1}^m$.

By replacing \(\Phi\) with a suitable iterate if necessary, we may assume without loss of generality that \(\|A\|<1\).

Fix \(R>0\) sufficiently large so that such that
\begin{equation}
\label{e-B0R}
\phi_\ell(B(0,R))\subset B(0,R)\quad \text{for all }1\leq \ell\leq m,
\end{equation}
where $B(0,R)$ denotes the closed ball in $\mathbb{R}^d$ of radius $R$ centered at the origin.

Set
$$
\mathcal A:=\{a_i-a_j\colon 1\leq i,j\leq m\}
$$
and define
\begin{equation}
\label{e-Lambdan'}
\Lambda=\left\{\sum_{k=1}^n A^{-k}\varepsilon_k\colon n\in \N\text { and } \varepsilon_k\in \mathcal A \text{ for }1\leq k\leq n\right\}.
\end{equation}
Since $A^{-1}\in GL_d(\Z)$ and $\mathcal A\subset \Z^d$, we have $\Lambda\subset \Z^d$.

Define
\[
\Omega=\Omega_R:=B(0,R)\cap \Lambda.
\]
Let $N=\#(\Omega)$, and enumerate the elements of $\Omega$ as
\[
\Omega=\{v_1,v_2,\ldots,v_N\},
\]
 with $v_1=0$.

For $1\leq r,r'\leq N$, define
\begin{equation}
\label{e-Lambdaij}
\mathcal E_{r,r'}=\left\{(i, j)\in \{1,\ldots, m\}^2\colon  v_{r'}=A^{-1}(v_r+a_{i}-a_{j})\right\}.
\end{equation}
Finally, define an $N\times N$ matrix ${\bf M}=({\bf M}_{r,r'})_{1\leq r,r'\leq N}$ by
\begin{equation}
\label{e-gij}
{\bf M}_{r,r'}:=
\left\{
\begin{array}{ll}
\sum_{(i, j)\in \mathcal E_{r,r'}} p_i p_j,& \text{ if } \mathcal E_{r,r'}\neq \emptyset,\\
0, & \text{ otherwise}.
\end{array}
\right.
\end{equation}

Let $\tau_A(\mu,2)$ denote the $L^{(A,2)}$ dimension of $\mu$ (see Definition~\ref{defn-1.9}), and let $\rho(\cdot)$ denote spectral radius.  The following result provides  an algorithm to compute $\tau_A(\mu,2)$.

\begin{prop}
\label{prop-8.1}
Let ${\bf M}$ be defined as above. Then
\begin{equation}
\label{e-algorithm}
\tau_A(\mu,2)=\displaystyle \frac{d \log \rho({\bf M})}{\log |\det(A)|}.
\end{equation}
Consequently, $\mu$ is absolutely continuous if and only if $\rho({\bf M})=|\det(A)|$.
\end{prop}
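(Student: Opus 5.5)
The plan is the classical $L^2$/correlation argument of Lau and of Lau--Ngai--Rao, adapted to the anisotropic grid $A^n\mathcal D$.

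\textbf{Step 1: from $L^2$ sums to a correlation measure.} Write $\mu_n=\sum_{|J|=n}p_J\delta_{a_J}$ with $a_J=\sum_{k=1}^n A^{k-1}a_{j_k}$, so that $\mu=\mu_n*(A^n\mu)$. Since $A^{-n}a_J\in\Z^d$ after an integer shift, the atoms of $A^{-n}\mu_n$ lie on a fixed lattice coset. Put
\[
S_n=\sum_{Q\in\mathcal D}\mu(A^nQ)^2 .
\]
Because $\mu$ has compact support and $A^{-n}$ of any translate of $A^nK$ meets only boundedly many cubes of $\mathcal D$, a standard comparison gives
\[
S_n\approx \sum_{\|A^{-n}(a_J-a_{J'})\|\le C} p_Jp_{J'} .
\]
The upper bound uses Cauchy--Schwarz over boundedly many overlaps. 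For the lower bound, take the cube containing a heavy subpiece, possibly after passing from $n$ to $n+n_1$ with fixed $n_1$. Now $A^{-n}(a_J-a_{J'})=\sum_{k=1}^n A^{-(n-k+1)}(a_{j_k}-a_{j'_k})\in\Lambda\cup\{0\}$. Since $\|A\|<1$ and \eqref{e-B0R} holds, these partial sums, read from $k=1$, evolve by $v\mapsto A^{-1}(v+a_i-a_j)$. The constraint that the final vector has norm $\le C$ forces every intermediate vector to stay bounded, because the backward map is contracting by \eqref{e-B0R}. After enlarging $R$ if necessary, every intermediate vector therefore lies in $\Omega$.

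\textbf{Step 2: reduction to matrix powers.} With the above, $\sum_{\|A^{-n}(a_J-a_{J'})\|\le C}p_Jp_{J'}$ is comparable to
\[
\sum_{r':\,\|v_{r'}\|\le C} (\mathbf M^n)_{1,r'} ,
\]
counting pairs of words whose difference path starts at $v_1=0$ and follows $\mathcal E_{r,r'}$. The total row sum $\sum_{r'}(\mathbf M^n)_{1,r'}$ is comparable to the restricted one, since from any state of $\Omega$ one can return near $0$ in boundedly many steps. This return is obtained by appending a common letter pair, i.e. $i=j$, which maps $v\mapsto A^{-1}v$. That map does not contract, so the return must instead be argued by reversing the path in backward time using $\|A\|<1$. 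Hence $S_n\approx\|e_1^\top\mathbf M^n\|_1$.

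\textbf{Step 3: identification of the growth rate.} It remains to show $\lim\frac1n\log\|e_1^\top\mathbf M^n\|=\log\rho(\mathbf M)$. Restricting $\Omega$ to states reachable from $v_1$, which is what $\Lambda$ encodes, handles this. Every state $v\in\Omega$ is reachable from $0$ and satisfies a symmetry $v\leftrightarrow -v$. One must show that the class of $0$ carries the spectral radius, i.e. that each state is reachable from $0$ and can also reach a state from which $0$ is reachable. I would handle this through positivity of the entries (all $p_\ell>0$) and the Perron--Frobenius theorem on the reachable subgraph. Then
\[
\tau_A(\mu,2)=\frac{d}{\log|\det A|}\lim\frac1n\log S_n=\frac{d\log\rho(\mathbf M)}{\log|\det A|}.
\]
The final sentence of the proposition follows from Theorem~\ref{thm-6.4}, since $\tau_A=d$ exactly when $\rho(\mathbf M)=|\det A|$, noting $\log|\det A|<0$.

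The main obstacle is Step 3 together with the comparison in Step 2: ensuring that the row of $\mathbf M^n$ indexed by $0$ really grows like $\rho(\mathbf M)^n$, rather than like the spectral radius of a smaller class. I would first try to show that $\Omega$ is a single communicating class by exhibiting explicit paths back to $0$, using the symmetry of $\mathcal A$ and reversed differences. If that fails, I would define $\rho$ on the class of $0$ and check that it dominates all other classes.
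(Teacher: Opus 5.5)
Your skeleton matches the paper's. You write $\sum_Q\mu(A^nQ)^2$ as a sum over pairs of words, note that $A^{-n}(a_J-a_{J'})$ is the endpoint of a path $v\mapsto A^{-1}(v+a_i-a_j)$ from $0$ whose intermediate points stay in $\Omega$ by backward contraction, and read off entries of ${\bf M}^n$. However, the two comparison steps are not established. The mechanism you propose for them is false in general: namely, returning from an arbitrary state to (near) $0$, or $\Omega$ being a single communicating class.

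Take $\Phi=\{x/2,\,x/2+1\}$ and $R=2$. Then $\Lambda=2\Z$ and $\Omega=\{-2,0,2\}$, and $2\mapsto 2(2+\varepsilon)$ stays in $\Omega$ only for $\varepsilon=-1$. So $\{2\}$ and $\{-2\}$ are closed classes from which $0$ is unreachable. ``Reversing the path in backward time'' only shows that paths stay bounded, not that they come back. This breaks your Step 2 as argued. It also breaks the lower bound in Step 1: pairs at an offset $z$ with $\sum_Q\mu(Q)\mu(Q+z)=0$ contribute nothing to $S_n$. Refining from $n$ to $n+n_1$ only produces sub-pairs whose offsets are states reachable from $z$, so again you would need to return near $0$.

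The missing idea is that the zero offset dominates all others, by Cauchy--Schwarz. Let $\nu_n(y)=\sum_{A^{-n}a_J=y}p_J$ on $\Z^d$ and
\[
f_n(z)=\sum_{A^{-n}(a_J-a_{J'})=z}p_Jp_{J'}=\sum_{y}\nu_n(y)\nu_n(y-z).
\]
Then $f_n(z)\le f_n(0)=({\bf M}^n)_{1,1}$. Combined with $S_n\ge\mu(Q_0)^2f_n(0)$ for any cube $Q_0$ with $\mu(Q_0)>0$, this gives your Step 1 lower bound and Step 2 at once. It even gives $\|{\bf M}^n\|_1=\sum_{r,r'}f_n(v_{r'}-A^{-n}v_r)\le N^2({\bf M}^n)_{1,1}$, so the class of $0$ does carry $\rho({\bf M})$. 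Your fallback in Step 3 is therefore true, but precisely because of this inequality.

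The paper's ``neighbor'' argument is the same comparison, phrased through the classes $\Gamma_n$: it shows $\widetilde w_n=({\bf M}^n)_{1,1}\le\beta_n=\|{\bf M}^n\|_1\le w_n\le N^4\widetilde w_n$ and combines this with $\theta_n\approx\widetilde w_n$. Finally, once $S_n\approx\|e_1^\top{\bf M}^n\|_1$ is in hand, Step 3 needs only that every state is reachable from $0$, which you already observed.
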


\begin{proof}
The second part of the proposition follows directly from \eqref{e-algorithm} and Theorem~\ref{thm-6.4}. It therefore remains to prove \eqref{e-algorithm}.

Write
\begin{equation}
\label{e-thetan1}
\theta_n:=\sum_{Q\in \mathcal D}\mu(A^nQ)^2.
\end{equation}
By the definition of $\tau_A(\mu,2)$, proving \eqref{e-algorithm} is equivalent to showing that
\begin{equation}
\label{e-thetan}
\limsup_{n\to \infty} \frac{1}{n}\log \theta_n=\log \rho({\bf M}).
\end{equation}

 Our proof of \eqref{e-thetan} is adapted from the proof of \cite[Proposition~4.1]{Akiyama2020}. For completeness, we provide a detailed proof.

For each $n\in \N$, introduce an equivalence relation $\sim_n$ on $\{1,\ldots,m\}^n$ by
$$
x_1\ldots x_n\sim_n y_1\ldots y_n \quad \text{if}\quad  \phi_{x_1\ldots x_n}=\phi_{y_1\ldots y_n}.
$$
Note that  $\phi_{x_1\ldots x_n}=\phi_{y_1\ldots y_n}$ if and only if $\sum_{k=1}^n A^{n+1-k}(a_{x_k}-b_{y_k})=0$.
For convenience, let $\Gamma_n:=\{1,\ldots,m\}^n/\sim_n$ denote the corresponding quotient set.

Define a tuple $(M_1,\ldots, M_m)$ of $N\times N$ non-negative matrices by
$$
(M_i)_{r,r'}=\left\{\begin{array}{ll} p_j, & \mbox{ if }\;  v_{r'}=A^{-1}(v_r+a_i-a_j) \;\text{ for some }j\in \{1,\ldots, m\},\\
0, & \mbox{ otherwise},
\end{array}
\right.
$$
where $1\leq i\leq m$, and $1\leq r,r'\leq N$. 

From the definitions of ${\bf M}$ and the matrices $M_i$, it follows that
\begin{equation}
\label{e-BM}
{\bf M}=\sum_{i=1}^m p_iM_i.
\end{equation}
Moreover, one readily checks that for every word $x_1\ldots x_n\in \{1,\ldots,m\}^n$,
\begin{equation}
\label{e-productM}
(M_{x_1}\cdots M_{x_n})_{i,j}=\sum_{\substack{\ell_1\ldots \ell_n\in \{1,\ldots,m\}^n\\
v_j=A^{-n} v_i+\sum_{k=1}^nA^{-(n+1-k)}(a_{x_k}-a_{\ell_k}) }} p_{\ell_1}\ldots p_{\ell_n}
\end{equation}
for all $1\leq i,j\leq N$. In  verifying this,  we use the fact that if 
$$
v_j=A^{-n} v_i+\sum_{k=1}^nA^{-(n+1-k)}(a_{x_k}-a_{\ell_k}),
$$
then $$A^{-\ell} v_i+\sum_{k=1}^{\ell-1} A^{-(\ell+1-k)}(a_{x_k}-a_{\ell_k})\in \Omega=\{v_1,\ldots, v_N\}$$ for every integer $1<\ell<n$. 
The fact follows directly from \eqref{e-B0R}. As a special case of \eqref{e-productM}, taking $i=j=1$, we obtain
\begin{equation}
\label{e-8.7}
(M_{x_1}\cdots M_{x_n})_{1,1}=\sum_{\substack{y_1\ldots y_n\in \{1,\ldots,m\}^n\\
y_1\ldots y_n\sim_n x_1\ldots x_n}} p_{y_1}\ldots p_{y_n}.
\end{equation}

From \eqref{e-productM}, we see that
\begin{equation}
\label{e-8.8}
M_{x_1}\cdots M_{x_n}=M_{y_1}\cdots M_{y_n}\quad\text{ if }\quad x_1\ldots y_n\sim_n y_1\ldots y_n.
\end{equation}

Now, by the self-affinity relation of $\mu$, and applying \eqref{e-8.7}-\eqref{e-8.8}, we obtain
\begin{equation*}
\label{e-8.9}
\mu=\sum_{x_1\ldots x_n\in \{1,\ldots,m\}^n}p_{x_1\ldots x_n}\mu\circ \phi_{x_1\ldots x_n}^{-1}=\sum_{[x_1\ldots x_n]\in \Gamma_n}(M_{x_1}\cdots M_{x_n})_{1,1}\;\mu\circ  \phi_{x_1\ldots x_n}^{-1}.
\end{equation*}
It follows that
\begin{align}
\label{e-8.11}
\theta_n=\sum_{Q\in \mathcal D}\mu(A^nQ)^2&=\sum_{Q\in \mathcal D}\Big(\sum_{[x_1\ldots x_n]\in \Gamma_n}(M_{x_1}\cdots M_{x_n})_{1,1}\;\mu\big(\phi_{x_1\ldots x_n}^{-1}(A^nQ)\big)\Big)^2\nonumber\\
&=\sum_{Q\in \mathcal D}\left(\sum_{\substack{[x_1\ldots x_n]\in \Gamma_n\\ \phi_{x_1\ldots x_n}(K)\cap A^nQ\neq \emptyset}}(M_{x_1}\cdots M_{x_n})_{1,1}\;\mu\big(\phi_{x_1\ldots x_n}^{-1}(A^nQ)\big)\right)^2.
\end{align}

Since $\Phi$ is an integral IFS, it is readily verified that there exists $L\in \N$,  independent of $n$, such that for each $Q\in \mathcal D$, there are at most $L$  elements $[x_1\ldots x_n]\in \Gamma_n$ satisfying $A^n Q\cap \phi_{x_1\ldots x_n}(K)\neq \emptyset$;  conversely, each $\phi_{x_1\ldots x_n}(K)$ intersects  $A^nQ$ for at most $L$  cubes $Q\in \mathcal D$. By applying this property, together with \eqref{e-8.11}  and the Cauchy-Schwarz inequality, one obtains
\begin{equation}
\label{e-thetacpmpare}
L^{-2} \sum_{[x_1\ldots x_n]\in \Gamma_n}\big((M_{x_1}\cdots M_{x_n})_{1,1}\big)^2\leq \theta_n\leq L^2 \sum_{[x_1\ldots x_n]\in \Gamma_n}\big((M_{x_1}\cdots M_{x_n})_{1,1}\big)^2.
\end{equation}

For an $N\times N$ non-negative matrix $G$, define its $1$-norm by $$\|G\|_1:=\sum_{1\leq i,j\leq N}G_{i,j}.$$
Set
$$
\beta_n:=\sum_{x_1\ldots x_n\in \{1,\ldots,m\}^n}p_{x_1\ldots x_n} \|M_{x_1}\ldots M_{x_n}\|_1.
$$
Then
\begin{equation}
\label{e-limitbetan}
\beta_n=\left\|\left(\sum_{i=1}^m p_iM_i\right)^n\right\|_1=\left\|{\bf M}^n\right\|_1,
\end{equation}
where the second equality follows from \eqref{e-BM}.  It follows that $$\lim_{n\to \infty} \frac{1}{n}\log \beta_n=\log \rho({\bf M}).$$

To compare $\beta_n$ and $\theta_n$, we define
$$
w_n:=\sum_{[x_1\ldots x_n]\in \Gamma_n} \big(\|M_{x_1}\cdots M_{x_n}\|_1\big)^2 \quad \text{and}\quad \widetilde{w}_n:=\sum_{[x_1\ldots x_n]\in \Gamma_n} \big((M_{x_1}\cdots M_{x_n})_{1,1}\big)^2.
$$
Clearly, $w_n\geq \widetilde{w}_n$.  Moreover, by \eqref{e-BM}, \eqref{e-8.7} and \eqref{e-8.8},
\begin{equation}
\label{e-widetildewn}
\widetilde{w}_n=\sum_{x_1\ldots x_n\in \{1,\ldots, m\}^n} p_{x_1\ldots x_n} (M_{x_1}\cdots M_{x_n})_{1,1}=({\bf M}^n)_{1,1}.
\end{equation}

For two elements $[x_1\ldots x_n], [y_1\ldots y_n]\in \Gamma_n$, we say that $[y_1\ldots y_n]$ is a neighbor of $[x_1\ldots x_n]$ if there exist $i,j\in \{1,\ldots N\}$ such that
$$
v_j=A^{-n}v_i+\sum_{k=1}^n A^{-(n+1-k)}(a_{x_k}-a_{y_k}).
$$
Assume that $n$ is sufficiently large so that $A^{-n}v_{i_1}-v_{j_1}\neq A^{-n}v_{i_2}-v_{j_2}$ whenever $(i_1,j_1)\neq (i_2,j_2)$. Then by \eqref{e-productM},
$$
\|M_{x_1}\cdots M_{x_n}\|_1=\sum(M_{y_1}\cdots M_{y_n})_{1,1},
$$
where the sum is over all neighbors of $[x_1\ldots x_n]$.  As is evident from the definition of a neighbor,  any element of $\Gamma_n$ has at most $N^2$ neighbors, and conversely, each element of $\Gamma_n$ is the neighbor of at most $N^2$ elements in $\Gamma_n$.  Applying the Cauchy-Schwartz inequality,  we obtain
 \begin{align*}
    w_n&=\sum_{[x_1\ldots x_n]\in \Gamma_n} (\|M_{x_1}\cdots
    M_{x_n}\|_1)^2\\ &=\sum_{[x_1\ldots x_n]\in \Gamma_n}
    \left(\sum_{\substack{[y_1\ldots y_n]\in \Gamma_n\\ [y_1\ldots
          y_n] \text{ is a neighbor of }[x_1\ldots x_n]}}
    (M_{x_1}\cdots M_{x_n})_{1,1}\right)^2\\
    &\leq N^2
    \sum_{[x_1\ldots x_n]\in \Gamma_n}
    \left(\sum_{\substack{[y_1\ldots y_n]\in \Gamma_n\\ [y_1\ldots
          y_n] \text{ is a neighbor of }[x_1\cdots x_n]}}
    \left((M_{y_1}\cdots M_{y_n})_{1,1}\right)^2\right)\\ &\leq
 N^4 \sum_{[y_1\ldots y_n]\in \Omega_n }
   ((M_{y_1}\cdots M_{y_n})_{1,1})^2=N^4 \widetilde{w}_n.
  \end{align*}
  Hence we obtain $\widetilde{w}_n\leq w_n\leq N^4 \widetilde{w}_n$.

  From the definition of $\beta_n$ and \eqref{e-8.7}-\eqref{e-8.8}, we see that
 \begin{align*}
 \widetilde{w}_n= \sum_{[x_1\ldots x_n]\in \Gamma_n}\big((M_{x_1}\cdots M_{x_n})_{1,1}\big)^2&\leq \sum_{[x_1\ldots x_n]\in \Gamma_n}(M_{x_1}\cdots M_{x_n})_{1,1} \|M_{x_1}\cdots M_{x_n}\|_1\\
 &=\beta_n \leq w_n.
  \end{align*}
  Combining it with the inequalities $\widetilde{w}_n\leq w_n\leq N^4 \widetilde{w}_n$ and \eqref{e-thetacpmpare}, we obtain
  $$
 L^{-2}N^{-4}\beta_n\leq L^{-2}N^{-4} w_n\leq  L^{-2}\widetilde{w}_n\leq  \theta_n\leq L^2\widetilde{w}_n \leq L^2 \beta_n.
  $$
  This, together with \eqref{e-limitbetan}, yields \eqref{e-thetan}.
  \end{proof}

\begin{rem}
\label{rem-8.2}
 \begin{itemize}
\item[(i)]Notice that every entry of the constructed matrix ${\bf M}$ is either zero or an quadratic polynomial in $\Z[\pb]:=\Z[p_1,\ldots, p_\ell]$ with non-negative coeeficients. By   Proposition~\ref{prop-8.1}, $\mu=\mu_{\pb}$ is absolutely continuous if and only if $\rho({\bf M})=|\det(A)|$. Therefore, for a given integral IFS $\Phi$, Proposition~\ref{prop-8.1}  provides an algorithm for determining those probability vectors $\pb$ for which the corresponding self-affine measure $\mu_{\pb}$ is absolutely continuous.
\item[(ii)]  Although the matrix ${\bf M}$ depends on the choice of $R$,  its spectral radius is independent of this choice. Moreover, by \eqref{e-widetildewn} and the argument in the proof of Proposition~\ref{prop-8.1}, we have
$$\rho({\bf M})=\rho(\widetilde{{\bf M}}),$$
where $\widetilde{\bf M}$ is the irreducible component  of
${\bf M}$ containing vertex $1$. We remark that, alternatively, one may adapt the approaches of  \cite{Lau1993}  and \cite{LNR2001} to prove that
$$\tau_A(\mu,2)=\frac{d\log \rho(\widetilde{{\bf M}})}{\log |\det(A)|}.$$
\item[(iii)] Proposition~\ref{prop-8.1} extends to all non-integral homogeneous affine IFSs satisfying the finite type condition.   Here we say that an IFS
$\Phi=\{\phi_\ell(x)=Ax+a_\ell\}_{\ell=1}^m$ satisfies the {\it finite type condition}, if there exists $R>0$ such that \eqref{e-B0R} holds and $\#(\Lambda\cap B(0,R))<\infty$,  where $\Lambda$ is defined as in \eqref{e-Lambdan'}.
\end{itemize}

\end{rem}

\section{Examples}
\label{S-8}

We first give an unexpected example of inhomogeneous Rajchman self-similar measures on $\R$.

\begin{exmp}
\label{exmp-1}
Let $\Phi=\{\phi_1(x)=\beta^{-1}x,\, \phi_2(x)=1-\beta^{-1}x\}$ be an IFS on \(\mathbb{R}\), where \(\beta=\frac{\sqrt{5}+1}{2}\). For \(p\in [0,1]\), let \(\mu_p\) be the self-similar measure associated with \(\Phi\) and the probability vector \((p,1-p)\). Then the following statements hold:
\begin{itemize}
\item[(i)] \(\mu_p\) is absolutely continuous if and only if \(p=\frac{\sqrt{5}-1}{2}\) or \(p=\frac{3-\sqrt{5}}{2}\);
\item[(ii)] In these two cases, letting \(f_p=\frac{d\mu_p}{dx}\) denote the density of \(\mu_p\), we have
\[
f_p(x)=\left\{
\begin{array}{ll}
\dfrac{\beta^2}{2\beta-1}\chi_{[0,\beta^{-1}]}(x)+\dfrac{\beta}{2\beta-1}\chi_{[\beta^{-1},1]}(x),
& \text{if } p=\frac{\sqrt{5}-1}{2},\\[8pt]
2\beta x\,\chi_{[0,\beta^{-1}]}(x)+2\beta^2(1-x)\,\chi_{[\beta^{-1},1]}(x),
& \text{if } p=\frac{3-\sqrt{5}}{2},
\end{array}
\right.
\]
where, for a subset $A$ of $\R$, $\chi_A$ denotes its characteristic function.
See Figure~\ref{Fig1} for the graphs of the densities $f_p$.
\end{itemize}

\end{exmp}

\begin{figure}[H]
 \begin{center}
\includegraphics[width=3.5in]{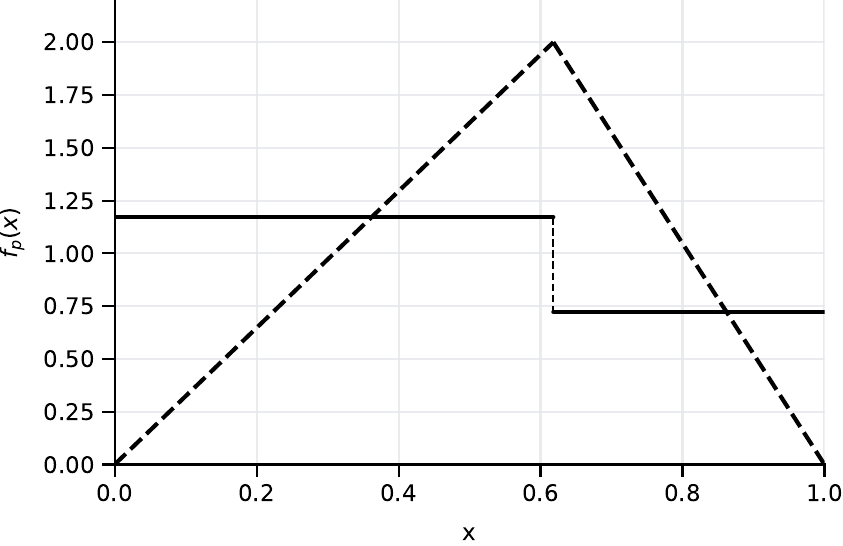}
\par\vspace{0pt}
\end{center}
	\label{tab:graph}
\caption{The densities $f_p$ for $p=\frac{\sqrt{5}-1}{2}$ (solid) and $p=\frac{3-\sqrt{5}}{2}$ (dashed).}
\label{Fig1}
\end{figure}

\begin{proof}[Justification]  Here we give a sketch of the proof of the above statements. For each
\( p \in \{(\sqrt{5}-1)/2,\,(3-\sqrt{5})/2\} \), let \( f_p \) be the density function defined in (ii).
One checks directly that \( \int_{\mathbb{R}} f_p(x)\,dx = 1 \) and that
\[
f_p(x) = p\beta f_p(\beta x) + (1-p)\beta f_p(\beta - \beta x)
\quad \text{for all } x \in [0,1] \setminus \{\beta^{-2}, \beta^{-1}\}.
\]

Let \( \eta_p \) be the probability measure on \([0,1]\) with density \( f_p \). Then by the above scaling property of $f_p$, \( \eta_p \) satisfies the self-similarity relation
\[
\eta_p = p\,\eta_p \circ \phi_1^{-1} + (1-p)\,\eta_p \circ \phi_2^{-1}.
\]
Hence \( \eta_p = \mu_p \), which proves (ii).

To prove (i), we apply a specialized version of a result of Lau, Ngai, and Rao \cite[Theorem~4.1 and Corollary~4.1]{LNR2001}: for any self-similar measure $\mu$ associated with an IFS
\[
\Psi = \{(-1)^{\epsilon_\ell} \rho x + a_\ell\}_{\ell=1}^m
\]
on $\mathbb{R}$, where $0 < \rho < 1$ and $\epsilon_\ell \in \{0,1\}$, satisfying WSC* (see Definition~4.2 in \cite{LNR2001}), $\mu$ is absolutely continuous if and only if its $L^2$-dimension equals $1$; equivalently, if the spectral radius of a certain transition matrix equals $\rho$.

Since $\beta = (\sqrt{5}+1)/2$ is a Pisot number, the IFS $\Phi$ in our example satisfies WSC*. Moreover, a direct computation using the algorithm of \cite{LNR2001}  shows that, for each $p \in [0,1]$, the transition matrix $M(p)$ corresponding to $\mu_p$ is
\[
M(p)=\begin{pmatrix}
p^2+(1-p)^2 & 2p(1-p) & 0 & 0 & 0\\
0 & 0 & (1-p)^2 & 0 & 0 \\
0 & 0 & 0 & p^2 & 2p(1-p)\\
2p(1-p) & p^2+(1-p)^2 & 0 & 0 & 0\\
0 & 0 & p(1-p) & 0 & 0
\end{pmatrix}.
\]
Hence, by the criterion of Lau, Ngai, and Rao, $\mu_p$ is absolutely continuous if and only if the spectral radius of $M(p)$ equals $\beta^{-1}$.

Since $M(p)$ is nonnegative, if its spectral radius is $\beta^{-1}$, then $\beta^{-1}$ is an eigenvalue of $M(p)$; see e.g.~\cite[Theorem~8.3.1]{HornJohnson1985}. The characteristic polynomial of $M(p)$ is
\begin{align*}
Q_p(\lambda)=\lambda [&(-2\lambda-4)p^6+(6\lambda+12)p^5+(2\lambda^2-7\lambda-13)p^4+(-4\lambda^2+4\lambda+6)p^3\\
&+(2\lambda^3+2\lambda^2-\lambda-1)p^2-2\lambda^3 p-\lambda^4+\lambda^3].
\end{align*}
Using $\beta^{-2}+\beta^{-1}-1=0$, we obtain
\[
Q_p(\beta^{-1})=\beta^{-1}(-2\beta^{-1}-4)(p-\beta^{-1})^2(p-\beta^{-2})^2\Bigl(p^2-p-\tfrac{\beta^{-1}}{2}\Bigr).
\]
Since $p \in [0,1]$, we have $Q_p(\beta^{-1})=0$ if and only if $p=\beta^{-1}$ or $p=\beta^{-2}$. In these two cases, one checks that $\beta^{-1}$ is indeed the spectral radius of $M(p)$. This completes the proof of (i).
\end{proof}

\begin{rem}
It is easily checked that the Fourier transforms of the densities $f_p$ for $p=(\sqrt{5}+1)/2$ and $p=(3-\sqrt{5})/2$ in Example~\ref{exmp-1}  have no zeros on $\R$.
\end{rem}

 Example~~\ref{exmp-1} has an interesting application in harmonic analysis.

\begin{defn}
\label{defn-Fourier frame}
A Borel probability measure $\nu$ on $\R$ is said to admit a {\it Fourier frame} if there exist a countable set $\Lambda\subset \R$ and two positive constants $A$ and $C$ such that
$$
A\|f\|^2_{L^(\nu)}\leq \sum_{\lambda\in \Lambda} |\langle f, e_\lambda\rangle_{L^2(\nu)}|^2\leq C \|f\|^2_{L^2(\nu)},\qquad f\in L^2(\mu),
$$
where $e_\lambda(x)=e^{2\pi i\lambda x}$. If, in addition, $A=C=1$, then $\nu$ is called a {\it spectral measure}.
\end{defn}

\begin{rem}
Notice that the density of the self-similar measure \(\mu_p\) constructed in Example~\ref{exmp-1}, with \(p=(\sqrt{5}-1)/2\), is piecewise constant but not constant. This provides an affirmative answer to a question of Chun-Kit Lai (via private communication), namely, whether there exists an absolutely continuous self-similar measure \(\nu\) on \(\mathbb{R}\) whose density is non-constant on its support but takes values in an interval \((c_1,c_2)\) with \(c_1,c_2>0\) for \(\nu\)-almost every point. As an application, by applying a result of Chun-Kit Lai \cite[Theorem~1.1]{Lai2011}, we see that
the $L^2$ space of the measure $\mu_{(\sqrt{5}-1)/2}$ admits a Fourier frame. However, by \cite[Corollary~1.4]{DutkayLai2014},  the measure $\mu_{(\sqrt{5}-1)/2}$ is not a spectral measure since its density on its support is not constant almost everywhere. To the best of our knowledge, this is the first example of a self-similar measure that admits a Fourier frame but is not a spectral measure.

\end{rem}

Next we provide an example of a homogeneous integral IFS on $\R$ whose attractor is an interval, but for which all the associated self-similar measures are non-Rajchman.

\begin{exmp}
\label{exmp-nonRajchman}
Let $\Phi= \{ \tfrac{1}{3}x,\ \tfrac{1}{3}x + 1,\ \tfrac{1}{3}x + 3,\ \tfrac{1}{3}x + 4 \}$ be an IFS on $\R$ and let $K$ be its attractor. Then $K=[0,6]$, and all self-similar measures associated with $\Phi$ are non-Rajchman and hence singular.
\end{exmp}

\begin{proof}[Justification]
One checks directly that
\[
[0,6] = [0,2] \cup [1,3] \cup [3,5] \cup [4,6]=\bigcup_{\ell=1}^4 \phi_\ell([0,6]),
\]
which implies  that $K = [0,6]$.

Next, let  $\pb = (p_1,p_2,p_3,p_4)$ be a probability vector, and let $\mu$ be the self-similar measure associated with $\Phi$ and $\pb$. We show below that $\mu$ is non-Rajchman.

Suppose, on the contrary, that $\mu$ is Rajchman. Recall that for $\xi \in \mathbb{R}$,
\[
\widehat{\mu}(\xi) = \prod_{n=0}^{\infty} H(3^{-n}\xi),
\]
where
\[
H(\xi) := p_1 + p_2 e^{2\pi i \xi} + p_3 e^{2\pi i (3\xi)} + p_4 e^{2\pi i (4\xi)}.
\]
By Lemma~\ref{lem-2.2}, for any $m \in \mathbb{N}$, there exists $\tau_m \in \mathbb{Z}$ such that
\[
H(m 3^{-\tau_m}) = 0.
\]
In particular, there exists an integer $\tau$ such that $H(3^{-\tau})=0$.

Note that $H(3^j)=1$ for all nonnegative integers $j$, so $\tau \geq 1$.
Moreover,
\[
H(3^{-1}) = (p_1 + p_3) + (p_2 + p_4)e^{2\pi i/3} \neq 0,
\]
so $\tau \neq 1$, and hence $\tau \geq 2$. It follows that the imaginary parts of
\[
e^{2\pi i 3^{-\tau}}, \quad e^{2\pi i \cdot 3\cdot 3^{-\tau}}, \quad \text{and} \quad e^{2\pi i \cdot 4\cdot 3^{-\tau}}
\]
are strictly positive. Since all $p_\ell$ are nonnegative, from
\[
0=H(3^{-\tau})=p_1 + p_2 e^{2\pi i 3^{-\tau}} + p_3 e^{2\pi i \cdot 3\cdot 3^{-\tau}} + p_4 e^{2\pi i \cdot 4\cdot 3^{-\tau}} ,
\]
it follows that $p_2 = p_3 = p_4 = 0$, and hence $p_1 = 1$, which is impossible, since the above sum would then be equal to $1$.
\end{proof}

Our last example shows that a generailized IFS of type I may also admit an integral factor.

\begin{exmp}
\label{exmp-8.5}
Let
$$\Phi=\left\{\begin{pmatrix} \beta^{-1} & 0\\
0& 1/2 \end{pmatrix}\begin{pmatrix} x\\
y \end{pmatrix}+a_\ell    \right\}_{1\leq \ell\leq 3}  $$
be an IFS on $\R^2$
with $\beta=\displaystyle\frac{\sqrt{5}+1}{2}$, $a_1=\begin{pmatrix}0\\ 0\end{pmatrix}$, $a_2=\begin{pmatrix}0\\ 1\end{pmatrix}$ and $a_3=\begin{pmatrix}1\\ 0\end{pmatrix}$.
Clearly, $\Phi$ is a generailized IFS of type I but admits an integral factor.
\end{exmp}

\end{document}